\newtheorem{theo}{Theorem}[section]
\newtheorem{lemm}[theo]{Lemma}
\newtheorem{rema}[theo]{Remark}
\newtheorem{prob}[theo]{Problem}
\numberwithin{equation}{section}
\begin{document}

\title[stability for inverse source problems]{Stability for the inverse source
problems in elastic and electromagnetic waves}

\author{Gang Bao}
\address{School of Mathematical Sciences, Zhejiang University, Hangzhou
310027, China.}
\email{baog@zju.edu.cn}

\author{Peijun Li}
\address{Department of Mathematics, Purdue University, West Lafayette, Indiana
47907, USA.}
\email{lipeijun@math.purdue.edu}

\author{Yue Zhao}
\address{School of Mathematics and Statistics, Central China Normal University,
Wuhan 430079, China.}
\email{zhaoyueccnu@163.com}

\thanks{The work of G. Bao is supported in part by a NSFC Innovative Group Fund
(No.11621101), an Integrated Project of the Major Research Plan of NSFC (No.
91630309), and an NSFC A3 Project (No. 11421110002), and  the Fundamental
Research Funds for the Central Universities.}

\subjclass[2010]{35R30, 78A46}

\keywords{inverse source problem, elastic wave equation, Maxwell's equations,
stability, Green's tensor}

\begin{abstract}
This paper concerns the inverse source problems for the time-harmonic
elastic and electromagnetic wave equations. The goal is to determine the
external force and the electric current density from boundary measurements of
the radiated wave field, respectively. The problems are challenging due to the
ill-posedness and complex model systems.  Uniqueness and stability are
established for both of the inverse source problems. Based on either continuous
or discrete multi-frequency data, a unified increasing stability theory is
developed. The stability estimates consist of two parts: the Lipschitz type data
discrepancy and the high frequency tail of the source functions. As the upper
bound of frequencies increases, the latter decreases and thus becomes
negligible. The increasing stability results reveal
that ill-posedness of the inverse problems can be overcome by using
multi-frequency data. The method is based on integral equations and analytical
continuation, and requires the Dirichlet data only. The analysis employs
asymptotic expansions of Green's tensors and the transparent boundary conditions
by using the Dirichlet-to-Neumann maps. In addition, for the first time,
the stability is established on the inverse source problems for both the Navier
and Maxwell equations.
\end{abstract}

\maketitle

\section{Introduction}

The inverse source problems in waves arise in many scientific and industrial areas such
as antenna design and synthesis, biomedical imaging, and photo-acoustic
tomography \cite{A-IP99}. For instance, in medical imaging, such
as magnetoencephalography (MEG),  the imaging modality is a non-invasive
neurophysiological technique that measures the electric or magnetic fields
generated by neuronal activity of the brain \cite{ABF-SIAP02, FKM-IP04,
NOHTA-PMB07}. The spatial distributions of the measured fields are analyzed to
localize the sources of the activity within the brain to provide information
about both the structure and function of the brain. The inverse source problems
are also considered as a basic mathematical tool for solving many
imaging problems including reflection tomography, diffusion-based optical
tomography, lidar imaging for chemical and biological threat detection, and
fluorescence microscopy \cite{I-89}.

Motivated by these significant applications, the inverse source problems, as an
important research subject in inverse scattering theory, have continuously attracted
much attention by many researchers \cite{AM-IP06, ABF-SIAP02, BN-IP11, BN-IP13, 
BCL-JUQ16, BCL-17, BCLZ-MC14, DML-SIAP07, L-IP11, LLC-IP18, ZG-IP15}.
Consequently, a great deal of mathematical and numerical results are available, especially
for the acoustic waves or the Helmholtz equations. In general, it is known that there is no uniqueness for the
inverse source problem at a fixed frequency due to the existence of
non-radiating sources \cite{BC-JMP77, DS-IEEE82, HKP-IP05}. Therefore,
additional information is required for the source in order to obtain a unique
solution, such as to seek the minimum energy solution \cite{MD-IEEE99}.
From the computational point of view, a more challenging issue is the lack of
stability. A small variation of the data might lead to a huge error in the
reconstruction. Recently, it has been realized that the use of multi-frequency
data is an effective approach to overcome the difficulties of non-uniqueness and
instability which are encountered at a single frequency.
In \cite{BLT-JDE10}, Bao et al. initialized the mathematical study on the
stability of the inverse source problem for the Helmholtz equation by
using multi-frequency data. The increasing stability was further studied in
\cite{CIL-JDE16},\cite{LY-2016} for the inverse source problem of the three-dimensional
Helmholtz equation. Based on the Huygens principle, the method
assumes a special form of the source function, and requires both the Dirichlet
and Neumann boundary data. A different approach was developed in \cite{LY-2016} to
obtain the same increasing stability result for both the two- and
three-dimensional Helmholtz equation. The method removes the assumption on the
source function and requires the Dirichlet data only. An
attempt was made in \cite{LY-JMAA17} to extend the stability result to the
inverse random source of the one-dimensional stochastic Helmholtz equation. We refer
to \cite{ACTV-IP12, BLRX-SINUM15, EV-IP09} for the study of the inverse source
problems by using multiple frequency information. A topical review can be found in
\cite{BLLT-IP15} on the inverse source problems as well as other inverse
scattering problems by using multiple frequencies to overcome the
ill-posedness and gain increased stability. We also
refer to \cite{I-D11} on the increasing stability of determining potentials
for the Sch\"{o}dinger equation. Related results can be found in \cite{AI-IP10,
HI-IP04, I-CM07} on the increasing stability in the solution of the
Cauchy problem for the acoustic and electromagnetic wave equations.

Although a lot of work has been done on the inverse source problem for acoustic
waves, little is known on the inverse source problems for elastic and
electromagnetic waves, especially their stability.
This work initializes the mathematical study and provides the first stability
results of the inverse source problems for elastic and electromagnetic waves. Our objective
is to develop a unified stability theory of the inverse source problems for 
elastic and electromagnetic waves. It
significantly extends the previous approaches for the Helmholtz equations to
handle the more complicated Navier and Maxwell equations. Especially, more
delicate studies
are needed for sophisticated Green's tensors of these two wave equations. The
results shed light on the stability analysis of the more challenging
inverse medium and obstacle scattering problems \cite{BLLT-IP15}. In addition,
they motivate further study of the time-domain inverse problem where all
frequencies are available in order to gain better stability \cite{BZ-JAMS14}. It
should also be pointed out that the general case is widely open for the inverse
source problems on these vector wave equations in inhomogeneous media. 
General references on elastic and electromagnetic wave scattering problems may
be found in \cite{ABG-15, BMU-SIAP15, C-88, LL-59, LWWZ-IP16, MP-JASA85,
NU-IM94, T-SIAP15} and \cite{CK-98, EINT-02, HR-WM98,N-00,
RK-94,SU-ARMA92,Y-IPT98},
respectively.

For electromagnetic waves, Ammari et al.\cite{ABF-SIAP02} showed
uniqueness and stability, and presented an inversion scheme to reconstruct
dipole sources based on a low-frequency asymptotic analysis of the time-harmonic
Maxwell equations. In \cite{AM-IP06}, Albanese
and Monk discussed uniqueness and non-uniqueness of the inverse source
problems for Maxwell's equations. A monograph can be found in \cite{RK-94} on general inverse
problems for Maxwell's equations. We refer to \cite{K-IP92, L-AA15, LY-AA05,
Y-IPT98} for solving inverse source problems on hyperbolic systems by using
Carleman estimates, and to \cite{OPS-DMJ93, RS-IP89} for inverse problems which
are related to Maxwell's equations. To the best of our knowledge, there
is no stability result for the inverse source problem of Maxwell's
equations in a general setting. The questions are completely open regarding
uniqueness and stability for the inverse source problem of the elastic wave
equation. In this paper, we develop new techniques and establish a unified
increasing stability theory in the inverse source scattering problems for both
elastic and electromagnetic waves, where the wave propagation is governed by
the two- or three-dimensional Navier equation and the three dimensional
Maxwell equations, respectively.

For elastic waves, the inverse source problem is to determine the external
force that produces the measured displacement. We show a uniqueness result and
demonstrate that the increasing stability can be achieved by using the Dirichlet
boundary data only at multiple frequencies. For electromagnetic waves, the
inverse source problem is to reconstruct the electric current density from the
tangential trace of the electric field. First we discuss the uniqueness of the
problem and distinguish the detectable radiating sources from the non-radiating
sources. Then we prove that the increasing stability can be obtained to
reconstruct the radiating electric
current densities from the boundary measurement at multiple frequencies. For
each wave, we give the stability estimates for both the continuous frequency
data and the discrete frequency data. The estimates consist of two parts: the
first part is the Lipschitz type of data discrepancy and the second part is the
high frequency tail of the source function. The former is analyzed via the Green
tensor. The latter is estimated by the analytical continuation, which decreases
as the frequency of the data increases. The results reveal that the
ill-posedness of the inverse problems can be overcome and the inverse
problems are stable when multi-frequency data is used. In our analysis, the main
ingredients are to use the transparent boundary conditions and Green's tensors
for the wave equations. The transparent boundary condition establishes the
relation between the Dirichlet data and the Neumann data. The Neumann data
can not only be represented in terms of the Dirichlet data, but also be computed
once the Dirichlet data is available in practice.

\begin{figure}
\centering
\includegraphics[width=0.3\textwidth]{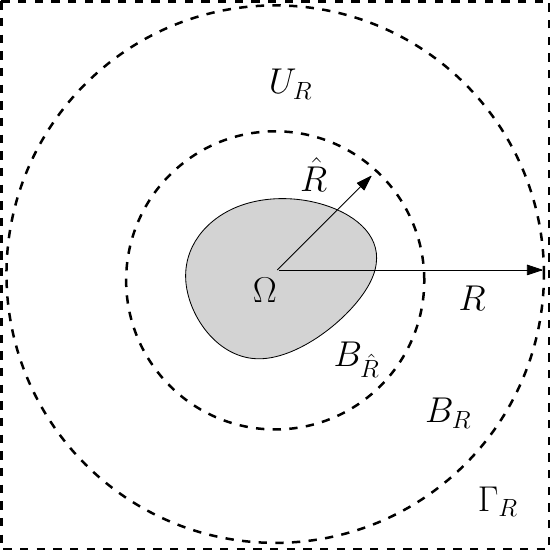}
\caption{Problem geometry of the inverse source scattering.}
\label{pg}
\end{figure}

Throughout, we assume that the source of either the external force or the
electric current density has a compact support $\Omega\subset\mathbb R^d$, $d=2$
or $3$. Let $\hat{R}>0$ be a sufficiently large constant such that
$\bar\Omega\subset B_{\hat R}=\{\boldsymbol x=(x_1, \dots, x_d)^\top\in\mathbb
R^d: |\boldsymbol x|<\hat{R}\}$. Let $R>\hat{R}$ be a constant such that
$B_{\hat R}\subset B_R=\{\boldsymbol x\in\mathbb R^d: |\boldsymbol x|<R\}$.
Denote by $\Gamma_R=\{\boldsymbol x\in \mathbb R^d: |\boldsymbol x|=R\}$ the
boundary of $B_R$ where the measurement of the wave field is taken. Let
$U_R=(-R, R)^d$ be a rectangular box in $\mathbb R^d$. Clearly we have
$\Omega\subset B_{\hat R}\subset B_R\subset U_R$. The problem
geometry is shown in Figure \ref{pg}.

The paper is organized as follows. In Section 2, we show the increasing
stability of the inverse source problem for elastic waves. Section 3 is devoted
to the inverse source problem for electromagnetic waves. The uniqueness and
non-uniqueness are discussed and the increasing stability is obtained. In both
sections, the analysis is carried for the continuous frequency data,
followed by the discussion for the discrete frequency data. The paper is
concluded with some general remarks in Section 4. To make the
paper easily accessible, some necessary notations and useful results are
provided in the appendices on the differential operators, Helmholtz
decomposition, and Sobolev spaces.

\section{Elastic waves}

This section addresses the inverse source problem for elastic waves. The
uniqueness and increasing stability are established to reconstruct the external
force from the boundary measurement of the displacement at multiple frequencies.

\subsection{Problem formulation}

Consider the time-harmonic Navier equation in a homogeneous medium:
\begin{equation}\label{ne}
\mu\Delta\boldsymbol{u}+ (\lambda + \mu)\nabla\nabla\cdot\boldsymbol{u} +
\omega^2\boldsymbol{u} = \boldsymbol{f}\quad\text{in}~\mathbb{R}^d,
\end{equation}
where $\omega>0$ is the angular frequency, $\lambda$
and $\mu$ are the Lam\'{e} constants satisfying $\mu>0$ and $\lambda + \mu>0$,
$\boldsymbol{u}\in\mathbb C^d$ is the displacement field, and
$\boldsymbol{f}\in\mathbb C^d$ accounts for the external force which is assumed
to have a compact support $\Omega\subset\mathbb R^d$.

An appropriate radiation condition is needed to complete the definition of the
scattering problem since it is imposed in the open domain. As discussed in
Appendix \ref{hd}, the displacement $\boldsymbol u$ can be decomposed into the
compressional part $\boldsymbol u_{\rm p}$ and the shear part $\boldsymbol
u_{\rm s}$:
\[
 \boldsymbol u=\boldsymbol u_{\rm p}+\boldsymbol u_{\rm s}\quad\text{in}~
\mathbb R^d\setminus\bar{\Omega}.
\]
The Kupradze--Sommerfeld radiation condition requires that $\boldsymbol
u_{\rm p}$ and $\boldsymbol u_{\rm s}$ satisfy the Sommerfeld radiation
condition:
\begin{equation}\label{rc}
\lim_{r\to\infty}r^{\frac{d-1}{2}}(\partial_r \boldsymbol{u}_{\rm p}-{\rm
i}\kappa_{\rm p}\boldsymbol{u}_{\rm
p})=0,\quad\lim_{r\to\infty}r^{\frac{d-1}{2}}(\partial_r
\boldsymbol{u}_{\rm s}-{\rm i}\kappa_{\rm s}\boldsymbol{u}_{\rm s})=0,\quad
r=|\boldsymbol x|,
\end{equation}
where $\kappa_{\rm p}, \kappa_{\rm s}$ are the compressional and shear
wavenumbers, given by
\[
 \kappa_{\rm p}=\frac{\omega}{(\lambda+2\mu)^{1/2}}=c_{\rm p}\omega,\quad
\kappa_{\rm s}=\frac{\omega}{\mu^{1/2}}=c_{\rm s}\omega,
\]
where
\begin{equation}\label{cj}
c_{\rm p}=(\lambda+2\mu)^{-1/2},\quad c_{\rm s}=\mu^{-1/2}.
\end{equation}
Note that $c_{\rm p}, c_{\rm s}$ are independent of $\omega$ and $c_{\rm
p}<c_{\rm s}$.

Given $\boldsymbol f\in L^2(\Omega)^d$, it is known that the scattering problem
\eqref{ne}--\eqref{rc} has a unique solution (cf. \cite{BCL-17}):
\begin{equation}\label{us}
 \boldsymbol u(\boldsymbol x, \omega)=\int_\Omega \mathbf
G_{\rm N}(\boldsymbol x, \boldsymbol y; \omega)\cdot\boldsymbol f (\boldsymbol
y){\rm d}\boldsymbol y,
\end{equation}
where $\mathbf G_{\rm N}(\boldsymbol x, \boldsymbol y; \omega)\in\mathbb
C^{d\times d}$ is Green's tensor for the Navier equation \eqref{ne} and the dot
is the matrix-vector multiplication. Explicitly, we have
\begin{equation}\label{gtn}
  \mathbf G_{\rm N}(\boldsymbol x, \boldsymbol y;
\omega)=\frac{1}{\mu}g_d(\boldsymbol x, \boldsymbol y; \kappa_{\rm s})\mathbf
I_d +\frac{1}{\omega^2}\nabla_{\boldsymbol x}\nabla^\top_{\boldsymbol
x}(g_d(\boldsymbol x, \boldsymbol y; \kappa_{\rm s})-g_d(\boldsymbol x,
\boldsymbol y; \kappa_{\rm p})),
\end{equation}
where $\mathbf I_d$ is the $d\times d$ identity matrix,
\begin{equation}\label{gn}
 g_2(\boldsymbol x, \boldsymbol y; \kappa)=\frac{\rm
i }{4}H_0^{(1)}(\kappa|\boldsymbol x-\boldsymbol y|)\quad\text{and} \quad
g_3(\boldsymbol x, \boldsymbol y; \kappa)=\frac{1}{4\pi}\frac{e^{{\rm
i}\kappa|\boldsymbol x-\boldsymbol y|}}{|\boldsymbol x-\boldsymbol y|}
\end{equation}
are the fundamental solutions for the two- and three-dimensional Helmholtz
equation, respectively, and $H_0^{(1)}$ is the Hankel function of the first
kind with order zero.

Define a boundary operator
\begin{equation}\label{tbo}
 D\boldsymbol{u}=\mu\partial_{\boldsymbol\nu}\boldsymbol{u}
+(\lambda+\mu)(\nabla\cdot\boldsymbol{u})\boldsymbol\nu\quad\text{on}~ \Gamma_R,
\end{equation}
where $\boldsymbol\nu$ is the unit normal vector on $\Gamma_R$. It is shown in
\cite{BP-JMAA08, LWWZ-IP16} that there exists a Dirichlet-to-Neumann (DtN)
operator $T_{\rm N}$ such that
\begin{equation}\label{tbc}
D\boldsymbol u=T_{\rm N}\boldsymbol u\quad\text{on}~\Gamma_R,
\end{equation}
which is the transparent boundary condition for the scattering problem of the
Navier equation.

\begin{prob}[Continuous frequency data for elastic waves]\label{p1}
Let the external force $\boldsymbol f$ be a complex function with the compact
support $\Omega$. The inverse source problem is to determine $\boldsymbol f$
from the displacement $\boldsymbol u(\boldsymbol x, \omega), \boldsymbol x\in
\Gamma_R, \omega\in (0, K)$, where $K>1$ is a constant.
\end{prob}

\begin{rema}
The boundary data does not have to be measured on the sphere $\Gamma_R$.
In fact, it can be measured on any Lipschitz continuous boundary $\Gamma$ which
encloses the compact support of $\boldsymbol f$, e.g., take
$\Gamma=\partial\Omega$. When $\boldsymbol u$ is available on $\Gamma$, we may
consider the following boundary value problem:
\begin{equation}\label{bvp}
 \begin{cases}
  \mu\Delta\boldsymbol u+(\lambda+\mu)\nabla\nabla\cdot\boldsymbol
u+\omega^2\boldsymbol u=0&\quad\text{in} ~ B_R\setminus \bar{\Omega},\\
\boldsymbol u=\boldsymbol u&\quad\text{on} ~ \Gamma,\\
D\boldsymbol u=T_{\rm N}\boldsymbol u&\quad\text{on} ~ \Gamma_R.
 \end{cases}
\end{equation}
It can be shown that the problem \eqref{bvp} has a unique solution $\boldsymbol
u$ in $B_R\setminus\bar{\Omega}$ \cite{LWWZ-IP16}. Therefore, the Dirichlet data
$\boldsymbol u$ is immediately available on $\Gamma_R$ once the problem
\eqref{bvp} is solved, and then the Neumann data $T_{\rm N}\boldsymbol u$ can be
computed on $\Gamma_R$ by using \eqref{tbc}.
\end{rema}

\subsection{Uniqueness}

This section is concerned with uniqueness of the inverse problem. Introduce two
auxiliary functions:
\begin{equation}\label{npw}
\boldsymbol{u}^{\rm inc}_{\rm p}(\boldsymbol x)=\boldsymbol{p} e^{-{\rm
i}\kappa_{\rm p}\boldsymbol{x}\cdot\boldsymbol{d}}\quad\text{and}\quad
\boldsymbol{u}^{\rm inc}_{\rm s}(\boldsymbol x)=\boldsymbol{q}e^{-{\rm
i}\kappa_{\rm s}\boldsymbol{x}\cdot\boldsymbol{d}},
\end{equation}
where $\boldsymbol d\in\mathbb S^{d-1}$ is the unit propagation direction vector
and $\boldsymbol p, \boldsymbol q\in\mathbb S^{d-1}$ are unit polarization
vectors. These unit vectors may be chosen as follows:

\begin{enumerate}[label=(\roman*)]

\item For $d=2$, $\boldsymbol d(\theta)=(\cos\theta, \sin\theta)^\top$,
$\boldsymbol p(\theta)$ and $\boldsymbol q(\theta)$ satisfy $\boldsymbol
p(\theta)=\boldsymbol d(\theta)$ and $\boldsymbol
q(\theta)\cdot\boldsymbol d(\theta)=0$ for all $\theta\in [0, 2\pi]$.

\item For $d=3$, $\boldsymbol d(\theta, \varphi)=(\sin\theta\cos\varphi,
\sin\theta\sin\varphi, \cos\theta)^\top$, $\boldsymbol p(\theta, \varphi)$ and
$\boldsymbol q(\theta, \varphi)$ satisfy $\boldsymbol p(\theta,
\varphi)=\boldsymbol d(\theta, \varphi)$ and $\boldsymbol q(\theta,
\varphi)\cdot\boldsymbol d(\theta, \varphi)=0$ for all $\theta\in[0, \pi],
\varphi\in[0, 2\pi]$.

\end{enumerate}
In fact, $\boldsymbol u^{\rm inc}_{\rm p}$ and $\boldsymbol u^{\rm inc}_{\rm s}$
are known as the compressional and shear plane waves. It is easy to verify that
they satisfy the homogeneous Navier equation:
\begin{equation}\label{nec}
\mu\Delta\boldsymbol{u} + (\lambda +
\mu)\nabla\nabla\cdot\boldsymbol{u} +
\omega^2\boldsymbol{u} =0\quad\text{in}~\mathbb R^d.
\end{equation}

\begin{theo}
Let $I \subset \mathbb R^+$ be an open interval. Then the external force
$\boldsymbol f$ can be uniquely determined by the multiple-frequency
data $\{\boldsymbol u(\boldsymbol x,\omega): \boldsymbol x\in\Gamma_R,\, \omega
\in I\}$.
\end{theo}

\begin{proof}

We prove the two dimensional case in details, and then
briefly present the proof for the three dimensional case since the steps
are similar. Let $\boldsymbol u(\boldsymbol x,\omega) = 0$ for
$\boldsymbol x\in \Gamma_R$ and $\omega \in I$. It suffices to show that
$\boldsymbol f=0$.

(i) Consider $d=2$. Let $\boldsymbol\xi_{\rm p}= \kappa_{\rm p} \boldsymbol d$.
The compressional plane wave in \eqref{npw} can be written as
$\boldsymbol{u}_{\rm p}^{\rm{inc}}(\boldsymbol x) =
\boldsymbol{p} e^{-{\rm i}\boldsymbol\xi_{\rm{p}}\cdot \boldsymbol
x}$. Multiplying the both sides of \eqref{ne} by $\boldsymbol{u}_{\rm
p}^{\rm inc}(\boldsymbol x)$, using the integration by parts over $B_R$, and
noting \eqref{nec}, we obtain
\[
 \int_{B_R}(\boldsymbol p e^{-{\rm i}\boldsymbol\xi_{\rm{p}}\cdot\boldsymbol
x})\cdot\boldsymbol{f}(\boldsymbol x){\rm d}\boldsymbol x=\int_{\Gamma_R}\left(
\boldsymbol{u}_{\rm{p}}^{\rm{inc}}(\boldsymbol x) \cdot
T_{\rm N}\boldsymbol u(\boldsymbol x,\omega)+ \boldsymbol{u}(\boldsymbol
x,\omega)\cdot
D \boldsymbol{u}_{\rm{p}}^{\rm{inc}}(\boldsymbol x)\right){\rm
d}S(\boldsymbol x),
\]
which means
\[
\boldsymbol p \cdot \hat{\boldsymbol f}(\boldsymbol\xi_{\rm{p}}) =
0,\quad\forall\omega\in I.
\]
Since $ \hat{\boldsymbol f}(\kappa_{\rm p} \boldsymbol
d)= \hat{\boldsymbol f}(c_{\rm p} \omega\boldsymbol d)$ is an analytic function
with respect to $\omega \in \mathbb C$, for each fixed $\boldsymbol d$ and
$\boldsymbol p$, we have
$\boldsymbol p \cdot \hat{\boldsymbol f}(\boldsymbol\xi_{\rm{p}}) = 0$ for all
$\kappa_{\rm{p}}\in (0, +\infty)$.

Let $\boldsymbol\xi_{\rm s} = -\kappa_{\rm s} \boldsymbol{d}$ with
$|\boldsymbol\xi_{\rm s}|=\kappa_{\rm s}\in (0, \infty)$. The shear plane wave
in \eqref{npw} can be written as $\boldsymbol u^{\rm inc}_{\rm
s}(\boldsymbol x)=\boldsymbol q e^{-{\rm i}\boldsymbol \xi_{\rm
s}\cdot\boldsymbol x}$. Multiplying $\boldsymbol u^{\rm inc}_{\rm s}$ on both
sides of \eqref{ne}, using the integration by parts, we
may similarly get $\boldsymbol q \cdot \hat{\boldsymbol
f}(\boldsymbol\xi_{\rm{s}}) = 0$ for all $\kappa_{\rm{s}}\in (0, +\infty)$.
Hence, for each $\kappa>0$, we have both $\boldsymbol p \cdot \hat{\boldsymbol
f}(\kappa \boldsymbol d) = 0$ and $\boldsymbol q \cdot \hat{\boldsymbol
f}(\kappa \boldsymbol d) = 0$. Let $\boldsymbol p(\theta)=(\cos\theta,
\sin\theta)^\top$ and take
$\boldsymbol q(\theta)=(-\sin\theta, \cos\theta)^\top$. Then $\boldsymbol
p(\theta)\cdot\boldsymbol q(\theta)=0$ and they form an orthonormal
basis in $\mathbb R^2$ for any $\theta\in [0, 2\pi]$. Hence we have from the
Pythagorean theorem that
\[
|\hat{\boldsymbol{f}}(\kappa\boldsymbol d)|^2 = |\boldsymbol{p} \cdot
\hat{\boldsymbol{f}}(\kappa\boldsymbol d)|^2 + |\boldsymbol{q}\cdot
\hat{\boldsymbol{f}}(\kappa\boldsymbol d)|^2 = 0
\]
for each $\kappa>0$ and $\boldsymbol d \in \mathbb S$, which means
$\hat{\boldsymbol{f}} = 0$ and then $\boldsymbol f$ = 0.

(ii) Consider $d=3$. Repeating similar steps, we get both $\boldsymbol p \cdot
\hat{\boldsymbol f}(\kappa \boldsymbol d) = 0$ and $\boldsymbol q \cdot
\hat{\boldsymbol f}(\kappa \boldsymbol d) = 0$.
Let $\boldsymbol p(\theta, \varphi)=(\sin\theta\cos\varphi,
\sin\theta\sin\varphi, \cos\theta)^\top$. We choose $\boldsymbol q_1(\theta,
\varphi)=(\cos\theta\cos\varphi, \cos\theta\sin\varphi,
-\sin\theta)^\top$ and $\boldsymbol q_2(\theta, \varphi)=\boldsymbol
p(\theta, \varphi)\times\boldsymbol q_1(\theta, \varphi)=(-\sin\varphi,
\cos\varphi, 0)^\top$ for the shear plane wave. It is easy to verify that
$\boldsymbol{p}, \boldsymbol{q}_1, \boldsymbol{q}_2$ are
mutually orthogonal and thus form an orthonormal basis in ${\mathbb{R}^3}$ for
any $\theta\in[0, \pi], \varphi\in[0, 2\pi]$. Using the Pythagorean
theorem yields
\[
|\hat{\boldsymbol{f}}(\kappa\boldsymbol d)|^2 = |\boldsymbol{p} \cdot
\hat{\boldsymbol{f}}(\kappa\boldsymbol d)|^2 + |\boldsymbol{q}_1\cdot
\hat{\boldsymbol{f}}(\kappa\boldsymbol d)|^2 + |\boldsymbol{q}_2
\cdot \hat{\boldsymbol{f}}(\kappa\boldsymbol d)|^2 = 0
\]
for each $\kappa>0$ and $\boldsymbol d \in \mathbb S^2$, which means
$\hat{\boldsymbol{f}} = 0$ and then $\boldsymbol f$ = 0.

\end{proof}

\subsection{Stability with continuous frequency data}

This section discusses the stability from the data with frequency ranging
over a finite interval. Given the Dirichlet data $\boldsymbol u$ on $\Gamma_R$,
$D\boldsymbol u$ can be viewed as the Neumann data. It follows from \eqref{tbc}
that the Neumann data can be computed via the DtN operator $T_{\rm N}$ once the
Dirichlet data is available on $\Gamma_R$. Hence we may just define a boundary
measurement in terms of the Dirichlet data only:
\[
\|\boldsymbol{u}(\cdot,\omega)\|^2_{\Gamma_R}=\int_{\Gamma_R}\left(
|T_{\rm N} \boldsymbol{u}(\boldsymbol x, \omega)|^2 +\omega^2
|\boldsymbol{u}(\boldsymbol x, \omega)|^2 \right){\rm d}\gamma(\boldsymbol x).
\]

Denote a functional space:
\[
\mathbb{F}_M(B_R) =\{\boldsymbol{f}\in H^{m+1}(B_R)^d:
\|\boldsymbol{f}\|_{H^{m+1}(B_R)^d}\leq M, ~{\rm supp}\boldsymbol f=\Omega\},
\]
where $m\geq d$ is an integer and $M>1$ is a constant. Hereafter, the
notation $``a\lesssim b"$ stands for $a\leq Cb$, where $C>0$ is a
generic constant independent of $m, \omega, K, M$, but may change step by step
in the proofs.

The following stability estimate is the main result for Problem \ref{p1}.

\begin{theo}\label{mrn}
Let $\boldsymbol{u}$ be the solution of the scattering problem
\eqref{ne}--\eqref{rc} corresponding to the source $\boldsymbol{f}\in \mathbb
F_M(B_R)$. Then
\begin{equation}\label{fe}
\| \boldsymbol{f}\|^2_{L^2(B_R)^d}\lesssim
\epsilon_1^2+\frac{M^2}{\left(\frac{K^{\frac{2}{3}}|\ln\epsilon_1|^{\frac{1}{4}}
}{(R+1)(6m-6d+3)^3}\right)^{2m-2d+1}},
\end{equation}
where
\[
\epsilon_1=\left(\int_0^K \omega^{d-1}
\|\boldsymbol{u}(\cdot,\omega)\|^2_{\Gamma_R}{\rm d}\omega\right)^{\frac{1}{2}}.
\]
\end{theo}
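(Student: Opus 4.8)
The plan is to pass to the Fourier side. By Plancherel's theorem it suffices to estimate $\|\widehat{\boldsymbol f}\|_{L^2(\mathbb R^d)^d}^2$, where $\widehat{\boldsymbol f}(\boldsymbol\xi)=\int_{\mathbb R^d}\boldsymbol f(\boldsymbol x)e^{-{\rm i}\boldsymbol\xi\cdot\boldsymbol x}\,{\rm d}\boldsymbol x$, and I would split this at a radius $\rho$ to be fixed at the end. The high-frequency part is handled by the a priori bound alone:
\[
\int_{|\boldsymbol\xi|>\rho}|\widehat{\boldsymbol f}(\boldsymbol\xi)|^2\,{\rm d}\boldsymbol\xi\le\rho^{-2(m+1)}\|\boldsymbol f\|_{H^{m+1}(B_R)^d}^2\le M^2\rho^{-2(m+1)},
\]
which, once $\rho$ is chosen, becomes the high-frequency tail in \eqref{fe}. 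The whole difficulty is to bound $\int_{|\boldsymbol\xi|<\rho}|\widehat{\boldsymbol f}(\boldsymbol\xi)|^2\,{\rm d}\boldsymbol\xi$ by the measured data, which I would do in two steps: a direct estimate on the small ball $\{|\boldsymbol\xi|<c_{\rm p}K\}$, then a quantitative analytic continuation out to radius $\rho$.

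\emph{Step 1: from the data to $\widehat{\boldsymbol f}$ on $\{|\boldsymbol\xi|<c_{\rm p}K\}$.} I would use the elastic (Betti) reciprocity identity on $B_R$ together with the transparent boundary condition \eqref{tbc}. For any $\boldsymbol v$ solving the homogeneous Navier equation $\mu\Delta\boldsymbol v+(\lambda+\mu)\nabla\nabla\cdot\boldsymbol v+\omega^2\boldsymbol v=0$ in $B_R$, Betti's formula applied to the radiating solution $\boldsymbol u$ of \eqref{ne} (for which $D\boldsymbol u=T_{\rm N}\boldsymbol u$ on $\Gamma_R$) gives
\[
\int_\Omega\boldsymbol f\cdot\boldsymbol v\,{\rm d}\boldsymbol x=\int_{\Gamma_R}\big(T_{\rm N}\boldsymbol u\cdot\boldsymbol v-\boldsymbol u\cdot D\boldsymbol v\big)\,{\rm d}\gamma(\boldsymbol x).
\]
Taking for $\boldsymbol v$ the compressional plane wave $\boldsymbol d\,e^{-{\rm i}\kappa_{\rm p}\boldsymbol d\cdot\boldsymbol x}$ and the shear plane waves $\boldsymbol p\,e^{-{\rm i}\kappa_{\rm s}\boldsymbol d\cdot\boldsymbol x}$ with $|\boldsymbol d|=1$, $\boldsymbol p\perp\boldsymbol d$ — which solve the homogeneous equation precisely because $\kappa_{\rm p}^2=\omega^2/(\lambda+2\mu)$ and $\kappa_{\rm s}^2=\omega^2/\mu$ — the left-hand side equals $\boldsymbol d\cdot\widehat{\boldsymbol f}(\kappa_{\rm p}\boldsymbol d)$, respectively $\boldsymbol p\cdot\widehat{\boldsymbol f}(\kappa_{\rm s}\boldsymbol d)$, i.e. the components of $\widehat{\boldsymbol f}$ parallel and orthogonal to $\boldsymbol d$. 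On $\Gamma_R$ one has $|\boldsymbol v|\lesssim1$ and, by \eqref{tbo}, $|D\boldsymbol v|\lesssim\omega$, so Cauchy--Schwarz makes the right-hand side $\lesssim R^{(d-1)/2}\|\boldsymbol u(\cdot,\omega)\|_{\Gamma_R}$. For fixed $\boldsymbol\xi$ with $|\boldsymbol\xi|<c_{\rm p}K$, the component of $\widehat{\boldsymbol f}(\boldsymbol\xi)$ along $\boldsymbol\xi$ is recovered from the datum at frequency $|\boldsymbol\xi|/c_{\rm p}<K$ and the component orthogonal to $\boldsymbol\xi$ from the datum at frequency $|\boldsymbol\xi|/c_{\rm s}<K$; summing the squares, integrating in polar coordinates, and changing the radial variable to the frequency yields
\[
\int_{|\boldsymbol\xi|<c_{\rm p}K}|\widehat{\boldsymbol f}(\boldsymbol\xi)|^2\,{\rm d}\boldsymbol\xi\lesssim\int_0^K\omega^{d-1}\|\boldsymbol u(\cdot,\omega)\|_{\Gamma_R}^2\,{\rm d}\omega=\epsilon_1^2.
\]

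\emph{Step 2: analytic continuation and optimisation.} Because $\boldsymbol f$ has compact support in $B_{\hat R}\subset B_R$, the Paley--Wiener theorem lets $\widehat{\boldsymbol f}$ extend to an entire function on $\mathbb C^d$ with $|\widehat{\boldsymbol f}(\boldsymbol\xi)|\lesssim Me^{R|{\rm Im}\,\boldsymbol\xi|}$, and with $|\widehat{\boldsymbol f}(\boldsymbol\xi)|\lesssim M$ for real $\boldsymbol\xi$. I would then invoke a quantitative analytic continuation estimate, adapting the one established in \cite{LY-2016} for the Helmholtz equation: from the $L^2$-smallness of $\widehat{\boldsymbol f}$ on $\{|\boldsymbol\xi|<c_{\rm p}K\}$, the Cauchy interior estimates for holomorphic functions, and the exponential-type bound, one propagates the smallness outward and obtains a bound for $\int_{|\boldsymbol\xi|<\rho}|\widehat{\boldsymbol f}|^2$ in terms of $\epsilon_1$, $M$, $K$, $\rho$ and the number $N$ of Taylor terms used in the continuation. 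Adding the high-frequency tail and then optimising over $\rho$ (and $N$) — the step in which the Sobolev index $m$, the radius $R$ and the frequency bound $K$ all enter quantitatively, and which forces the scaling $\rho\sim K^{2/3}|\ln\epsilon_1|^{1/4}\big/\big((R+1)(6m-6d+3)^3\big)$ — yields exactly \eqref{fe}. (If this $\rho$ is $\le1$, then \eqref{fe} holds trivially because $\|\boldsymbol f\|_{L^2(B_R)^d}\le\|\boldsymbol f\|_{H^{m+1}(B_R)^d}\le M$; if $\rho\le c_{\rm p}K$, Step 2 is unnecessary and one bounds $\int_{|\boldsymbol\xi|<\rho}|\widehat{\boldsymbol f}|^2$ directly by $\epsilon_1^2$.)

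The main obstacle is Step 2: making the analytic continuation estimate fully quantitative, with every constant explicit in $m$, $K$, $M$, $R$, and then carrying out the optimisation so that the precise powers $\tfrac23$, $\tfrac14$ and the polynomial factor $(6m-6d+3)^3$ emerge — this part is in essence one-complex-variable analysis applied to radial slices of $\widehat{\boldsymbol f}$, and is where the technical effort of \cite{LY-2016} must be reworked. A secondary complication, absent for the scalar Helmholtz equation, is the presence of two wavenumbers: since $c_{\rm p}<c_{\rm s}$, the compressional and shear plane waves probe $\widehat{\boldsymbol f}$ on spheres of different radii, so in Step 1 one must pair each point $\boldsymbol\xi$ with two distinct frequencies to reconstruct the full vector $\widehat{\boldsymbol f}(\boldsymbol\xi)$, and one must control the traction $D\boldsymbol v$ of the test plane waves and the DtN operator $T_{\rm N}$ on $\Gamma_R$ uniformly in $\omega$.
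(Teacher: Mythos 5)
Your overall architecture --- testing \eqref{ne} against compressional and shear plane waves to control $\hat{\boldsymbol f}$ on a low-frequency ball by $\epsilon_1^2$, bounding the Fourier tail by the $H^{m+1}$ a priori bound, and bridging the two regions by quantitative analytic continuation --- has the right shape, and your two-wavenumber bookkeeping (pairing each $\boldsymbol\xi$ with the two frequencies $|\boldsymbol\xi|/c_{\rm p}$ and $|\boldsymbol\xi|/c_{\rm s}$, both below $K$ when $|\boldsymbol\xi|<c_{\rm p}K$) is exactly the content of the paper's Lemma \ref{nfe}. The paper, however, does not continue $\hat{\boldsymbol f}$ in the Fourier variable: it continues the frequency-integrated data $I_1(s)+I_2(s)=\int_0^s\omega^{d-1}\|\boldsymbol u(\cdot,\omega)\|^2_{\Gamma_R}\,{\rm d}\omega$ as an analytic function of the upper limit $s$ in the sector $\mathcal V$, which is why it needs the Green-tensor growth estimates of Lemma \ref{ni} (the technical core of the section), and then combines Lemma \ref{caf} with the high-frequency tail estimate of Lemma \ref{nhfe}. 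Your Paley--Wiener route is closer to what the paper does for the \emph{discrete}-frequency Theorem \ref{mrnd} (Lemma \ref{ni12E}), and if completed it would arguably be simpler, since the exponential-type bound on $\hat{\boldsymbol f}$ is free.

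The genuine gap is Step 2, which is asserted rather than executed, and in the form you describe it cannot be run directly. The only quantitative continuation tool in play (Lemma \ref{caf}, which is also the sole source of the exponents in \eqref{fe} through the explicit function $\beta(s)\ge\frac{1}{\pi}((s/K)^4-1)^{-1/2}$) requires a \emph{pointwise} bound $|p(z)|\le\epsilon$ on $(0,K]$ together with $p(0)=0$; what your Step 1 delivers is only the $L^2$ bound $\int_{|\boldsymbol\xi|<c_{\rm p}K}|\hat{\boldsymbol f}|^2\,{\rm d}\boldsymbol\xi\lesssim\epsilon_1^2$, which controls no individual value $\hat{\boldsymbol f}(s\boldsymbol d)$, and $\hat{\boldsymbol f}(0)$ need not vanish. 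The paper's device for converting $L^2$ smallness into pointwise smallness of an analytic function is precisely to continue the cumulative integral $\int_0^s(\cdot)\,{\rm d}\omega$ (pointwise $\le\epsilon_1^2$ on $[0,K]$ because the integrand is nonnegative, and vanishing at $s=0$), after holomorphizing $|\cdot|^2$; you would need the same device applied to $s\mapsto\int_0^st^{d-1}\hat{\boldsymbol f}(t\boldsymbol d)\cdot\overline{\hat{\boldsymbol f}(\bar t\boldsymbol d)}\,{\rm d}t$, at which point your argument becomes a legitimate variant of the paper's with Lemma \ref{ni} replaced by Paley--Wiener. Finally, the specific powers $K^{2/3}$, $|\ln\epsilon_1|^{1/4}$ and the factor $(6m-6d+3)^3$ with exponent $2m-2d+1$ arise from the explicit $\beta$, the choice $s\sim((4R+3)c_{\rm s}\pi)^{-1/3}K^{2/3}|\ln\epsilon_1|^{1/4}$, and the elementary inequality \eqref{ei}; invoking ``Cauchy interior estimates'' and ``Taylor terms'' and claiming the optimisation ``yields exactly \eqref{fe}'' leaves the main quantitative content of the theorem unproved.
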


\begin{rema}
The stability estimate \eqref{fe} consists of two
parts: the data discrepancy and the high frequency tail. The former is of the
Lipschitz type. The latter decreases as $K$ increases which makes the problem
have an almost Lipschitz stability. The result reveals that the problem becomes
more stable when higher frequency data is used.
\end{rema}

\begin{lemm}\label{nfe}
Let $\boldsymbol u$ be the solution of the scattering problem
\eqref{ne}--\eqref{rc} corresponding to the source $\boldsymbol f\in
L^2(B_R)^d$. Then
\[
\|\boldsymbol{f} \|^2_{L^2(B_R)^d}\lesssim\int_0^{\infty}\omega^{d-1}
\|\boldsymbol u(\cdot, \omega)\|^2_{\Gamma_R} {\rm d}\omega.
\]
\end{lemm}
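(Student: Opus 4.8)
The plan is to test the Navier equation \eqref{ne} against the compressional and shear plane waves $\boldsymbol u^{\rm inc}_{\rm p}$ and $\boldsymbol u^{\rm inc}_{\rm s}$ from \eqref{npw}, convert the resulting volume integrals over $B_R$ into boundary integrals over $\Gamma_R$, and recognize the left-hand sides as components of the Fourier transform of $\boldsymbol f$ extended by zero outside $\Omega$ (with the convention $\widehat{\boldsymbol f}(\xi)=\int_{\mathbb R^d}\boldsymbol f(\boldsymbol x)e^{-{\rm i}\boldsymbol x\cdot\xi}\,{\rm d}\boldsymbol x$). Bounding the boundary integrals by $\|\boldsymbol u(\cdot,\omega)\|_{\Gamma_R}$ and integrating against the weight $\omega^{d-1}$, a change of variables $\xi=\kappa_{\rm p}\boldsymbol d$ (resp.\ $\xi=\kappa_{\rm s}\boldsymbol d$) will recover $\int_{\mathbb R^d}|\widehat{\boldsymbol f}(\xi)|^2\,{\rm d}\xi$ up to a constant, and Plancherel's theorem will complete the proof.

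First I would record the Betti-type second Green identity for $L\boldsymbol u:=\mu\Delta\boldsymbol u+(\lambda+\mu)\nabla\nabla\cdot\boldsymbol u+\omega^2\boldsymbol u$ associated with the boundary operator $D$ in \eqref{tbo}: integrating by parts twice and noting that the symmetric forms $\mu\int\nabla\boldsymbol u:\nabla\boldsymbol v$, $(\lambda+\mu)\int(\nabla\cdot\boldsymbol u)(\nabla\cdot\boldsymbol v)$ and $\omega^2\int\boldsymbol u\cdot\boldsymbol v$ cancel yields, for $\boldsymbol u,\boldsymbol v$ smooth enough in $B_R$,
\[
\int_{B_R}\bigl(\boldsymbol v\cdot L\boldsymbol u-\boldsymbol u\cdot L\boldsymbol v\bigr)\,{\rm d}\boldsymbol x=\int_{\Gamma_R}\bigl(\boldsymbol v\cdot D\boldsymbol u-\boldsymbol u\cdot D\boldsymbol v\bigr)\,{\rm d}\gamma.
\]
Taking $\boldsymbol u$ the radiating solution \eqref{us}, $\boldsymbol v=\boldsymbol u^{\rm inc}_{\rm p}$ (which satisfies $L\boldsymbol v=0$ by \eqref{nec}), and using $L\boldsymbol u=\boldsymbol f$ with ${\rm supp}\,\boldsymbol f\subset\Omega\subset B_R$, I obtain
\[
\boldsymbol p\cdot\widehat{\boldsymbol f}(\kappa_{\rm p}\boldsymbol d)=\int_{B_R}\boldsymbol u^{\rm inc}_{\rm p}\cdot\boldsymbol f\,{\rm d}\boldsymbol x=\int_{\Gamma_R}\bigl(\boldsymbol u^{\rm inc}_{\rm p}\cdot D\boldsymbol u-\boldsymbol u\cdot D\boldsymbol u^{\rm inc}_{\rm p}\bigr)\,{\rm d}\gamma,
\]
and the analogue with $\boldsymbol u^{\rm inc}_{\rm s}$ and $\kappa_{\rm s}$. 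With $\boldsymbol p=\boldsymbol d$ the compressional test produces the radial component $\widehat\xi\cdot\widehat{\boldsymbol f}(\xi)$ at $\xi=\kappa_{\rm p}\boldsymbol d$, and choosing a fixed orthonormal basis $\{\boldsymbol q_j(\boldsymbol d)\}_{j=1}^{d-1}$ of $\boldsymbol d^{\perp}$ for the shear polarizations produces the transverse components $\boldsymbol q_j\cdot\widehat{\boldsymbol f}(\xi)$ at $\xi=\kappa_{\rm s}\boldsymbol d$; as $(\omega,\boldsymbol d)$ runs over $(0,\infty)\times\mathbb S^{d-1}$, both $\kappa_{\rm p}\boldsymbol d=c_{\rm p}\omega\boldsymbol d$ and $\kappa_{\rm s}\boldsymbol d=c_{\rm s}\omega\boldsymbol d$ sweep out all of $\mathbb R^d$.

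Next I would bound the boundary integrals. On $\Gamma_R$ one has $|\boldsymbol u^{\rm inc}_{\rm p}|=|\boldsymbol u^{\rm inc}_{\rm s}|=1$, and since each derivative brings down a factor $\kappa_{\rm p}$ or $\kappa_{\rm s}$, both $\lesssim\omega$, also $|D\boldsymbol u^{\rm inc}_{\rm p}|+|D\boldsymbol u^{\rm inc}_{\rm s}|\lesssim\omega$. Replacing $D\boldsymbol u$ by $T_{\rm N}\boldsymbol u$ via the transparent boundary condition \eqref{tbc} and using Cauchy--Schwarz on the finite-measure surface $\Gamma_R$ gives
\[
\Bigl|\int_{B_R}\boldsymbol u^{\rm inc}_{\rm p}\cdot\boldsymbol f\,{\rm d}\boldsymbol x\Bigr|^2\lesssim\int_{\Gamma_R}\bigl(|T_{\rm N}\boldsymbol u|^2+\omega^2|\boldsymbol u|^2\bigr)\,{\rm d}\gamma=\|\boldsymbol u(\cdot,\omega)\|^2_{\Gamma_R},
\]
uniformly in $\omega$ and $\boldsymbol d$, and likewise for the shear term. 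Squaring the identities of the previous paragraph, integrating over $\boldsymbol d\in\mathbb S^{d-1}$ and over $\omega\in(0,\infty)$ against $\omega^{d-1}$, and changing variables via $\xi=c_{\rm p}\omega\boldsymbol d$ (so that ${\rm d}\xi=c_{\rm p}^{d}\,\omega^{d-1}\,{\rm d}\omega\,{\rm d}\boldsymbol d$) for the compressional part and $\xi=c_{\rm s}\omega\boldsymbol d$ for the shear part, I arrive at
\[
\int_{\mathbb R^d}\Bigl(|\widehat\xi\cdot\widehat{\boldsymbol f}(\xi)|^2+\sum_{j=1}^{d-1}|\boldsymbol q_j(\widehat\xi)\cdot\widehat{\boldsymbol f}(\xi)|^2\Bigr)\,{\rm d}\xi\lesssim\int_0^\infty\omega^{d-1}\|\boldsymbol u(\cdot,\omega)\|^2_{\Gamma_R}\,{\rm d}\omega,
\]
where $\widehat\xi=\xi/|\xi|$. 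Since $\{\widehat\xi\}\cup\{\boldsymbol q_j(\widehat\xi)\}_{j=1}^{d-1}$ is an orthonormal basis of $\mathbb R^d$, the left-hand side equals $\int_{\mathbb R^d}|\widehat{\boldsymbol f}(\xi)|^2\,{\rm d}\xi$, and Plancherel's theorem yields $\|\boldsymbol f\|^2_{L^2(B_R)^d}=\|\boldsymbol f\|^2_{L^2(\mathbb R^d)^d}\lesssim\|\widehat{\boldsymbol f}\|^2_{L^2(\mathbb R^d)^d}$, which is the asserted estimate.

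I expect the main obstacle to be the rigorous justification of the Green identity when $\boldsymbol f$ is merely $L^2(B_R)^d$: then $\boldsymbol u$ need not be classically smooth up to $\Gamma_R$, so either $D\boldsymbol u$ must be interpreted as an element of $H^{-1/2}(\Gamma_R)^d$ paired with the $H^{1/2}$-trace of $\boldsymbol u^{\rm inc}_{\rm p}$, or one exploits that $\boldsymbol f$ is supported away from $\Gamma_R$, so that $\boldsymbol u$ solves the homogeneous equation and is real-analytic in a neighbourhood of $\Gamma_R$, making the integration by parts legitimate (via a cutoff together with the interior equation). A minor additional point is the bookkeeping of polarizations in $d=3$, where the two shear polarization vectors $\boldsymbol q_1,\boldsymbol q_2$ must together span $\boldsymbol d^{\perp}$ in order to capture the full transverse part of $\widehat{\boldsymbol f}$.
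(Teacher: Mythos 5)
Your proposal is correct and follows essentially the same route as the paper: testing \eqref{ne} against the compressional and shear plane waves, converting to a boundary integral via integration by parts and the transparent boundary condition \eqref{tbc}, bounding by $\|\boldsymbol u(\cdot,\omega)\|_{\Gamma_R}$ with Cauchy--Schwarz, and then using polar coordinates, the Pythagorean decomposition of $\hat{\boldsymbol f}$ into radial and transverse components, and Plancherel. The only cosmetic difference is a sign convention in the Betti/Green identity, which is immaterial to the estimate.
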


\begin{proof}
Again, we prove the two dimensional case in details, and then
briefly present the proof for the three dimensional case.

(i) Consider $d=2$. Let $\boldsymbol\xi_{\rm p}=
\kappa_{\rm p} \boldsymbol d$ with $|\boldsymbol\xi_{\rm p}|=\kappa_{\rm p}\in
(0, \infty)$. The compressional plane wave in \eqref{npw} can be written as
$\boldsymbol{u}_{\rm p}^{\rm{inc}}(\boldsymbol x) =
\boldsymbol{p} e^{-{\rm i}\boldsymbol\xi_{\rm{p}}\cdot \boldsymbol
x}$. Multiplying the both sides of \eqref{ne} by $\boldsymbol{u}_{\rm
p}^{\rm inc}(\boldsymbol x)$, using the integration by parts over $B_R$, and
noting \eqref{nec}, we obtain
\[
 \int_{B_R}(\boldsymbol p e^{-{\rm i}\boldsymbol\xi_{\rm{p}}\cdot\boldsymbol
x})\cdot\boldsymbol{f}(\boldsymbol x){\rm d}\boldsymbol x=\int_{\Gamma_R}\left(
\boldsymbol{u}_{\rm{p}}^{\rm{inc}}(\boldsymbol x) \cdot
T_{\rm N}\boldsymbol u(\boldsymbol x,\omega)+ \boldsymbol{u}(\boldsymbol
x,\omega)\cdot D \boldsymbol{u}_{\rm{p}}^{\rm{inc}}(\boldsymbol x)\right){\rm
d}\gamma(\boldsymbol x).
\]
A simple calculation yields that
\[
D\boldsymbol u^{\rm inc}_{\rm p}(\boldsymbol x) = -{\rm i}\kappa_{\rm p}
\left(\mu (\boldsymbol{p} \cdot \boldsymbol\nu)\boldsymbol p + (\lambda +
\mu)\boldsymbol\nu\right) e^{{-\rm i}\boldsymbol \xi_{\rm{p}}\cdot\boldsymbol
x},
\]
which gives
\[
 |D\boldsymbol u^{\rm inc}_{\rm p}(\boldsymbol x)|\lesssim\kappa_{\rm p}.
\]
Noting $\mbox{supp}\boldsymbol{f}\subset B_R$, we get
\[
 \int_{B_R}(\boldsymbol p e^{-{\rm i}\boldsymbol \xi_{\rm p}\cdot\boldsymbol
x})\cdot\boldsymbol{f}(\boldsymbol x){\rm d}\boldsymbol x=\boldsymbol
p\cdot\int_{\mathbb{R}^2}\boldsymbol{f}(\boldsymbol x)e^{-{\rm
i}\boldsymbol\xi_{\rm p}\cdot\boldsymbol x}{\rm d}\boldsymbol x=\boldsymbol
p\cdot\hat{\boldsymbol f}(\boldsymbol\xi_{\rm p}).
\]
Combining the above estimates and using the Cauchy--Schwarz inequality yields
\[
|\boldsymbol p\cdot\hat{\boldsymbol f}(\boldsymbol\xi_{\rm p})|^2
\lesssim\int_{\Gamma_R}\left(|T_{\rm N}\boldsymbol{u}(\boldsymbol
x,\omega)|^2+\kappa^2_{\rm{p}}|\boldsymbol{u}(\boldsymbol
x,\omega)|^2\right){\rm d}\gamma(\boldsymbol x).
\]
Hence
\[
 \int_{\mathbb{R}^2}|\boldsymbol p\cdot\hat{\boldsymbol
f}(\boldsymbol\xi_{\rm p})|^2 {\rm d}\boldsymbol\xi_{\rm p}\lesssim
\int_{\mathbb{R}^2}\int_{\Gamma_R}\left(|T_{\rm N}\boldsymbol{u}(\boldsymbol
x,\omega)|^2+\kappa^2_{\rm p}|\boldsymbol{u}(\boldsymbol x,\omega)|^2\right){\rm
d}\gamma(\boldsymbol x) {\rm d}\boldsymbol\xi_{\rm p}.
\]
Using the polar coordinates, we have
\begin{align}\label{lema1_s1}
 \int_{\mathbb{R}^2}|\boldsymbol p\cdot\hat{\boldsymbol
f}(\boldsymbol\xi_{\rm p})|^2{\rm d}\boldsymbol\xi_{\rm p}
&\lesssim\int_0^{2\pi}{\rm d}\theta\int_0^{\infty}\kappa_{\rm
p}\int_{\Gamma_R}\left(|T_{\rm N}\boldsymbol{u}(\boldsymbol x,
\omega)|^2+\kappa^2_{\rm p}|
\boldsymbol{u}(\boldsymbol x,\omega)|^2\right){\rm d}\gamma(\boldsymbol x) {\rm
d}\kappa_{\rm p}\notag\\
 &\leq 2\pi\int_0^{\infty}\kappa_{\rm p}\int_{\Gamma_R}\left(|T_{\rm
N}\boldsymbol{u}(\boldsymbol x,\omega)|^2+\kappa^2_{\rm p}|
\boldsymbol{u}(\boldsymbol x,\omega)|^2\right){\rm d}\gamma(\boldsymbol x){\rm
d}\kappa_{\rm p}\notag\\
 &\lesssim \int_0^{\infty}\omega\int_{\Gamma_R}\left(|T_{\rm
N}\boldsymbol{u}(\boldsymbol x,\omega)|^2 +
\omega^2|\boldsymbol{u}(\boldsymbol x,\omega)|^2\right){\rm
d}\gamma(\boldsymbol x){\rm d}\omega\notag\\
&=\int_0^{\infty}\omega\|\boldsymbol u(\cdot, \omega)\|^2_{\Gamma_R}{\rm
d}\omega.
\end{align}

Let $\boldsymbol\xi_{\rm s} = \kappa_{\rm s} \boldsymbol{d}$ with
$|\boldsymbol\xi_{\rm s}|=\kappa_{\rm s}\in (0, \infty)$. The shear plane wave
in \eqref{npw} can be written as $\boldsymbol u^{\rm inc}_{\rm
s}(\boldsymbol x)=\boldsymbol q e^{-{\rm i}\boldsymbol \xi_{\rm
s}\cdot\boldsymbol x}$. Multiplying $\boldsymbol u^{\rm inc}_{\rm s}$ on both
sides of \eqref{ne}, using the integration by parts, and noting \eqref{nec}, we
may similarly get
\begin{align}\label{lema1_s2}
 \int_{\mathbb{R}^2}|\boldsymbol{q}\cdot
\hat{\boldsymbol{f}}(\boldsymbol\xi_{\rm s})|^2 {\rm d} \boldsymbol\xi_{\rm s}
 &=\int_{\mathbb{R}^2}\left|\int_{\mathbb{R}^2}(\boldsymbol q e^{-{\rm
i}\boldsymbol\xi_{\rm s}\cdot\boldsymbol x})\cdot\boldsymbol{f}(\boldsymbol
x){\rm d}\boldsymbol x\right|^2{\rm d}\boldsymbol\xi_{\rm s}\notag\\
 &\lesssim \int_0^{\infty}\kappa_{\rm s}\int_{\Gamma_R}\left(|T_{\rm
N}\boldsymbol{u}(\boldsymbol x,\omega)|^2 +
\kappa_{\rm s}^2| \boldsymbol{u}(\boldsymbol x,\omega|^2\right){\rm
d}\gamma(\boldsymbol x) {\rm d}\kappa_{\rm s}\notag\\
&\lesssim \int_0^{\infty}\omega\int_{\Gamma_R}\left(|T_{\rm
N}\boldsymbol{u}(\boldsymbol x,\omega)|^2 +
\omega^2|\boldsymbol{u}(\boldsymbol x,\omega)|^2\right){\rm
d}\gamma(\boldsymbol x) {\rm d}\omega\notag\\
&=\int_0^{\infty}\omega\|\boldsymbol u(\cdot,
\omega)\|^2_{\Gamma_R}{\rm d}\omega.
\end{align}

Using the polar coordinates, we deduce that
\begin{align}\label{lema1_s3}
\int_{\mathbb{R}^2}|\boldsymbol{q} \cdot
\hat{\boldsymbol{f}}(\boldsymbol\xi_{\rm s})|^2 {\rm d} \boldsymbol\xi_{\rm s}
&= \int_0^{2\pi}\rm{d}\theta \int_0^{\infty} \kappa_{\rm s}
|\boldsymbol{q}(\theta) \cdot \hat{\boldsymbol{f}}(\kappa_{\rm s}
\boldsymbol{p})|^2 {\rm d}\kappa_{\rm s}\notag \\
&=\int_0^{2\pi}\rm{d}\theta \int_0^{\infty} \kappa_{\rm p}
|\boldsymbol{q}(\theta) \cdot \hat{\boldsymbol{f}}(\kappa_{\rm p}
\boldsymbol{p})|^2 {\rm d}\kappa_{\rm p}= \int_{\mathbb{R}^2}|\boldsymbol{q}
\cdot\hat{\boldsymbol{f}}(\boldsymbol\xi_{\rm p})|^2 {\rm d}\boldsymbol \xi_{\rm
p}.
\end{align}
Let $\boldsymbol p(\theta)=(\cos\theta, \sin\theta)^\top$ and take
$\boldsymbol q(\theta)=(-\sin\theta, \cos\theta)^\top$. Then $\boldsymbol
p(\theta)\cdot\boldsymbol q(\theta)=0$ and they form an orthonormal
basis in $\mathbb R^2$ for any $\theta\in [0, 2\pi]$. Hence we have from the
Pythagorean theorem that
\begin{equation}\label{lema1_s4}
|\hat{\boldsymbol{f}}(\boldsymbol\xi_{\rm p})|^2 = |\boldsymbol{p} \cdot
\hat{\boldsymbol{f}}(\boldsymbol\xi_{\rm p})|^2 + |\boldsymbol{q}\cdot
\hat{\boldsymbol{f}}(\boldsymbol\xi_{\rm p})|^2.
\end{equation}

Noting ${\rm supp}\boldsymbol f\subset B_R$ again, we obtain from the
Parseval theorem and \eqref{lema1_s1}--\eqref{lema1_s4} that
\begin{align*}
\|\boldsymbol f\|^2_{L^2(B_R)^2}&=\|\boldsymbol f\|^2_{L^2(\mathbb
R^2)^2}=\|\hat{\boldsymbol f}\|^2_{L^2(\mathbb R^2)^2}=
\int_{\mathbb{R}^2}|\hat{\boldsymbol{f}}
(\boldsymbol\xi_ { \rm p})|^2 {\rm d} \boldsymbol\xi_{\rm p}\\
&=\int_{\mathbb{R}^2}|\boldsymbol{p} \cdot
\hat{\boldsymbol{f}}(\boldsymbol\xi_{\rm p})|^2 {\rm d}\boldsymbol \xi_{\rm p} +
\int_{\mathbb{R}^2}|\boldsymbol{q}\cdot \hat{\boldsymbol{f}}(\boldsymbol\xi_{\rm
p})|^2 {\rm d}\boldsymbol \xi_{\rm p}\\
&=\int_{\mathbb{R}^2}|\boldsymbol{p} \cdot
\hat{\boldsymbol{f}}(\boldsymbol\xi_{\rm p})|^2 {\rm d}\boldsymbol \xi_{\rm p} +
\int_{\mathbb{R}^2}|\boldsymbol{q} \cdot
\hat{\boldsymbol{f}}(\boldsymbol\xi_{\rm s})|^2 {\rm d} \boldsymbol\xi_{\rm
s}\\
&\lesssim \int_0^{\infty}\omega\|\boldsymbol u(\cdot, \omega)\|^2_{\Gamma_R}
{\rm d}\omega,
\end{align*}
which proves the lemma for the two-dimensional case.

(ii) Consider $d=3$. Repeating similar steps and using the spherical
coordinates, we get
\begin{align}\label{lema1_s5}
\int_{\mathbb{R}^3}|\boldsymbol{p} \cdot
\hat{\boldsymbol{f}}(\boldsymbol \xi_{\rm p})|^2{\rm d}\boldsymbol \xi_{\rm p}&=
\int_{\mathbb{R}^3}\left|\int_{\mathbb{R}^3}(\boldsymbol p e^{-{\rm
i}\boldsymbol\xi_{\rm p}\cdot\boldsymbol
x})\cdot\boldsymbol{f}(\boldsymbol x){\rm d}\boldsymbol x\right|^2{\rm
d}\boldsymbol\xi_{\rm p}\notag\\
&\lesssim\int_0^{2\pi}{\rm d}\theta\int_0^{\pi}{\rm sin}\varphi
{\rm d}\varphi\int_0^{\infty}\kappa_{\rm p}^2\int_{\Gamma_R}\left(|T_{\rm
N}\boldsymbol{u}(\boldsymbol x,\omega)|^2+\kappa^2_{\rm
p}|\boldsymbol{u}(\boldsymbol x,\omega)|^2\right){\rm d}\gamma(\boldsymbol
x){\rm d}\kappa_{\rm p}\notag\\
 &\leq 2\pi^2\int_0^{\infty}\kappa_{\rm p}^2\int_{\Gamma_R}\left(|T_{\rm
N}\boldsymbol{u}(\boldsymbol x,\omega)|^2+\kappa^2_{\rm
p}| \boldsymbol{u}(\boldsymbol x,\omega)|^2\right){\rm d}\gamma(\boldsymbol
x){\rm d}\kappa_{\rm p}\notag\\
&\lesssim \int_0^{\infty}\omega^2\int_{\Gamma_R}\left(|T_{\rm
N}\boldsymbol{u}(\boldsymbol x,\omega)|^2+\omega^2|
\boldsymbol{u}(\boldsymbol x,\omega)|^2\right){\rm d}\gamma(\boldsymbol x)
{\rm d}\omega\notag\\
&=\int_0^{\infty}\omega^2\|\boldsymbol u(\cdot,
\omega)\|^2_{\Gamma_R}{\rm d}\omega.
\end{align}
Let $\boldsymbol p(\theta, \varphi)=(\sin\theta\cos\varphi,
\sin\theta\sin\varphi, \cos\theta)^\top$. We choose $\boldsymbol q_1(\theta,
\varphi)=(\cos\theta\cos\varphi, \cos\theta\sin\varphi,
-\sin\theta)^\top$ and $\boldsymbol q_2(\theta, \varphi)=\boldsymbol
p(\theta, \varphi)\times\boldsymbol q_1(\theta, \varphi)=(-\sin\varphi,
\cos\varphi, 0)^\top$ for the shear plane wave. It is easy to verify that
$\{\boldsymbol{p}, \boldsymbol{q}_1, \boldsymbol{q}_2\}$ are
mutually orthogonal and thus form an orthonormal basis in ${\mathbb{R}^3}$ for
any $\theta\in[0, \pi], \varphi\in[0, 2\pi]$. Using the Pythagorean
theorem yields
\begin{equation}\label{lema1_s6}
|\hat{\boldsymbol{f}}(\boldsymbol \xi_{\rm p})|^2 = |\boldsymbol{p} \cdot
\hat{\boldsymbol{f}}(\boldsymbol \xi_{\rm p})|^2 + |\boldsymbol{q}_1\cdot
\hat{\boldsymbol{f}}(\boldsymbol \xi_{\rm p})|^2 + |\boldsymbol{q}_2
\cdot \hat{\boldsymbol{f}}(\boldsymbol \xi_{\rm p})|^2.
\end{equation}
Following similar arguments as those in \eqref{lema1_s2}--\eqref{lema1_s4}, we
get from \eqref{lema1_s5}--\eqref{lema1_s6} that
\begin{align*}
\|\boldsymbol{f} \|^2_{L^2(B_R)^3} &= \|\boldsymbol{f}
\|^2_{L^2(\mathbb{R}^3)^3}=\int_{\mathbb{R}^3}|\hat{\boldsymbol{f}}
(\boldsymbol\xi_{\rm p})|^2 {\rm d}\boldsymbol\xi_{\rm p}\\
&\lesssim\int_0^{\infty}\omega^2\|\boldsymbol u(\cdot,
\omega)\|^2_{\Gamma_R}{\rm d}\omega,
\end{align*}
which completes the proof.
\end{proof}

For $d=2$, let
\begin{align}
\label{nd2i1}I_1(s)=&\int_0^s
\omega^{3}\int_{\Gamma_R}\left|\int_\Omega\mathbf{G}_ {\rm N}(\boldsymbol
x, \boldsymbol y; \omega)\cdot\boldsymbol{f}(\boldsymbol y){\rm d}\boldsymbol
y\right|^2{\rm d}\gamma(\boldsymbol x){\rm d}\omega,\\
\label{nd2i2}I_2(s)=&\int_0^s \omega\int_{\Gamma_R}\left|\int_\Omega
D_{\boldsymbol x} (\mathbf{G}_{\rm N}(\boldsymbol x, \boldsymbol y;
\omega)\cdot\boldsymbol{f}(\boldsymbol y)){\rm
d}\boldsymbol y\right|^2{\rm d}\gamma(\boldsymbol x){\rm d}\omega.
\end{align}
For $d=3$, let
\begin{align}
\label{nd3i1}I_1(s)=&\int_0^s \omega^{4}\int_{\Gamma_R}\left|\int_\Omega
\mathbf{G}_{\rm N}(\boldsymbol x, \boldsymbol y;
\omega)\cdot\boldsymbol{f}(\boldsymbol y){\rm
d}\boldsymbol y\right|^2{\rm d}\gamma(\boldsymbol x){\rm d}\omega,\\
\label{nd3i2}I_2(s)=&\int_0^s \omega^{2}\int_{\Gamma_R}\left|\int_\Omega
D_{\boldsymbol x} \left(\mathbf{G}_{\rm N}(\boldsymbol x, \boldsymbol y;
\omega)\cdot\boldsymbol{f}(\boldsymbol y)\right){\rm d}\boldsymbol
y\right|^2{\rm d}\gamma(\boldsymbol x){\rm d}\omega.
\end{align}
Denote a sector
\[
\mathcal{V}=\{z\in\mathbb{C}: -\frac{\pi}{4}<{\rm arg}
z<\frac{\pi}{4}\}.
\]
The integrands in \eqref{nd2i1}--\eqref{nd3i2} are analytic functions of the
angular $\omega$. The integrals with respect to $\omega$ can be taken
over any path joining points $0$ and $s$ in $\mathcal{V}$. Thus $I_1(s)$ and
$I_2(s)$ are analytic functions of $s=s_1+{\rm i}s_2\in\mathcal{V}, s_1,
s_2\in\mathbb{R}$.

\begin{lemm}\label{ni}
 Let $\boldsymbol{f}\in H^3(B_R)^d$. For any $s=s_1+{\rm i}s_2\in\mathcal{V}$,
the following estimates hold:
\begin{enumerate}[label=(\roman*)]

\item when $d=3$,
 \begin{align}
 \label{n31} |I_1(s)|&\lesssim
|s|^{5}e^{4c_{\rm s}R|s|}\|\boldsymbol{f}\|_{H^2(B_R)^3}^2,\\
 \label{n32} |I_2(s)|&\lesssim
|s|^{3}e^{4c_{\rm s}R|s|}\|\boldsymbol{f}\|_{H^3(B_R)^3}^2;
 \end{align}

\item  when $d=2$,
\begin{align}
\label{n21}  |I_1(s)|& \lesssim |s|^3  (|s|^{\frac{3}{2}}+|s|^{\frac{1}{2}}+1)^2
e^{4c_{\rm s}R|s|}\|\boldsymbol{f}\|_{H^2(B_R)^2}^2,\\
 \label{n22} |I_2(s)|& \lesssim |s| (|s|^{\frac{3}{2}}+|s|^{\frac{1}{2}}+1)^2
e^{4c_{\rm s}R|s|}\|\boldsymbol{f}\|_{H^3(B_R)^2}^2.
 \end{align}
\end{enumerate}
\end{lemm}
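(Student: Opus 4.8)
The plan is to bound $|I_1(s)|$ and $|I_2(s)|$ for complex $s\in\mathcal V$ by estimating, uniformly in $\boldsymbol x\in\Gamma_R$, $\boldsymbol y\in\Omega$, and $\omega$ on a path from $0$ to $s$, the relevant kernels $\mathbf G_{\rm N}(\boldsymbol x,\boldsymbol y;\omega)$ and $D_{\boldsymbol x}\mathbf G_{\rm N}(\boldsymbol x,\boldsymbol y;\omega)$, together with their $\boldsymbol y$-derivatives, then pulling the estimates out of the integrals. Concretely, since the integrands in \eqref{nd2i1}--\eqref{nd3i2} are entire in $\omega$ (the singularity of $g_d$ at $\boldsymbol x=\boldsymbol y$ is absent because $\Gamma_R$ and $\Omega$ are disjoint), one may take the $\omega$-integral along the straight segment $\{\omega=ts: t\in[0,1]\}$, on which $|\omega|\le|s|$ and $\mathrm{Re}(\mathrm i\kappa_{\rm s}|\boldsymbol x-\boldsymbol y|)\le c_{\rm s}|\boldsymbol x-\boldsymbol y||s_2|\le 2c_{\rm s}R|s|$ (and similarly for $\kappa_{\rm p}$, which is cheaper because $c_{\rm p}<c_{\rm s}$). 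This is what produces the factor $e^{4c_{\rm s}R|s|}$; I would keep a crude bound on the diameter, $|\boldsymbol x-\boldsymbol y|\le 2R$, throughout.

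The key steps, in order: (1) From \eqref{gtn}--\eqref{gn}, estimate $g_3$ and its $\boldsymbol x$-derivatives up to order two, and $g_2$ and its $\boldsymbol x$-derivatives, on $\mathcal V$; in 3D one gets polynomial-in-$|\omega|$ growth (each $\nabla_{\boldsymbol x}$ produces an extra $\kappa_{\rm s}\sim|\omega|$), and the $1/\omega^2$ prefactor in \eqref{gtn} is harmless, yielding $|\mathbf G_{\rm N}|\lesssim |s|e^{2c_{\rm s}R|s|}$ and, after applying $D_{\boldsymbol x}$ (one more derivative via \eqref{tbo}), $|D_{\boldsymbol x}\mathbf G_{\rm N}|\lesssim |s|^2 e^{2c_{\rm s}R|s|}$. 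In 2D the Hankel function $H_0^{(1)}$ contributes the extra factor $(|s|^{3/2}+|s|^{1/2}+1)$ through its large- and small-argument asymptotics: $|H_0^{(1)}(z)|\lesssim |z|^{-1/2}e^{|\mathrm{Im}\,z|}$ for $|z|\gtrsim1$ and a logarithmic bound for $|z|\lesssim1$; differentiating in $\boldsymbol x$ twice brings $H_1^{(1)}$ and $H_2^{(1)}$, whose small-argument singularities $|z|^{-1}$, $|z|^{-2}$ combined with the $1/\omega^2$ prefactor and $|\boldsymbol x-\boldsymbol y|^{-1},|\boldsymbol x-\boldsymbol y|^{-2}$ are controlled because $|\boldsymbol x-\boldsymbol y|\ge R-\hat R>0$ is bounded below. (2) To convert point estimates of the kernel into estimates of $\int_\Omega\mathbf G_{\rm N}\cdot\boldsymbol f\,\mathrm d\boldsymbol y$ in the $H^2$ or $H^3$ norm of $\boldsymbol f$, integrate by parts in $\boldsymbol y$: since $\boldsymbol G_{\rm N}(\boldsymbol x,\cdot;\omega)$ is smooth on $\Omega$ (again because $\boldsymbol x\in\Gamma_R$ is away from $\bar\Omega$), one writes, e.g., $\bigl|\int_\Omega \mathbf G_{\rm N}\cdot\boldsymbol f\bigr|^2\lesssim \|\mathbf G_{\rm N}(\boldsymbol x,\cdot;\omega)\|_{H^2(\Omega)}^2\|\boldsymbol f\|_{(H^2(\Omega))^*}^2$ or, more simply, Cauchy--Schwarz directly against $\|\boldsymbol f\|_{L^2}\le\|\boldsymbol f\|_{H^2}$; the cleanest route is Cauchy--Schwarz, $\bigl|\int_\Omega\mathbf G_{\rm N}\cdot\boldsymbol f\bigr|\le |\Omega|^{1/2}\sup_{\boldsymbol y\in\Omega}|\mathbf G_{\rm N}|\,\|\boldsymbol f\|_{L^2(\Omega)^d}$, which already suffices if one is willing to use $\|\boldsymbol f\|_{H^2}$ (resp. $\|\boldsymbol f\|_{H^3}$) as the bound — the higher Sobolev norms appear only to keep consistency with later uses, so the weakest norm that closes the argument is fine. (3) Insert into \eqref{nd2i1}--\eqref{nd3i2}, bound $\int_0^s|\omega|^{k}\,\mathrm d\omega\lesssim |s|^{k+1}$ along the segment, and use that $\Gamma_R$ has finite measure $\sim R^{d-1}$; collecting powers of $|s|$ gives exactly \eqref{n31}--\eqref{n22}: in 3D, $|s|^4\cdot|s|^2\cdot\frac{1}{|s|}\cdot|s|$ from $I_1$'s weight, kernel square, etc. — one tracks the bookkeeping so that the final exponents are $5$ and $3$ in 3D and $3$ and $1$ (times the bracket squared) in 2D.

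The main obstacle will be the two-dimensional kernel estimates: $H_0^{(1)}$, $H_1^{(1)}$, $H_2^{(1)}$ must be controlled uniformly for $\kappa|\boldsymbol x-\boldsymbol y|$ ranging over a complex sector as $\omega$ sweeps $[0,s]$, including the regime near $\omega=0$ where the small-argument logarithmic and negative-power singularities of the Hankel functions appear; one must verify that the prefactors $1/\mu$ and $1/\omega^2$ in \eqref{gtn} combine with these to leave only the polynomial factor $(|s|^{3/2}+|s|^{1/2}+1)$ and not a genuine blow-up as $\omega\to0$. Handling this requires splitting into $|\omega|\le 1$ and $|\omega|\ge 1$ and using, in the former regime, the cancellation in $g_d(\cdot;\kappa_{\rm s})-g_d(\cdot;\kappa_{\rm p})$ (the difference is smoother in $\omega$ than either term, removing the apparent $1/\omega^2$ singularity), and in the latter the exponential asymptotics that produce $e^{|\mathrm{Im}\,z|}\le e^{2c_{\rm s}R|s|}$. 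Everything else — the 3D case, the integration in $\omega$, the use of Cauchy--Schwarz on $\Gamma_R$ and $\Omega$ — is routine once these pointwise kernel bounds are in hand.
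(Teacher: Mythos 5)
Your overall architecture (deform the $\omega$-integral to the segment $\{st:t\in[0,1]\}$, exploit the disjointness of $\Gamma_R$ and $\Omega$, and use the cancellation in $g_d(\cdot;\kappa_{\rm s})-g_d(\cdot;\kappa_{\rm p})$ to neutralize the $1/\omega^2$ prefactor) matches the paper, and it would certainly yield \emph{some} bound of the form $|s|^N e^{4c_{\rm s}R|s|}M^2$, which is all the downstream Lemma \ref{ni12} actually needs. But your route does not prove the lemma as stated, and the gap is in exactly the place you wave at with ``one tracks the bookkeeping.'' You propose to bound the kernels in sup norm and note that ``each $\nabla_{\boldsymbol x}$ produces an extra $\kappa_{\rm s}\sim|\omega|$,'' arriving at $|D_{\boldsymbol x}\mathbf G_{\rm N}|\lesssim |s|^2 e^{2c_{\rm s}R|s|}$. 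Feed that into \eqref{nd3i2}: the weight contributes $|s|^3$ and the squared kernel contributes at least $|s|^2$ more, so you land at $|s|^5$ or worse, not the claimed $|s|^3$; the same overshoot occurs in 2D. The device the paper uses to avoid this is not optional: every $\boldsymbol x$-derivative of the kernel (both the $\nabla_{\boldsymbol x}\nabla_{\boldsymbol x}^\top$ inside $\mathbf G_2$ and the extra derivative from $D_{\boldsymbol x}$) is rewritten as a $\boldsymbol y$-derivative via $\nabla_{\boldsymbol x}g(\boldsymbol x,\boldsymbol y)=-\nabla_{\boldsymbol y}g(\boldsymbol x,\boldsymbol y)$ and then integrated by parts onto $\boldsymbol f$. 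This trades each would-be factor of $|\kappa|$ for one derivative of $\boldsymbol f$, which is precisely why $I_1$ carries $H^2$ and $I_2$ carries $H^3$: the higher Sobolev norm is the \emph{price} of the lower power of $|s|$, not (as you suggest) a cosmetic choice where ``the weakest norm that closes the argument is fine.'' Your alternative duality pairing is also written backwards — to exploit $\boldsymbol f\in H^2$ you must place the kernel in the dual norm, not $\boldsymbol f$.

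A secondary inaccuracy: in 2D you attribute the factor $(|s|^{3/2}+|s|^{1/2}+1)$ to the large/small-argument asymptotics of $H_0^{(1)}$. In the paper it arises from the explicit term ${\rm ln}(st)$ in the series for $Y_0$ (the bound $|(|s|t)^{1/2}\ln(st)|\lesssim |s|^{3/2}+|s|^{1/2}+1$), after the $J_0$ and harmonic-number parts have been controlled by the elementary inequalities $4^k(k!)^2\geq(2k)!$ and $H_k/(4^k(k!)^2)\lesssim 1/(2k)!$; no Hankel asymptotics on a complex sector are invoked. To repair your proof you would need to (i) systematically perform the $\boldsymbol y$-integration by parts described above for every derivative falling on the kernel, and (ii) redo the 2D kernel analysis from the power series of $J_0$ and $Y_0$, isolating the logarithmic term as the source of the bracket.
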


\begin{proof}
We first prove the three-dimensional case and then the corresponding
two-dimensional case.

(i) Consider $d=3$. Recalling \eqref{gtn}, we split the Green tensor
$\mathbf{G}_{\rm N}(\boldsymbol x, \boldsymbol y)$ into two parts:
\[
\mathbf{G}_{\rm N}(\boldsymbol x, \boldsymbol y; \omega)=
\mathbf{G}_1(\boldsymbol x, \boldsymbol y; \omega) + \mathbf{G}_2(\boldsymbol x,
\boldsymbol y; \omega),
\]
where
\begin{align*}
&\mathbf{G}_1(\boldsymbol x, \boldsymbol y; \omega)=  \frac{1}{4 \pi
\mu}\frac{e^{{\rm i}\kappa_{\rm s}|\boldsymbol x- \boldsymbol y|}}{|\boldsymbol
x-\boldsymbol y|}\mathbf{I}_3,\\
&\mathbf{G}_2(\boldsymbol x, \boldsymbol y; \omega)= \frac{1}{4\pi\omega^2}
\nabla_{\boldsymbol x}\nabla_{\boldsymbol x}^{\top}\bigg(\frac{e^{{\rm
i}\kappa_{\rm s}|\boldsymbol x-\boldsymbol y|}}{|\boldsymbol x-\boldsymbol y|} -
\frac{e^{{\rm i}\kappa_{\rm p}|\boldsymbol x-\boldsymbol
y|}}{|\boldsymbol x-\boldsymbol y|}\bigg).
\end{align*}
Let $\omega = st, t\in(0,1)$. Noting \eqref{cj}, we have from a simple
calculation that
\[
|I_1(s)|\lesssim I_{1,1}(s) + I_{1,2}(s),
\]
where
\begin{align*}
I_{1,1}(s) &= \int_0^1 |s|^5 t^4 \int_{\Gamma_R} \bigg|\int_{\Omega}
\mathbf{G}_1(\boldsymbol x, \boldsymbol y; st)\cdot \boldsymbol{f}(\boldsymbol
y){\rm d}\boldsymbol y\bigg|^2 {\rm d}\gamma(\boldsymbol x) {\rm d} t\\
& \lesssim \int_0^1 |s|^5 t^4 \int_{\Gamma_R} \bigg|\int_{\Omega}\frac{e^{{\rm
i}c_{\rm s}st|\boldsymbol x-\boldsymbol y|}}{|\boldsymbol
x-\boldsymbol y|} \mathbf{I}_3\cdot \boldsymbol{f}(\boldsymbol y){\rm
d}\boldsymbol y\bigg|^2 {\rm d}\gamma(\boldsymbol x) {\rm d} t
\end{align*}
and
\begin{align*}
I_{1,2}(s) &= \int_0^1 |s|^5 t^4 \int_{\Gamma_R} \bigg|\int_{\Omega}
\mathbf{G}_2(\boldsymbol x, \boldsymbol y; st)\cdot \boldsymbol{f}(\boldsymbol
y){\rm d}\boldsymbol y\bigg|^2 {\rm d}\gamma(\boldsymbol x) {\rm d} t\\
&\lesssim \int_0^1 |s|^5 t^4 \int_{\Gamma_R} \bigg|\int_{\Omega}
\frac{1}{(st)^2}\nabla_{\boldsymbol y}\nabla_{\boldsymbol
y}^{\top}\bigg(\frac{e^{{\rm
i}c_{\rm s}st|\boldsymbol x-\boldsymbol y|}}{|\boldsymbol x-\boldsymbol y|} -
\frac{e^{{\rm i}c_{\rm p}st|\boldsymbol x-\boldsymbol
y|}}{|\boldsymbol x-\boldsymbol y|}\bigg)\cdot \boldsymbol{f}(\boldsymbol
y){\rm d}\boldsymbol y\bigg|^2 {\rm d}\gamma(\boldsymbol x) {\rm d} t.
\end{align*}
Here we have used
\[
 \nabla_{\boldsymbol x}\nabla_{\boldsymbol x}^{\top}\bigg(\frac{e^{{\rm
i}\kappa_{\rm s}|\boldsymbol x-\boldsymbol y|}}{|\boldsymbol x-\boldsymbol y|} -
\frac{e^{{\rm i}\kappa_{\rm p}|\boldsymbol x-\boldsymbol
y|}}{|\boldsymbol x-\boldsymbol y|}\bigg)=\nabla_{\boldsymbol
y}\nabla_{\boldsymbol y}^{\top}\bigg(\frac{e^{{\rm
i}\kappa_{\rm s}|\boldsymbol x-\boldsymbol y|}}{|\boldsymbol x-\boldsymbol y|} -
\frac{e^{{\rm i}\kappa_{\rm p}|\boldsymbol x-\boldsymbol
y|}}{|\boldsymbol x-\boldsymbol y|}\bigg).
\]

First we estimate $I_{1, 1}(s)$. Noting that ${\rm supp}\boldsymbol f=
\Omega\subset B_R$ and
\[
|e^{{\rm i}c_{\rm s}st |\boldsymbol x-\boldsymbol y|}|\leq e^{2c_{\rm s}R|s|},
\quad\forall~\boldsymbol x\in \Gamma_R, \boldsymbol y\in \Omega,
\]
we have from the Cauchy--Schwarz inequality that
 \begin{align}\label{lema2_s1}
|I_{1,1}(s)|&\lesssim\int_0^1|s|^{5}t^{4}\int_{\Gamma_R}\bigg|\int_{\Omega}
\frac{e^{2c_{\rm s}R|s|}}{|\boldsymbol x-\boldsymbol y|
}|\boldsymbol{f}(\boldsymbol y)|{\rm d}\boldsymbol y\bigg|^2{\rm
d}\gamma(\boldsymbol x){\rm d}t\notag\\
&\lesssim\int_0^1|s|^{5}t^{4}\int_{\Gamma_R}\int_{B_R}
|\boldsymbol{f}(\boldsymbol y)|^2 {\rm
d}\boldsymbol y\int_{\Omega}\frac{e^{4c_{\rm s}R|s|}}{|\boldsymbol x-\boldsymbol
y|^2}{\rm d}\boldsymbol y {\rm d}\gamma(\boldsymbol x){\rm d}t\notag\\
&\lesssim |s|^{5}e^{4c_{\rm s}R|s|}\|\boldsymbol{f}\|_{L^2(B_R)^3}^2.
\end{align}

Next we estimate $I_{1,2}(s)$. For any $c\in\mathbb R$, considering the
following power series
\begin{align*}
\frac{e^{{\rm i}cst|\boldsymbol x-\boldsymbol y|}}{|\boldsymbol
x-\boldsymbol y|} = \frac{1}{|\boldsymbol x-\boldsymbol y|} + {\rm i}(cst)
-\frac{(cst)^2}{2!}|\boldsymbol x-\boldsymbol y| - \frac{{\rm
i}(cst)^3}{3!}|\boldsymbol x-\boldsymbol y|^2
+ \frac{(cst)^4}{4!}|\boldsymbol x-\boldsymbol y|^3 + \cdots,
\end{align*}
we obtain
\begin{align}\label{lema2_s2}
\frac{1}{(st)^2}\nabla_{\boldsymbol y}\nabla_{\boldsymbol
y}^{\top}&\bigg(\frac{e^{{\rm i}c_{\rm s}st|\boldsymbol x-\boldsymbol
y|}}{|\boldsymbol x-\boldsymbol y|} - \frac{e^{{\rm i}c_{\rm p}st|\boldsymbol
x-\boldsymbol y|}}{|\boldsymbol x-\boldsymbol y|}\bigg)=-\frac{1}{2}(c_{\rm
s}^2 - c_{\rm p}^2)\nabla_{\boldsymbol y}\nabla_{\boldsymbol y}^{\top}|\boldsymbol
x-\boldsymbol y| \notag\\
&- \frac{{\rm i}(st)}{3!}(c_{\rm s}^3-c_{\rm p}^3)\nabla_{\boldsymbol
y}\nabla_{\boldsymbol
y}^{\top}|\boldsymbol x-\boldsymbol y|^2  +\frac{(st)^2}{4!}(c_{\rm s}^4 -
c_{\rm p}^4)\nabla_{\boldsymbol y}\nabla_{\boldsymbol
y}^{\top}|\boldsymbol x-\boldsymbol y|^3+\cdots.
\end{align}
Substituting \eqref{lema2_s2} into $I_{1, 2}(s)$ and using the integration by
parts, we have
\begin{align*}
I_{1,2}(s) &= \int_0^1 |s|^5 t^4 \int_{\Gamma_R} \bigg|\int_{\Omega}
\nabla_{\boldsymbol y}\nabla_{\boldsymbol y}^{\top}\Big(\frac{1}{2}(c_{\rm s}^2
-c_{\rm p}^2)|\boldsymbol x-\boldsymbol y|+\cdots\Big)\cdot
\boldsymbol{f}(\boldsymbol y){\rm d}\boldsymbol y\bigg|^2 {\rm
d}\gamma(\boldsymbol x) {\rm d} t\\
&=\int_0^1 |s|^5 t^4 \int_{\Gamma_R} \bigg|\int_{\Omega}
\Big(\frac{1}{2}(c_{\rm s}^2 - c_{\rm p}^2)|\boldsymbol x-\boldsymbol
y|+\cdots\Big)\nabla_{\boldsymbol y}\nabla_{\boldsymbol y}\cdot
\boldsymbol{f}(\boldsymbol y){\rm d}\boldsymbol y\bigg|^2 {\rm
d}\gamma(\boldsymbol x) {\rm d} t.
\end{align*}
Noting $c_{\rm p}<c_{\rm s}$, we have
\begin{align}\label{lema2_s3}
&\bigg|\frac{1}{2}(c_{\rm s}^2 - c_{\rm p}^2)|\boldsymbol x-\boldsymbol y| +
\frac{{\rm
i}(st)}{3!}(c_{\rm s}^3-c_{\rm p}^3)|\boldsymbol x-\boldsymbol y|^2
-\frac{(st)^2}{4!}(c_{\rm p}^4 -c_{\rm s}^4)|\boldsymbol x-\boldsymbol
y|^3+\cdots\bigg|\notag\\
&\leq \frac{1}{2}c_{\rm s}^2|\boldsymbol x-\boldsymbol y| +
\frac{|s|}{3!}c_{\rm s}^3|\boldsymbol
x-\boldsymbol y|^2+\frac{|s|^2}{4!}c_{\rm s}^4|\boldsymbol x-\boldsymbol
y|^3+\cdots\notag\\
&\leq  c_{\rm s}^2|\boldsymbol x-\boldsymbol y|\Big( \frac{1}{2} +
\frac{c_{\rm s}|s||\boldsymbol x-\boldsymbol y|}{3!}+\frac{c_{\rm
s}^2|s|^2|\boldsymbol
x-\boldsymbol y|^2}{4!}+\cdots \Big)\notag\\
&\leq 2 c_{\rm s}^2 R e^{c_{\rm s}|s||\boldsymbol x-\boldsymbol y|}\lesssim  e^{2
c_{\rm s} R|s|}.
\end{align}
Using \eqref{lema2_s3} and the Cauchy--Schwarz inequality gives
\begin{align}\label{lema2_s4}
|I_{1,2}(s)|&\lesssim \int_0^1|s|^{5}t^{4}\int_{\Gamma_R}\Big(\int_{B_R}
|\nabla_{\boldsymbol y}\nabla_{\boldsymbol y}\cdot
\boldsymbol{f}(\boldsymbol y)|^2 {\rm d}\boldsymbol y\Big)
\Big(\int_{\Omega} e^{4 c_{\rm s} R|s|}{\rm d}\boldsymbol y\Big) {\rm
d}\gamma(\boldsymbol x){\rm d}t\notag\\
&\lesssim |s|^{5}e^{4c_{\rm s}R|s|}\|\boldsymbol{f}\|_{H^2(B_R)^3}^2,
\end{align}
Combining \eqref{lema2_s1} and \eqref{lema2_s4} proves \eqref{n31}.

For $I_{2}(s)$, we have from \eqref{tbo}, \eqref{lema2_s2}, and the integrations
by parts that
\[
|I_2(s)|\lesssim I_{2,1}(s)+I_{2,2}(s),
\]
where
\begin{align*}
 I_{2, 1}(s)&=\int_0^1 |s|^3 t^2\int_{\Gamma_R}\bigg|
\int_{\Omega}\nabla_{\boldsymbol x}\left(\mathbf{G}_{\rm N}(\boldsymbol
x,\boldsymbol y)\cdot\boldsymbol{f}(\boldsymbol
y)\right)\cdot\boldsymbol\nu(\boldsymbol x){\rm d}\boldsymbol y\bigg|^2{\rm
d}\gamma(\boldsymbol x){\rm d}t\\
&=\int_0^1 |s|^3 t^2\int_{\Gamma_R}\bigg|\int_{\Omega}\mathbf{G}_{\rm
N}(\boldsymbol x,\boldsymbol y)\cdot \left(\nabla_{\boldsymbol
y}\boldsymbol{f}(\boldsymbol y)\cdot\boldsymbol\nu(\boldsymbol x)\right){\rm
d}\boldsymbol y\bigg|^2{\rm d}\gamma(\boldsymbol x){\rm d}t
\end{align*}
and
\begin{align*}
 I_{2, 2}(s)=&\int_0^1 |s|^3
t^2\int_{\Gamma_R}\bigg|\int_{\Omega}\nabla_{\boldsymbol
x}\cdot\left(\mathbf{G}_{\rm N}(\boldsymbol
x,\boldsymbol y)\cdot\boldsymbol{f}(\boldsymbol
y)\right)\boldsymbol\nu(\boldsymbol x){\rm d}\boldsymbol y \bigg|^2{\rm
d}\gamma(\boldsymbol x){\rm d}t\\
=& \int_0^1 |s|^3 t^2\int_{\Gamma_R}\bigg|\int_{\Omega} \mathbf{G}_{\rm
N}(\boldsymbol x,\boldsymbol y)\cdot
(\nabla_{\boldsymbol y}\cdot\boldsymbol{f}(\boldsymbol
y)) \boldsymbol\nu(\boldsymbol x){\rm d}\boldsymbol y \bigg|^2{\rm
d}\gamma(\boldsymbol x){\rm d}t\\
\lesssim &\int_0^1 |s|^3 t^2\int_{\Gamma_R}\bigg|\int_{\Omega}
\frac{e^{{\rm i}\kappa_{\rm s}|\boldsymbol x-\boldsymbol y|}}{|\boldsymbol
x-\boldsymbol y|}(\nabla_{\boldsymbol y} \cdot
\boldsymbol{f}(\boldsymbol y))\boldsymbol\nu(\boldsymbol x){\rm d}\boldsymbol
y \bigg|^2 {\rm d}\gamma(\boldsymbol x){\rm d}t\\
&\qquad+ \int_0^1 |s|^3 t^2\int_{\Gamma_R}\bigg| \int_{\Omega}
 \Big(\frac{1}{2}(c_{\rm s}^2 - c_{\rm p}^2)|\boldsymbol x-\boldsymbol y|+
\frac{{\rm i}(st)}{3!}(c_{\rm s}^3-c_{\rm p}^3)|\boldsymbol x-\boldsymbol y|^2
\\
&\qquad-\frac{(st)^2}{4!}(c_{\rm p}^4- c_{\rm s}^4)|\boldsymbol x-\boldsymbol
y|^3+\cdots \Big)\nabla_{\boldsymbol y} \cdot \Big(\nabla_{\boldsymbol
y}\nabla_{\boldsymbol y}\cdot\boldsymbol{f}(\boldsymbol
y)\Big)\boldsymbol\nu(\boldsymbol x){\rm
d}\boldsymbol y\bigg|^2  {\rm d}\gamma(\boldsymbol x){\rm d}t.
\end{align*}
Following the similar steps for $I_{1, 1}(s)$ and $I_{1, 2}(s)$, we may
estimate $I_{2, 1}(s)$ and $I_{2, 2}(s)$, respectively, and prove the inequality
\eqref{n32}.

(ii) Consider $d=2$. Similarly, let $\omega = st, t\in(0,1)$ and
\[
 \mathbf{G}_{\rm N}(\boldsymbol x, \boldsymbol y;
\omega)=\mathbf{G}_1(\boldsymbol x, \boldsymbol y;
\omega)+\mathbf{G}_2(\boldsymbol x, \boldsymbol y; \omega)
\]
where
\begin{align*}
&\mathbf{G}_1(\boldsymbol x,\boldsymbol y; \omega) = \frac{\rm
i}{4\mu}H_{0}^{(1)}(\kappa_{\rm s}|\boldsymbol x - \boldsymbol
y|)\mathbf{I}_2,\\
&\mathbf{G}_2(\boldsymbol x,\boldsymbol y; \omega) = \frac{\rm
i}{4\omega^2}\nabla_{\boldsymbol x}\nabla_{\boldsymbol
x}^{\top}\Big(H_{0}^{(1)}(\kappa_{\rm s}|\boldsymbol x - \boldsymbol y|) -
H_{0}^{(1)}(\kappa_{\rm p}|\boldsymbol x - \boldsymbol y|)\Big).
\end{align*}
Noting \eqref{cj}, we get
\begin{align*}
|I_1(s)|\lesssim I_{1,1}(s) + I_{1,2}(s),
\end{align*}
where
\begin{align*}
I_{1,1}(s) &= \int_0^1 |s|^4 t^3 \int_{\Gamma_R} \bigg|\int_{\Omega}
\mathbf{G}_1(\boldsymbol x,\boldsymbol y; st)\cdot \boldsymbol{f}(\boldsymbol
y){\rm d}\boldsymbol y\bigg|^2 {\rm d}\gamma(\boldsymbol x) {\rm d} t \\
& \lesssim \int_0^1 |s|^4 t^3 \int_{\Gamma_R} \bigg|\int_{\Omega}
H_{0}^{(1)}(c_{\rm s}st|\boldsymbol x - \boldsymbol y|) \mathbf{I}_2\cdot
\boldsymbol{f}(\boldsymbol y){\rm d}\boldsymbol y\bigg|^2 {\rm
d}\gamma(\boldsymbol x) {\rm d} t
\end{align*}
and
\begin{align*}
I_{1,2}(s) &= \int_0^1 |s|^4 t^3 \int_{\Gamma_R} \bigg|\int_{\Omega}
\mathbf{G}_2(\boldsymbol x,\boldsymbol y; st)\cdot \boldsymbol{f}(\boldsymbol
y){\rm d}\boldsymbol y\bigg|^2 {\rm d}\gamma(\boldsymbol x) {\rm d} t \\
& \lesssim\int_0^1 |s|^4 t^3 \int_{\Gamma_R} \bigg|\int_{\Omega}
\frac{1}{(st)^2}\nabla_{\boldsymbol y}\nabla_{\boldsymbol
y}^{\top}\big(H_{0}^{(1)}(c_{\rm s}st|\boldsymbol x - \boldsymbol y|) -
H_{0}^{(1)}(c_{\rm p}st|\boldsymbol x - \boldsymbol y|)\big)\cdot
\boldsymbol{f}(\boldsymbol y){\rm d}\boldsymbol y\bigg|^2 {\rm
d}\gamma(\boldsymbol x) {\rm d} t.
\end{align*}
Here we have used
\[
 \nabla_{\boldsymbol x}\nabla_{\boldsymbol
x}^{\top}\big(H_{0}^{(1)}(\kappa_{\rm s}|\boldsymbol x - \boldsymbol y|) -
H_{0}^{(1)}(\kappa_{\rm p}|\boldsymbol x - \boldsymbol
y|)\big)=\nabla_{\boldsymbol y}\nabla_{\boldsymbol
y}^{\top}\big(H_{0}^{(1)}(\kappa_{\rm s}|\boldsymbol x - \boldsymbol y|) -
H_{0}^{(1)}(\kappa_{\rm p}|\boldsymbol x - \boldsymbol y|)\big).
\]

First we estimate $I_{1, 1}(s)$. Recall $H_0^{(1)}(z)=J_0(z) + {\rm
i}Y_0(z)$ and the expansions of $J_0, Y_0$ in \cite{W-22}:
\[
J_0(z)=\sum_{k=0}^{\infty}
\frac{(-1)^k}{4^k} \frac{z^{2k}}{(k!)^2},\quad
 Y_0(z)=\frac{2}{\pi}\left({\rm ln}(\frac{z}{2}) + c_0\right)J_0(z) +
\frac{2}{\pi}\sum_{k=1}^{\infty}(-1)^{k+1}H_k\frac{z^{2k}}{4^k (k!)^2},
\]
where $c_0=0.5772...$ is the Euler--Mascheroni constant and $H_k$ is a
harmonic number defined by
\[
H_k=1+\frac{1}{2}+\cdots+\frac{1}{k}.
\]
It is easy to verify
\begin{equation}\label{fi}
4^k(k!)^2\geq(2k)!,
\end{equation}
which gives
\begin{align*}
|J_0(c_{\rm s} st|\boldsymbol x - \boldsymbol y|)| &=
\left|c_{\rm s}^2\sum_{k=0}^{\infty}(-1)^k\frac{(c_{\rm s}st)^{2k-2}|\boldsymbol
x-\boldsymbol y|^{2k}}{4^k(k!)^2}\right|\\
&\leq c_{\rm s}^2|\boldsymbol x-\boldsymbol
y|^2\sum_{k=0}^{\infty}\frac{(c_{\rm s}|s||\boldsymbol x-\boldsymbol
y|)^{2k}}{(2k)!} \leq c_{\rm s}^2|\boldsymbol x-\boldsymbol y|^2e^{c_{\rm
s}|s||\boldsymbol
x-\boldsymbol y|}\lesssim e^{2 c_{\rm s} R |s|}.
\end{align*}
On the other hand, it can be shown that
\begin{equation}\label{hfi}
\frac{H_k}{4^{k}(k)!^2}\lesssim\frac{1}{(2k)!},
\end{equation}
which yields
\begin{align*}
|Y_0(c_{\rm s} st|\boldsymbol x - \boldsymbol y|)| \lesssim |J_0(c_{\rm s}
st|\boldsymbol x
- \boldsymbol y|)|+ \sum_{k=1}^{\infty}\frac{(c_{\rm s} |s| t|\boldsymbol x -
\boldsymbol y|)^{2k}}{(2k)!}\lesssim e^{2 c_{\rm s} R |s|}.
\end{align*}
It follows from the Cauchy--Schwarz inequality that
\begin{align}\label{lema2_s5}
|I_{1,1}(s)| &\lesssim \int_0^1 |s|^4 t^3 \int_{\Gamma_R} e^{4R c_{\rm s}|s|}
\|\boldsymbol{f}\|^2_{H^2(B_R)^2} {\rm d}\gamma(\boldsymbol x) {\rm d}
t\notag\\
&\lesssim |s|^4  \|\boldsymbol{f}\|^2_{L^2(B_R)^2} \lesssim |s|^3
(|s|^{\frac{3}{2}}+|s|^{\frac{1}{2}}+1)^2
e^{4c_{\rm s}R|s|}\|\boldsymbol{f}\|_{L^2(B_R)^2}^2.
\end{align}

Next we estimate $I_{1,2}(s)$, which requires to evaluate the integral
\[
\left|\int_\Omega\mathbf{G}_2(\boldsymbol x,\boldsymbol y)\cdot
\boldsymbol{f}(\boldsymbol y){\rm d}\boldsymbol y\right|^2,
\]
where
\begin{align*}
\mathbf{G}_2(\boldsymbol x,\boldsymbol y; \omega) =& \frac{\rm i}{4\omega^2}
\nabla_{\boldsymbol x}\nabla_{\boldsymbol x}^{\top}\left(J_{0}(\kappa_{\rm
s}|\boldsymbol x - \boldsymbol y|) - J_{0}(\kappa_{\rm p}|\boldsymbol x -
\boldsymbol y|)\right)\\
& - \frac{1}{4\omega^2} \nabla_{\boldsymbol x}\nabla_{\boldsymbol
x}^{\top}\left(Y_{0}(\kappa_{\rm s}|\boldsymbol x - \boldsymbol y|) -
Y_{0}(\kappa_{\rm p}|\boldsymbol x - \boldsymbol y|)\right).
\end{align*}
Letting $\omega = st$ and using the expansion of $J_0$, we obtain
\begin{align*}
&\frac{1}{(st)^2}\nabla_{\boldsymbol y}\nabla_{\boldsymbol
y}^{\top}\left(J_0(c_{\rm s}st|\boldsymbol x-\boldsymbol y| -
J_0(c_{\rm p}st|\boldsymbol
x-\boldsymbol y|)\right)\\
&=c_{\rm s}^2\sum_{k=1}^{\infty}(-1)^k\frac{(c_{\rm
s}st)^{2k-2}\nabla_{\boldsymbol
y}\nabla_{\boldsymbol y}^{\top}|\boldsymbol x-\boldsymbol
y|^{2k}}{4^k(k!)^2}-c_{\rm p}^2\sum_{k=1}^{\infty}(-1)^k\frac{(c_{\rm
p}st)^{2k-2}\nabla_{
\boldsymbol y}\nabla_{\boldsymbol y}^{\top}|\boldsymbol x-\boldsymbol
y|^{2k}}{4^k(k!)^2}.
\end{align*}
It follows from the integration by parts that
\begin{align}\label{lema2_s6}
&\int_{\Omega}\frac{1}{(st)^2} \Big(\nabla_{\boldsymbol
y}\nabla_{\boldsymbol y}^{\top}\big(J_0(c_{\rm s}st|\boldsymbol x-\boldsymbol
y| - J_0(c_{\rm p}st|\boldsymbol x-\boldsymbol y|)\big)\Big)\cdot
\boldsymbol{f}(\boldsymbol y){\rm d}\boldsymbol y\notag\\
=&\int_{\Omega}\bigg(c_{\rm s}^2\sum_{k=1}^{\infty}(-1)^k\frac{(c_{\rm
s}st)^{2k-2}|\boldsymbol x-\boldsymbol y|^{2k}}{4^k(k!)^2}\bigg)
\nabla_{\boldsymbol y}\nabla_{\boldsymbol y}\cdot\boldsymbol{f}(\boldsymbol
y){\rm d}\boldsymbol y\notag\\
&\quad-\int_{\Omega}\bigg(c_{\rm p}^2\sum_{k=1}^{\infty}(-1)^k\frac{(c_{\rm
p}st)^{2k-2}|\boldsymbol x-\boldsymbol y|^{2k}}{4^k(k!)^2}\bigg)
\nabla_{\boldsymbol y}\nabla_{\boldsymbol y}\cdot\boldsymbol{f}(\boldsymbol
y){\rm d}\boldsymbol y.
\end{align}
Using the inequality \eqref{fi}, we get for any $c>0$ that
\begin{equation}\label{lema2_s7}
\left|c^2\sum_{k=1}^{\infty}(-1)^k\frac{(cst)^{2k-2}|\boldsymbol x-\boldsymbol
y|^{2k}}{4^k(k!)^2}\right|\leq c^2|\boldsymbol x-\boldsymbol
y|^2\sum_{k=0}^{\infty}\frac{(c|s||\boldsymbol
x-\boldsymbol y|)^{2k}}{(2k)!}\leq c^2|\boldsymbol x-\boldsymbol
y|^2e^{c|s||\boldsymbol x-\boldsymbol y|}.
\end{equation}
Combining \eqref{lema2_s6}--\eqref{lema2_s7} and using the Cauchy--Schwarz
inequality, we obtain
\begin{align}\label{lema2_s8}
&\left|\int_{\Omega} \frac{1}{(st)^2}\left(\nabla_{\boldsymbol
y}\nabla_{\boldsymbol y}^{\top}\left(J_0(c_{\rm s}st|\boldsymbol x-\boldsymbol
y| - J_0(c_{\rm p}st|\boldsymbol x-\boldsymbol y|)\right)\right)\cdot
\boldsymbol{f}(\boldsymbol y){\rm d}\boldsymbol y\right|^2\cr
&\lesssim \left(\int_{B_R}\left|\nabla_{\boldsymbol y}\nabla_{\boldsymbol
y}\cdot\boldsymbol{f}(\boldsymbol y)\right|^2{\rm d}\boldsymbol y\right)\left(
\int_{\Omega} c_{\rm s}^4|\boldsymbol x-\boldsymbol y|^4e^{2c_{\rm
s}|s||\boldsymbol
x-\boldsymbol y|}{\rm d}\boldsymbol y \right)\lesssim e^{4R c_{\rm s}|s|}
\|\boldsymbol{f}\|^2_{H^2(B_R)^2}.
\end{align}

Let
\begin{align*}
\frac{1}{(st)^2}\nabla_{\boldsymbol y}\nabla_{\boldsymbol
y}^{\top}\left(Y_0(c_{\rm s}st|\boldsymbol x-\boldsymbol y| -
Y_0(c_{\rm p}st|\boldsymbol
x-\boldsymbol y|)\right)=\mathbf{A}+\mathbf{B},
\end{align*}
where
\begin{align*}
\mathbf{A}=&\frac{2}{\pi}\frac{1}{(st)^2}\nabla_{\boldsymbol
y}\nabla_{\boldsymbol y}^{\top}\left[ \Big( {\rm
ln}(\frac{1}{2}c_{\rm s}st|\boldsymbol x-\boldsymbol y|)+\gamma
\Big)J_0(c_{\rm s}st|\boldsymbol x-\boldsymbol y|)\right]\\
&-\frac{2}{\pi}\frac{1}{(st)^2}\nabla_{\boldsymbol y}\nabla_{\boldsymbol
y}^{\top}\left[\Big( {\rm ln}(\frac{1}{2}c_{\rm p}st|\boldsymbol x-\boldsymbol
y|)+\gamma \Big)J_0(c_{\rm p}st|\boldsymbol x-\boldsymbol y|)\right],\\
\mathbf{B}=&\frac{2}{\pi}\frac{1}{(st)^2} \nabla_{\boldsymbol
y}\nabla_{\boldsymbol
y}^{\top}\sum_{k=1}^{\infty}(-1)^{k+1}H_k\frac{
(c_{\rm s}st|\boldsymbol x-\boldsymbol
y|)^{2k}}{4^k(k!)^2}\\
&-\frac{2}{\pi}\frac{1}{(st)^2}\nabla_{\boldsymbol y}\nabla_{\boldsymbol
y}^{\top}\sum_{k=1}^{\infty}(-1)^{k+1}H_k\frac{(c_{\rm p}st|\boldsymbol
x-\boldsymbol
y|)^{2k}}{4^k(k!)^2}.
\end{align*}
We consider the matrix $\mathbf{B}$ first. Using the
integration by parts yields
\begin{align*}
\int_{\Omega} \mathbf{B} \cdot \boldsymbol{f}(\boldsymbol y){\rm
d}\boldsymbol y
=& \int_{\Omega} \frac{2}{\pi} c_{\rm s}^2 |\boldsymbol x-\boldsymbol y|^2
\sum_{k=0}^{\infty}(-1)^{k+2}H_{k+1}\frac { (c_{\rm s}st|\boldsymbol
x-\boldsymbol
y|)^{2k}}{4^{k+1}((k+1)!)^2}
\nabla_{\boldsymbol y}\nabla_{\boldsymbol
y}\cdot\boldsymbol{f}(\boldsymbol y){\rm d}\boldsymbol y\\
&- \int_{\Omega} \frac{2}{\pi} c_{\rm p}^2 |\boldsymbol x-\boldsymbol y|^2
\sum_{k=0}^{\infty}(-1)^{k+2}H_{k+1}\frac { (c_{\rm p}st|\boldsymbol
x-\boldsymbol
y|)^{2k}}{4^{k+1}((k+1)!)^2}
\nabla_{\boldsymbol y}\nabla_{\boldsymbol y}\cdot\boldsymbol{f}(\boldsymbol
y){\rm d}\boldsymbol y.
\end{align*}
It is easy to verify from \eqref{hfi} that
\[
\frac{H_{k+1}}{4^{k+1}((k+1)!)^2}\lesssim\frac{1}{
(2k)!},
\]
which gives for any $c>0$ that
\[
\left|\sum_{k=0}^{\infty}(-1)^{k+2}H_{k+1}\frac{(cst|\boldsymbol
x-\boldsymbol y|)^{2k}}{4^{k+1}((k+1)!)^2}\right|
\lesssim\sum_{k=0}^{\infty}\frac{(c|s||\boldsymbol x-\boldsymbol
y|)^{2k}}{(2k)!}\lesssim e^{c|s||\boldsymbol x-\boldsymbol y|}.
\]
Using the Cauchy--Schwarz inequality and noting $c_{\rm p}<c_{\rm s}$, we have
\begin{align}\label{lema2_s9}
\left|\int_{\Omega} \mathbf{B} \cdot \boldsymbol{f}(\boldsymbol y){\rm
d}\boldsymbol y\right|& \lesssim \Big(\int_{B_R}\left|\nabla_{\boldsymbol
y}\nabla_{\boldsymbol y}\cdot\boldsymbol{f}\right|^2{\rm d}\boldsymbol y\Big)
\Big(\int_{\Omega} |\boldsymbol x-\boldsymbol y|^4e^{2c_{\rm s}|s||\boldsymbol
x-\boldsymbol y|}{\rm d}\boldsymbol y\Big)\notag\\
&\lesssim e^{4Rc_{\rm s}|s|}\|\boldsymbol{f}\|^2_{H^2(B_R)^2}.
\end{align}

Now we consider the matrix $\mathbf{A}$. Using the identity for any two
smooth functions $l$ and $h$:
\begin{align*}
\nabla_{\boldsymbol y}\nabla_{\boldsymbol y}^{\top}l(\boldsymbol y)h(\boldsymbol
y)=h(\boldsymbol y)\nabla_{\boldsymbol y}\nabla_{\boldsymbol
y}^{\top}l(\boldsymbol y)+l(\boldsymbol y)\nabla_{\boldsymbol
y}\nabla_{\boldsymbol y}^{\top}h(\boldsymbol y)
+\nabla_yl(\boldsymbol y)^{\top}\nabla_{\boldsymbol y} h(\boldsymbol
y)+\nabla_{\boldsymbol y}l(\boldsymbol y)^{\top}\nabla_{\boldsymbol
y}h(\boldsymbol y)
\end{align*}
and
\[
{\rm ln}\Big(\frac{1}{2}cst|\boldsymbol x-\boldsymbol y|\Big)={\rm
ln}\Big(\frac{1}{2}cst\Big) + {\rm ln}(|\boldsymbol x-\boldsymbol y|),
\]
we split $\mathbf{A}$ into three parts:
\[
\mathbf{A} = \mathbf{A}_1 + \mathbf{A}_2 + \mathbf{A}_3,
\]
where
\begin{align*}
\mathbf{A}_1=&\frac{2}{\pi}\frac{1}{(st)^2}\nabla_{\boldsymbol
y}\nabla_{\boldsymbol y}^{\top} \left( {\rm ln}(|\boldsymbol x-\boldsymbol
y|)\right)J_0(c_{\rm s}st|\boldsymbol x-\boldsymbol y|)\\
&-\frac{2}{\pi}\frac{1}{(st)^2}\nabla_{\boldsymbol y}\nabla_{\boldsymbol
y}^{\top}\left( {\rm ln}(|\boldsymbol x-\boldsymbol y|)
\right)J_0(c_{\rm p}st|\boldsymbol x-\boldsymbol y|),\\
\mathbf{A}_2=&\frac{2}{\pi}\frac{1}{(st)^2}\left(\nabla_{\boldsymbol y}
J_0(c_{\rm s}st|\boldsymbol x-\boldsymbol y|)-\nabla_{\boldsymbol y}
J_0(c_{\rm p}st|\boldsymbol x-\boldsymbol
y|)\right)^{\top}\cdot\nabla_{\boldsymbol y}
\left( {\rm ln}(|\boldsymbol x-\boldsymbol y|)\right)\\
&+\frac{2}{\pi}\frac{1}{(st)^2}\left(\nabla_{\boldsymbol y}
J_0(c_{\rm s}st|\boldsymbol x-\boldsymbol y|)-\nabla_{\boldsymbol y}
J_0(c_{\rm p}st|\boldsymbol x-\boldsymbol
y|)\right)^{\top}\cdot\nabla_{\boldsymbol
y}\left( {\rm ln}(|\boldsymbol x-\boldsymbol y|) \right) \\
\mathbf{A}_3=&\frac{2}{\pi}\frac{1}{(st)^2} \Big(
{\rm ln}(\frac{1}{2}c_{\rm s}st|\boldsymbol x-\boldsymbol y|)+\gamma
\Big)\nabla_{\boldsymbol y}\nabla_{\boldsymbol y}^{\top}J_0(c_{\rm
s}st|\boldsymbol
x-\boldsymbol y|)\\
&- \frac{2}{\pi}\frac{1}{(st)^2}\Big( {\rm ln}(\frac{1}{2}c_{\rm
p}st|\boldsymbol x-\boldsymbol y|)+\gamma \Big)\nabla_{\boldsymbol y}\nabla_{\boldsymbol
y}^{\top}J_0(c_{\rm p}st|\boldsymbol x-\boldsymbol y|).
\end{align*}
For $\mathbf{A}_1$, we have from \eqref{fi} and the expansion of
$J_0$ that
\begin{align}\label{lema2_s10}
&\left|\frac{1}{(st)^2}\left(J_0(c_{\rm s}s|\boldsymbol x-\boldsymbol y|) -
J_0(c_{\rm p}st|\boldsymbol x-\boldsymbol y|)\right)\right| \lesssim
|\boldsymbol
x-\boldsymbol y|^2e^{c_{\rm s}|s||\boldsymbol x-\boldsymbol y|}.
\end{align}
Noting ${\rm ln}|\boldsymbol x-\boldsymbol y|$ is analytic when $\boldsymbol
x\in \Gamma_R, \boldsymbol y\in \Omega$, we get from the Cauchy--Schwarz
inequality that
\begin{align}\label{lema2_s11}
\left|\int_{\Omega}\mathbf{A}_1 \cdot\boldsymbol{f}(\boldsymbol
y){\rm d}\boldsymbol y\right|^2 \lesssim e^{4R
c_{\rm s}|s|}\int_{\Omega}\left|\nabla_{\boldsymbol y}\nabla_{\boldsymbol
y}^{\top}
{\rm ln}(|\boldsymbol x-\boldsymbol y|) \cdot \boldsymbol{f}(\boldsymbol
y)\right|^2{\rm d}\boldsymbol y \lesssim e^{4R c_{\rm s}|s|}
\|\boldsymbol{f}\|^2_{L^{2}(B_R)^2}.
\end{align}
For $\mathbf A_2$, using the analyticity of ${\rm ln}|\boldsymbol x-\boldsymbol
y|$ for $\boldsymbol x\in \Gamma_R, \boldsymbol y\in\Omega$, the integration by
parts, and the estimate of \eqref{lema2_s10}, we have
\begin{align}\label{lema2_s12}
\left|\int_{\Omega}\mathbf{A}_2 \cdot \boldsymbol{f}(\boldsymbol
y){\rm d}\boldsymbol y\right|^2 \lesssim e^{4R c_{\rm s}|s|}
\|\boldsymbol{f}\|^2_{H^{1}(B_R)^2}.
\end{align}
Now we consider $\mathbf{A}_3.$ Noting
\[
{\rm ln}\Big(\frac{1}{2}cst|\boldsymbol
x-\boldsymbol y|\Big)+c_0 = {\rm ln}\Big(\frac{1}{2}c|\boldsymbol
x-\boldsymbol y|\Big)+c_0+{\rm ln}(st)
\]
and using the expansion of $J_0$:
\[
\nabla_{\boldsymbol y}\nabla_{\boldsymbol y}^{\top}J_0(cst|\boldsymbol
x-\boldsymbol y|) = \nabla_{\boldsymbol y}\nabla_{\boldsymbol
y}^{\top}\sum_{k=1}^{\infty} \frac{(-1)^k}{4^k} \frac{(cst|\boldsymbol
x-\boldsymbol y|)^{2k}}{(k!)^2},
\]
we have from the integration by parts that
\[
 \int_{\Omega}\mathbf{A}_3\cdot \boldsymbol{f}(\boldsymbol y){\rm d}\boldsymbol
y=\boldsymbol A_{3, 1}+\boldsymbol A_{3, 2},
\]
where
\begin{align*}
 \boldsymbol A_{3, 1}=&\int_{\Omega}\frac{2}{\pi}\frac{1}{(st)^2}
\sum_{k=1}^{\infty}
\frac{(-1)^k}{4^k} \frac{(c_{\rm s}st|\boldsymbol x-\boldsymbol
y|)^{2k}}{(k!)^2}
\nabla_{\boldsymbol y}\nabla_{\boldsymbol y}\cdot\left[ \Big({\rm
ln}(\frac{1}{2}c_{\rm s}|\boldsymbol x-\boldsymbol y|) +
c_0\Big)\boldsymbol{f}(\boldsymbol y)\right]{\rm d}\boldsymbol y\\
&-\int_{\Omega}\frac{2}{\pi}\frac{1}{(st)^2}  \sum_{k=1}^{\infty}
\frac{(-1)^k}{4^k} \frac{(c_{\rm p}st|\boldsymbol x-\boldsymbol
y|)^{2k}}{(k!)^2}\nabla_{\boldsymbol y}\nabla_{\boldsymbol y}\cdot
\left[\Big({\rm ln}(\frac{1}{2}c_{\rm p}|\boldsymbol x-\boldsymbol y|) +
c_0\Big)\boldsymbol{f}(\boldsymbol y)\right]{\rm d}\boldsymbol y
\end{align*}
and
\begin{align*}
\boldsymbol A_{3,2}=&\int_{\Omega}\frac{2}{\pi}\frac{1}{(st)^2} {\rm ln}(st)
J_0(c_{\rm s}st|\boldsymbol x-\boldsymbol y|) \nabla_{\boldsymbol
y}\nabla_{\boldsymbol y}\cdot \boldsymbol{f}(\boldsymbol y){\rm d}\boldsymbol
y\\
&-\int_{\Omega}\frac{2}{\pi}\frac{1}{(st)^2} {\rm ln}(st)
J_0(c_{\rm p}st|\boldsymbol x-\boldsymbol y|) \nabla_{\boldsymbol
y}\nabla_{\boldsymbol y}\cdot \boldsymbol{f}(\boldsymbol y){\rm d}\boldsymbol y.
\end{align*}
Since the function ${\rm ln}(\frac{1}{2}c|\boldsymbol x-\boldsymbol y|) +c_0$ is
analytic for $\boldsymbol y\in\Omega$, $\boldsymbol x\in\Gamma_R$,
we have from \eqref{lema2_s7} and the Cauchy--Schwarz
inequality that
\begin{equation}\label{lema2_s13}
|\boldsymbol A_{3,1}|^2\lesssim e^{4R
c_{\rm s}|s|} \|\boldsymbol{f}\|^2_{H^{2}(B_R)^2}.
\end{equation}
It is easy to verify that
\[
\left|(|s|t)^{\frac{1}{2}}~{\rm ln}(st)\right| \lesssim
(|s|t)^{\frac{1}{2}}~(|s|t+\frac{1}{(|s|t)^{\frac{1}{2}}}+\pi)\lesssim
|s|^{\frac{3}{2}}+\pi|s|^{\frac{1}{2}}+1.
\]
Hence
\begin{align*}
&\left|(|s|t)^{\frac{1}{2}}  {\rm ln}(st) \frac{1}{(st)^2}
\left(J_0(c_{\rm s}st|\boldsymbol x-\boldsymbol y|)-J_0(c_{\rm p}st|\boldsymbol
x-\boldsymbol y|)\right)\right|\\
&\lesssim(|s|^{\frac{3}{2}}+\pi|s|^{\frac{1}{2}}+1)\Bigg|  c_{\rm
s}^2|\boldsymbol
x-\boldsymbol y|^2\sum_{k=0}^{\infty} \frac{(-1)^{k+1}}{4^{k+1}}
\frac{(c_{\rm s}st|\boldsymbol x-\boldsymbol y|)^{2k}}{((k+1)!)^2} \\
&\qquad - c_{\rm p}^2|\boldsymbol
x-\boldsymbol y|^2\sum_{k=0}^{\infty} \frac{(-1)^{k+1}}{4^{k+1}}
\frac{(c_{\rm p}st|\boldsymbol x-\boldsymbol y|)^{2k}}{((k+1)!)^2}\Bigg|\\
&\lesssim c_{\rm s}^2|\boldsymbol x-\boldsymbol y|^2
(|s|^{\frac{3}{2}}+\pi|s|^{\frac{1}{2}}+1) \bigg( \sum_{k=0}^{\infty}
\frac{(c_{\rm s}|s||\boldsymbol x-\boldsymbol y|)^{2k}}{(2k)!} +
\sum_{k=0}^{\infty} \frac{(c_{\rm p}|s||\boldsymbol x-\boldsymbol y|)^{2k}}{(2k)!}\bigg)\\
&\lesssim c_{\rm s}^2|\boldsymbol x-\boldsymbol y|^2
(|s|^{\frac{3}{2}}+\pi|s|^{\frac{1}{2}}+1) e^{c_{\rm s}|s||\boldsymbol
x-\boldsymbol y|}.
\end{align*}
Multiplying $\boldsymbol A_{3,2}$ by $(|s|t)^{\frac{1}{2}}$ and using the
Cauchy--Schwarz inequality, we obtain
\begin{align}
|(|s|t)^{\frac{1}{2}}\boldsymbol A_{3,2}|^2
&\lesssim\left( \int_{B_R}|\nabla_{\boldsymbol
y}\nabla_{\boldsymbol y}\cdot\boldsymbol{f}(\boldsymbol y)|^2{\rm d}\boldsymbol
y\right)\left(\int_{\Omega} c_{\rm s}^4|\boldsymbol x-\boldsymbol y|^4
(|s|^{\frac{3}{2}}+\pi|s|^{\frac{1}{2}}+1)^2 e^{2c_{\rm s}|s||\boldsymbol
x-\boldsymbol y|}{\rm d}\boldsymbol y\right)\cr
\label{lema2_s14}&\lesssim (|s|^{\frac{3}{2}}+\pi|s|^{\frac{1}{2}}+1)^2
e^{2c_{\rm s}|s||\boldsymbol x-\boldsymbol y|}\int_{B_R}|\nabla_{\boldsymbol
y}\nabla_{\boldsymbol y}\cdot\boldsymbol{f}(\boldsymbol y)|^2{\rm d}\boldsymbol
y\cr
&\lesssim (|s|^{\frac{3}{2}}+\pi|s|^{\frac{1}{2}}+1)^2 e^{4R
c_{\rm s}|s|}\|\boldsymbol{f}\|^2_{H^2(B_R)^3}.
\end{align}
Combining \eqref{lema2_s8}--\eqref{lema2_s14}, we obtain
\[
\left|(|s|t)^{\frac{1}{2}} \int_{\Omega} \mathbf{G}_2(\boldsymbol x,\boldsymbol
y; st)\cdot \boldsymbol{f}(\boldsymbol y){\rm d}\boldsymbol y\right|^2 \lesssim
(|s|^{\frac{3}{2}}+|s|^{\frac{1}{2}}+1)^2
e^{4Rc_{\rm s}|s_2|}\|\boldsymbol{f}\|_{H^2(B_R)^2}^2,
\]
which implies
\begin{equation}\label{lema2_s15}
|I_{1,2}(s)| \lesssim |s|^3 (|s|^{\frac{3}{2}}+|s|^{\frac{1}{2}}+1)^2
e^{4Rc_{\rm s}|s_2|}\|\boldsymbol{f}\|_{H^2(B_R)^2}^2.
\end{equation}
Combining \eqref{lema2_s5} and \eqref{lema2_s15} completes the proof of the
inequality \eqref{n21}.

For $I_{2}(s)$, we have from \eqref{tbo} and the integration by parts that
\begin{align*}
\int_{\Omega}& D_{\boldsymbol x} \left(\mathbf{G}_{\rm N}(\boldsymbol
x,\boldsymbol y; \omega)\cdot\boldsymbol{f}(\boldsymbol y)\right){\rm
d}\boldsymbol y= \mu \int_{\Omega} \mathbf{G}_{\rm N}(\boldsymbol x,\boldsymbol
y)\cdot \left(\nabla_{\boldsymbol y}\boldsymbol{f}(\boldsymbol y)\cdot
\boldsymbol\nu(\boldsymbol x)\right){\rm d}\boldsymbol y\\
& + (\lambda + \mu)\int_{\Omega} \frac{\rm i}{4  \mu }H_{0}^{(1)}(\kappa_{\rm
s}|\boldsymbol x - \boldsymbol y|)(\nabla_{\boldsymbol y} \cdot
\boldsymbol{f}(\boldsymbol y))\boldsymbol\nu(\boldsymbol x)~{\rm d}\boldsymbol
y\\
&+(\lambda + \mu)\int_{\Omega} \frac{\rm
i}{4\omega^2}\left(H_0^{(1)}(\kappa_{\rm s}|\boldsymbol
x-\boldsymbol y|)-H_0^{(1)}(\kappa_{\rm p}|\boldsymbol x-\boldsymbol y|)
\right)\nabla_{\boldsymbol y}\cdot (\nabla_{\boldsymbol y}\nabla_{\boldsymbol
y}\cdot\boldsymbol f(\boldsymbol y))\boldsymbol\nu(\boldsymbol x){\rm
d}\boldsymbol y.
\end{align*}
Using the estimates for the integrals involving $\mathbf{G}_1(\boldsymbol
x,\boldsymbol y)$ and $\mathbf{G}_2(\boldsymbol x,\boldsymbol y)$, which we have
obtained for $I_1(s)$, and the Cauchy--Schwarz inequality, we may similarly get
\[
|I_{2}(s)| \lesssim |s| (|s|^{\frac{3}{2}}+|s|^{\frac{1}{2}}+1)^2
e^{4Rc_{\rm s}|s_2|}\|\boldsymbol{f}\|_{H^3(B_R)^2}^2,
\]
which shows \eqref{n22}. The proof is now complete.
\end{proof}

\begin{lemm}\label{nhfe}
Let $\boldsymbol{f}\in\mathbb{F}_M(B_R)$. For any $s\ge 1$, the following
estimate holds:
\[
\int_s^{\infty}\omega^{d-1}\|\boldsymbol
u(\cdot, \omega)\|^2_{\Gamma_R}{\rm d}\omega \lesssim s^{-(2m-2d+1)}\|
\boldsymbol{f}\|^2_{H^{m+1}(B_R)^d}.
\]
\end{lemm}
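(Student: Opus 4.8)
The plan is to bound the boundary norm $\|\boldsymbol u(\cdot,\omega)\|_{\Gamma_R}$ pointwise in $\omega$ with arbitrarily fast polynomial decay and then integrate over $(s,\infty)$. By \eqref{us} we have $\boldsymbol u(\boldsymbol x,\omega)=\int_\Omega\mathbf G_{\rm N}(\boldsymbol x,\boldsymbol y;\omega)\cdot\boldsymbol f(\boldsymbol y)\,{\rm d}\boldsymbol y$, and combining this with \eqref{tbo} and \eqref{tbc}, $T_{\rm N}\boldsymbol u(\boldsymbol x,\omega)=\int_\Omega D_{\boldsymbol x}\bigl(\mathbf G_{\rm N}(\boldsymbol x,\boldsymbol y;\omega)\cdot\boldsymbol f(\boldsymbol y)\bigr)\,{\rm d}\boldsymbol y$ for $\boldsymbol x\in\Gamma_R$. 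I would split $\mathbf G_{\rm N}=\mathbf G_1+\mathbf G_2$ exactly as in the proof of Lemma \ref{ni}, use the identity $\nabla_{\boldsymbol x}=-\nabla_{\boldsymbol y}$ acting on the kernels $g_d(\boldsymbol x,\boldsymbol y;\kappa_{\rm p,s})$, and integrate by parts in $\boldsymbol y$ to transfer every $\boldsymbol x$-derivative present in $\mathbf G_2$ and in $D_{\boldsymbol x}$ onto $\boldsymbol f$; since $\boldsymbol f\in H^{m+1}(B_R)^d$ is supported in $\bar\Omega\subset B_{\hat R}$ it extends by zero to an element of $H^{m+1}(\mathbb R^d)^d$, so all its derivatives up to order $m+1$ lie in $L^2$ with support in $\bar\Omega$ and no boundary contributions arise (the same device used already in Lemmas \ref{nfe} and \ref{ni}). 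After this reduction, $\boldsymbol u(\cdot,\omega)$ and $T_{\rm N}\boldsymbol u(\cdot,\omega)$ restricted to $\Gamma_R$ are finite sums of scalar integrals
\[
J_\kappa(\boldsymbol x)=\int_\Omega g_d(\boldsymbol x,\boldsymbol y;\kappa)\,(P\boldsymbol f)(\boldsymbol y)\,{\rm d}\boldsymbol y,\qquad \kappa\in\{\kappa_{\rm p},\kappa_{\rm s}\},
\]
where $P$ is a constant-coefficient differential operator with $\mathrm{ord}\,P\le 2$ for the blocks of $\boldsymbol u$ and $\mathrm{ord}\,P\le 3$ for those of $T_{\rm N}\boldsymbol u$, and where the $\mathbf G_2$-blocks carry the additional prefactor $\omega^{-2}$.

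The decisive observation is that $\mathrm{dist}(\Gamma_R,\bar\Omega)=R-\hat R>0$, so for $\boldsymbol x\in\Gamma_R$ the phase $\phi_{\boldsymbol x}(\boldsymbol y)=|\boldsymbol x-\boldsymbol y|$ is smooth on a neighbourhood of $\bar\Omega$ and non-stationary, $|\nabla_{\boldsymbol y}\phi_{\boldsymbol x}|\equiv 1$. Writing $g_3=\frac{1}{4\pi}e^{{\rm i}\kappa\phi_{\boldsymbol x}}/\phi_{\boldsymbol x}$ (the two-dimensional kernel is discussed below) and introducing $L=\frac{1}{{\rm i}\kappa}\frac{\boldsymbol y-\boldsymbol x}{|\boldsymbol y-\boldsymbol x|}\cdot\nabla_{\boldsymbol y}$, which satisfies $L\,e^{{\rm i}\kappa\phi_{\boldsymbol x}}=e^{{\rm i}\kappa\phi_{\boldsymbol x}}$, I would integrate by parts $N$ further times. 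In spherical coordinates centred at $\boldsymbol x$ the transpose iterates cleanly as $(L^{\top})^N a=c_N\kappa^{-N}r^{1-d}\partial_r^N(r^{d-1}a)$ with $|c_N|=1$ and $r=|\boldsymbol y-\boldsymbol x|\ge R-\hat R$, so that, by the Cauchy--Schwarz inequality over the bounded domain $\Omega$,
\[
|J_\kappa(\boldsymbol x)|\lesssim\kappa^{-N}\int_\Omega r^{1-d}\Bigl|\partial_r^N\bigl(r^{d-1}(P\boldsymbol f)/\phi_{\boldsymbol x}\bigr)\Bigr|\,{\rm d}\boldsymbol y\lesssim\kappa^{-N}\,\|\boldsymbol f\|_{H^{N+\mathrm{ord}\,P}(B_R)^d},
\]
uniformly in $\boldsymbol x\in\Gamma_R$ and $\omega\ge 1$. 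Choosing $N$ in each block so that $N+\mathrm{ord}\,P=m+1$ (the extra $\omega^{-2}$ of the $\mathbf G_2$-blocks compensating for their two fewer oscillatory integrations by parts) and recalling $\kappa_{\rm p,s}=c_{\rm p,s}\omega$, we get $\|\boldsymbol u(\cdot,\omega)\|_{L^2(\Gamma_R)}\lesssim\omega^{-(m+1)}\|\boldsymbol f\|_{H^{m+1}(B_R)^d}$ and $\|T_{\rm N}\boldsymbol u(\cdot,\omega)\|_{L^2(\Gamma_R)}\lesssim\omega^{-m}\|\boldsymbol f\|_{H^{m+1}(B_R)^d}$, whence
\[
\omega^{d-1}\|\boldsymbol u(\cdot,\omega)\|_{\Gamma_R}^2=\omega^{d-1}\|T_{\rm N}\boldsymbol u(\cdot,\omega)\|_{L^2(\Gamma_R)}^2+\omega^{d+1}\|\boldsymbol u(\cdot,\omega)\|_{L^2(\Gamma_R)}^2\lesssim\omega^{\,d-1-2m}\|\boldsymbol f\|_{H^{m+1}(B_R)^d}^2 .
\]
Since $m\ge d$ the exponent $d-1-2m$ is $<-1$, so integration over $(s,\infty)$ yields $\int_s^\infty\omega^{d-1}\|\boldsymbol u(\cdot,\omega)\|_{\Gamma_R}^2\,{\rm d}\omega\lesssim s^{\,d-2m}\|\boldsymbol f\|_{H^{m+1}(B_R)^d}^2\le s^{-(2m-2d+1)}\|\boldsymbol f\|_{H^{m+1}(B_R)^d}^2$, the last inequality because $s\ge 1$ and $d\le 2d-1$.

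The main obstacle is the two-dimensional case, where $g_2=\frac{\rm i}{4}H_0^{(1)}(\kappa|\boldsymbol x-\boldsymbol y|)$ has a logarithmic singularity at the origin, so the clean ``oscillatory factor times smooth amplitude'' structure used above is not immediate. I would follow the splitting already employed in the proof of Lemma \ref{ni}: write $H_0^{(1)}=J_0+{\rm i}Y_0$ and isolate the logarithmic term, so that $H_0^{(1)}(\kappa|\boldsymbol x-\boldsymbol y|)$ becomes an entire function of $\kappa|\boldsymbol x-\boldsymbol y|$ plus $\ln(\kappa|\boldsymbol x-\boldsymbol y|)\,J_0(\kappa|\boldsymbol x-\boldsymbol y|)$ up to bounded factors. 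The entire part and the series part of $Y_0$ are treated by the same non-stationary-phase integration by parts (for $\boldsymbol x\in\Gamma_R$, $\boldsymbol y\in\Omega$ and $\omega\ge s\ge 1$ the argument $\kappa|\boldsymbol x-\boldsymbol y|$ stays bounded below by the fixed constant $c_{\rm p}(R-\hat R)$, so one may equivalently use the large-argument form $H_0^{(1)}(z)=e^{{\rm i}z}$ times a symbol of order $-\tfrac12$), while in the logarithmic term one writes $\ln(\kappa|\boldsymbol x-\boldsymbol y|)=\ln\kappa+\ln|\boldsymbol x-\boldsymbol y|$: the factor $\ln|\boldsymbol x-\boldsymbol y|$ is real-analytic on $\Omega$ and joins the amplitude, and the remaining $\ln\kappa\lesssim\omega^{\varepsilon}$ only degrades the decay to $\omega^{-(m+1)+\varepsilon}$, which still gives $\int_s^\infty\omega^{d-1}\|\boldsymbol u(\cdot,\omega)\|_{\Gamma_R}^2\,{\rm d}\omega\lesssim s^{-(2m-2d+1)}\|\boldsymbol f\|_{H^{m+1}(B_R)^d}^2$ once $\varepsilon<d-1$. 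A secondary point to watch is the bookkeeping of how many oscillatory integrations by parts the blocks $\mathbf G_1$, $\mathbf G_2$ and their $D_{\boldsymbol x}$-images can each afford with only $m+1$ derivatives of $\boldsymbol f$ available, together with the tracking of the (at most polynomial) constants generated by the iterated operator $(L^{\top})^N$; this bookkeeping is exactly what forces the hypothesis $m\ge d$.
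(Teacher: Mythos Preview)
Your overall strategy --- reduce to oscillatory integrals $\int_\Omega g_d(\boldsymbol x,\boldsymbol y;\kappa)(P\boldsymbol f)(\boldsymbol y)\,{\rm d}\boldsymbol y$ with $\boldsymbol x\in\Gamma_R$ separated from $\bar\Omega$, then integrate by parts repeatedly in the radial direction $r=|\boldsymbol x-\boldsymbol y|$ to extract powers of $\kappa^{-1}$ --- is exactly what the paper does, and for $d=3$ your argument and the paper's are the same (the paper writes the radial integration by parts explicitly in spherical coordinates rather than via the operator $L$, and uses $m$ rather than $m+1$ iterations, which is why its intermediate exponent is $s^{-(2m-5)}$ instead of your $s^{-(2m-3)}$; both imply the stated $s^{-(2m-2d+1)}$).

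For $d=2$ the paper takes a different and cleaner route than either of your suggestions. It does \emph{not} split $H_0^{(1)}=J_0+{\rm i}Y_0$ into series pieces, nor does it invoke the large-argument symbol form. Instead it uses the Sommerfeld integral representation
\[
H_0^{(1)}(t)=\frac{2}{{\rm i}\pi}\int_1^\infty e^{{\rm i}t\tau}(\tau^2-1)^{-1/2}\,{\rm d}\tau
\]
to write down explicit uniformly bounded antiderivatives
\[
W_k(t)=\frac{2}{{\rm i}\pi}\int_1^\infty \frac{e^{{\rm i}t\tau}}{({\rm i}\tau)^k(\tau^2-1)^{1/2}}\,{\rm d}\tau,\qquad W_k'=W_{k-1},\quad W_0=H_0^{(1)},\quad |W_k|\le C.
\]
With these in hand the radial integration by parts proceeds verbatim as in $d=3$: each step replaces $W_{k-1}(\kappa\rho)$ by $\kappa^{-1}W_k(\kappa\rho)$ and differentiates the amplitude, yielding $\kappa^{-(m+1)}$ without any logarithmic loss or symbol calculus. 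Note also that your primary $d=2$ route (apply $L$ to the entire series pieces of $J_0$ and $Y_0$) does not work as written, since those pieces are real-valued and not of the form $e^{{\rm i}\kappa\phi_{\boldsymbol x}}\times(\text{smooth})$; your parenthetical alternative (the large-argument form $H_0^{(1)}(z)=e^{{\rm i}z}b(z)$ with $b$ a classical symbol of order $-\tfrac12$ on $z\ge c_{\rm p}(R-\hat R)>0$) is the correct variant and does give the result, but the paper's $W_k$ device avoids having to justify uniform symbol bounds near the lower endpoint.
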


\begin{proof}
Let
\begin{align*}
&\int_s^{\infty}\omega^{d-1} \|\boldsymbol
u(\cdot, \omega)\|^2_{\Gamma_R}{\rm d}\omega\\
= &\int_s^{\infty} \omega^{d-1} \int_{\Gamma_R}\bigl(|T_{\rm N}
\boldsymbol{u}(\boldsymbol x,\omega)|^2 +\omega^2|\boldsymbol{u}(\boldsymbol x,
\omega)|^2 \bigr){\rm d}\gamma(\boldsymbol x){\rm d}\omega=L_1 + L_2,
\end{align*}
where
\begin{align*}
 L_1=&\int_s^{\infty}\omega^{d+1} \int_{\Gamma_R} |\boldsymbol{u}(\boldsymbol
x, \omega)|^2 {\rm d}\gamma(\boldsymbol x){\rm d}\omega,\\
L_2= &\int_s^{\infty} \omega^{d-1} \int_{\Gamma_R}
|T_{\rm N} \boldsymbol{u}(x,\omega)|^2{\rm d}\gamma(\boldsymbol x){\rm
d}\omega.
\end{align*}

(i) Consider $d=3$. Using \eqref{us} and noting $s\geq 1$,
we have
\[
L_1=\int_s^{\infty}\omega^{4} \int_{\Gamma_R} |\boldsymbol{u}(\boldsymbol x,
\omega)|^2 {\rm d}\gamma(\boldsymbol x){\rm d}\omega\lesssim L_{1, 1}+L_{1, 2},
\]
where
\begin{align*}
L_{1, 1}&=\int_s^{\infty}\omega^{4} \int_{\Gamma_R}
\bigg|\int_{\Omega}\frac{e^{{\rm i}\kappa_{\rm s}|\boldsymbol
x-\boldsymbol y|}}{|\boldsymbol x-\boldsymbol
y|}\mathbf{I}_3\cdot\boldsymbol{f}(\boldsymbol y){\rm d}\boldsymbol y\bigg|^2
{\rm d}\gamma(\boldsymbol x){\rm d}\omega,\\
L_{1, 2}&=\int_s^{\infty} \int_{\Gamma_R}
\bigg|\int_{\Omega}\nabla_{\boldsymbol y}\nabla_{\boldsymbol
y}^{\top}\Big(\frac{e^{{\rm i}\kappa_{\rm s}|\boldsymbol x-\boldsymbol
y|}}{|\boldsymbol x-\boldsymbol y|} -
\frac{e^{{\rm i}\kappa_{\rm p}|\boldsymbol x-\boldsymbol y|}}{|\boldsymbol
x-\boldsymbol y|}\Big)\cdot\boldsymbol{f}(\boldsymbol y){\rm d}\boldsymbol
y\bigg|^2 {\rm d}\gamma(\boldsymbol x){\rm d}\omega.
\end{align*}
Noting $\Omega\subset B_{\hat R}\subset B_R$, using the polar
coordinates $\rho=|\boldsymbol y-\boldsymbol x|$ originated at
$\boldsymbol x$ with respect to $\boldsymbol y$ and the integration by parts,
we obtain
\begin{align*}
L_{1,1}&=\int_s^{\infty} \omega^{4}\int_{\Gamma_R} \bigg|\int_0^{2\pi}{\rm
d}\theta\int_0^{\pi}\sin\varphi{\rm d}\varphi\int_{R-\hat R}^{R+\hat R}e^{{\rm
i}\kappa_s\rho}\mathbf{I}_3\cdot(\boldsymbol{f}\rho) {\rm d}\rho\bigg|^2 {\rm
d}\gamma(\boldsymbol x){\rm d}\omega\\
&=\int_s^{\infty} \omega^{4} \int_{\Gamma_R}\bigg|\int_0^{2\pi}{\rm
d}\theta\int_0^{\pi}\sin\varphi{\rm d}\varphi\int_{R-\hat
R}^{R+\hat R}\frac{e^{{\rm i}\kappa_s\rho}}{({\rm
i}\kappa_s)^m}\mathbf{I}_3\cdot\frac{\partial^m(\boldsymbol{f}
\rho)}{\partial\rho^m} {\rm d}\rho\bigg|^2 {\rm d}\gamma(\boldsymbol x){\rm
d}\omega,
\end{align*}
which gives after using $\kappa_{\rm s} = c_{\rm s}\omega$ that
\begin{align*}
L_{1,1}\lesssim & \int_s^{\infty}\omega^{4} \int_{\Gamma_R}
\bigg|\int_0^{2\pi}{\rm d}\theta\int_0^{\pi}\sin\varphi{\rm
d}\varphi\int_{R-\hat R}^{R+\hat R}\omega^{-m}\\
&\qquad\bigg(\bigg|\sum_{|\boldsymbol \alpha|=m}\partial_{\boldsymbol
y}^{\boldsymbol \alpha}
\boldsymbol{f}\bigg|\rho +m\bigg|\sum_{ |\boldsymbol
\alpha|=m-1}\partial_{\boldsymbol y}^{\boldsymbol \alpha}\boldsymbol{f}\bigg|\bigg){\rm d}\rho\bigg|^2 {\rm
d}\gamma(\boldsymbol x){\rm d}\omega\cr
=& \int_s^{\infty}  \omega^{4}\int_{\Gamma_R}\bigg|\int_0^{2\pi}{\rm
d}\theta\int_0^{\pi}\sin\varphi{\rm d}\varphi\int_{R-\hat R}^{R+\hat R}
\omega^{-m}\\
&\qquad\bigg(\bigg|\sum_{|\boldsymbol \alpha|=m}\partial_{\boldsymbol
y}^{\boldsymbol \alpha}\boldsymbol{f}\bigg|\frac{1}{\rho}
+\bigg|\sum\limits_{|\boldsymbol
\alpha|=m-1}\partial_{\boldsymbol y}^{\boldsymbol \alpha}
\boldsymbol{f}\bigg|\frac{m}{\rho^2}\bigg)\rho^2{\rm d}\rho\bigg|^2 {\rm
d}\gamma(\boldsymbol x){\rm d}\omega\\
\leq& \int_s^{\infty}\omega^{4} \int_{\Gamma_R}
\bigg|\int_0^{2\pi}{\rm d}\theta\int_0^{\pi}\sin\varphi{\rm
d}\varphi\int_{R-\hat R}^{R+\hat R}\omega^{-m}\\
&\qquad\bigg(\bigg|\sum_{|\boldsymbol \alpha|=m}\partial_{\boldsymbol
y}^{\boldsymbol \alpha}\boldsymbol{f}\bigg|\frac{1}{(R-\hat R)}
+\bigg|\sum_{|\boldsymbol
\alpha|=m-1}\partial_{\boldsymbol y}^{\boldsymbol \alpha}
\boldsymbol{f}\bigg|\frac{m}{(R-\hat R)^2}\bigg)\rho^2{\rm d}\rho\bigg|^2
{\rm d}\gamma(\boldsymbol x){\rm d}\omega\\
\leq&\int_s^{\infty}\omega^{4} \int_{\Gamma_R}
\bigg|\int_0^{2\pi}{\rm d}\theta\int_0^{\pi}\sin\varphi{\rm
d}\varphi\int_{0}^{\infty}\omega^{-m}\\
&\qquad\bigg(\bigg|\sum_{|\boldsymbol \alpha|=m}\partial_{\boldsymbol
y}^{\boldsymbol \alpha}\boldsymbol{f}\bigg|\frac{1}{(R-\hat R)}
+\bigg|\sum_{|\boldsymbol
\alpha|=m-1}\partial_{\boldsymbol y}^{\boldsymbol \alpha}
\boldsymbol{f}\bigg|\frac{m}{(R-\hat R)^2}\bigg)\rho^2{\rm d}\rho\bigg|^2
{\rm d}\gamma(\boldsymbol x){\rm d}\omega.
\end{align*}
Changing back to the Cartesian coordinates with respect to $\boldsymbol y$, we
have
\begin{align}\label{lema3_s1}
L_{1,1}\leq& \int_s^{\infty} \int_{\Gamma_R}
\omega^{4}\bigg|\int_{\Omega}\omega^{-m}\notag\\
&\qquad\bigg(\bigg|\sum_{|\boldsymbol \alpha|=m}\partial_{\boldsymbol
y}^{\boldsymbol \alpha}\boldsymbol{f}\bigg|\frac{1}{(R-\hat R)}
+\bigg|\sum_{|\boldsymbol
\alpha|=m-1}\partial_{\boldsymbol y}^{\boldsymbol \alpha}
\boldsymbol{f}\bigg|\frac{m}{(R-\hat R)^2}\bigg){\rm d} \boldsymbol y\bigg|^2
{\rm d}\gamma(\boldsymbol x){\rm d}\omega\notag\\
\lesssim& m\|\boldsymbol{f}\|^2_{H^m(B_R)^3}\int_s^{\infty}\omega^{4-2m}{\rm
d}\omega\notag\\
\lesssim&\left(\frac{m}{2m-5}\right)s^{-(2m-5)}
\|\boldsymbol{f}\|^2_{H^m(B_R)^3}
\lesssim s^{-(2m-5)}\|\boldsymbol{f}\|^2_{H^m(B_R)^3},
\end{align}
where we have used the fact that $m\geq d=3$.

For $L_{1,2}$, it follows from the integration by parts that
\begin{align*}
L_{1,2} &= \int_s^{\infty}\int_{\Gamma_R}
\bigg|\int_{\Omega}\bigg(\frac{e^{{\rm i}\kappa_{\rm
s}|\boldsymbol x-\boldsymbol y|}}{|\boldsymbol x-\boldsymbol y|} - \frac{e^{{\rm
i}\kappa_{\rm p}|\boldsymbol x-\boldsymbol y|}}{|\boldsymbol x-\boldsymbol
y|}\bigg)\nabla_{\boldsymbol y}\nabla_{\boldsymbol y} \cdot
\boldsymbol{f}(\boldsymbol y){\rm d}\boldsymbol y\bigg|^2 {\rm
d}\gamma(\boldsymbol x){\rm d}\omega\\
&\lesssim  \int_s^{\infty}\int_{\Gamma_R}
\bigg|\int_{\Omega}\frac{e^{{\rm i}\kappa_{\rm
s}|\boldsymbol x-\boldsymbol y|}}{|\boldsymbol x-\boldsymbol
y|}\nabla_{\boldsymbol y}\nabla_{\boldsymbol y} \cdot
\boldsymbol{f}(\boldsymbol y){\rm d}\boldsymbol y\bigg|^2 {\rm
d}\gamma(\boldsymbol x){\rm d}\omega\\
&\qquad+ \int_s^{\infty}\int_{\Gamma_R}
\bigg|\int_{\Omega}\frac{e^{{\rm
i}\kappa_{\rm p}|\boldsymbol x-\boldsymbol y|}}{|\boldsymbol x-\boldsymbol
y|}\nabla_{\boldsymbol y}\nabla_{\boldsymbol y} \cdot
\boldsymbol{f}(\boldsymbol y){\rm d}\boldsymbol y\bigg|^2 {\rm
d}\gamma(\boldsymbol x){\rm d}\omega.
\end{align*}
We may follow the same steps as those for \eqref{lema3_s1} to show
\begin{align}\label{lema3_s2}
L_{1, 2} \leq& \int_s^{\infty}
\int_{\Gamma_R}\bigg|\int_{\Omega}\omega^{-(m-2)}\bigg(\bigg|\sum_{|\boldsymbol
\alpha|=m-2}\partial_{\boldsymbol
y}^{\boldsymbol \alpha}(\nabla_{\boldsymbol y}\nabla_{\boldsymbol y} \cdot
\boldsymbol{f})\bigg|\frac{1}{(R-\hat R)}\notag\\
&\qquad+\bigg|\sum_{|\boldsymbol \alpha|=m-3}\partial_{\boldsymbol
y}^{\boldsymbol \alpha} (\nabla_{\boldsymbol y}\nabla_{\boldsymbol y} \cdot
\boldsymbol{f})\bigg|\frac{(m-2)}{(R-\hat R)^2}\bigg){\rm d} \boldsymbol
y\bigg|^2 {\rm d}\gamma(\boldsymbol x){\rm d}\omega\cr
\lesssim& (m-2)\|\boldsymbol{f}\|^2_{H^m(B_R)^3}\int_s^{\infty}\omega^{4-2m}{\rm
d}\omega\cr
\lesssim&\left(\frac{m-2}{2m-5}\right)s^{-(2m-5)}
\|\boldsymbol{f}\|^2_{H^m(B_R)^3} \lesssim
s^{-(2m-5)}\|\boldsymbol{f}\|^2_{H^m(B_R)^3},
\end{align}

Noting $s\geq 1$, using \eqref{tbo} and the integration by parts, we get
\begin{align*}
L_2(s)&=\int_{s}^{\infty}\omega^2\int_{\Gamma_R}\bigg|\int_{\Omega}
D_{\boldsymbol x} \big(\mathbf{G}_{\rm N}(\boldsymbol x,\boldsymbol
y; \omega)\cdot\boldsymbol{f}(\boldsymbol y)\big){\rm d}\boldsymbol y\bigg|^2
{\rm d}\gamma(\boldsymbol x){\rm d}\omega\cr
&\lesssim \int_{s}^{\infty}\omega^2\int_{\Gamma_R}\bigg|\int_{\Omega}
\mathbf{G}_{\rm N}(\boldsymbol x,\boldsymbol y;\omega)\cdot
\big(\nabla_{\boldsymbol
y}\boldsymbol{f}(\boldsymbol y)\cdot\boldsymbol \nu(\boldsymbol x)\big){\rm
d}\boldsymbol y\bigg|^2{\rm d}\gamma(\boldsymbol x){\rm d}\omega\cr
&\quad+\int_{s}^{\infty}\omega^2\int_{\Gamma_R}\bigg|\int_{\Omega}\frac{e^{{\rm
i}\kappa_{\rm s}|\boldsymbol x-\boldsymbol y|}}{|\boldsymbol x-\boldsymbol y|}
\big(\nabla_{\boldsymbol y} \cdot \boldsymbol{f}(\boldsymbol y)\big)
\boldsymbol\nu(\boldsymbol x){\rm d}\boldsymbol y\bigg|^2{\rm
d}\gamma(\boldsymbol x){\rm d}\omega\cr
&\quad+\int_{s}^{\infty}\frac{1}{\omega^2}\int_{\Gamma_R}\bigg|\int_{\Omega}
\bigg(\frac{e^{{\rm i}\kappa_{\rm s}|\boldsymbol x-\boldsymbol y|}}{|\boldsymbol
x-\boldsymbol y|} - \frac{e^{{\rm i}\kappa_{\rm p}|\boldsymbol x-\boldsymbol
y|}}{|\boldsymbol x-\boldsymbol y|}\bigg)
\nabla_{\boldsymbol y} \cdot \big(\nabla_{\boldsymbol
y}\nabla_{\boldsymbol y}\cdot\boldsymbol{f}\big)\boldsymbol\nu(\boldsymbol
x)~{\rm d}\boldsymbol y\bigg|^2{\rm d}\gamma(\boldsymbol x){\rm d}\omega.
\end{align*}
Again, we may follow similar arguments as those for \eqref{lema3_s1} and
\eqref{lema3_s2} to get
\begin{align}\label{lema3_s3}
L_2\lesssim s^{-(2m-5)}\|\boldsymbol{f}\|^2_{H^m(B_R)^3}.
\end{align}
Combining \eqref{lema3_s1}--\eqref{lema3_s3} completes the proof for the three
dimensional case.

(ii) Consider $d=2$. Noting $s\geq1$, we have
\[
 L_1=\int_s^{\infty}  \omega^{3}\int_{\Gamma_R}|\boldsymbol{u}(\boldsymbol x,
\omega)|^2 {\rm d}\gamma(\boldsymbol x){\rm d}\omega\lesssim L_{1,1}+L_{1,2},
\]
where
\begin{align*}
L_{1,1}&=\int_s^{\infty} \int_{\Gamma_R}
\omega^{3}\left|\int_{\Omega}H_{0}^{(1)}(\kappa_{\rm s}|\boldsymbol x -
\boldsymbol y|)\mathbf{I}_2\cdot\boldsymbol{f}(\boldsymbol y){\rm d}\boldsymbol
y\right|^2 {\rm d}\gamma(\boldsymbol x){\rm d}\omega,\\
L_{1,2}&=\int_s^{\infty} \int_{\Gamma_R} \frac{1}{\omega}\left|\int_{\Omega}
\nabla_{\boldsymbol x}\nabla_{\boldsymbol x}^{\top}\left(H_{0}^{(1)}(\kappa_{\rm
s}|\boldsymbol x - \boldsymbol y|) - H_{0}^{(1)}(\kappa_{\rm p}|\boldsymbol x -
\boldsymbol y|)\right)\cdot\boldsymbol{f}(\boldsymbol y){\rm d}\boldsymbol
y\right|^2 {\rm d}\gamma(\boldsymbol x){\rm d}\omega.
\end{align*}
The Hankel function can be expressed by the following integral when $t>0$
(e.g., \cite{W-22}, Chapter VI):
\begin{align*}
H_0^1(t)=\frac{2}{{\rm i}\pi}\int_1^{\infty}e^{{\rm
i}t\tau}(\tau^2-1)^{-\frac{1}{2}}{\rm d}\tau.
\end{align*}
Using the polar coordinates $\rho=|\boldsymbol y-\boldsymbol x|$ originated at
$\boldsymbol x$ with respect to $\boldsymbol y$ and noting $\Omega\subset \hat
R\subset R$, we have
\begin{align*}
L_{1,1}=\int_s^{\infty}\omega^{3} \int_{\Gamma_R} \bigg|\int_0^{2\pi}{\rm
d}\theta\int_{R-\hat R}^{R+\hat R}H_0^{(1)}(\kappa_s\rho)\mathbf{I}
_2\cdot(\boldsymbol {f} \rho){\rm d}\rho\bigg|^2 {\rm d}\gamma(\boldsymbol
x){\rm d}\omega.
\end{align*}
Let
\begin{align}\label{lema3_s4}
W_k(t)=\frac{2}{{\rm i}\pi}\int_1^{\infty}\frac{e^{{\rm
i}t\tau}}{({\rm i}\tau)^k(\tau^2-1)^{\frac{1}{2}}}{\rm d}\tau, \quad
k=1,2,\cdots.
\end{align}
It is easy to verify that
\[
 W_0(t)=H_0^{(1)}(t)\quad\text{and}\quad \frac{{\rm d}
W_k(t)}{{\rm d}t}=W_{k-1}(t), ~t>0, ~k\in\mathbb{N}.
\]
Using the integration by parts yields
\begin{align*}
L_{1,1}=&\int_s^{\infty} \omega^{3}\int_{\Gamma_R} \bigg|\int_0^{2\pi}{\rm
d}\theta\int_{R-\hat R}^{R+\hat
R}\frac{W_1(\kappa_s\rho)}{\kappa_s}\mathbf I_2\cdot\frac{\partial
(\boldsymbol{f}\rho)}{\partial\rho}{\rm d}\rho\bigg|^2 {\rm
d}\gamma(\boldsymbol x){\rm d}\omega\cr
=&\int_s^{\infty} \omega^{3}\int_{\Gamma_R} \bigg|\int_0^{2\pi}{\rm
d}\theta\int_{R-\hat R}^{R+\hat
R}\frac{W_{m+1}(\kappa_s\rho)}{\kappa_s^{m+1}}\mathbf
I_2\cdot\frac{\partial^{m+1}(\boldsymbol{f} \rho) }{\partial \rho^{m+1}}{\rm
d}\rho\bigg|^2 {\rm d}\gamma(\boldsymbol x){\rm d}\omega.
\end{align*}
Consequently,
\begin{align*}
L_{1,1}\lesssim&\int_s^{\infty}\omega^{3} \int_{\Gamma_R}
\bigg|\int_0^{2\pi}{\rm d}\theta\int_{R-\hat R}^{R+\hat R}\bigg|\frac{H_{m+1}
(\kappa_s\rho)}{\omega^{m+1}}\bigg|\bigg|\frac{\partial^{m+1}
(\boldsymbol{f}\rho)}{\partial \rho^{m+1}}\bigg|{\rm
d}\rho\bigg|^2 {\rm d}\gamma(\boldsymbol x){\rm d}\omega\cr
\lesssim&\int_s^{\infty}\omega^{3} \int_{\Gamma_R}
\bigg|\int_0^{2\pi}{\rm d}\theta\int_{R-\hat R}^{R+\hat R}\bigg|\frac{H_{m+1}
(\kappa_{\rm s}\rho)}{\omega^{m+1}}\bigg|\\
&\qquad\bigg(\bigg|\sum_{|\boldsymbol \alpha|=m+1}\partial_{\boldsymbol
y}^{\boldsymbol \alpha}\boldsymbol{f}\bigg|+\bigg|\sum_{|\boldsymbol
\alpha|=m}\partial_{\boldsymbol y}^{\boldsymbol \alpha
}\boldsymbol{f}\bigg|\frac{(m+1)}{\rho}\bigg)\rho{\rm d}\rho\bigg|^2 {\rm
d}\gamma(\boldsymbol x){\rm d}\omega\\
\lesssim &\int_s^{\infty}\omega^{3} \int_{\Gamma_R} \bigg|\int_0^{2\pi}{\rm
d}\theta\int_{R-\hat R}^{R+\hat R}\bigg|\frac{H_{m+1}(\kappa_s\rho)
}{\omega^{m+1}}\bigg|\\
&\qquad\bigg(\bigg|\sum_{|\boldsymbol\alpha|=m+1}\partial_{\boldsymbol
y}^{\boldsymbol \alpha}\boldsymbol{f}
\bigg|+\bigg|\sum_{|\boldsymbol
\alpha|=m}\partial_{\boldsymbol y}^{\boldsymbol \alpha
}\boldsymbol{f}\bigg|\frac{(m+1)}{(R-\hat R)}\bigg)\rho{\rm d}\rho\bigg|^2
{\rm d}\gamma(\boldsymbol x){\rm d}\omega.
\end{align*}
It is easy to note from \eqref{lema3_s4} that there exists a constant $C>0$
such that $|H_n(\kappa\rho)|\leq C$ for $m\ge 1$. Hence,
\begin{align*}
L_{1,1}\lesssim&\int_s^{\infty} \omega^{3}
\int_{\Gamma_R}\bigg|\int_0^{2\pi}{\rm
d}\theta\int_{R-\hat R}^{R+\hat R}\omega^{-(m+1)}\\
&\qquad\bigg(\bigg|\sum_{|\boldsymbol \alpha|=m+1}\partial_{\boldsymbol
y}^{\boldsymbol \alpha}\boldsymbol{f}\bigg|
+\bigg|\sum_{|\boldsymbol \alpha|=m}\partial_{\boldsymbol
y}^{\boldsymbol \alpha}\boldsymbol{f}\bigg|\frac{(m+1)}{(R-\hat
R)}\bigg)\rho{\rm d}\rho\bigg|^2 {\rm d}\gamma(\boldsymbol x){\rm d}\omega.
\end{align*}
Changing back to the Cartesian coordinates with respect to $\boldsymbol y$, we
have
\begin{align}\label{lema3_s5}
L_{1,1}\lesssim&\int_s^{\infty} \omega^{3}\int_{\Gamma_R}
\bigg|\int_{\Omega}\omega^{-(m+1)}\bigg(\bigg|\sum_{|\alpha|=m+1}\partial_{\boldsymbol
y}^{\boldsymbol \alpha}\boldsymbol{f}\bigg|
+\bigg|\sum\limits_{|\boldsymbol
\alpha|=m}\partial_{\boldsymbol y}^{\boldsymbol
\alpha}\boldsymbol{f}\bigg|\frac{(m+1)}{(R-\hat R)}
\bigg){\rm d}\boldsymbol y\bigg|^2 {\rm d}\gamma(\boldsymbol x){\rm
d}\omega\notag\\
&\lesssim (m+1)\|\boldsymbol{f}\|^2_{H^{m+1}(B_R)^2}\int_s^{\infty}\omega^{1-2m}{ \rm
d}\omega\notag\\
&=\left(\frac{m+1}{2m-2}\right)s^{-(2m-2)}\|\boldsymbol{f}
\|_{H^{m+1}(B_R)^2}\lesssim s^{-(2m-2)}\|\boldsymbol{f}\|^2_
{H^{m+1}(B_R)^2}.
\end{align}

Using the integration by parts yields
\begin{align*}
 L_{1,2}&=\int_s^{\infty} \frac{1}{\omega} \int_{\Gamma_R}\left|\int_{\Omega}
\left(H_{0}^{(1)}(\kappa_{\rm
s}|\boldsymbol x - \boldsymbol y|) - H_{0}^{(1)}(\kappa_{\rm p}|\boldsymbol x -
\boldsymbol y|)\right)\nabla_{\boldsymbol y}\nabla_{\boldsymbol
y}\cdot\boldsymbol{f}(\boldsymbol y){\rm d}\boldsymbol
y\right|^2 {\rm d}\gamma(\boldsymbol x){\rm d}\omega\\
&\lesssim\int_s^{\infty} \frac{1}{\omega} \int_{\Gamma_R}\left|\int_{\Omega}
H_{0}^{(1)}(\kappa_{\rm s}|\boldsymbol x - \boldsymbol y|) \nabla_{\boldsymbol
y}\nabla_{\boldsymbol y}\cdot\boldsymbol{f}(\boldsymbol y){\rm d}\boldsymbol
y\right|^2 {\rm d}\gamma(\boldsymbol x){\rm d}\omega\\
&\qquad+ \int_s^{\infty} \frac{1}{\omega} \int_{\Gamma_R}\left|\int_{\Omega}
H_{0}^{(1)}(\kappa_{\rm p}|\boldsymbol x -
\boldsymbol y|)\nabla_{\boldsymbol y}\nabla_{\boldsymbol
y}\cdot\boldsymbol{f}(\boldsymbol y){\rm d}\boldsymbol
y\right|^2 {\rm d}\gamma(\boldsymbol x){\rm d}\omega
\end{align*}
We may follow a similar proof for \eqref{lema3_s5} to show that
\begin{align}\label{lema3_s6}
 L_{1,2}\lesssim&\int_s^{\infty} \frac{1}{\omega}\int_{\Gamma_R}
\bigg|\int_{\Omega}\omega^{-(m-1)}
\bigg(\bigg|\sum_{|\alpha|=m-1}\partial_{\boldsymbol
y}^{\boldsymbol \alpha}( \nabla_{\boldsymbol
y}\nabla_{\boldsymbol y}\cdot\boldsymbol{f})\bigg|\notag\\
&\qquad+\bigg|\sum_{|\boldsymbol
\alpha|=m-2}\partial_{\boldsymbol y}^{\boldsymbol \alpha}( \nabla_{\boldsymbol
y}\nabla_{\boldsymbol y}\cdot\boldsymbol{f})\bigg|\frac{(m-1)}{(R-\hat R)}
\bigg){\rm d}\boldsymbol y\bigg|^2 {\rm d}\gamma(\boldsymbol x){\rm
d}\omega\cr
&\lesssim
(m-1)\|\boldsymbol{f}\|^2_{H^{m+1}(B_R)^2}\int_s^{\infty}\omega^{1-2m}{\rm
d}\omega\notag\\
&=\left(\frac{m-1}{2m-2}\right)s^{-(2m-2)}\|\boldsymbol{f}
\|_{H^{m+1}(B_R)^2}\lesssim s^{-(2m-2)}\|\boldsymbol{f}\|^2_
{H^{m+1}(B_R)^2}.
\end{align}

Next is to consider $L_2.$ Again, we use \eqref{tbo} and the integration by
parts to get
\begin{align*}
L_2(s)&=\int_{s}^{\infty}\omega\int_{\Gamma_R}\left|\int_{\Omega}
D_{\boldsymbol x}\left(\mathbf{G}_{\rm N}(\boldsymbol x,\boldsymbol
y)\cdot\boldsymbol{f}(\boldsymbol y)\right){\rm d}\boldsymbol y\right|^2 {\rm
d}\gamma(\boldsymbol x){\rm d}\omega\cr
&\lesssim \int_{s}^{\infty}\omega\int_{\Gamma_R}\left|\int_{\Omega}
\mathbf{G}_{\rm N}(\boldsymbol x,\boldsymbol y)\cdot \left(\nabla_{\boldsymbol
y}\boldsymbol{f}\cdot \boldsymbol\nu(\boldsymbol x)\right){\rm d}\boldsymbol
y\right|^2{\rm d}\gamma(\boldsymbol x){\rm d}\omega\cr
&\quad+ \int_{s}^{\infty}\omega\int_{\Gamma_R}\left|\int_{\Omega}
H_{0}^{(1)}(\kappa_{\rm s}|\boldsymbol x -\boldsymbol y|)  (\nabla_{\boldsymbol
y} \cdot \boldsymbol{f}(\boldsymbol y))\boldsymbol \nu(\boldsymbol x){\rm
d}\boldsymbol y\right|^2{\rm d}\gamma(\boldsymbol x){\rm d}\omega\cr
&\quad+\int_{s}^{\infty}\frac{1}{\omega^3}\int_{\Gamma_R}\left|\int_{\Omega}
\left(H_{0}^{(1)}(\kappa_{\rm s}|\boldsymbol x - \boldsymbol y|) -
H_{0}^{(1)}(\kappa_{\rm p}|\boldsymbol x -\boldsymbol y|)\right)\right.
\left.\nabla_{\boldsymbol y} \cdot \left(\nabla_{\boldsymbol
y}\nabla_{\boldsymbol y}\cdot\boldsymbol{f}\right)\boldsymbol\nu(\boldsymbol
x)~{\rm d}\boldsymbol y\right|^2 {\rm d} S(\boldsymbol x){\rm d}\omega.
\end{align*}
Following similar arguments as those for \eqref{lema3_s5} and \eqref{lema3_s6},
we have
\begin{align}\label{lema3_s7}
L_2 \lesssim s^{-(2m-2)}\|\boldsymbol{f}\|^2_{H^{m+1} (B_R)^2}.
\end{align}
Combing \eqref{lema3_s5}--\eqref{lema3_s7} completes the proof for the two
dimensional case.
\end{proof}

\begin{lemm}\label{ni12}
 Let $\boldsymbol{f}\in\mathbb F_M(B_R) $. Then there exists a function
$\beta(s)$ satisfying
\begin{equation} \label{beta}
\begin{cases}
  \beta(s)\geq\frac{1}{2}, \quad & s\in(K, ~ 2^{\frac{1}{4}}K),\\
  \beta(s)\geq \frac{1}{\pi}((\frac{s}{K})^4-1)^{-\frac{1}{2}}, \quad & s\in
(2^{\frac{1}{4}}K, ~\infty),
 \end{cases}
\end{equation}
such that
\[
 |I_1(s)+I_2(s)|\lesssim M^2 e^{(4R+1)c_{\rm
s}s}\epsilon_1^{2\beta(s)},\quad\forall
s\in (K, ~\infty).
\]
\end{lemm}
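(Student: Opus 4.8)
\emph{Proof plan.} Set $G(s):=I_1(s)+I_2(s)$, which is analytic on the sector $\mathcal V$. The plan is to combine a low--frequency smallness bound for $G$ with the high--frequency growth bound of Lemma \ref{ni} through a two--constants (harmonic measure) inequality on the slit domain $\mathcal V\setminus[0,K]$, after first stripping off the exponential growth with an elementary exponential multiplier. Two facts will be used. First, by \eqref{us}, \eqref{tbc} and the definitions \eqref{nd2i1}--\eqref{nd3i2}, for real $s$ one has the identity
\[
I_1(s)+I_2(s)=\int_0^s\omega^{d-1}\|\boldsymbol u(\cdot,\omega)\|^2_{\Gamma_R}\,{\rm d}\omega ,
\]
so $0\le G(s)\le\epsilon_1^2$ for all $s\in(0,K)$ (the integrand is nonnegative and $G(K)=\epsilon_1^2$). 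Second, since $\boldsymbol f\in\mathbb F_M(B_R)$ gives $\|\boldsymbol f\|_{H^3(B_R)^d}\le\|\boldsymbol f\|_{H^{m+1}(B_R)^d}\le M$ (as $m\ge d$), Lemma \ref{ni} provides a bound of the form $|G(s)|\lesssim M^2 P(|s|)\,e^{4c_{\rm s}R|{\rm Im}\,s|}$ on $\mathcal V$ with $P$ a fixed polynomial; here I read off the exponential factor in the sharper form $e^{4c_{\rm s}R|{\rm Im}\,s|}$ that the proof of Lemma \ref{ni} actually produces, via $|e^{{\rm i}c\,\omega|\boldsymbol x-\boldsymbol y|}|=e^{-c\,{\rm Im}(\omega)|\boldsymbol x-\boldsymbol y|}$ and $|\boldsymbol x-\boldsymbol y|\le 2R$.

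Next introduce $h(s):=e^{-\lambda s}G(s)$ with $\lambda:=(4R+1)c_{\rm s}$, analytic on $\mathcal V$. On $\mathcal V$ one has $|{\rm Im}\,s|\le{\rm Re}\,s$, so
\[
|h(s)|\lesssim M^2 P(|s|)\,e^{(4c_{\rm s}R-\lambda){\rm Re}\,s}=M^2 P(|s|)\,e^{-c_{\rm s}{\rm Re}\,s}\lesssim M^2
\]
uniformly on $\overline{\mathcal V}$, and on $[0,K]$ (where $s\ge 0$) still $|h(s)|\le\epsilon_1^2$. Now view $h$ on the slit domain $D:=\mathcal V\setminus[0,K]$, whose boundary is the union of the two faces $\Gamma_1$ of the slit $[0,K]$, on which $|h|\le\epsilon_1^2$, and the two rays $\Gamma_2=\{\arg s=\pm\pi/4\}$, on which $|h|\lesssim M^2$. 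Because $h$ is bounded on $D$ and $D$ lies in a sector (so a Phragm\'en--Lindel\"of argument applies), the two--constants theorem gives, for real $s>K$,
\[
|h(s)|\le(\epsilon_1^2)^{\beta(s)}(CM^2)^{1-\beta(s)}\lesssim M^2\epsilon_1^{2\beta(s)},
\]
where $\beta(s)$ is the harmonic measure of $\Gamma_1$ in $D$ at the point $s$ (the last step uses $M>1$, and is trivial if $\epsilon_1>1$). Undoing the multiplier, $|I_1(s)+I_2(s)|=e^{\lambda s}|h(s)|\lesssim M^2 e^{(4R+1)c_{\rm s}s}\epsilon_1^{2\beta(s)}$.

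It remains to verify that this $\beta(s)$ satisfies \eqref{beta}, which is the computational heart of the matter and the step I expect to require the most care. I would evaluate it by the conformal map $s\mapsto v=\sqrt{(s/K)^4-1}$, which sends $D$ onto the right half--plane, carries $\Gamma_1$ onto the segment $\{{\rm i}t:|t|\le1\}$ and $\Gamma_2$ onto $\{{\rm i}t:|t|\ge1\}$ of the imaginary axis, and maps each real $s>K$ to the positive real point $v_0=\sqrt{(s/K)^4-1}$. Composing with $v\mapsto{\rm i}v$ onto the upper half--plane and using the subtended--angle formula for harmonic measure yields
\[
\beta(s)=\frac{2}{\pi}\arctan\!\left(((s/K)^4-1)^{-1/2}\right).
\]
For $s\in(K,2^{1/4}K)$ the argument of $\arctan$ exceeds $1$, so $\beta(s)>\tfrac2\pi\cdot\tfrac\pi4=\tfrac12$; for $s>2^{1/4}K$ that argument lies in $(0,1)$, and concavity of $\arctan$ on $[0,\infty)$ gives $\arctan x\ge\tfrac\pi4 x$, so $\beta(s)\ge\tfrac12\,((s/K)^4-1)^{-1/2}\ge\tfrac1\pi\,((s/K)^4-1)^{-1/2}$. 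This is precisely \eqref{beta}. Everything else --- the identity for $I_1+I_2$, the coarse growth bound imported from Lemma \ref{ni}, and the classical two--constants theorem --- is routine bookkeeping; the geometry of the slit domain and the explicit harmonic measure computation are where the real content lies.
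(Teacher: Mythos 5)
Your proposal is correct, and the reduction is the same one the paper uses: multiply $I_1+I_2$ by $e^{-(4R+1)c_{\rm s}s}$, bound the product on the sector $\mathcal V$ via Lemma \ref{ni}, bound it by $\epsilon_1^2$ on $[0,K]$ via the identity $I_1(s)+I_2(s)=\int_0^s\omega^{d-1}\|\boldsymbol u(\cdot,\omega)\|^2_{\Gamma_R}{\rm d}\omega$, and then invoke an analytic-continuation estimate. The difference is that the paper treats that last step as a black box — it simply cites Lemma \ref{caf} from \cite{CIL-JDE16} — whereas you prove it: the two-constants theorem on the slit sector $\mathcal V\setminus[0,K]$, the conformal map $s\mapsto\sqrt{(s/K)^4-1}$ onto the right half-plane, and the subtended-angle formula giving $\beta(s)=\frac{2}{\pi}\arctan\bigl(((s/K)^4-1)^{-1/2}\bigr)$, from which both inequalities in \eqref{beta} follow by $\arctan x\ge\frac{\pi}{4}x$ on $[0,1]$. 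This buys a self-contained argument and, as a bonus, an explicit formula for $\beta$ rather than just the lower bounds. One detail you handle more carefully than the paper: for $e^{-(4R+1)c_{\rm s}s}(I_1+I_2)(s)$ to be bounded on $\mathcal V$ one really needs the growth of Lemma \ref{ni} in the form $e^{4c_{\rm s}R|{\rm Im}\,s|}$ (which its proof delivers, cf.\ \eqref{lema2_s15}) rather than $e^{4c_{\rm s}R|s|}$ as stated, since $|s|$ can exceed ${\rm Re}\,s$ by a factor $\sqrt2$ on the sector; your explicit remark on this closes a small gap that the paper's own one-line deduction glosses over.
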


\begin{proof}
 It follows from Lemma \ref{ni} that
\[
 |\left(I_1(s)+I_2(s)\right)e^{-(4R+1)c_{\rm s}|s|}|\lesssim M^2,\quad\forall
s\in\mathcal{V}.
\]
Recalling \eqref{nd2i1}--\eqref{nd3i2}, we have
\[
 |\left(I_1(s)+I_2(s)\right)e^{-(4R+1)c_{\rm s}s}|\leq\epsilon_1^2,\quad s\in
[0, ~K].
\]
An direct application of Lemma \ref{caf} shows that there exists a function
$\beta(s)$ satisfying \eqref{beta} such that
\[
 |\left(I_1(s)+I_2(s)\right)e^{-(4R+1)c_{\rm s}s}|\lesssim
M^2\epsilon_1^{2\beta},\quad\forall
s\in (K, ~\infty),
\]
which completes the proof.
\end{proof}

Next we prove Theorem \ref{mrn}.

\begin{proof}
We can assume that $\epsilon_1<e^{-1}$, otherwise the estimate is obvious.
Let
\[
s=\begin{cases}
\frac{1}{((4R+3)c_{\rm
s}\pi)^{\frac{1}{3}}}K^{\frac{2}{3}}|\ln\epsilon_1|^{\frac{1}{4}}
,& 2^{\frac{1}{4}}
((4R+3)c_{\rm
s}\pi)^{\frac{1}{3}}K^{\frac{1}{3}}<|\ln\epsilon_1|^{\frac{1}{4}},\\
K, &|\ln\epsilon_1|^{\frac{1}{4}}\leq
2^{\frac{1}{4}}((4R+3)c_{\rm s}\pi)^{\frac{1}{3}}K^{\frac{1}{3}}.
 \end{cases}
\]
If
$2^{\frac{1}{4}}((4R+3)c_{\rm s}\pi)^{\frac{1}{3}}K^{\frac{1}{3}}
<|\ln\epsilon_1|^{\frac{1}{4}}$, then we have from Lemma \ref{ni12} that
\begin{align*}
 |I_1(s)+I_2(s)|&\lesssim M^2 e^{(4R+3)c_{\rm s}s}
e^{-\frac{2|\ln\epsilon_1|}{\pi}((\frac{s}{K})^4-1)^{-\frac{1}{2}}}\cr
&\lesssim M^2 e^{\frac{(4R+3)c_{\rm s}}{((4R+3)c_{\rm
s}\pi)^{\frac{1}{3}}}K^{\frac{2}{3}}|\ln\epsilon_1|^{\frac{1}{4}}-\frac{
2|\ln\epsilon_1|}{\pi}(\frac{K}{s})^2}\\
&=M^2 e^{-2\left(\frac{c_{\rm
s}^2(4R+3)^2}{\pi}\right)^{\frac{1}{3}}K^{\frac{2}{3}}
|\ln\epsilon_1|^{\frac{1}{2}}\left(1-\frac{1}{2}
|\ln\epsilon_1|^{-\frac{1}{4}}\right)}.
\end{align*}
Noting
\[
\frac{1}{2} |\ln\epsilon_1|^{-\frac{1}{4}}<\frac{1}{2},\quad
\left(\frac{(4R+3)^2}{\pi}\right)^{\frac{1}{3}}>1,
\]
we have
\[
 |I_1(s)+I_2(s)| \lesssim M^2 e^{-(c_{\rm
s}K)^{\frac{2}{3}}|\ln\epsilon_1|^{\frac{1}{2}}}.
\]
Using the elementary inequality
\begin{equation}\label{ei}
 e^{-x}\leq \frac{(6m-6d+3)!}{x^{3(2m-2d+1)}}, \quad x>0,
\end{equation}
we get
\begin{equation}\label{theo1_s1}
|I_1(s)+I_2(s)|\lesssim\frac{M^2}{\left(\frac{K^2|\ln\epsilon_1|^{\frac{3}{2}
}}{(6m-6d+3)^3}\right)^{2m-2d+1}}.
\end{equation}
If $|\ln\epsilon_1|^{\frac{1}{4}}\leq
2^{\frac{1}{4}}(((4R+3)\pi)^{\frac{1}{3}}K^{\frac{1}{3}}$, then $s=K$. We have
from \eqref{nd2i1}--\eqref{nd3i2}  that
\[
 |I_1(s)+I_2(s)|\leq \epsilon_1^2.
\]
Here we have used the fact that
\[
I_1(s)+I_2(s)=\int_0^s \omega^{d-1}\|\boldsymbol u(\cdot,
\omega)\|^2_{\Gamma_R}{\rm d}\omega,\quad s>0.
\]
Hence we obtain from Lemma \ref{nhfe} and \eqref{theo1_s1} that
\begin{align*}
 &\int_0^\infty \omega^{d-1}\|\boldsymbol
u(\cdot, \omega)\|^2_{\Gamma_R}{\rm d}\omega\\
&\leq I_1(s)+I_2(s)+\int_s^\infty \omega^{d-1}
\|\boldsymbol u(\cdot, \omega)\|^2_{\Gamma_R}{\rm d}\omega\\
&\lesssim\epsilon_1^2+\frac{M^2}{\left(\frac{
K^2|\ln\epsilon_1|^{\frac{3}{2}}}{(6m-6d+3)^3}
\right)^{2m-2d+1}}+\frac{M^2}{\left(2^{
-\frac{1}{4}}((4R+3)\pi)^{-\frac{1}{3}}K^{\frac{2}{3}}
|\ln\epsilon_1|^{\frac{1}{4}}\right)^{2m-2d+1}}.
\end{align*}
By Lemma \ref{nfe}, we have
\[
 \|\boldsymbol{f}\|^2_{L^2(B_R)^d}\lesssim \epsilon_1^2
+\frac{M^2}{\left(\frac{K^2|\ln\epsilon_1|^{\frac{3}{2}}}{(6m-6d+3)^3}
\right)^{2m-2d+1}}+\frac{M^2}{\left(\frac{K^{\frac{2}
{3}}|\ln\epsilon_1|^{\frac{1}{4}}}{(R+1)(6m-6d+3)^3}\right)^{2m-2d+1}}.
\]
Since $K^{\frac{2}{3}}|\ln\epsilon_1|^{\frac{1}{4}}\leq K^2
|\ln\epsilon_1|^{\frac{3}{2}}$ when $K>1$ and $|\ln\epsilon_1|>1$, we finish the
proof and obtain the stability estimate \eqref{fe}.
\end{proof}

\subsection{Stability with discrete frequency data}

In this section, we discuss the stability at a discrete set of frequencies. Let
us first specify the discrete frequency data. For $\boldsymbol n\in\mathbb
Z^d\setminus\{0\}$, let $n=|\boldsymbol n|$ and define two angular frequencies
\[
\omega_{{\rm p}, n} = \frac{n\pi}{c_{\rm p}R}, \quad \omega_{{\rm
s}, n} = \frac{n\pi}{c_{\rm s}R}.
\]
The corresponding wavenumbers are
\begin{equation}\label{dwn}
\kappa_{{\rm p}, n}=c_{\rm p}\omega_{{\rm p}, n}=\frac{n\pi}{R},\quad
\kappa_{{\rm s}, n}=c_{\rm s}\omega_{{\rm s},
n}=\frac{n\pi}{R}.
\end{equation}
Recall the boundary measurement at continuous frequencies:
\[
\|\boldsymbol{u}(\cdot,\omega)\|^2_{\Gamma_R}=\int_{\Gamma_R}\left(
|T_{\rm N} \boldsymbol{u}(\boldsymbol x, \omega)|^2
+\omega^2 |\boldsymbol{u}(\boldsymbol x, \omega)|^2 \right){\rm
d}\gamma(\boldsymbol x).
\]
Now we define the boundary measurements at discrete frequencies:
\begin{align*}
\|\boldsymbol{u}(\cdot,\omega_{{\rm p},
n})\|^2_{\Gamma_R}&=\int_{\Gamma_R}\left(
|T_{\rm N} \boldsymbol{u}(\boldsymbol x, \omega_{{\rm p}, n})|^2
+n^2 |\boldsymbol{u}(\boldsymbol x, \omega_{{\rm p}, n})|^2 \right){\rm
d}\gamma(\boldsymbol x),\\
\|\boldsymbol{u}(\cdot,\omega_{{\rm s},
n})\|^2_{\Gamma_R}&=\int_{\Gamma_R}\left(
|T_{\rm N} \boldsymbol{u}(\boldsymbol x, \omega_{{\rm s}, n})|^2
+n^2 |\boldsymbol{u}(\boldsymbol x, \omega_{{\rm s}, n})|^2 \right){\rm
d}\gamma(\boldsymbol x).
\end{align*}

Since the discrete frequency data cannot recover the Fourier coefficient of
$\boldsymbol f$ at $\boldsymbol n=0$, i.e., $\hat{\boldsymbol
f}_0=\frac{1}{(2R)^d}\int_{U_R} \boldsymbol f(\boldsymbol x){\rm d}\boldsymbol
x$ is missing, we assume that $\hat{\boldsymbol f}_0=0$. Otherwise we may
replace $\boldsymbol f(\boldsymbol x)$ by $\tilde{\boldsymbol f}(\boldsymbol
x)=\boldsymbol f(\boldsymbol x)-\left(\int_\Omega \boldsymbol f(\boldsymbol
x){\rm d}\boldsymbol x\right)\chi_\Omega(\boldsymbol x)$, where $\chi$ is the
characteristic function, such that $\tilde{\boldsymbol f}$ has a compact support
$\Omega$ and $\int_\Omega \tilde{\boldsymbol f}(\boldsymbol x){\rm
d}\boldsymbol x=0$. In fact, when $\omega=0$, the Navier equation \eqref{ne}
reduces to
\begin{equation}\label{sne}
\mu\Delta\boldsymbol{u}+ (\lambda + \mu)\nabla\nabla\cdot\boldsymbol{u}= \boldsymbol{f}.
\end{equation}
Integrating \eqref{sne} on both sides on $B_R$ and using the
integration by parts, we have
\[
\int_{\Gamma_R}T_{\rm N}\boldsymbol u (\boldsymbol x){\rm d}\gamma
=\int_{B_R}\boldsymbol f(\boldsymbol x){\rm d}\boldsymbol x,
\]
which implies that $\hat{\boldsymbol f}_0$ can be indeed recovered by the data
corresponding to the static Navier equation. Hence we define
\[
\tilde{\mathbb F}_M(B_R)=\{\boldsymbol f\in\mathbb F_M(B_R): \int_\Omega
\boldsymbol f(\boldsymbol x){\rm d}\boldsymbol x=0\}.
\]

\begin{prob}[Discrete frequency data for elastic waves]\label{p2}
Let $\boldsymbol f\in \tilde{\mathbb F}_M(B_R)$. The inverse source problem is
to determine $\boldsymbol f$ from the displacement $\boldsymbol u(\boldsymbol x,
\omega),  \boldsymbol x\in \Gamma _R, \omega\in
(0, \frac{\pi}{c_{\rm p}R}]\cup\cup_{n=1}^N \{\omega_{{\rm p}, n}, \omega_{{\rm
s}, n}\}$, where $1<N\in\mathbb N$.
\end{prob}

The following stability estimate is the main result for Problem \ref{p2}.

\begin{theo}\label{mrnd}
Let $\boldsymbol{u}$ be the solution of the scattering problem
\eqref{ne}--\eqref{rc} corresponding to the source $\boldsymbol{f}\in \tilde{\mathbb
F}_M(B_R)$. Then
\begin{equation}\label{dfe}
\|\boldsymbol{f}\|^2_{L^2(B_R)^d}\lesssim\epsilon_2^2+\frac{M^2}{\left(\frac{N^{
\frac{5}{8}}|\ln\epsilon_3|^{\frac{1}{9}}}{(6m-3d+3)^3}\right)^{2m-d+1}},
\end{equation}
where
\begin{align*}
\epsilon_2&=\left(\sum_{n=1}^N \|\boldsymbol{u}(\cdot,\omega_{{\rm
p}, n})\|^2_{\Gamma_R} + \|\boldsymbol{u}(\cdot,\omega_{{\rm
s}, n})\|^2_{\Gamma_R}\right)^{\frac{1}{2}},\\
\epsilon_3&= \sup_{\omega \in
(0,\frac{\pi}{c_{\rm p}R}]}\left\|\boldsymbol{u}\left(\cdot,\omega
\right)\right\|_{\Gamma_R}.
\end{align*}
\end{theo}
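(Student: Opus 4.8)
The plan is to reprove Theorem~\ref{mrnd} along the lines of Theorem~\ref{mrn}, but with the Fourier transform and Parseval's identity replaced by a Fourier \emph{series} on the cube $U_R=(-R,R)^d$. This is the natural move because the discrete wavenumbers $\kappa_{{\rm p},n}=\kappa_{{\rm s},n}=n\pi/R$ of \eqref{dwn} are exactly the frequencies of the exponential basis $\{e^{{\rm i}\frac{\pi}{R}\boldsymbol n\cdot\boldsymbol x}:\boldsymbol n\in\mathbb Z^d\}$ on $U_R$. Since ${\rm supp}\,\boldsymbol f=\Omega\subset U_R$ and $\hat{\boldsymbol f}_0=0$ for $\boldsymbol f\in\tilde{\mathbb F}_M(B_R)$, Parseval gives $\|\boldsymbol f\|^2_{L^2(B_R)^d}=(2R)^d\sum_{\boldsymbol n\ne 0}|\hat{\boldsymbol f}_{\boldsymbol n}|^2$ with $\hat{\boldsymbol f}_{\boldsymbol n}=(2R)^{-d}\int_{U_R}\boldsymbol f(\boldsymbol x)e^{-{\rm i}\frac{\pi}{R}\boldsymbol n\cdot\boldsymbol x}{\rm d}\boldsymbol x$. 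I would then fix a cutoff radius $\rho\ge N$, to be optimised at the end, and treat the three ranges $1\le|\boldsymbol n|\le N$, $N<|\boldsymbol n|\le\rho$ and $|\boldsymbol n|>\rho$ separately.

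For the low band $1\le|\boldsymbol n|\le N$, each $\hat{\boldsymbol f}_{\boldsymbol n}$ is the value of $\hat{\boldsymbol f}$ at the lattice point $\frac{\pi}{R}\boldsymbol n$, which lies on the sphere of radius $\frac{\pi}{R}|\boldsymbol n|$, a sampled wavenumber. For each such $\boldsymbol n$ I would test \eqref{ne} against the compressional and shear plane waves of \eqref{npw} propagating along $\boldsymbol d=\boldsymbol n/|\boldsymbol n|$ at the frequencies $\omega_{{\rm p},|\boldsymbol n|}$, $\omega_{{\rm s},|\boldsymbol n|}$, repeating the reciprocity / integration-by-parts computation in the proof of Lemma~\ref{nfe} verbatim but now evaluated at the single point $\frac{\pi}{R}\boldsymbol n$ rather than integrated over a ray. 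With the orthonormal decompositions \eqref{lema1_s4}, \eqref{lema1_s6} and the bound $\kappa_{{\rm p},|\boldsymbol n|}^2\lesssim|\boldsymbol n|^2$ (so the discrete weight $n^2$ in $\|\boldsymbol u(\cdot,\omega_{{\rm p},n})\|^2_{\Gamma_R}$ absorbs $\kappa^2$), this yields $|\hat{\boldsymbol f}_{\boldsymbol n}|^2\lesssim\|\boldsymbol u(\cdot,\omega_{{\rm p},|\boldsymbol n|})\|^2_{\Gamma_R}+\|\boldsymbol u(\cdot,\omega_{{\rm s},|\boldsymbol n|})\|^2_{\Gamma_R}$ and hence $(2R)^d\sum_{1\le|\boldsymbol n|\le N}|\hat{\boldsymbol f}_{\boldsymbol n}|^2\lesssim\epsilon_2^2$, the Lipschitz part of \eqref{dfe}. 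For the high band $|\boldsymbol n|>\rho$ I would simply use the compact support and $H^{m+1}$-regularity of $\boldsymbol f$: $m+1$ integrations by parts in $\hat{\boldsymbol f}_{\boldsymbol n}$ give $\sum_{\boldsymbol n}|\boldsymbol n|^{2(m+1)}|\hat{\boldsymbol f}_{\boldsymbol n}|^2\lesssim M^2$, whence $(2R)^d\sum_{|\boldsymbol n|>\rho}|\hat{\boldsymbol f}_{\boldsymbol n}|^2\lesssim\rho^{-(2m-d+1)}M^2$.

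The core is the intermediate band $N<|\boldsymbol n|\le\rho$, where $\frac{\pi}{R}|\boldsymbol n|$ lies beyond the sampled range, so $\hat{\boldsymbol f}_{\boldsymbol n}$ has to be reached by analytic continuation. I would reuse the complex-frequency machinery: introduce $I_1(s),I_2(s)$ exactly as in \eqref{nd2i1}--\eqref{nd3i2}, which are analytic on $\mathcal V$, satisfy $I_1(s)+I_2(s)=\int_0^s\omega^{d-1}\|\boldsymbol u(\cdot,\omega)\|^2_{\Gamma_R}{\rm d}\omega$ for $s>0$, obey $|(I_1(s)+I_2(s))e^{-(4R+1)c_{\rm s}|s|}|\lesssim M^2$ on $\mathcal V$ by Lemma~\ref{ni}, and obey $|I_1(s)+I_2(s)|\le\epsilon_3^2\int_0^s\omega^{d-1}{\rm d}\omega\lesssim\epsilon_3^2$ on $(0,\pi/(c_{\rm p}R)]$, the window where the continuous data $\epsilon_3$ is given. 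Applying Lemma~\ref{caf} as in Lemma~\ref{ni12}, with $(0,\pi/(c_{\rm p}R)]$ playing the role of $[0,K]$ and exploiting the $N$ spheres already recovered in the low band to sharpen the continuation, produces a function $\beta(s)$ and a bound $|I_1(s)+I_2(s)|\lesssim M^2e^{(4R+1)c_{\rm s}s}\epsilon_3^{2\beta(s)}$ for $s$ large; the polar/spherical change of variables of Lemma~\ref{nfe} then makes $I_1(s)+I_2(s)$ dominate $\sum_{|\boldsymbol n|\lesssim s}|\hat{\boldsymbol f}_{\boldsymbol n}|^2$, which covers the intermediate band once $s\sim\rho$. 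Summing the three contributions and choosing $\rho\sim N^{5/8}|\ln\epsilon_3|^{1/9}$ so as to balance $e^{c\rho}\epsilon_3^{2\beta(\rho)}$ against $\rho^{-(2m-d+1)}M^2$, via the elementary inequality \eqref{ei} with the exponent adjusted to $2m-d+1$, gives \eqref{dfe}; as in Theorem~\ref{mrn}, the estimate is trivial from $\|\boldsymbol f\|_{L^2}\le M$ when $\epsilon_3\ge e^{-1}$.

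The step I expect to be the real obstacle is this intermediate band: one has to make the analytic continuation quantitatively compatible with the discrete, truncated sampling --- that is, to pin down how the $N$ exactly sampled wavenumbers, the weak a~priori bound $M$, and the narrow low-frequency window together enlarge the effective range of reliable frequencies to order $N^{5/8}|\ln\epsilon_3|^{1/9}$ --- and to carry this bookkeeping through the rather involved Green's-tensor estimates of Lemma~\ref{ni}, in particular the two-dimensional Hankel-function expansions and the logarithmic terms in the matrices $\mathbf A$ and $\mathbf B$, which is where the unusual exponents $5/8$ and $1/9$ in \eqref{dfe} originate. By comparison, the low- and high-mode estimates are routine adaptations of Lemma~\ref{nfe} and the argument of Lemma~\ref{nhfe}, and the reduction to a Fourier series on $U_R$ is essentially forced by the arithmetic of the sampled wavenumbers.
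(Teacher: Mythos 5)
Your low-band and high-band treatments coincide with the paper's (Lemmas \ref{nfed} and \ref{TFE}): testing against discrete plane waves gives $|\hat{\boldsymbol f}_{\boldsymbol n}|^2\lesssim\|\boldsymbol u(\cdot,\omega_{{\rm p},n})\|^2_{\Gamma_R}+\|\boldsymbol u(\cdot,\omega_{{\rm s},n})\|^2_{\Gamma_R}$, and $m+1$ integrations by parts give the $N_0^{-(2m-d+1)}M^2$ tail. The problem is the intermediate band, and your proposed mechanism there has a genuine gap. You want to analytically continue $I_1(s)+I_2(s)=\int_0^s\omega^{d-1}\|\boldsymbol u(\cdot,\omega)\|^2_{\Gamma_R}\,{\rm d}\omega$ and then claim that "the polar/spherical change of variables of Lemma \ref{nfe} makes $I_1(s)+I_2(s)$ dominate $\sum_{|\boldsymbol n|\lesssim s}|\hat{\boldsymbol f}_{\boldsymbol n}|^2$." It does not: those integrals control $\int_{|\boldsymbol\xi|\le cs}|\hat{\boldsymbol f}(\boldsymbol\xi)|^2{\rm d}\boldsymbol\xi$, a volume integral of the Fourier transform, and a volume integral does not dominate a sum of point values $|\hat{\boldsymbol f}(\tfrac{\pi}{R}\boldsymbol n)|^2$ at lattice points without a separate quantitative mean-value or Bernstein-type argument that you never supply. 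Likewise, your idea of "exploiting the $N$ spheres already recovered in the low band to sharpen the continuation" has no support in Lemma \ref{caf}, which only takes a smallness bound on an interval $(0,K]$ and a global bound $M$ on the sector; incorporating discrete interior samples into the harmonic-measure estimate would require a genuinely new lemma. Finally, your attribution of the exponents $5/8$ and $1/9$ to the Hankel-function and logarithmic estimates of Lemma \ref{ni} is off the mark, since none of the Green's-tensor machinery enters the discrete-frequency proof.

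The paper sidesteps all of this by continuing a different analytic function. For each fixed direction $\boldsymbol d$ it sets $I(s)=\bigl|\int_{B_R}\boldsymbol f(\boldsymbol x)e^{-{\rm i}s\boldsymbol d\cdot\boldsymbol x}{\rm d}\boldsymbol x\bigr|^2$, which is entire in $s$; the global bound $|I(s)|\lesssim e^{2|s|R}M^2$ on $\mathcal V$ is immediate from Cauchy--Schwarz and the compact support, and the smallness on $(0,\pi/R]$ comes from Lemma \ref{FF}, i.e.\ from the low-frequency window data $\epsilon_3$ alone. Lemma \ref{caf} then gives $|I(s)|\lesssim M^2e^{2sR}\epsilon_3^{2\beta(s)}$ for all $s>\pi/R$ uniformly in $\boldsymbol d$ (Lemma \ref{ni12E}), which directly controls each individual coefficient $\hat{\boldsymbol f}_{\boldsymbol n}$ at $s=n\pi/R$ --- exactly the pointwise control your route is missing. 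The $N$ sampled frequencies then enter only through a dichotomy in the choice of the cutoff $N_0$: when $N^{3/8}\ge \frac{1}{2^{5/6}\pi^{2/3}}|\ln\epsilon_3|^{1/9}$ one takes $N_0=N$ and bounds all low modes by $\epsilon_2^2$; otherwise one takes $N_0=[N^{3/4}|\ln\epsilon_3|^{1/9}]>N$ and bounds every mode up to $N_0$ by the continuation estimate, with the exponents $5/8$ and $1/9$ emerging from this elementary optimization together with the inequality \eqref{ei}. To repair your argument you would either need to adopt this per-direction continuation of $\hat{\boldsymbol f}(s\boldsymbol d)$, or add a rigorous passage from the ball integral of $|\hat{\boldsymbol f}|^2$ to the lattice sum.
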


\begin{rema}
The stability estimate \eqref{dfe} for the discrete frequency data is analogous
to the estimate \eqref{fe} for the continuous frequency data. It also consists of the
Lipschitz type data discrepancy and the high frequency tail of the source
function. The stability increases as $N$ increases, i.e., the inverse problem
is more stable when higher frequency data is used.
\end{rema}

The rest of this section is to prove Theorem \ref{mrnd}. Similarly, we consider the
auxiliary functions of compressional and shear plane waves:
\begin{equation}\label{dcsw}
 \boldsymbol u^{\rm inc}_{{\rm p}, n}=\boldsymbol p_{\boldsymbol n} e^{-{\rm
i}\kappa_{{\rm p}, n}\boldsymbol x\cdot\hat{\boldsymbol n}}\quad\text{and}\quad
\boldsymbol u^{\rm inc}_{{\rm s}, n}=\boldsymbol q_{\boldsymbol n} e^{-{\rm
i}\kappa_{{\rm s}, n}\boldsymbol x\cdot\hat{\boldsymbol n}},
\end{equation}
where $\hat{\boldsymbol n}=\boldsymbol n/n$ represents the unit propagation
direction vector and $\boldsymbol p_{\boldsymbol n}, \boldsymbol
q_{\boldsymbol n}$ are unit polarization vectors satisfying $\boldsymbol
p_{\boldsymbol n}=\hat{\boldsymbol n}$ and $\boldsymbol
q_{\boldsymbol n}\cdot\hat{\boldsymbol n}=0$. Substituting \eqref{dwn} into
\eqref{dcsw} yields
\[
 \boldsymbol u^{\rm inc}_{{\rm p}, n}=\boldsymbol p_{\boldsymbol n} e^{-{\rm
i}(\frac{\pi}{R})\boldsymbol x\cdot\boldsymbol n}\quad\text{and}\quad
\boldsymbol u^{\rm inc}_{{\rm s}, n}=\boldsymbol q_{\boldsymbol n} e^{-{\rm
i}(\frac{\pi}{R})\boldsymbol x\cdot\boldsymbol n}.
\]
It is easy to verify that $\boldsymbol u^{\rm inc}_{{\rm p}, n}$ and
$\boldsymbol u^{\rm inc}_{{\rm s}, n}$ satisfy the homogeneous
Navier equation in $\mathbb R^d$:
\begin{equation}\label{necd}
\mu\Delta\boldsymbol{u}^{\rm inc}_{{\rm p}, n} + (\lambda +
\mu)\nabla\nabla\cdot\boldsymbol{u}^{\rm inc}_{{\rm p}, n} +
\omega_{{\rm p}, n}^2\boldsymbol{u}^{\rm inc}_{{\rm p}, n}
=0
\end{equation}
and
\begin{equation}\label{nesd}
\mu\Delta\boldsymbol{u}^{\rm inc}_{{\rm s}, n} + (\lambda +
\mu)\nabla\nabla\cdot\boldsymbol{u}^{\rm inc}_{{\rm s}, n} +
\omega_{{\rm s}, n}^2\boldsymbol{u}^{\rm inc}_{{\rm s}, n}
=0.
\end{equation}

\begin{lemm}\label{nfed}
Let $\boldsymbol u$ be the solution of the scattering problem
\eqref{ne}--\eqref{rc} corresponding to the source $\boldsymbol f\in
L^2(B_R)^d$. For all $\boldsymbol n\in\mathbb Z^d\setminus\{0\}$, the Fourier
coefficients of $\boldsymbol f$ satisfy
\[
|\hat{\boldsymbol f}_{\boldsymbol
n} |^2\lesssim\|\boldsymbol{u}(\cdot,\omega_{{\rm p}, n})\|^2_{\Gamma_R}
+ \|\boldsymbol{u}(\cdot,\omega_{{\rm s}, n})\|^2_{\Gamma_R}.
\]
\end{lemm}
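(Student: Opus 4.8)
The plan is to mimic the proof of Lemma \ref{nfe}, replacing the continuum of plane waves by the discrete family \eqref{dcsw} and the Fourier transform by the Fourier series on $U_R$. The key observation is that, by \eqref{dwn}, both $\boldsymbol u^{\rm inc}_{{\rm p},n}$ and $\boldsymbol u^{\rm inc}_{{\rm s},n}$ reduce to $\boldsymbol p_{\boldsymbol n}e^{-{\rm i}(\pi/R)\boldsymbol x\cdot\boldsymbol n}$ and $\boldsymbol q_{\boldsymbol n}e^{-{\rm i}(\pi/R)\boldsymbol x\cdot\boldsymbol n}$; since $\mathrm{supp}\,\boldsymbol f=\Omega\subset B_R\subset U_R$, testing $\boldsymbol f$ against these exponentials recovers, up to the fixed factor $(2R)^d$, the components of $\hat{\boldsymbol f}_{\boldsymbol n}$ along $\boldsymbol p_{\boldsymbol n}$ and $\boldsymbol q_{\boldsymbol n}$.

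First I would multiply \eqref{ne}, taken at the frequency $\omega=\omega_{{\rm p},n}$, by $\boldsymbol u^{\rm inc}_{{\rm p},n}$, integrate over $B_R$, and use the integration-by-parts identity together with \eqref{necd} and the transparent boundary condition \eqref{tbc} exactly as in Lemma \ref{nfe}, to get
\[
(2R)^d\,\boldsymbol p_{\boldsymbol n}\cdot\hat{\boldsymbol f}_{\boldsymbol n}
=\int_{\Gamma_R}\Big(\boldsymbol u^{\rm inc}_{{\rm p},n}(\boldsymbol x)\cdot T_{\rm N}\boldsymbol u(\boldsymbol x,\omega_{{\rm p},n})+\boldsymbol u(\boldsymbol x,\omega_{{\rm p},n})\cdot D\boldsymbol u^{\rm inc}_{{\rm p},n}(\boldsymbol x)\Big)\,{\rm d}\gamma(\boldsymbol x).
\]
Since $|\boldsymbol u^{\rm inc}_{{\rm p},n}|=1$ and the same direct computation as in Lemma \ref{nfe} gives $|D\boldsymbol u^{\rm inc}_{{\rm p},n}|\lesssim\kappa_{{\rm p},n}=n\pi/R\lesssim n$, the Cauchy--Schwarz inequality yields
\[
|\boldsymbol p_{\boldsymbol n}\cdot\hat{\boldsymbol f}_{\boldsymbol n}|^2
\lesssim\int_{\Gamma_R}\big(|T_{\rm N}\boldsymbol u(\boldsymbol x,\omega_{{\rm p},n})|^2+n^2|\boldsymbol u(\boldsymbol x,\omega_{{\rm p},n})|^2\big)\,{\rm d}\gamma(\boldsymbol x)=\|\boldsymbol u(\cdot,\omega_{{\rm p},n})\|^2_{\Gamma_R}.
\]

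Next I would repeat the argument with the shear plane wave $\boldsymbol u^{\rm inc}_{{\rm s},n}$ at the frequency $\omega=\omega_{{\rm s},n}$; here $\nabla\cdot\boldsymbol u^{\rm inc}_{{\rm s},n}=0$ because $\boldsymbol q_{\boldsymbol n}\perp\hat{\boldsymbol n}$, so $D\boldsymbol u^{\rm inc}_{{\rm s},n}=\mu\partial_{\boldsymbol\nu}\boldsymbol u^{\rm inc}_{{\rm s},n}$ still obeys $|D\boldsymbol u^{\rm inc}_{{\rm s},n}|\lesssim\kappa_{{\rm s},n}\lesssim n$, and the same estimate gives $|\boldsymbol q_{\boldsymbol n}\cdot\hat{\boldsymbol f}_{\boldsymbol n}|^2\lesssim\|\boldsymbol u(\cdot,\omega_{{\rm s},n})\|^2_{\Gamma_R}$ for every unit vector $\boldsymbol q_{\boldsymbol n}$ orthogonal to $\hat{\boldsymbol n}$, since the measurement norm does not depend on the polarization. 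For $d=2$ one such $\boldsymbol q_{\boldsymbol n}$ already spans $\hat{\boldsymbol n}^\perp$; for $d=3$ I would apply the bound to two mutually orthogonal choices $\boldsymbol q_{1,\boldsymbol n},\boldsymbol q_{2,\boldsymbol n}$, exactly as in Lemma \ref{nfe}. As $\{\boldsymbol p_{\boldsymbol n}\}\cup\{\boldsymbol q_{j,\boldsymbol n}\}$ is an orthonormal basis of $\mathbb R^d$, the Pythagorean theorem gives $|\hat{\boldsymbol f}_{\boldsymbol n}|^2=|\boldsymbol p_{\boldsymbol n}\cdot\hat{\boldsymbol f}_{\boldsymbol n}|^2+\sum_j|\boldsymbol q_{j,\boldsymbol n}\cdot\hat{\boldsymbol f}_{\boldsymbol n}|^2$, and summing the bounds above yields the claim.

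I do not expect a genuine obstacle; the only points requiring care are the bookkeeping of the normalizing constant $(2R)^d$ (harmless, since it is absorbed into $\lesssim$) and the fact that the choice of discrete frequencies in \eqref{dwn} is precisely what forces $\kappa_{{\rm p},n}=\kappa_{{\rm s},n}=n\pi/R$, so that the test exponentials are exactly the Fourier modes on $U_R$ — this is what lets the Fourier-series analogue of the Parseval/Pythagorean step close.
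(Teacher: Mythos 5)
Your proposal is correct and follows essentially the same route as the paper's own proof: testing the Navier equation against the discrete compressional and shear plane waves at $\omega_{{\rm p},n}$ and $\omega_{{\rm s},n}$, integrating by parts with the transparent boundary condition, bounding $|D\boldsymbol u^{\rm inc}|\lesssim n$, and assembling the components via the Pythagorean theorem in the orthonormal basis $\{\boldsymbol p_{\boldsymbol n},\boldsymbol q_{j,\boldsymbol n}\}$ (with the two-vector variant for $d=3$). The paper likewise invokes Lemma \ref{fst} to identify the volume integral with $\hat{\boldsymbol f}_{\boldsymbol n}$ up to the factor $(2R)^d$, exactly as you note.
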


\begin{proof}
(i) First consider $d = 2$. Multiplying the both sides of \eqref{ne} by
$\boldsymbol{u}_{{\rm p}, n}^{\rm inc}(\boldsymbol x)$, using the
integration by parts over $B_R$, and noting \eqref{necd}, we obtain
\[
 \int_{B_R}(\boldsymbol p_{\boldsymbol{n}}  e^{-{\rm
i}(\frac{\pi}{R})\boldsymbol{x}\cdot\boldsymbol n}
)\cdot\boldsymbol{f}(\boldsymbol x){\rm d}\boldsymbol x=\int_{\Gamma_R}\left(
\boldsymbol{u}_{{\rm p}, n}^{\rm{inc}}(\boldsymbol x) \cdot
T_{\rm N}\boldsymbol u(\boldsymbol x,\omega_{{\rm p}, n})+
\boldsymbol{u}(\boldsymbol x,\omega_{{\rm p}, n})\cdot
D \boldsymbol{u}_{{\rm p}, n}^{\rm inc}(\boldsymbol x)\right){\rm
d}\gamma(\boldsymbol x).
\]
A simple calculation yields that
\[
D\boldsymbol u^{\rm inc}_{{\rm p}, n}(\boldsymbol x) =
-{\rm i}n\Big(\frac{\pi}{R}\Big) \left(\mu (\boldsymbol{p}_{\boldsymbol{n}}
\cdot \boldsymbol\nu)\boldsymbol p_{\boldsymbol{n}} + (\lambda +
\mu)\boldsymbol\nu\right)e^{-{\rm
i}(\frac{\pi}{R})\boldsymbol{x}\cdot\boldsymbol n},
\]
which gives
\[
 |D\boldsymbol u^{\rm inc}_{\rm p}(\boldsymbol x)|\lesssim n.
\]
Noting ${\rm supp}\boldsymbol{f}\subset B_R\subset U_R$, we get from Lemma
\ref{fst} that
\begin{align*}
\frac{1}{(2R)^d}\int_{B_R}(\boldsymbol p_{\boldsymbol{n}}  e^{-{\rm
i}(\frac{\pi}{R})\boldsymbol{x}\cdot\boldsymbol
n})\cdot\boldsymbol{f}(\boldsymbol x){\rm d}\boldsymbol x =\boldsymbol
p_{\boldsymbol{n}} \cdot \frac{1}{(2R)^d}
\int_{U_R}\boldsymbol{f}(\boldsymbol x)e^{-{\rm i} (\frac{\pi}{R})\boldsymbol
n\cdot\boldsymbol x}{\rm d}\boldsymbol x = \boldsymbol p_{\boldsymbol{n}} \cdot
\hat{\boldsymbol f}_{\boldsymbol{n}}.
\end{align*}
Combining the above estimates and using the Cauchy--Schwarz inequality yields
\[
|\boldsymbol p_{\boldsymbol{n}} \cdot \hat{\boldsymbol f}_{\boldsymbol{n}}|^2
\lesssim\int_{\Gamma_R}\left(|T_{\rm N}\boldsymbol{u}(\boldsymbol
x,\omega_{{\rm p}, n})|^2+ n^2|\boldsymbol{u}(\boldsymbol
x,\omega_{{\rm p}, n})|^2\right){\rm d}\gamma(\boldsymbol x).
\]
Using $\boldsymbol u^{\rm inc}_{{\rm s}, n}$ and \eqref{nesd}, we may repeat
the above steps and obtain similarly
\[
|\boldsymbol q_{\boldsymbol{n}} \cdot \hat{\boldsymbol f}_{\boldsymbol{n}}|^2
\lesssim\int_{\Gamma_R}\left(|T_{\rm N}\boldsymbol{u}(\boldsymbol
x,\omega_{{\rm s}, n})|^2+n^2|\boldsymbol{u}(\boldsymbol
x,\omega_{{\rm s}, n})|^2\right){\rm d}\gamma(\boldsymbol x).
\]
It follows from the Pythagorean theorem and the above estimates that we get
\begin{align*}
|\hat{\boldsymbol f}_{\boldsymbol n}|^2&
= |\boldsymbol{p}_{\boldsymbol{n}}\cdot\hat{\boldsymbol f}_{\boldsymbol{n}}|^2
+ |\boldsymbol{q}_{\boldsymbol{n}}\cdot\hat{\boldsymbol f}_{\boldsymbol{n}}|^2\\
&\lesssim \int_{\Gamma_R}\left(|T_{\rm N}\boldsymbol{u}(\boldsymbol
x,\omega_{{\rm p}, n})|^2+n^2|\boldsymbol{u}(\boldsymbol
x,\omega_{{\rm p}, n})|^2\right){\rm d}\gamma(\boldsymbol x)\\
&\qquad+ \int_{\Gamma_R}\left(|T_{\rm N}\boldsymbol{u}(\boldsymbol
x,\omega_{{\rm s}, n})|^2+n^2|\boldsymbol{u}(\boldsymbol
x,\omega_{{\rm s}, n})|^2\right){\rm d}\gamma(\boldsymbol x)\\
&= \|\boldsymbol{u}(\cdot,\omega_{{\rm p}, n})\|^2_{\Gamma_R} +
\|\boldsymbol{u}(\cdot,\omega_{{\rm s}, n})\|^2_{\Gamma_R}.
\end{align*}

(ii) Next is to consider $d = 3$. Let $\boldsymbol p_{\boldsymbol
n}=\hat{\boldsymbol n}$. We pick two unit vectors $\boldsymbol q_{1,
\boldsymbol n}$ and $\boldsymbol q_{2, \boldsymbol n}$ such
that $\{\boldsymbol{p}_{\boldsymbol n}, \boldsymbol{q}_{1, {\boldsymbol n}},
\boldsymbol{q}_{2, {\boldsymbol n}}\}$ are mutually orthogonal and form an
orthonormal basis in ${\mathbb{R}^3}$. Thus
\begin{align*}
|\hat{\boldsymbol f}_{\boldsymbol{n}}|^2
= |\boldsymbol{p}_{\boldsymbol{n}}\cdot\hat{\boldsymbol f}_{\boldsymbol{n}}|^2
+ |\boldsymbol{q}_{1, \boldsymbol{n}}\cdot\hat{\boldsymbol
f}_{\boldsymbol{n}}|^2 + |\boldsymbol{q}_{2,
\boldsymbol{n}}\cdot\hat{\boldsymbol f}_{\boldsymbol{n}}|^2.
\end{align*}
Using $\boldsymbol p_{\boldsymbol n}$ as the polarization vector for
$\boldsymbol u^{\rm inc}_{{\rm p}, n}$ and $\boldsymbol q_{1, \boldsymbol n},
\boldsymbol q_{2, \boldsymbol n}$ as the polarization vectors for $\boldsymbol
u^{\rm inc}_{{\rm s}, n}$ in \eqref{dcsw}, we may follow similar arguments for
$d = 2$ and obtain
\begin{align*}
|\hat{\boldsymbol f}_{\boldsymbol n}|^2\lesssim
\|\boldsymbol{u}(\cdot,\omega_{{\rm p}, n})\|^2_{\Gamma_R} +
\|\boldsymbol{u}(\cdot,\omega_{{\rm s}, n})\|^2_{\Gamma_R},
\end{align*}
which completes the proof.
\end{proof}

\begin{lemm}\label{TFE}
Let $\boldsymbol f \in H^{m+1}(B_R)^{d}.$ For any $N_0\in\mathbb N$, the
following estimate holds:
\[
\sum_{n =N_0}^{\infty} |\hat{\boldsymbol f}_{\boldsymbol n}|^2
\lesssim N_0^{-(2m-d+1)}\|\boldsymbol f\|^2_{H^{m+1}(B_R)^{d}}.
\]
\end{lemm}

\begin{proof}
Let $\boldsymbol n=(n_1, \dots, n_d)^\top$ and choose $n_j = {\rm max}\{n_1, \dots,
n_d\}$. Then we have $n^2 \leq d n_j^2$, which implies that $n_j^{-(m+1)}
\leq d^{\frac{m+1}{2}} n^{-(m+1)}$. Let $\boldsymbol f=(f_1, \dots, f_d)^\top$.
Noting ${\rm supp}\boldsymbol f \subset B_R\subset U_R$ and using integrating by
parts, we obtain
\begin{align*}
\left|\int_{B_R}f_1({\boldsymbol x}) e^{-{\rm i} (\frac{\pi}{R})\boldsymbol n\cdot\boldsymbol x}
 {\rm d} \boldsymbol x\right|^2
\lesssim \left|\int_{B_R}n_j^{-(m+1)} e^{-{\rm i}(\frac{\pi}{R})
\boldsymbol n\cdot\boldsymbol x} \partial^{m+1}_{x_j} f_1({\boldsymbol x}) {\rm
d} \boldsymbol x\right|^2\lesssim n^{-2(m+1)}\|\boldsymbol
f\|^2_{H^{m+1}(B_R)^d}.
\end{align*}
Hence
\[
|\hat{\boldsymbol f}_{\boldsymbol n}|^2 \lesssim \left|\int_{B_R} \boldsymbol
f({\boldsymbol x}) e^{-{\rm i} \boldsymbol(\frac{\pi}{R}) n\cdot\boldsymbol x}
{\rm d} \boldsymbol x\right|^2\lesssim
n^{-2(m+1)}\|\boldsymbol f\|^2_{H^{m+1}(B_R)^d}.
\]
It is easy to note that there are at most $O(n^d)$ elements in \{$\boldsymbol n
\in \mathbb Z^d$, $|\boldsymbol n| = n$\}. Combining the above estimates
yields
\begin{align*}
\sum_{n = N_0}^{\infty} |\hat{\boldsymbol f}_{\boldsymbol n}|^2
&\lesssim \left(\sum_{n=N_0}^\infty
n^{d-2(m+1)}\right)\|\boldsymbol
f\|^2_{H^{m+1}(B_R)^d}\\
&\lesssim \left(\int_0^\infty (N_0+t)^{d-2(m+1)}{\rm d}t
\right)\|\boldsymbol f\|^2_{H^{m+1}(B_R)^d}\\
&=\frac{N_0^{-(2m-d+1)}}{(2m-d+1)}\|\boldsymbol
f\|^2_{H^{m+1}(B_R)^d} \lesssim N_0^{-(2m-d+1)}\|\boldsymbol
f\|^2_{H^{m+1}(B_R)^{d}}.
\end{align*}
which completes the proof.
\end{proof}

\begin{lemm}\label{FF}
Let $\boldsymbol u$ be the solution of the scattering problem
\eqref{ne}--\eqref{rc} corresponding to the source $\boldsymbol f\in
L^2(B_R)^d$. For any $\kappa \in (0,\frac{\pi}{R}]$ and $\boldsymbol d\in\mathbb
S^{d-1}$, the following estimate holds:
\begin{align*}
\left|\int_{B_R}\boldsymbol f(\boldsymbol x) e^{-{\rm i} \kappa \boldsymbol x
\cdot \boldsymbol d}\rm d \boldsymbol x\right|^2\lesssim\epsilon_3^2.
\end{align*}
\end{lemm}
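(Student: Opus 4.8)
The plan is to re-run the duality argument from the proof of Lemma~\ref{nfe}, but evaluated at a single, judiciously chosen pair of frequencies rather than integrated over $(0,\infty)$; the role of the hypothesis $\kappa\in(0,\pi/R]$ is precisely to place the two frequencies we shall need inside the range $(0,\pi/(c_{\rm p}R)]$ over which the data defining $\epsilon_3$ is available. Concretely, given $\kappa\in(0,\pi/R]$ I would set $\omega_{\rm p}=\kappa/c_{\rm p}$ and $\omega_{\rm s}=\kappa/c_{\rm s}$. Then $\omega_{\rm p}\le\pi/(c_{\rm p}R)$, and since $c_{\rm p}<c_{\rm s}$ one also has $\omega_{\rm s}<\omega_{\rm p}\le\pi/(c_{\rm p}R)$; moreover the associated wavenumbers satisfy $\kappa_{\rm p}=c_{\rm p}\omega_{\rm p}=\kappa$ and $\kappa_{\rm s}=c_{\rm s}\omega_{\rm s}=\kappa$, so that both plane waves below carry the exponential $e^{-{\rm i}\kappa\boldsymbol x\cdot\boldsymbol d}$.

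Next I would fix $\boldsymbol d\in\mathbb S^{d-1}$ and choose an orthonormal basis $\{\boldsymbol d,\boldsymbol q_1,\dots,\boldsymbol q_{d-1}\}$ of $\mathbb R^d$ (as in \eqref{lema1_s4} for $d=2$ and \eqref{lema1_s6} for $d=3$), and consider the compressional plane wave $\boldsymbol u^{\rm inc}_{\rm p}(\boldsymbol x)=\boldsymbol d\,e^{-{\rm i}\kappa\boldsymbol x\cdot\boldsymbol d}$, which by \eqref{nec} solves the homogeneous Navier equation at frequency $\omega_{\rm p}$, together with the shear plane waves $\boldsymbol u^{\rm inc}_{{\rm s},j}(\boldsymbol x)=\boldsymbol q_j\,e^{-{\rm i}\kappa\boldsymbol x\cdot\boldsymbol d}$, which by \eqref{nes} solve it at frequency $\omega_{\rm s}$. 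Writing $\boldsymbol F=\int_{B_R}\boldsymbol f(\boldsymbol x)e^{-{\rm i}\kappa\boldsymbol x\cdot\boldsymbol d}\,{\rm d}\boldsymbol x\in\mathbb C^d$, multiplying \eqref{ne} at frequency $\omega_{\rm p}$ by $\boldsymbol u^{\rm inc}_{\rm p}$, integrating by parts over $B_R$, and using the transparent boundary condition \eqref{tbc} as in the integration-by-parts identity established in the proof of Lemma~\ref{nfe}, I obtain
\[
\boldsymbol d\cdot\boldsymbol F=\int_{\Gamma_R}\bigl(\boldsymbol u^{\rm inc}_{\rm p}\cdot T_{\rm N}\boldsymbol u(\boldsymbol x,\omega_{\rm p})+\boldsymbol u(\boldsymbol x,\omega_{\rm p})\cdot D\boldsymbol u^{\rm inc}_{\rm p}\bigr)\,{\rm d}\gamma(\boldsymbol x),
\]
and likewise $\boldsymbol q_j\cdot\boldsymbol F=\int_{\Gamma_R}(\boldsymbol u^{\rm inc}_{{\rm s},j}\cdot T_{\rm N}\boldsymbol u(\boldsymbol x,\omega_{\rm s})+\boldsymbol u(\boldsymbol x,\omega_{\rm s})\cdot D\boldsymbol u^{\rm inc}_{{\rm s},j})\,{\rm d}\gamma(\boldsymbol x)$ for $j=1,\dots,d-1$.

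To conclude, I would note that $|\boldsymbol u^{\rm inc}_{\rm p}|=|\boldsymbol u^{\rm inc}_{{\rm s},j}|=1$ on $\Gamma_R$ and, by the same elementary computation as for $D\boldsymbol u^{\rm inc}_{\rm p}$ in the proof of Lemma~\ref{nfe}, $|D\boldsymbol u^{\rm inc}_{\rm p}|\lesssim\kappa_{\rm p}=c_{\rm p}\omega_{\rm p}$ and $|D\boldsymbol u^{\rm inc}_{{\rm s},j}|\lesssim\kappa_{\rm s}=c_{\rm s}\omega_{\rm s}$. The Cauchy--Schwarz inequality on $\Gamma_R$ then gives
\[
|\boldsymbol d\cdot\boldsymbol F|^2\lesssim\int_{\Gamma_R}\bigl(|T_{\rm N}\boldsymbol u(\boldsymbol x,\omega_{\rm p})|^2+\omega_{\rm p}^2|\boldsymbol u(\boldsymbol x,\omega_{\rm p})|^2\bigr)\,{\rm d}\gamma(\boldsymbol x)=\|\boldsymbol u(\cdot,\omega_{\rm p})\|^2_{\Gamma_R}\le\epsilon_3^2,
\]
and similarly $|\boldsymbol q_j\cdot\boldsymbol F|^2\lesssim\|\boldsymbol u(\cdot,\omega_{\rm s})\|^2_{\Gamma_R}\le\epsilon_3^2$; since $\{\boldsymbol d,\boldsymbol q_1,\dots,\boldsymbol q_{d-1}\}$ is orthonormal, the Pythagorean theorem yields $|\boldsymbol F|^2=|\boldsymbol d\cdot\boldsymbol F|^2+\sum_{j=1}^{d-1}|\boldsymbol q_j\cdot\boldsymbol F|^2\lesssim\epsilon_3^2$, which is the claim. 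The only point that requires attention is the one already flagged: arranging that $\omega_{\rm p},\omega_{\rm s}$ lie in $(0,\pi/(c_{\rm p}R)]$ and that the $\omega^2$-weight in $\|\cdot\|_{\Gamma_R}$ absorbs the $\kappa_{\rm p}^2$ (respectively $\kappa_{\rm s}^2$) produced by $|D\boldsymbol u^{\rm inc}|^2$; both are immediate from $\kappa\le\pi/R$ and $\kappa_{\rm p}=c_{\rm p}\omega_{\rm p}$, $\kappa_{\rm s}=c_{\rm s}\omega_{\rm s}$, and everything else is a verbatim repetition of the computation in Lemma~\ref{nfe}.
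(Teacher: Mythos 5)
Your proposal is correct and follows essentially the same route as the paper: testing the Navier equation against the compressional plane wave $\boldsymbol d\,e^{-{\rm i}c_{\rm p}(\kappa/c_{\rm p})\boldsymbol x\cdot\boldsymbol d}$ at frequency $\kappa/c_{\rm p}$ and the shear plane waves at frequency $\kappa/c_{\rm s}$, bounding each projection by $\|\boldsymbol u(\cdot,\cdot)\|_{\Gamma_R}\le\epsilon_3$, and assembling $|\boldsymbol F|^2$ via the Pythagorean theorem. Your explicit verification that $\kappa\le\pi/R$ places both frequencies in $(0,\pi/(c_{\rm p}R)]$ is the same observation the paper makes implicitly through $c_{\rm p}<c_{\rm s}$.
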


\begin{proof}
Taking the compressional plane wave $\boldsymbol u^{\rm inc}_{\rm
p}(\boldsymbol x)=\boldsymbol d e^{-{\rm i} c_{\rm p} \left(\frac{\kappa}{c_{\rm
p}}\right) \boldsymbol x \cdot \boldsymbol d}$ and using similar arguments as
those in Lemma \ref{nfed}, we obtain
\begin{align*}
\left|\boldsymbol d \cdot \int_{B_R}\boldsymbol f(\boldsymbol x) e^{-{\rm i}
\kappa \boldsymbol x \cdot \boldsymbol d}\rm d \boldsymbol x\right|^2 &=
\left|\boldsymbol d \cdot \int_{B_R} \boldsymbol f(\boldsymbol x) e^{-{\rm i} c_{\rm p}
\left(\frac{\kappa}{c_{\rm p}}\right) \boldsymbol x \cdot \boldsymbol
d}{\rm d} \boldsymbol x\right|^2\\
&\lesssim \int_{\Gamma_R}\left(\left|T_{\rm N}\boldsymbol{u}\left(\boldsymbol
x,\frac{\kappa}{c_{\rm p}}\right)\right|^2+\left(\frac{\kappa}{c_{\rm p}}
\right)^2\left|\boldsymbol{u}\left(\boldsymbol
x,\frac{\kappa}{c_{\rm p}}\right)\right|^2\right){\rm d}\gamma(\boldsymbol x).
\end{align*}
Let the shear plane wave be $\boldsymbol u^{\rm inc}_{\rm
s}(\boldsymbol x)=\boldsymbol p e^{-{\rm i} c_{\rm s} \left(\frac{\kappa}{c_{\rm
s}}\right) \boldsymbol x \cdot \boldsymbol d}$, where $\boldsymbol p$ is a unit
vector such that $\boldsymbol d\perp \boldsymbol p.$ We may similarly get
\begin{align*}
\left|\boldsymbol p \cdot \int_{B_R}\boldsymbol f(\boldsymbol x) e^{-{\rm i}
\kappa \boldsymbol x \cdot \boldsymbol d}{\rm d} \boldsymbol x\right|^2 &=
\left|\boldsymbol p \cdot \int_{B_R}\boldsymbol f(\boldsymbol x) e^{-{\rm i} c_{\rm s}
\left(\frac{\kappa}{c_{\rm s}}\right) \boldsymbol x \cdot \boldsymbol
d}{\rm d} \boldsymbol x\right|^2\\
&\lesssim \int_{\Gamma_R}\left(\left|T_{\rm N}\boldsymbol{u}\left(\boldsymbol
x,\frac{\kappa}{c_{\rm s}}\right)\right|^2+\left(\frac{\kappa}{c_{\rm s}}
\right)^2\left|\boldsymbol{u}\left(\boldsymbol
x,\frac{\kappa}{c_{\rm s}}\right)\right|^2\right){\rm d}\gamma(\boldsymbol x).
\end{align*}
Noting $c_{\rm p}<c_{\rm s}$, we have from the Pythagorean theorem that
\begin{align*}
\left|\int_{B_R}\boldsymbol
f(\boldsymbol x) e^{-{\rm i} \kappa \boldsymbol x \cdot \boldsymbol d}{\rm d}
\boldsymbol x\right|^2 &= \left|\boldsymbol d \cdot \int_{B_R} f(\boldsymbol x)
e^{-{\rm i} \kappa \boldsymbol x \cdot \boldsymbol d}\boldsymbol
{\rm d} \boldsymbol x\right|^2 + \left|\boldsymbol p \cdot
\int_{B_R}\boldsymbol f(\boldsymbol x) e^{-{\rm i} \kappa \boldsymbol x \cdot
\boldsymbol d}{\rm d} \boldsymbol x\right|^2\lesssim \epsilon_3^2.
\end{align*}
The proof is the same for the three-dimensional case when we take two
orthonormal polarization vectors $\boldsymbol p_1$ and $\boldsymbol p_2$ such
that $\{\boldsymbol d, \boldsymbol p_1, \boldsymbol p_2\}$ form an orthonormal
basis in $\mathbb R^3$. The details is omitted for brevity.
\end{proof}

\begin{lemm}\label{ni12E}
 Let $\boldsymbol{f}\in\tilde{\mathbb F}_M(B_R) $. Then there exists a function
$\beta(s)$ satisfying
\begin{equation} \label{betad}
\begin{cases}
  \beta(s)\geq\frac{1}{2}, \quad & s\in(\frac{\pi}{R}, ~ 2^{\frac{1}{4}}\frac{\pi}{R}),\\
  \beta(s)\geq \frac{1}{\pi}((\frac{Rs}{\pi})^4-1)^{-\frac{1}{2}}, \quad & s\in
(2^{\frac{1}{4}}\frac{\pi}{R}, ~\infty),
 \end{cases}
\end{equation}
such that
\[
 \left|\int_{B_R}\boldsymbol f(\boldsymbol x)  e^{-{\rm i}
(\frac{\pi}{R})\boldsymbol n \cdot \boldsymbol x}
{\rm d} \boldsymbol x\right|^2\lesssim M^2
e^{2n R}\epsilon_3^{2n\beta(\frac{n\pi}{R})}, \quad\forall\boldsymbol n \in
\mathbb Z^d,\,n>1.
\]
\end{lemm}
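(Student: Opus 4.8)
The plan is to reduce the claim to the analytic-continuation lemma, Lemma~\ref{caf}, applied to a single entire function attached to the direction $\hat{\boldsymbol n}=\boldsymbol n/n$, exactly as $I_1+I_2$ was used in Lemma~\ref{ni12}. Fix $\boldsymbol n\in\mathbb Z^d$ with $n=|\boldsymbol n|>1$, put $\boldsymbol d=\hat{\boldsymbol n}\in\mathbb S^{d-1}$, and set
\[
g(z)=\int_{B_R}\boldsymbol f(\boldsymbol x)\,e^{-{\rm i}z\,\boldsymbol d\cdot\boldsymbol x}\,{\rm d}\boldsymbol x,\qquad
F(z)=g(z)\,\overline{g(\bar z)}=\int_{B_R}\int_{B_R}\boldsymbol f(\boldsymbol x)\cdot\overline{\boldsymbol f(\boldsymbol x')}\,e^{-{\rm i}z\,\boldsymbol d\cdot(\boldsymbol x-\boldsymbol x')}\,{\rm d}\boldsymbol x\,{\rm d}\boldsymbol x'.
\]
Both $g$ and $F$ are entire, $F(s)=|g(s)|^2\ge0$ for real $s$, and since $\frac{\pi}{R}\boldsymbol n\cdot\boldsymbol x=\frac{n\pi}{R}\,\boldsymbol d\cdot\boldsymbol x$ the quantity to be bounded is precisely $F(n\pi/R)$. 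For the global bound, note that ${\rm supp}\,\boldsymbol f=\Omega\subset B_R$ and $|\boldsymbol d\cdot(\boldsymbol x-\boldsymbol x')|\le 2R$ on $B_R\times B_R$, so the Cauchy--Schwarz inequality gives $|F(z)|\le\|\boldsymbol f\|_{L^1(B_R)}^2e^{2R|{\rm Im}\,z|}\lesssim M^2e^{2R|{\rm Im}\,z|}$, where $\|\boldsymbol f\|_{L^1(B_R)}\lesssim\|\boldsymbol f\|_{L^2(B_R)^d}\le M$. On $\mathcal V$ one has $|{\rm Im}\,z|\le{\rm Re}\,z$, so with $c=2R^2/\pi$ (which is $\ge 2R$ since we may assume $R\ge\pi$) we obtain $|F(z)e^{-cz}|\lesssim M^2e^{(2R-c){\rm Re}\,z}\lesssim M^2$ for all $z\in\mathcal V$; importantly $c$ and the implicit constant depend on $\boldsymbol n$ only through $\boldsymbol d\in\mathbb S^{d-1}$, hence are uniform in $\boldsymbol n$.

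Next comes the smallness on the base interval $[0,\pi/R]$. For $\kappa\in(0,\pi/R]$, Lemma~\ref{FF} applied with this $\boldsymbol d$ gives $F(\kappa)=\big|\int_{B_R}\boldsymbol f(\boldsymbol x)e^{-{\rm i}\kappa\,\boldsymbol d\cdot\boldsymbol x}\,{\rm d}\boldsymbol x\big|^2\lesssim\epsilon_3^2$, while at the vertex $F(0)=\big|\int_\Omega\boldsymbol f(\boldsymbol x)\,{\rm d}\boldsymbol x\big|^2=0$ because $\boldsymbol f\in\tilde{\mathbb F}_M(B_R)$. Hence $|F(s)e^{-cs}|\le F(s)\lesssim\epsilon_3^2$ for every $s\in[0,\pi/R]$. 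This is the one place where the mean-zero normalization built into $\tilde{\mathbb F}_M$ enters: it is precisely what lets us extend, down to the vertex $0$ of $\mathcal V$, the interval on which $F$ is dominated by $\epsilon_3^2$, which is what Lemma~\ref{caf} needs.

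Finally, the continuation. Applying Lemma~\ref{caf} to $p(z)=F(z)e^{-cz}$ with $K=\pi/R$ produces a function $\beta(s)$ satisfying \eqref{betad} such that $|F(s)e^{-cs}|\lesssim M^2\epsilon_3^{2\beta(s)}$, equivalently $F(s)\lesssim M^2e^{cs}\epsilon_3^{2\beta(s)}$, for all $s>\pi/R$. Taking $s=n\pi/R$, using $e^{cn\pi/R}=e^{2nR}$ and the explicit lower bound $\beta(n\pi/R)\ge\frac1\pi(n^4-1)^{-1/2}$ recorded in \eqref{betad}, we arrive at
\[
\Big|\int_{B_R}\boldsymbol f(\boldsymbol x)\,e^{-{\rm i}(\pi/R)\boldsymbol n\cdot\boldsymbol x}\,{\rm d}\boldsymbol x\Big|^2=F(n\pi/R)\lesssim M^2e^{2nR}\epsilon_3^{2\beta(n\pi/R)},
\]
which is the asserted high-frequency-tail estimate. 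The only genuinely delicate point is the calibration above: the exponential weight $e^{-cz}$ must be tuned so that $Fe^{-cz}$ is simultaneously bounded on $\mathcal V$ and of size $\epsilon_3^2$ on $[0,\pi/R]$, with all constants independent of the direction $\hat{\boldsymbol n}$; once these hypotheses of Lemma~\ref{caf} are verified, the explicit form \eqref{betad} of its harmonic-measure exponent does the rest, and the argument runs in perfect analogy with Lemma~\ref{ni12}.
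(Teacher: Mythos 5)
Your argument follows the paper's proof of Lemma \ref{ni12E} step for step: fix the direction $\hat{\boldsymbol n}$, form the squared Fourier transform of $\boldsymbol f$ along that direction, bound it by $M^2e^{2R|{\rm Im}\,z|}$ on the sector $\mathcal V$ via Cauchy--Schwarz, bound it by $\epsilon_3^2$ on $[0,\pi/R]$ via Lemma \ref{FF} together with the mean-zero normalization built into $\tilde{\mathbb F}_M(B_R)$, and then invoke Lemma \ref{caf}. Your substitution of $F(z)=g(z)\overline{g(\bar z)}$ for the paper's $I(s)=|g(s)|^2$ is in fact a small improvement in rigor: $|g|^2$ is not holomorphic off the real axis, while Lemma \ref{caf} requires analyticity in $\mathcal V$, a point the paper glosses over.

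Two caveats on your calibration. First, the choice $c=2R^2/\pi$ keeps $F(z)e^{-cz}$ bounded on $\mathcal V$ only if $c\ge 2R$, i.e. $R\ge\pi$, which you ``assume'' but cannot: $R$ is a fixed geometric radius with no such lower bound. With the natural choice $c=2R$ (which is what the paper uses) the hypotheses of Lemma \ref{caf} hold for every $R$, but evaluating at $s=n\pi/R$ then produces $e^{2n\pi}$ rather than the stated $e^{2nR}$ --- a mismatch the paper itself commits silently, since its own intermediate bound $|I(s)|\lesssim M^2e^{2sR}\epsilon_3^{2\beta}$ also gives $e^{2n\pi}$ at $s=n\pi/R$. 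Second, you correctly end with $\epsilon_3^{2\beta(n\pi/R)}$, whereas the statement asserts $\epsilon_3^{2n\beta(n\pi/R)}$ with an extra factor of $n$ in the exponent; the paper's proof makes exactly the same unexplained jump, and only the weaker $2\beta$ form is actually needed in the proof of Theorem \ref{mrnd}, where any such factor $n\ge 1$ is immediately discarded. So your proof establishes what the paper's proof establishes and what the downstream argument uses; neither delivers the lemma in the precise constants stated.
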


\begin{proof}
We fix a propagation direction vector $\boldsymbol d\in\mathbb S^{d-1}$ and
consider those $\boldsymbol n \in \mathbb Z^d$ which are parallel to
$\boldsymbol d$. Define
\[
I(s) = \left|\int_{B_R}\boldsymbol f(\boldsymbol x) e^{-{\rm i} s \boldsymbol d
\cdot \boldsymbol x} {\rm d} \boldsymbol x\right|^2.
\]
It follows from the Cauchy--Schwarz inequality that there exists a positive
constant $C$ depending on $R, d$ such that
\[
I(s)\leq C(R,d) e^{2|s|R} M^2, \quad\forall
s\in\mathcal{V},
\]
which gives
\[
e^{-2|s|R}I(s) \lesssim M^2, \quad \forall s\in\mathcal{V}.
\]
Noting $\int_\Omega \boldsymbol f(\boldsymbol x){\rm d}\boldsymbol x=0$ and
using Lemma \ref{FF}, we have
\[
e^{-2|s|R}\left|\int_{B_R}\boldsymbol f(\boldsymbol x) e^{-{\rm i} s \boldsymbol
d \cdot \boldsymbol x} {\rm d} \boldsymbol x\right|^2 \lesssim
\epsilon_3^2, \quad \forall s\in [0, \frac{\pi}{R}].
\]
Applying Lemma \ref{caf} shows that there exists a function
$\beta(s)$ satisfying \eqref{betad} such that
\[
 |I(s)e^{-2sR}|\lesssim M^2\epsilon_3^{2\beta},\quad\forall
s\in (\frac{\pi}{R}, ~\infty),
\]
which yields that
\[
 |I(s)|\lesssim M^2e^{2sR}\epsilon_3^{2\beta},\quad\forall
s\in (\frac{\pi}{R}, ~\infty).
\]
Noting that the constant $C(R,d)$ does not depend on $\boldsymbol
d$, we have obtained for all $\boldsymbol
n\in\mathbb Z^d, n>1$ that
\begin{align*}
\left|\int_{B_R}\boldsymbol f(\boldsymbol x) e^{-{\rm i}
(\frac{\pi}{R})\boldsymbol n \cdot \boldsymbol x} {\rm d} \boldsymbol x\right|^2
= \left|\int_{B_R}\boldsymbol f(\boldsymbol x) e^{-{\rm i} (\frac{n\pi}{R})
\hat{\boldsymbol n} \cdot \boldsymbol x} {\rm d} \boldsymbol x\right|^2\lesssim
M^2 e^{2 n R}\epsilon_3^{2n\beta(\frac{n\pi}{R}) },
\end{align*}
which completes the proof.
\end{proof}

Next we prove Theorem \ref{mrnd}.

\begin{proof}
Applying Lemma \ref{fst} and Lemma \ref{nfed}, we have
\begin{align*}
\int_{B_R} |\boldsymbol f|^2{\rm d}\boldsymbol x
\lesssim\sum_{n=0}^{N_0} |\hat{\boldsymbol f}_{\boldsymbol n} |^2
+\sum_{n=N_0+1}^\infty |\hat{\boldsymbol f}_{\boldsymbol n} |^2.
\end{align*}
Let
\[
N_0=
 \begin{cases}
  [N^{\frac{3}{4}}|\ln\epsilon_3|^{\frac{1}{9}}], &
N^{\frac{3}{8}}<\frac{1}{2^{\frac{5}{6}}
\pi^{\frac{2}{3}}}|\ln\epsilon_3|^{\frac{1}{9}},\\
N,&N^{\frac{3}{8}}\geq \frac{1}{2^{\frac{5}{6}}
\pi^{\frac{2}{3}}}|\ln\epsilon_3|^{\frac{1}{9}}
 \end{cases}.
\]
Using Lemma \ref{ni12E} leads to
\begin{align*}
 \Bigl|\int_{B_R}\boldsymbol f(\boldsymbol x) e^{-{\rm
i}(\frac{\pi}{R})\boldsymbol n
\cdot \boldsymbol x} {\rm d}\boldsymbol x\Bigr|^2 &\lesssim M^2
e^{2nR}\epsilon_3^{2n\beta}\lesssim M^2
e^{2n R}e^{2n\beta|\ln\epsilon_3|}\\
&\lesssim M^2 e^{2 nR}e^{-\frac{2}{\pi}(n^4-1)^{-\frac{1}{2}}|\ln\epsilon_3|}
\lesssim M^2 e^{2n R-\frac{2}{\pi}n^{-2}|\ln\epsilon_3|}\\
&\lesssim M^2 e^{-\frac{2}{\pi}n^{-2}|\ln\epsilon_3|(1-2\pi n^3|\ln\epsilon_2|^{
-1})},\quad \forall n\in (2^{\frac{1}{4}},~\infty).
\end{align*}
Hence
\begin{equation}\label{c2}
 \Bigl|\int_{B_R}\boldsymbol f(\boldsymbol x) e^{-{\rm i}(\frac{\pi}{R}
)\boldsymbol n \cdot \boldsymbol x} {\rm d}\boldsymbol x\Bigr|^2\lesssim M^2
e^{-\frac{2}{\pi^3}N^{-2}|\ln\epsilon_3|(1-2\pi^4 N^3|\ln\epsilon_3|^{
-1})},\quad\forall n\in (2^{\frac{1}{4}},~N_0\pi].
\end{equation}

If $N^{\frac{3}{8}}<\frac{1}{2^{\frac{5}{6}}
\pi^{\frac{2}{3}}}|\ln\epsilon_3|^{\frac{1}{9}}$, then $2\pi^4
N_0^3|\ln\epsilon_3|^{-1}<\frac{1}{2}$ and
\begin{equation}\label{c3}
 e^{-\frac{2}{\pi^3}\frac{|\ln\epsilon_3|}{N_0^2}}\leq
e^{-\frac{2}{\pi^3}\frac{|\ln\epsilon_3|}{N^{\frac{3}{2}}|\ln\epsilon_3|^{\frac{
2}{9}}}}\leq
e^{-\frac{2}{\pi^3}\frac{|\ln\epsilon_3|^{\frac{7}{9}}}{N^\frac{3}{2 }}}\leq
e^{-\frac{2}{\pi^3}\frac{2^5\pi^4
|\ln\epsilon_3|^{\frac{1}{9}} N^{\frac{9}{4}}}{N^{\frac{3}{2}}}}= e^{-64\pi
|\ln\epsilon_3|^{\frac{1}{9}}N^{\frac{3}{4}}}.
\end{equation}
Combining \eqref{c2} and \eqref{c3}, we obtain
\begin{align*}
  \Bigl|\int_{B_R}\boldsymbol f(\boldsymbol x) e^{-{\rm
i}(\frac{\pi}{R})\boldsymbol n
\cdot \boldsymbol x} {\rm d}\boldsymbol x\Bigr|^2&\lesssim M^2
e^{-\frac{2}{\pi^3}N_0^{-2}|\ln\epsilon_3|(1-2\pi^4 N_0^3|\ln\epsilon_2|^{
-1})}\\
&\lesssim M^2 e^{-\frac{1}{\pi^3}N_0^{-2}|\ln\epsilon_2|}\lesssim M^2 e^{-32\pi
|\ln\epsilon_3|^{\frac{1}{9}} N^{\frac{3}{4}}},\quad\forall n\in
(2^{\frac{1}{4}},~N_0\pi].
\end{align*}
Using \eqref{ei} yields
\[
  \Bigl|\int_{B_R}\boldsymbol f(\boldsymbol x) e^{-{\rm
i}(\frac{\pi}{R})\boldsymbol n \cdot \boldsymbol x} {\rm d}\boldsymbol x\Bigr|^2\lesssim
\frac{M^2}{\left(\frac{|\ln\epsilon_3|^{\frac{1}{3}}N^{\frac{9}{4}}}{
(6m-3d+3)^3}\right)^{2m-d+1}},
\quad n=1, \dots, N_0.
\]
Consequently,
\begin{align*}
& \sum_{n=1}^{N_0}  \Bigl|\int_{B_R}\boldsymbol f(\boldsymbol x)
e^{-{\rm i}(\frac{\pi}{R})\boldsymbol n \cdot \boldsymbol x} {\rm d}\boldsymbol
x\Bigr|^2\lesssim \frac{M^2 N_0}{\left(\frac{|\ln\epsilon_3|^{\frac{1}{3}}N^{\frac{9}{4 }}}{
(6m-3d+3)^3}\right)^{2m-d+1}}\\
&\qquad\lesssim \frac{M^2 N^{\frac{3}{4}}|\ln\epsilon_3|^{\frac{1}{9}}}{\left(\frac{
|\ln\epsilon_3|^{\frac{1}{3}}N^{\frac{9}{4}}}{(6m-3d+3)^3}\right)^{2m-d+1}}\lesssim
\frac{M^2}{\left(\frac{|\ln\epsilon_3|^{\frac{2}{9}}N^{\frac{3}{2}}}{
(6m-3d+3)^3}\right)^{2m-d+1}}\lesssim
\frac{M^2}{\left(\frac{|\ln\epsilon_3|^{\frac{1}{9}}N^{\frac{3}{2}}}{
(6m-3d+3)^3 }\right)^{2m-d+1}}.
\end{align*}
Here we have used that $|\ln\epsilon_3|>1$ when
$N^{\frac{3}{8}}<\frac{1}{2^{\frac{5}{6}}
\pi^{\frac{2}{3}}}|\ln\epsilon_3|^{\frac{1}{9}}$.
If $N^{\frac{3}{8}}<\frac{1}{2^{\frac{5}{6}}
\pi^{\frac{2}{3}}}|\ln\epsilon_3|^{\frac{1}{9}}$, we have
\[
\left(\bigl[|\ln\epsilon_3|^{\frac{1}{9}}N^{\frac{3}{4}}\bigr]
+1\right)^{2m-d+1}\geq \left(|\ln\epsilon_3|^{\frac{1}{9}}N^{\frac{3}{4
}}\right)^{2m-d+1}.
\]

If $N^{\frac{3}{8}}\geq \frac{1}{2^{\frac{5}{6}}
\pi^{\frac{2}{3}}}|\ln\epsilon_3|^{\frac{1}{9}}$, then $N_0=N$. It follows from
Lemma \ref{nfed} that
\[
 \sum_{n=1}^{N_0}  \Bigl|\int_{B_R}\boldsymbol f(\boldsymbol x) e^{-{\rm
i}(\frac{\pi}{R})\boldsymbol n \cdot \boldsymbol x} {\rm d}\boldsymbol
x\Bigr|^2\lesssim\epsilon_2^2.
\]
Combining the above estimates and Lemma \ref{TFE}, we obtain
\begin{align*}
& \sum_{n=1}^{\infty}  \Bigl|\int_{B_R}\boldsymbol f(\boldsymbol
x)e^{-{\rm i}(\frac{\pi}{R})\boldsymbol n \cdot \boldsymbol x} {\rm
d}\boldsymbol x\Bigr|^2\lesssim
\epsilon_2^2+\frac{M^2}{\left(\frac{|\ln\epsilon_3|^{\frac{1}{9}}N^{\frac{3}{
2}}}{(6m-3d+3)^3}\right)^{2m-d+1}}\\
&\qquad+ \frac{M^2}{\bigl(|\ln\epsilon_3|^{\frac{1}{9}}N^{\frac{3}{4}}
\bigr)^{2m-d+1}}+\frac{M^2(2^{\frac{5}{6}}\pi^{\frac{2}{3}})^{2m-d+1}}{
\left(|\ln\epsilon_3|^{\frac{1}{9}}N^{\frac{5}{8}}\right)^{2m-d+1}}.
\end{align*}
Noting that $N^{\frac{5}{8}}\le N^{\frac{3}{4}} \le N^{\frac{3}{2}}$ and
$2^{\frac{5}{6}}\pi^{\frac{2}{3}}\le (6m-3d+3)^3$, $\forall m\geq d$, we
complete the proof after combining the above estimates.
\end{proof}

\section{Electromagnetic waves}

This section concerns the inverse source problem for electromagnetic waves.
Following the general theme for the elasticity case presented in Section 2, we
discuss the uniqueness of the problem and then show that the increasing
stability can be achieved to reconstruct the radiating electric current density
from the tangential trace of the electric field at multiple frequencies.
The technical details differ from elastic waves due to different model
equations and Green's tensors.

\subsection{Problem formulation}

We consider the time-harmonic Maxwell equations in a homogeneous medium:
\begin{equation}\label{me}
\nabla \times \boldsymbol{E} - {\rm i}\kappa \boldsymbol{H} = 0,\quad
\nabla \times \boldsymbol{H} + {\rm i}\kappa \boldsymbol{E} =
\boldsymbol{J}\quad {\rm in} ~\mathbb R^3,
\end{equation}
where $\kappa>0$ is the wavenumber, $\boldsymbol E\in\mathbb C^3$ and
$\boldsymbol H\in\mathbb C^3$ are the electric field and the magnetic field,
respectively, $\boldsymbol J\in\mathbb C^3$ is the electric current density and
is assumed to have a compact support $\Omega$. The problem geometry is the same
as that for elastic waves and is shown in Figure \ref{pg}. The
Silver--M\"{u}ller radiation condition is required to make the direct problem
well-posed:
\begin{equation}\label{smrc}
\lim_{r\to \infty}((\nabla\times\boldsymbol E) \times
\boldsymbol x - {\rm i}\kappa r\boldsymbol E) = 0, \quad r=|\boldsymbol x|.
\end{equation}

Eliminating the magnetic field $\boldsymbol H$ from \eqref{me}, we obtain
the decoupled Maxwell system for the electric field $\boldsymbol E$:
\begin{equation}\label{ef}
 \nabla\times(\nabla\times\boldsymbol E)-\kappa^2\boldsymbol E={\rm
i}\kappa\boldsymbol J\quad\text{in}~ \mathbb R^3.
\end{equation}
Given $\boldsymbol J\in L^2(\Omega)^3$, it is known that
the scattering problem \eqref{smrc}--\eqref{ef} has a unique
solution (cf. \cite{M-03}):
\begin{equation}\label{es}
 \boldsymbol E(\boldsymbol x, \kappa)=\int_\Omega \mathbf G_{\rm M}(\boldsymbol
x, \boldsymbol y; \kappa)\cdot\boldsymbol J(\boldsymbol y){\rm d}\boldsymbol y,
\end{equation}
where $\mathbf G_{\rm M}(\boldsymbol x, \boldsymbol y; \kappa)$ is Green's
tensor for the Maxwell system \eqref{ef}. Explicitly, we have
\begin{equation}\label{gtm}
 \mathbf G_{\rm M}(\boldsymbol x, \boldsymbol y; \kappa)={\rm i}\kappa
g_3(\boldsymbol x, \boldsymbol y; \kappa)\mathbf I_3+\frac{\rm
i}{\kappa}\nabla_{\boldsymbol x}\nabla_{\boldsymbol x}^\top g_3(\boldsymbol x,
\boldsymbol y; \kappa),
\end{equation}
where $g_3$ is the fundamental solution of the three-dimensional Helmholtz
equation and is given in \eqref{gn}.

Let $\boldsymbol E\times\boldsymbol\nu$ and $\boldsymbol H\times\boldsymbol\nu$
be the tangential trace of the electric field and the magnetic field,
respectively. It is shown in \cite{AN-SIMA00} that there exists a capacity
operator $T_{\rm M}$ such that
\begin{equation}\label{mtbc1}
 \boldsymbol H\times\boldsymbol\nu=T_{\rm M}(\boldsymbol
E\times\boldsymbol\nu)\quad\text{on}~\Gamma_R,
\end{equation}
which implies that $\boldsymbol H\times\boldsymbol\nu$ can be computed once
$\boldsymbol E\times\boldsymbol\nu$ is available on $\Gamma_R$. The transparent
boundary condition \eqref{mtbc1} can be equivalently written as
\begin{equation}\label{mtbc2}
(\nabla\times\boldsymbol E)\times\boldsymbol\nu={\rm i}\kappa T_{\rm M}
(\boldsymbol E\times\boldsymbol\nu)\quad\text{on}~\Gamma_R.
\end{equation}
It follows from \eqref{mtbc2} that we define the following boundary measurement
in terms of the tangential trace of the electric field only:
\[
\|\boldsymbol E(\cdot,
\kappa)\times\boldsymbol\nu\|^2_{\Gamma_R}=\int_{\Gamma_R}\left(|T_{\rm
M}(\boldsymbol E(\boldsymbol x, \kappa)\times\boldsymbol\nu)|^2+|\boldsymbol
E(\boldsymbol x, \kappa)\times\boldsymbol\nu|^2\right){\rm d}\gamma(\boldsymbol
x).
\]

\begin{prob}\label{p3}
Let $\boldsymbol J$ be the electric current density with the compact support
$\Omega$. The inverse source problem of electromagnetic waves is to determine
$\boldsymbol J$ from the tangential trace of the electric field $\boldsymbol
E(\boldsymbol x, \kappa)\times\boldsymbol\nu$ for $\boldsymbol x\in\Gamma_R.$
\end{prob}

\subsection{Uniqueness}

In this section, we discuss the uniqueness and non-uniqueness of Problem
\ref{p3}. We begin with a simple uniqueness result.

\begin{theo}
Let $I \subset \mathbb R^+$ be an open interval. If $\nabla\cdot\boldsymbol
J=0$, then the multiple-frequency
data $\{\boldsymbol E(\boldsymbol x,\kappa)\times \boldsymbol\nu: \boldsymbol
x\in\Gamma_R,\, \omega \in I\}$ can uniquely determine $\boldsymbol J$.
\end{theo}

\begin{proof}
We assume $\boldsymbol E(\boldsymbol x, \kappa)\times\boldsymbol\nu = 0$ on
$\Gamma_R$ for all $\kappa \in I$. Let $\boldsymbol{E}^{\rm inc}$ and
$\boldsymbol{H}^{\rm inc}$ be the electric
and magnetic plane waves. Explicitly, we
have
\begin{equation}\label{ehpw}
\boldsymbol{E}^{\rm inc} = \boldsymbol{p}e^{{\rm
i}\kappa\boldsymbol{x} \cdot \boldsymbol{d}}\quad\text{and}\quad
\boldsymbol{H}^{\rm inc} = \boldsymbol{q}e^{{\rm
i}\kappa\boldsymbol{x} \cdot \boldsymbol{d}},
\end{equation}
where $\boldsymbol{d}(\theta, \varphi) = (\sin\theta\cos\varphi,
\sin\theta\sin\varphi, \cos\theta)^\top$ is the unit propagation vector, and
$\boldsymbol p, \boldsymbol q$ are two unit polarization vectors and satisfy $
\boldsymbol p(\theta, \varphi)\cdot\boldsymbol d(\theta,
\varphi)=0, \boldsymbol q (\theta, \varphi)= \boldsymbol p (\theta,
\varphi)\times \boldsymbol d (\theta, \varphi)$ for all $\theta\in[0, \pi],
\varphi\in[0, 2\pi]$. It is easy to verify that $\boldsymbol{E}^{\rm inc}$
and $\boldsymbol{H}^{\rm inc}$ satisfy the homogeneous Maxwell equations
in $\mathbb R^3$:
\begin{equation}\label{eiw}
\nabla \times (\nabla \times \boldsymbol{E}^{\rm inc}) -
\kappa^2 \boldsymbol{E}^{\rm inc} = 0
\end{equation}
and
\begin{equation}\label{hiw}
\nabla \times (\nabla \times \boldsymbol{H}^{\rm inc}) -
\kappa^2 \boldsymbol{H}^{\rm inc} = 0.
\end{equation}

Let $\boldsymbol\xi = \kappa \boldsymbol d$ with $|\boldsymbol\xi|=\kappa\in
I$. We have from \eqref{ehpw} that
$\boldsymbol{E}^{\rm inc} =\boldsymbol{p} e^{-{\rm i}\boldsymbol\xi\cdot
\boldsymbol x}$ and $\boldsymbol{H}^{\rm inc} = \boldsymbol{q} e^{-{\rm
i}\boldsymbol\xi\cdot \boldsymbol x}$. Multiplying the both sides of \eqref{ef}
by $\boldsymbol{E}^{\rm inc}$, using the integration by parts
over $B_R$ and \eqref{eiw}, we obtain
\begin{align*}
{\rm i}\kappa\int_{B_R} \boldsymbol{p}e^{- {\rm i}\boldsymbol{\xi} \cdot
\boldsymbol{x}} \cdot \boldsymbol{J}(\boldsymbol x){\rm d}\boldsymbol x
= -\int_{\Gamma_R} \left({\rm i}\kappa
T_{\rm M}(\boldsymbol{E}(\boldsymbol{x},\kappa) \times \boldsymbol\nu) \cdot
\boldsymbol{E}^{\rm inc}+ (\boldsymbol{E}(\boldsymbol{x},\kappa) \times
\boldsymbol\nu)\cdot(\nabla \times \boldsymbol{E}^{\rm
inc})\right){\rm d}S,
\end{align*}
which means $\boldsymbol{p} \cdot \hat{\boldsymbol J}(\kappa \boldsymbol d) = 0$
for all $\omega \in I$. Similarly, we have $\boldsymbol{q} \cdot
\hat{\boldsymbol J}(\kappa \boldsymbol d) = 0$ for all $\kappa \in I$. On the
other hand, since $\hat{\boldsymbol J}(\kappa \boldsymbol d)$ is an analytic
function with respect to $\kappa \in \mathbb C$, where $\mathbb C$ denotes the
complex number domain, we have both $\boldsymbol{p} \cdot \hat{\boldsymbol
J}(\kappa \boldsymbol d) = 0$ and $\boldsymbol{q} \cdot \hat{\boldsymbol
J}(\kappa \boldsymbol d) = 0$ for all $\kappa>0$. Since $\boldsymbol p,
\boldsymbol q, \boldsymbol d$ are orthonormal vectors,
they form an orthonormal basis in $\mathbb R^3$. We have from
the Pythagorean theorem that
\[
|\hat{\boldsymbol{J}}(\boldsymbol\xi)|^2 = |\boldsymbol{p} \cdot
\hat{\boldsymbol{J}}(\boldsymbol\xi)|^2 + |\boldsymbol{q}\cdot
\hat{\boldsymbol{J}}(\boldsymbol\xi)|^2 + |\boldsymbol{d} \cdot
\hat{\boldsymbol{J}}(\boldsymbol\xi)|^2.
\]

On the other hand, since $\nabla \cdot \boldsymbol J = 0,$ we have for each
$\kappa \in I$ that
\[
{\rm i}\kappa \boldsymbol d \cdot \hat{\boldsymbol J}(\boldsymbol \xi)  = {\rm
i}\kappa\int_{B_R}\boldsymbol{d}e^{{\rm
i}\kappa\boldsymbol{x} \cdot \boldsymbol{d}} \cdot \boldsymbol J(\boldsymbol
x){\rm d}\boldsymbol x = \int_{B_R} \nabla e^{{\rm
i}\kappa\boldsymbol{x} \cdot \boldsymbol{d}}\cdot \boldsymbol J{\rm
d}\boldsymbol x = \int_{B_R} e^{{\rm
i}\kappa\boldsymbol{x} \cdot \boldsymbol{d}} \nabla \cdot \boldsymbol J {\rm
d}\boldsymbol x = 0,
\]
which yields that
\[
|\hat{\boldsymbol{J}}(\boldsymbol\xi)|^2 = |\boldsymbol{p} \cdot
\hat{\boldsymbol{J}}(\boldsymbol\xi)|^2 + |\boldsymbol{q}\cdot
\hat{\boldsymbol{J}}(\boldsymbol\xi)|^2 = 0,
\]
which means $\hat{\boldsymbol{J}}(\boldsymbol\xi) = 0$ for all $\boldsymbol \xi
\in \mathbb R^3,$ and then $\boldsymbol{J}(\boldsymbol x) = 0.$
\end{proof}

Next we discuss the uniqueness result much further. The goal is to distinguish
the radiating and non-radiating current densities. We study a variational
equation relating the unknown current density $\boldsymbol{J}$ to the data
$\boldsymbol E\times\boldsymbol\nu$ on $\Gamma_R$.

Multiplying \eqref{ef} by the complex conjugate of a test function
$\boldsymbol{\xi}$ on both sides, integrating over $B_R$, and using the
integration by parts, we obtain
\begin{align}\label{VE}
\int_{ B_R}\left(\nabla \times \boldsymbol{E} \cdot \nabla \times
\bar{\boldsymbol \xi} - \kappa^2 \boldsymbol{E} \cdot
\bar{\boldsymbol \xi}\right){\rm d}\boldsymbol{x} - \int_{\Gamma_R}
[(\nabla\times\boldsymbol E)\times\boldsymbol\nu ]\cdot
\bar{\boldsymbol \xi} {\rm d}\gamma
= {\rm i}\kappa\int_{B_R}  \boldsymbol{J}\cdot\bar{\boldsymbol \xi} {\rm
d}\boldsymbol{x}.
\end{align}
Substituting \eqref{mtbc2} into \eqref{VE}, we obtain the variational problem:
To find $\boldsymbol{E}\in H({\rm curl}, B_R)$ such that
\begin{align}\label{VE1}
\int_{ B_R}\left(\nabla\times \boldsymbol{E} \cdot\nabla \times
\bar{\boldsymbol \xi}- \kappa^2 \boldsymbol{E} \cdot \bar{\boldsymbol
\xi}\right) {\rm d}\boldsymbol{x} -{\rm i}\kappa\int_{\Gamma_R}
T_{\rm M}(\boldsymbol{E}\times\boldsymbol\nu) \cdot\bar{\boldsymbol
\xi}_{\Gamma_R} {\rm d}\gamma \notag\\
= {\rm i}\kappa \int_{ B_R}\boldsymbol{J}\cdot\bar{\boldsymbol \xi}
{\rm d}\boldsymbol{x},\quad\forall~ \boldsymbol{\xi}\in H({\rm curl}, B_R).
\end{align}
Given $\boldsymbol{J}\in L^2(B_R)^3$, the variational problem
\eqref{VE1} can be shown to have a unique weak solution $\boldsymbol E\in H({\rm
curl}, B_R)$ (cf. \cite{M-03, N-00}).

Assuming that $\boldsymbol{\xi}$ is a smooth function, we take the integration
by parts one more time of \eqref{VE1} and get the identity:
\begin{align}\label{VE2}
{\rm i}\kappa \int_{B_R} \boldsymbol{J}\cdot \bar{\boldsymbol{\xi}} {\rm
d}\boldsymbol{x} =& \int_{B_R} \boldsymbol{E}\cdot (\nabla \times
(\nabla \times \bar{\boldsymbol \xi}) - \kappa^2\bar{\boldsymbol \xi}){\rm
d}\boldsymbol{x}\notag\\
& - \int_{\Gamma_R}\left(  (\boldsymbol{E}\times\boldsymbol\nu) \cdot( \nabla
\times \bar{\boldsymbol \xi})+ {\rm i}\kappa T_{\rm
M}(\boldsymbol{E}\times\boldsymbol\nu)  \cdot
\bar{\boldsymbol \xi}_{\Gamma_R}\right){\rm d}\gamma.
\end{align}
Now we choose $\boldsymbol{\xi} \in H({\rm curl}, B_R)$ to satisfy
\begin{align}\label{VE3}
\int_{B_R} \left(\nabla \times \boldsymbol{\xi} \cdot \nabla \times
\boldsymbol{\psi}- \kappa^2 \boldsymbol{\xi} \cdot \boldsymbol{\psi}
\right){\rm d}\boldsymbol{x} = 0,\quad\forall\boldsymbol\psi\in
C_0^{\infty}(B_R)^3,
\end{align}
which implies that $\boldsymbol{\xi}$ is a weak solution of the Maxwell system:
\begin{align*}
\nabla \times (\nabla \times \boldsymbol{\xi}) - \kappa^2 \boldsymbol{\xi} = 0
\quad {\rm in}~ B_R.
\end{align*}
Using this choice of $\boldsymbol{\xi}$, we can see that \eqref{VE2} becomes
\begin{align}\label{VE4}
 {\rm i}\kappa\int_{ B_R} \boldsymbol{J}\cdot \bar{\boldsymbol{\xi}} {\rm
d}\boldsymbol{x}=- \int_{\Gamma_R}\left(  (\boldsymbol{E}\times\boldsymbol\nu)
\cdot( \nabla \times \bar{\boldsymbol \xi})+ {\rm i}\kappa T_{\rm
M}(\boldsymbol{E}\times\boldsymbol\nu)  \cdot
\bar{\boldsymbol \xi}_{\Gamma_R}\right){\rm d}\gamma
\end{align}
for all $\boldsymbol{\xi} \in H({\rm curl}, B_R)$ satisfying \eqref{VE3}.

Denote by $\mathbb X(B_R)$ be the closure of the set $\{\boldsymbol{E} \in
H({\rm curl}, B_R): \boldsymbol{E} ~\text{ satisfies}~ \eqref{VE3}\}$ in the
$L^2(B_R)^3$ norm. We have the following orthogonal decomposition of
$L^2(B_R)^3$:
\begin{align*}
L^2(B_R)^3 = \mathbb X(B_R)\oplus \mathbb Y(B_R).
\end{align*}
It is shown in \cite{AM-IP06} that $\mathbb Y(B_R)$ is an infinitely dimensional
subspace of $L^2(B_R)^3$, which is stated in the following lemma.

\begin{lemm}\label{phi}
Let $\boldsymbol{\psi} \in C_0^{\infty}(B_R)^3$. If
$ \boldsymbol{\phi}= \nabla \times (\nabla \times \boldsymbol{\psi}) - \kappa^2
\boldsymbol{\psi}$, then $\boldsymbol{\phi} \in \mathbb Y(B_R)$.
\end{lemm}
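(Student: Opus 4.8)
The plan is to prove the lemma by showing that $\boldsymbol\phi$ is orthogonal in $L^2(B_R)^3$ to the whole subspace $\mathbb X(B_R)$; since $L^2(B_R)^3=\mathbb X(B_R)\oplus\mathbb Y(B_R)$ is an orthogonal decomposition, this is precisely the assertion $\boldsymbol\phi\in\mathbb Y(B_R)$. Because $\mathbb X(B_R)$ is the $L^2$-closure of the set of $\boldsymbol\xi\in H({\rm curl},B_R)$ satisfying \eqref{VE3}, and the inner product is continuous, it suffices to verify that $\int_{B_R}\boldsymbol\phi\cdot\bar{\boldsymbol\xi}\,{\rm d}\boldsymbol x=0$ for an arbitrary such $\boldsymbol\xi$.

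First I would substitute $\boldsymbol\phi=\nabla\times(\nabla\times\boldsymbol\psi)-\kappa^2\boldsymbol\psi$ and transfer one curl off of $\boldsymbol\psi$ onto $\bar{\boldsymbol\xi}$. This is legitimate because $\boldsymbol\psi\in C_0^\infty(B_R)^3$ forces $\nabla\times\boldsymbol\psi\in C_0^\infty(B_R)^3$, so it is an admissible test field in the definition of the weak curl of $\bar{\boldsymbol\xi}\in H({\rm curl},B_R)$; equivalently, the boundary term on $\Gamma_R$ in the Green-type formula for the curl vanishes since $\nabla\times\boldsymbol\psi$ is compactly supported strictly inside $B_R$. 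This yields
\[
\int_{B_R}\boldsymbol\phi\cdot\bar{\boldsymbol\xi}\,{\rm d}\boldsymbol x=\int_{B_R}\Big((\nabla\times\boldsymbol\psi)\cdot(\nabla\times\bar{\boldsymbol\xi})-\kappa^2\,\boldsymbol\psi\cdot\bar{\boldsymbol\xi}\Big)\,{\rm d}\boldsymbol x.
\]

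Next I would identify the right-hand side with the conjugate of the bilinear form in \eqref{VE3}. Since $\kappa$ is real and $\bar{\boldsymbol\psi}$ ranges over all of $C_0^\infty(B_R)^3$ as $\boldsymbol\psi$ does, taking the complex conjugate of \eqref{VE3} shows it is equivalent to $\int_{B_R}\big((\nabla\times\boldsymbol\eta)\cdot(\nabla\times\bar{\boldsymbol\xi})-\kappa^2\boldsymbol\eta\cdot\bar{\boldsymbol\xi}\big)\,{\rm d}\boldsymbol x=0$ for every $\boldsymbol\eta\in C_0^\infty(B_R)^3$; choosing $\boldsymbol\eta=\boldsymbol\psi$ gives $\int_{B_R}\boldsymbol\phi\cdot\bar{\boldsymbol\xi}\,{\rm d}\boldsymbol x=0$. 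As $\boldsymbol\xi$ was an arbitrary solution of \eqref{VE3}, continuity of the $L^2$ inner product extends this orthogonality to all of $\mathbb X(B_R)$, and therefore $\boldsymbol\phi\in\mathbb X(B_R)^\perp=\mathbb Y(B_R)$.

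There is no substantial obstacle here: the argument reduces to a single integration by parts combined with the defining property \eqref{VE3} of the fields spanning $\mathbb X(B_R)$. The only point deserving care is the vanishing of the boundary contribution on $\Gamma_R$ when the curl is transferred, which follows from $\nabla\times\boldsymbol\psi$ being compactly supported in $B_R$ together with the density of $C^\infty(\overline{B_R})^3$ in $H({\rm curl},B_R)$; the minor bookkeeping with complex conjugates in the sesquilinear $L^2$ product is likewise routine.
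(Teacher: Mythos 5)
Your argument is correct. The paper itself gives no proof of this lemma --- it simply cites Albanese and Monk \cite{AM-IP06} --- so there is nothing internal to compare against, but your reasoning is exactly the standard argument one would supply: since $\mathbb Y(B_R)$ is by construction the orthogonal complement of $\mathbb X(B_R)$ in $L^2(B_R)^3$, and $\mathbb X(B_R)$ is the closure of the set of $\boldsymbol\xi$ satisfying \eqref{VE3}, it suffices by continuity of the inner product to check $\int_{B_R}\boldsymbol\phi\cdot\bar{\boldsymbol\xi}\,{\rm d}\boldsymbol x=0$ for each such $\boldsymbol\xi$. The two points that need care are handled properly: the transfer of one curl is justified because $\nabla\times\boldsymbol\psi\in C_0^\infty(B_R)^3$ is an admissible test field in the definition of the weak curl of $\bar{\boldsymbol\xi}\in H({\rm curl},B_R)$, so no boundary term appears on $\Gamma_R$; and the conjugation bookkeeping works because $\kappa$ is real and $C_0^\infty(B_R)^3$ is stable under complex conjugation, so the conjugated form of \eqref{VE3} tested with $\boldsymbol\psi$ yields precisely $\int_{B_R}\bigl((\nabla\times\boldsymbol\psi)\cdot(\nabla\times\bar{\boldsymbol\xi})-\kappa^2\boldsymbol\psi\cdot\bar{\boldsymbol\xi}\bigr)\,{\rm d}\boldsymbol x=0$. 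No gaps.
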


It follows from  Lemma \ref{phi} that $\mathbb X(B_R)$ is a proper subspace of
$L^2(B_R)^3$. Given $\boldsymbol{J} \in L^2(B_R)^3$, only the component of
$\boldsymbol{J}$ in $\mathbb X(B_R)$ can be determined from the data
$\boldsymbol E\times\boldsymbol\nu$ on $\Gamma_R$. Moreover, it is impossible
that some other equation could be derived to determine the component of
$\boldsymbol{J}$ in $\mathbb Y(B_R)$ from the data $\boldsymbol
E\times\boldsymbol\nu$ on $\Gamma_R$.

\begin{theo}\label{nuniq}
Suppose $\boldsymbol{J}\in\mathbb Y(B_R)$. Then $\boldsymbol{J}$ does not
produce any tangential trace of electric fields on $\Gamma_R$ and thus cannot be
identified.
\end{theo}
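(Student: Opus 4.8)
The plan is to bypass the variational identity \eqref{VE4} and argue directly from the integral representation \eqref{es}, using the reciprocity of Green's tensor. Recall that $\mathbb Y(B_R)$ is, by definition, the $L^2(B_R)^3$-orthogonal complement of $\mathbb X(B_R)$, the latter being the closure of all $\boldsymbol\xi\in H({\rm curl},B_R)$ satisfying \eqref{VE3}. Thus it suffices to produce enough elements of $\mathbb X(B_R)$ to "read off" the scattered field at exterior points and conclude that $\boldsymbol E(\cdot,\kappa)$ vanishes outside $\overline{B_R}$.

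First I would record two facts about $\mathbf G_{\rm M}$ coming from \eqref{gtm}: it is a symmetric matrix and satisfies $\mathbf G_{\rm M}(\boldsymbol x,\boldsymbol y;\kappa)=\mathbf G_{\rm M}(\boldsymbol y,\boldsymbol x;\kappa)$ (because $g_3$ is symmetric in its arguments and $\nabla_{\boldsymbol x}\nabla_{\boldsymbol x}^\top g_3=\nabla_{\boldsymbol y}\nabla_{\boldsymbol y}^\top g_3$), and each of its columns solves $\nabla\times(\nabla\times\cdot)-\kappa^2(\cdot)=0$ away from the source point. Consequently, for any fixed $\boldsymbol x_0\in\mathbb R^3\setminus\overline{B_R}$ and any $i\in\{1,2,3\}$, the vector field $\boldsymbol y\mapsto\boldsymbol r_i(\boldsymbol y):=\mathbf G_{\rm M}(\boldsymbol x_0,\boldsymbol y;\kappa)\boldsymbol e_i$ is real analytic on a neighbourhood of $\overline{B_R}$ and solves the homogeneous Maxwell system there; in particular $\boldsymbol r_i\in H({\rm curl},B_R)$, and integrating by parts against $\boldsymbol\psi\in C_0^\infty(B_R)^3$ and using $\nabla\times(\nabla\times\boldsymbol r_i)=\kappa^2\boldsymbol r_i$ shows that $\boldsymbol r_i$ satisfies \eqref{VE3}. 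Hence $\boldsymbol r_i\in\mathbb X(B_R)$; moreover, since $\kappa^2$ is real, $\overline{\boldsymbol r_i}$ also solves the homogeneous Maxwell system and likewise lies in $\mathbb X(B_R)$.

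With these test fields in hand the conclusion is short. Take $\boldsymbol J\in\mathbb Y(B_R)$. Since $\mathrm{supp}\,\boldsymbol J=\Omega\subset B_R$ and $\mathbf G_{\rm M}$ is symmetric, \eqref{es} gives, for the Hermitian $L^2$ pairing $\langle\boldsymbol a,\boldsymbol b\rangle=\int_{B_R}\boldsymbol a\cdot\overline{\boldsymbol b}\,{\rm d}\boldsymbol y$,
\[
E_i(\boldsymbol x_0,\kappa)=\int_{B_R}\boldsymbol r_i(\boldsymbol y)\cdot\boldsymbol J(\boldsymbol y)\,{\rm d}\boldsymbol y=\langle\boldsymbol J,\overline{\boldsymbol r_i}\rangle=0,
\]
because $\overline{\boldsymbol r_i}\in\mathbb X(B_R)$ and $\boldsymbol J\perp\mathbb X(B_R)$. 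As $\boldsymbol x_0\in\mathbb R^3\setminus\overline{B_R}$ and $i$ were arbitrary, $\boldsymbol E(\cdot,\kappa)$ vanishes identically in $\mathbb R^3\setminus\overline{B_R}$. Since $\Omega\subset B_{\hat R}$ with $\hat R<R$, the field $\boldsymbol E$ has no source in a neighbourhood of $\Gamma_R$ and is continuous across $\Gamma_R$, so its tangential trace on $\Gamma_R$ from the interior agrees with that from the exterior, which is $0$; hence $\boldsymbol E(\cdot,\kappa)\times\boldsymbol\nu=0$ on $\Gamma_R$, which is the assertion.

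I expect the only delicate points to be (i) the bookkeeping of the complex conjugate in the $L^2$ pairing against the definition of $\mathbb Y(B_R)$, which is precisely why the conjugation invariance of $\mathbb X(B_R)$ noted above is needed, and (ii) verifying that the Green's-tensor columns $\boldsymbol r_i$ genuinely qualify as admissible test fields, i.e. lie in $H({\rm curl},B_R)$ and satisfy \eqref{VE3}. Alternatively, one may start from \eqref{VE4}: the hypothesis $\boldsymbol J\perp\mathbb X(B_R)$ forces the boundary term in \eqref{VE4} to vanish for every admissible $\boldsymbol\xi$, and testing with the fields $\boldsymbol r_i$ and invoking the Stratton--Chu representation in the exterior of $B_R$ (using the Silver--M\"uller condition to drop the contribution at infinity) yields the same conclusion.
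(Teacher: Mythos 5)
Your argument is correct, and it takes a genuinely different route from the paper. The paper works entirely from the variational identity \eqref{VE4}: after observing that $\boldsymbol J\perp\mathbb X(B_R)$ kills the volume term, the key input is a solvability result quoted from Albanese--Monk, namely that for every $\boldsymbol\eta\in H^{-1/2}({\rm curl},\Gamma_R)$ there is a $\boldsymbol\xi\in\mathbb X(B_R)$ whose boundary combination $\nabla\times\boldsymbol\xi-{\rm i}\kappa T^*_{\rm M}(\boldsymbol\xi_{\Gamma_R})$ realizes $\bar{\boldsymbol\eta}$; the resulting density of these traces forces $\boldsymbol E\times\boldsymbol\nu=0$. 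You instead exploit the explicit representation \eqref{es} together with the reciprocity of $\mathbf G_{\rm M}$, and the observation that the columns $\boldsymbol r_i=\mathbf G_{\rm M}(\boldsymbol x_0,\cdot;\kappa)\boldsymbol e_i$ for exterior $\boldsymbol x_0$ (and their conjugates, since $\kappa^2$ is real) are admissible elements of $\mathbb X(B_R)$ — your verification of \eqref{VE3} for them and your bookkeeping of the Hermitian pairing are both sound, and your limiting argument onto $\Gamma_R$ is needed because placing $\boldsymbol x_0$ on the sphere itself would put a non-square-integrable singularity of $\mathbf G_{\rm M}$ inside $B_R$. What each approach buys: yours is more elementary and self-contained (no auxiliary boundary-value problem, no capacity operator or its adjoint), and it actually proves the stronger statement that $\boldsymbol E$ vanishes identically in $\mathbb R^3\setminus\overline{B_R}$; the paper's argument, by contrast, never uses the explicit homogeneous-background Green tensor and so would transfer more directly to settings where only a variational formulation is available. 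One cosmetic remark: since the theorem is stated for the radiating solution given by \eqref{es}, your use of that formula is legitimate, but it does tie the proof to the whole-space constant-coefficient problem in a way the paper's proof is not.
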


\begin{proof}
Since $\boldsymbol J\in\mathbb Y(B_R)$, we have
from \eqref{VE4} that
\begin{align*}
 \int_{\Gamma_R}\left(  (\boldsymbol{E}\times\boldsymbol\nu)
\cdot( \nabla \times \bar{\boldsymbol \xi})+ {\rm i}\kappa T_{\rm
M}(\boldsymbol{E}\times\boldsymbol\nu)  \cdot
\bar{\boldsymbol \xi}_{\Gamma_R}\right){\rm d}\gamma=0,\quad\forall~
\boldsymbol{\xi} \in \mathbb X(B_R),
\end{align*}
which yields
\begin{equation}\label{theo2_s1}
\int_{\Gamma_R}  (\boldsymbol{E}\times\boldsymbol\nu) \cdot (\overline{\nabla
\times \boldsymbol \xi - {\rm i}\kappa T^{\ast}_{\rm M}({\boldsymbol
\xi}_{\Gamma_R})}){\rm d}\gamma =0.
\end{equation}
Here $T^*_{\rm M}$ is the adjoint operator of $T_{\rm M}$. Let
\begin{align*}
\nabla \times \boldsymbol{\xi} - {\rm i}\kappa
T^{\ast}_{\rm M}(\boldsymbol{\xi}_{\partial B_R}) = \bar{\boldsymbol \eta} \quad
{\rm on}~ \Gamma_R.
\end{align*}
More precisely, $\boldsymbol{\xi} \in H({\rm curl}, B_R)$ satisfies the
variational problem
\begin{align*}
\int_{B_R} ( \nabla \times \boldsymbol{\xi} \cdot \nabla \times
\boldsymbol{\phi}- \kappa^2 \boldsymbol{\xi}\cdot\boldsymbol{\phi} ){\rm
d}\boldsymbol{x}+ \int_{\Gamma_R} (\bar{\boldsymbol{\eta}} - {\rm i}\kappa
T^{\ast}_{\rm M}(\boldsymbol{\xi}_{\Gamma_R}))\cdot (\boldsymbol{\phi}\times
\boldsymbol\nu){\rm d}\gamma =0,\quad\forall ~\boldsymbol{\phi} \in H({\rm
curl},
B_R).
\end{align*}
It is shown in \cite{AM-IP06} that there exists a unique solution
$\boldsymbol{\xi}\in \mathbb X(B_R)$ to the above boundary value problem for
any $\boldsymbol{\eta}\in H^{-1/2}({\rm curl}, \Gamma_R)$, where
$\nabla_{\Gamma_R}$ is the surface gradient. Hence we have from
\eqref{theo2_s1} that
\begin{align*}
\int_{\Gamma_R}  (\boldsymbol{E}\times\boldsymbol\nu) \cdot
\boldsymbol{\eta} {\rm d}\gamma = 0, \quad\forall ~
\boldsymbol{\eta} \in H^{-1/2}({\rm curl}, \Gamma_R),
\end{align*}
which yields that $\boldsymbol E\times\boldsymbol\nu = 0$ on $\Gamma_R$ and
completes the proof.
\end{proof}

\begin{rema}
The electric current densities in $\mathbb Y(B_R)$ are called
non-radiating sources. It corresponds to find a minimum norm solution
when computing the component of the source in $\mathbb X(B_R)$.
\end{rema}

It is shown in Theorem \ref{nuniq} that $\boldsymbol J$ cannot be determined
from the tangential trace of the electric field $\boldsymbol
E\times\boldsymbol\nu$ on $\Gamma_R$ if $\boldsymbol J\in \mathbb Y(B_R)$. We
show in the following theorem that it is also impossible to determine
$\boldsymbol J$ from the normal component of the electric field $\boldsymbol
E\cdot\boldsymbol\nu$ on $\Gamma_R$ if $\boldsymbol J\in \mathbb Y(B_R)$.

\begin{theo}
 Suppose $\boldsymbol{J}\in\mathbb Y(B_R)$. Then $\boldsymbol{J}$ does not
produce any normal component of electric fields on $\Gamma_R$.
\end{theo}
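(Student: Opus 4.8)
The plan is to combine the conclusion of Theorem~\ref{nuniq} with the transparent boundary condition and then exploit the source-free Maxwell equations in a neighbourhood of $\Gamma_R$. Since $\boldsymbol J\in\mathbb Y(B_R)$, Theorem~\ref{nuniq} already gives $\boldsymbol E\times\boldsymbol\nu=0$ on $\Gamma_R$. Feeding this into the transparent boundary condition \eqref{mtbc1} and using the linearity of the capacity operator $T_{\rm M}$, we obtain $\boldsymbol H\times\boldsymbol\nu=T_{\rm M}(\boldsymbol E\times\boldsymbol\nu)=0$ on $\Gamma_R$ as well, so \emph{both} tangential traces of the electromagnetic field vanish on $\Gamma_R$.

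Next I would use that $\Gamma_R$ lies in the homogeneous, source-free region $B_R\setminus\bar\Omega$, where the second equation of \eqref{me} reads $\nabla\times\boldsymbol H=-{\rm i}\kappa\boldsymbol E$. Taking the normal component on $\Gamma_R$ and invoking the surface identity $(\nabla\times\boldsymbol H)\cdot\boldsymbol\nu=\operatorname{div}_{\Gamma_R}(\boldsymbol H\times\boldsymbol\nu)$, valid for the traces of $H(\operatorname{curl})$ fields, we get
\[
{\rm i}\kappa\,\boldsymbol E\cdot\boldsymbol\nu=-(\nabla\times\boldsymbol H)\cdot\boldsymbol\nu=-\operatorname{div}_{\Gamma_R}(\boldsymbol H\times\boldsymbol\nu)=0\quad\text{on}~\Gamma_R .
\]
Since the normal trace $\boldsymbol E\cdot\boldsymbol\nu$ is well defined (because $\nabla\cdot\boldsymbol E=0$ near $\Gamma_R$, as $\boldsymbol J$ is supported in $\Omega$), this yields $\boldsymbol E\cdot\boldsymbol\nu=0$ on $\Gamma_R$, which is the claim.

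An alternative route, perhaps cleaner to state, is to pass to the exterior: as $\boldsymbol J$ is supported in $\Omega\subset B_{\hat R}\subset B_R$, the field $\boldsymbol E$ from \eqref{es} is, by the outgoing character of the Green tensor \eqref{gtm}, a radiating solution of the homogeneous Maxwell system \eqref{ef} in $\mathbb R^3\setminus\bar B_R$ satisfying \eqref{smrc}; since its tangential trace on $\Gamma_R$ vanishes, the classical uniqueness theorem for the exterior perfect-conductor problem (cf.~\cite{CK-98, M-03}) forces $\boldsymbol E\equiv0$ in $\mathbb R^3\setminus\bar B_R$, and because $\boldsymbol E$ is a smooth solution across $\Gamma_R$ its interior trace must agree with this, giving $\boldsymbol E=0$, hence $\boldsymbol E\cdot\boldsymbol\nu=0$, on $\Gamma_R$.

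The main obstacle is making the boundary manipulations rigorous: in the first approach one must justify the surface identity $(\nabla\times\boldsymbol H)\cdot\boldsymbol\nu=\operatorname{div}_{\Gamma_R}(\boldsymbol H\times\boldsymbol\nu)$ in the trace spaces $H^{-1/2}(\operatorname{div},\Gamma_R)$ and $H^{-1/2}(\Gamma_R)$ rather than merely for smooth fields, whereas in the second approach the delicate point is to confirm that the restriction of $\boldsymbol E$ to the exterior really is a \emph{radiating} Maxwell solution, so that the Rellich-type uniqueness lemma applies on the connected unbounded component. Everything else is routine.
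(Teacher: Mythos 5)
Your argument is correct, but it is genuinely different from the one in the paper. The paper never leaves the variational framework: it tests \eqref{ef} against gradient fields $\nabla\phi$, $\phi\in C^\infty(B_R)$, kills the resulting boundary term $\int_{\Gamma_R}(\boldsymbol\nu\times(\nabla\times\boldsymbol E))\cdot\nabla\phi\,{\rm d}\gamma$ using Theorem \ref{nuniq} together with \eqref{mtbc2}, and then combines the identity ${\rm i}\kappa\,\nabla\cdot\boldsymbol E=\nabla\cdot\boldsymbol J$ (obtained by taking the divergence of the second equation in \eqref{me}) with one more integration by parts to conclude $\int_{\Gamma_R}(\boldsymbol E\cdot\boldsymbol\nu)\phi\,{\rm d}\gamma=0$ for all $\phi$. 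That route needs no surface differential calculus, no trace of $\boldsymbol H$, and no exterior problem. Your first route instead exploits the pointwise structure of Maxwell's equations: $\boldsymbol E\times\boldsymbol\nu=0$ plus linearity of $T_{\rm M}$ in \eqref{mtbc1} gives $\boldsymbol H\times\boldsymbol\nu=0$, and the surface-divergence identity turns this into $(\nabla\times\boldsymbol H)\cdot\boldsymbol\nu=0$, hence $\boldsymbol E\cdot\boldsymbol\nu=0$ since the region near $\Gamma_R$ is source free. The functional-analytic worry you flag about justifying $(\nabla\times\boldsymbol H)\cdot\boldsymbol\nu=\operatorname{div}_{\Gamma_R}(\boldsymbol H\times\boldsymbol\nu)$ in $H^{-1/2}$ spaces is actually moot here: since ${\rm supp}\,\boldsymbol J=\Omega\subset B_{\hat R}$ with $\hat R<R$, the representation \eqref{es} shows $\boldsymbol E$ and $\boldsymbol H$ are smooth (indeed analytic) in a neighbourhood of $\Gamma_R$, so the classical identity suffices. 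Your second route is also sound and in fact proves more --- exterior uniqueness for the perfect-conductor problem forces $\boldsymbol E\equiv 0$ outside $B_R$ and hence the \emph{entire} trace of $\boldsymbol E$ vanishes on $\Gamma_R$, not just its normal component --- at the cost of importing the Rellich-type uniqueness theorem, which the paper's self-contained interior argument avoids. Either of your arguments would serve as a valid replacement; the paper's has the advantage of staying within the weak formulation already established for Theorem \ref{nuniq}.
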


\begin{proof}
 Let $\phi\in C^\infty(B_R)$. Multiplying both sides of \eqref{ef} by
$\nabla\phi$ and integrating on $B_R$, we have
\[
\int_{B_R} (\nabla\times(\nabla\times\boldsymbol E)-\kappa^2\boldsymbol E)\cdot
\nabla \phi  {\rm d} \boldsymbol{x} = {\rm i} \kappa \int_{B_R} \boldsymbol{J}
\cdot \nabla \phi {\rm d} \boldsymbol{x},
\]
It follows from the integration by parts that
\[
\int_{B_R} (\nabla\times(\nabla\times\boldsymbol E)) \cdot \nabla \phi {\rm d}
\boldsymbol{x} = \int_{B_R} (\nabla\times\boldsymbol E) \cdot (\nabla\times
\nabla \phi) {\rm d} \boldsymbol{x}- \int_{\Gamma_R} (\boldsymbol\nu\times
(\nabla\times\boldsymbol E)) \cdot \nabla \phi{\rm d}\gamma.
\]
Noting $\nabla\times \nabla \phi = 0$ and \eqref{mtbc2}, and using Theorem
\ref{nuniq}, we obtain
\[
\int_{\Gamma_R} (\boldsymbol\nu\times (\nabla\times\boldsymbol E)) \cdot
\nabla \phi{\rm d}\gamma=0.
\]
Combining the above equations gives
\[
 -\kappa^2 \int_{B_R} \boldsymbol E \cdot \nabla \phi  {\rm d} \boldsymbol{x} =
{\rm i} \kappa \int_{B_R} \boldsymbol{J} \cdot \nabla \phi {\rm d}
\boldsymbol{x},
\]
which implies
\begin{equation}\label{nq-s1}
{\rm i}\kappa \int_{B_R} \boldsymbol E \cdot \nabla \phi {\rm d} \boldsymbol{x}
= \int_{B_R} \boldsymbol{J} \cdot \nabla \phi {\rm d} \boldsymbol{x}.
\end{equation}
We have from the integration by parts that
\begin{equation}\label{nq-s2}
{\rm i}\kappa \int_{B_R} \boldsymbol E \cdot \nabla \phi {\rm d} \boldsymbol{x}
=-{\rm i}\kappa \int_{B_R} \nabla \cdot \boldsymbol E  \phi{\rm d}
\boldsymbol{x}+{\rm i}\kappa \int_{\Gamma_R}(\boldsymbol E \cdot
\boldsymbol\nu)\phi{\rm d}\gamma.
\end{equation}

On the other hand, since
\[
\nabla \times \boldsymbol H + {\rm i}\kappa \boldsymbol E = \boldsymbol J,
\]
then by taking the divergence on both sides, we have
\[
{\rm i}\kappa\nabla \cdot \boldsymbol E =\nabla\cdot\boldsymbol J.
\]
Hence
\begin{equation}\label{nq-s3}
{\rm i}\kappa \int_{B_R} \nabla \cdot \boldsymbol E \phi{\rm d} \boldsymbol{x}
= \int_{B_R} \nabla \cdot \boldsymbol J  \phi \boldsymbol{x} = - \int_{B_R}
\boldsymbol J \cdot \nabla  \phi {\rm d} \boldsymbol{x}.
\end{equation}
Combing \eqref{nq-s1}--\eqref{nq-s3}, we get
\begin{align*}
\int_{\Gamma_R}(\boldsymbol E \cdot \boldsymbol\nu) \phi  {\rm
d} \boldsymbol{x} = 0,\quad\forall\phi\in C^\infty(B_R),
\end{align*}
which implies that $\boldsymbol E \cdot \boldsymbol\nu$ on $\Gamma_R$ and
completes the proof.
\end{proof}

The following theorem concerns the uniqueness result of Problem \ref{p3}.

\begin{theo}
Suppose $\boldsymbol{J} \in \mathbb X(B_R)$, then $\boldsymbol{J}$ can be
uniquely determined by the data $\boldsymbol{E} \times\boldsymbol \nu$ on
$\Gamma_R$.
\end{theo}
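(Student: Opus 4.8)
The plan is to reduce the claim, by linearity of the source-to-data map $\boldsymbol J\mapsto\boldsymbol E\times\boldsymbol\nu$, to the statement that a source $\boldsymbol J\in\mathbb X(B_R)$ which radiates zero tangential trace must itself vanish. Indeed, if $\boldsymbol J_1,\boldsymbol J_2\in\mathbb X(B_R)$ produce the same data $\boldsymbol E\times\boldsymbol\nu$ on $\Gamma_R$, then $\boldsymbol J:=\boldsymbol J_1-\boldsymbol J_2$ lies in $\mathbb X(B_R)$ (a linear subspace) and produces zero data, so it suffices to prove $\boldsymbol J=0$.

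The main tool is the variational identity \eqref{VE4}: for every $\boldsymbol\xi\in H({\rm curl},B_R)$ satisfying \eqref{VE3},
\[
{\rm i}\kappa\int_{B_R}\boldsymbol J\cdot\bar{\boldsymbol\xi}\,{\rm d}\boldsymbol x
=-\int_{\Gamma_R}\Bigl((\boldsymbol E\times\boldsymbol\nu)\cdot(\nabla\times\bar{\boldsymbol\xi})+{\rm i}\kappa\, T_{\rm M}(\boldsymbol E\times\boldsymbol\nu)\cdot\bar{\boldsymbol\xi}_{\Gamma_R}\Bigr)\,{\rm d}\gamma .
\]
When $\boldsymbol E\times\boldsymbol\nu=0$ on $\Gamma_R$, the entire right-hand side is identically zero, so $\int_{B_R}\boldsymbol J\cdot\bar{\boldsymbol\xi}\,{\rm d}\boldsymbol x=0$ for all such $\boldsymbol\xi$. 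Next I would pass to the $L^2$-closure: the linear functional $\boldsymbol\xi\mapsto\int_{B_R}\boldsymbol J\cdot\bar{\boldsymbol\xi}\,{\rm d}\boldsymbol x$ is bounded on $L^2(B_R)^3$, and it annihilates the set $\{\boldsymbol\xi\in H({\rm curl},B_R):\boldsymbol\xi\text{ satisfies }\eqref{VE3}\}$, which by the very definition of $\mathbb X(B_R)$ is dense in $\mathbb X(B_R)$. Hence $\int_{B_R}\boldsymbol J\cdot\bar{\boldsymbol\xi}\,{\rm d}\boldsymbol x=0$ for every $\boldsymbol\xi\in\mathbb X(B_R)$, i.e. $\boldsymbol J\perp\mathbb X(B_R)$ in $L^2(B_R)^3$. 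Since $\boldsymbol J\in\mathbb X(B_R)$ by hypothesis, taking $\boldsymbol\xi=\boldsymbol J$ gives $\|\boldsymbol J\|_{L^2(B_R)^3}^2=0$, so $\boldsymbol J=0$, which proves the uniqueness.

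The step I expect to require the most care is the passage to the closure: although it is essentially tautological that the relevant solution set is dense in $\mathbb X(B_R)$, one must observe that it is the \emph{left-hand} side of \eqref{VE4} — not its right-hand side, which involves $\nabla\times\bar{\boldsymbol\xi}$ and the surface trace $\bar{\boldsymbol\xi}_{\Gamma_R}$ and is therefore not continuous in the $L^2(B_R)^3$ topology — that carries the needed continuity, so that its vanishing on a dense subset propagates. A secondary point is justifying that \eqref{VE4}, derived in the excerpt for smooth $\boldsymbol\xi$, holds for all $\boldsymbol\xi\in H({\rm curl},B_R)$ satisfying \eqref{VE3}; this follows by approximating such $\boldsymbol\xi$ in the $H({\rm curl},B_R)$ norm by smooth fields and using continuity of all terms in \eqref{VE2} with respect to that norm. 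Note that this argument is the exact complement of Theorem \ref{nuniq}: the data $\boldsymbol E\times\boldsymbol\nu$ on $\Gamma_R$ determines the $\mathbb X(B_R)$-component of $\boldsymbol J$ uniquely and carries no information about its $\mathbb Y(B_R)$-component.
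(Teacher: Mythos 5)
Your proposal is correct and follows essentially the same route as the paper's proof: use the identity \eqref{VE4} with vanishing data to conclude $\int_{B_R}\boldsymbol J\cdot\bar{\boldsymbol\xi}\,{\rm d}\boldsymbol x=0$ for all $\boldsymbol\xi\in\mathbb X(B_R)$, then take $\boldsymbol\xi=\boldsymbol J$. The only difference is that you spell out the linearity reduction and the passage from the dense solution set to its $L^2$-closure, steps the paper leaves implicit.
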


\begin{proof}
It suffices to show that $\boldsymbol J=0$ if $\boldsymbol
E\times\boldsymbol\nu=0$ on $\Gamma_R$. It follows from \eqref{VE4} that we
have
\begin{align*}
\int_{ B_R}  \boldsymbol{J}  \cdot\bar{\boldsymbol \xi} {\rm
d}\boldsymbol{x}=0,\quad\forall~\boldsymbol\xi\in\mathbb X(B_R).
\end{align*}
Taking $\boldsymbol \xi=\boldsymbol J$ yields that
\begin{align*}
\int_{B_R}  |\boldsymbol{J}|^2{\rm d}\boldsymbol x = 0,
\end{align*}
which completes the proof.
\end{proof}

Taking account of the uniqueness result, we revise the inverse source problem
for electromagnetic waves, which is to determine $\boldsymbol J$ in the smaller
space $\mathbb X(B_R)$.

\begin{prob}[Continuous frequency data for electromagnetic waves]\label{p4}
Let $\boldsymbol J\in \mathbb X(B_R)$. The
inverse source problem of electromagnetic waves is to
determine $\boldsymbol J$ from the tangential trace of the electric field
$\boldsymbol E(\boldsymbol x, \kappa)\times\boldsymbol\nu$ for $\boldsymbol
x\in\Gamma_R, \kappa\in (0, K)$, where $K>1$ is a constant.
\end{prob}

\subsection{Stability with continuous frequency data}

Define a functional space
\[
 \mathbb J_M(B_R)=\{\boldsymbol J\in\mathbb X(B_R)\cap H^m(B_R)^3:
\|\boldsymbol J\|_{H^m(B_R)^3}\leq M\},
\]
where $m\geq d$ is an integer and $M>1$ is a constant. The following is our main
result regarding the stability for Problem \ref{p4}.

\begin{theo}\label{mrm}
Let $\boldsymbol{E}$ be the solution of the scattering problem
\eqref{smrc}--\eqref{ef} corresponding to
$\boldsymbol{J}\in \mathbb J_M(B_R)$. Then
\begin{equation}\label{je}
 \| \boldsymbol{J}\|^2_{L^2(B_R)^3}\lesssim
\epsilon_4^2+\frac{M^2}{\left(\frac{K^{\frac{2}{3}}|\ln\epsilon_4|^{\frac{1}{4
}}}{(R+1)(6m-15)^3}\right)^{2m-5}},
\end{equation}
where
\[
 \epsilon_4=\left(\int_0^K \kappa^2\|\boldsymbol E(\cdot,
\kappa)\times\boldsymbol\nu\|^2_{\Gamma_R}{\rm d}\kappa \right)^{\frac{1}{2}}
\]

\end{theo}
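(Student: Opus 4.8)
The plan is to transcribe the four-step scheme used for the Navier equation (Lemmas \ref{nfe}, \ref{ni}, \ref{nhfe}, \ref{ni12} and the optimization in the proof of Theorem \ref{mrn}) to the decoupled Maxwell system \eqref{ef}, using crucially that $\boldsymbol J\in\mathbb X(B_R)$ forces $\nabla\cdot\boldsymbol J=0$ in $B_R$, so that $\widehat{\boldsymbol J}(\boldsymbol\xi)\perp\boldsymbol\xi$ for every $\boldsymbol\xi$. \emph{Step 1 (a Fourier lemma).} For $\boldsymbol d\in\mathbb{S}^2$ and a polarization $\boldsymbol p\in\mathbb{S}^2$ with $\boldsymbol p\cdot\boldsymbol d=0$, the transverse plane wave $\boldsymbol E^{\rm inc}=\boldsymbol p\,e^{-{\rm i}\kappa\boldsymbol x\cdot\boldsymbol d}$ solves $\nabla\times(\nabla\times\boldsymbol E^{\rm inc})-\kappa^2\boldsymbol E^{\rm inc}=0$ in $\mathbb R^3$. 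Multiplying \eqref{ef} by $\boldsymbol E^{\rm inc}$, integrating over $B_R$, using the Green formula for the curl--curl operator, and invoking \eqref{mtbc2} to replace $(\nabla\times\boldsymbol E)\times\boldsymbol\nu$ by ${\rm i}\kappa T_{\rm M}(\boldsymbol E\times\boldsymbol\nu)$, one gets an identity with left side ${\rm i}\kappa\,\boldsymbol p\cdot\widehat{\boldsymbol J}(\kappa\boldsymbol d)$ and right side a boundary integral over $\Gamma_R$ in which only $\boldsymbol E\times\boldsymbol\nu$, $T_{\rm M}(\boldsymbol E\times\boldsymbol\nu)$, $\boldsymbol E^{\rm inc}$ and $\nabla\times\boldsymbol E^{\rm inc}$ appear; since $|\boldsymbol E^{\rm inc}|=1$ and $|\nabla\times\boldsymbol E^{\rm inc}|=\kappa$ on $\Gamma_R$, the Cauchy--Schwarz inequality yields $\kappa^2|\boldsymbol p\cdot\widehat{\boldsymbol J}(\kappa\boldsymbol d)|^2\lesssim\kappa^2\|\boldsymbol E(\cdot,\kappa)\times\boldsymbol\nu\|^2_{\Gamma_R}$. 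Writing $\boldsymbol\xi=\kappa\boldsymbol d$, integrating in spherical coordinates (${\rm d}\boldsymbol\xi=\kappa^2{\rm d}\kappa\,{\rm d}\Omega(\boldsymbol d)$), choosing for each $\boldsymbol d$ an orthonormal frame $\{\boldsymbol d,\boldsymbol p_1,\boldsymbol p_2\}$, and using $\widehat{\boldsymbol J}(\boldsymbol\xi)\cdot\boldsymbol\xi=0$ with the Pythagorean theorem and Parseval's identity, one obtains
\[
\|\boldsymbol J\|^2_{L^2(B_R)^3}\lesssim\int_0^\infty\kappa^2\|\boldsymbol E(\cdot,\kappa)\times\boldsymbol\nu\|^2_{\Gamma_R}\,{\rm d}\kappa,
\]
the analogue of Lemma \ref{nfe}; this is also where the weight $\kappa^2$ in $\epsilon_4$ originates, the boundary norm carrying no intrinsic power of $\kappa$.

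\emph{Step 2 (analyticity and a priori growth).} Split $\mathbf G_{\rm M}$ as in \eqref{gtm} into ${\rm i}\kappa g_3\mathbf I_3$ and $\tfrac{\rm i}{\kappa}\nabla_{\boldsymbol x}\nabla_{\boldsymbol x}^\top g_3$, and note $\nabla_{\boldsymbol x}\times(\mathbf G_{\rm M}(\boldsymbol x,\boldsymbol y;\kappa)\cdot\boldsymbol J(\boldsymbol y))={\rm i}\kappa\,\nabla_{\boldsymbol x}g_3\times\boldsymbol J$ since the second piece is a gradient, hence curl-free. Define, parallel to \eqref{nd3i1}--\eqref{nd3i2},
\[
I_1(s)=\int_0^s\kappa^2\int_{\Gamma_R}\Big|\int_\Omega\mathbf G_{\rm M}(\boldsymbol x,\boldsymbol y;\kappa)\cdot\boldsymbol J(\boldsymbol y)\,{\rm d}\boldsymbol y\Big|^2{\rm d}\gamma(\boldsymbol x)\,{\rm d}\kappa,
\]
\[
I_2(s)=\int_0^s\int_{\Gamma_R}\Big|\int_\Omega\big(\nabla_{\boldsymbol x}\times\mathbf G_{\rm M}(\boldsymbol x,\boldsymbol y;\kappa)\big)\cdot\boldsymbol J(\boldsymbol y)\,{\rm d}\boldsymbol y\Big|^2{\rm d}\gamma(\boldsymbol x)\,{\rm d}\kappa,
\]
so that $I_1(s)+I_2(s)=\int_0^s\kappa^2\|\boldsymbol E(\cdot,\kappa)\times\boldsymbol\nu\|^2_{\Gamma_R}\,{\rm d}\kappa$ by \eqref{mtbc2}. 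The integrands are entire in $\kappa$, so $I_1,I_2$ extend analytically to the sector $\mathcal V$; expanding $e^{{\rm i}\kappa r}/r$ ($r=|\boldsymbol x-\boldsymbol y|$) in powers of $\kappa$, the $1/r$ and constant terms of $\tfrac1\kappa\nabla_{\boldsymbol x}\nabla_{\boldsymbol x}^\top(e^{{\rm i}\kappa r}/r)$ are harmless ($\nabla\nabla^\top(1/r)$ is analytic and bounded on $\Gamma_R\times\Omega$, where $r\ge R-\hat R>0$, and $\nabla\nabla^\top$ kills constants), and for the remaining terms one moves the $\nabla_{\boldsymbol y}\nabla_{\boldsymbol y}^\top$ onto $\boldsymbol J$ by integration by parts; with $|e^{{\rm i}\kappa r}|\le e^{2R|s|}$ and Cauchy--Schwarz this gives $|I_1(s)|+|I_2(s)|\lesssim|s|^{p}e^{4R|s|}\|\boldsymbol J\|^2_{H^m(B_R)^3}\lesssim|s|^{p}e^{4R|s|}M^2$ on $\mathcal V$ for a fixed power $p$, the analogue of Lemma \ref{ni}. \emph{Step 3 (high-frequency tail).} Using \eqref{es}, \eqref{gtm}, polar coordinates $\rho=|\boldsymbol y-\boldsymbol x|$ centered at $\boldsymbol x$, and $m$-fold integration by parts in $\rho$ (producing $\kappa^{-m}$), one dominates each piece of $\kappa^2|\boldsymbol E|^2$ and of $|(\nabla\times\boldsymbol E)\times\boldsymbol\nu|^2$ by $\kappa^{4-2m}$ times a norm controlled by $\|\boldsymbol J\|^2_{H^m(B_R)^3}$, so that for $s\ge1$
\[
\int_s^\infty\kappa^2\|\boldsymbol E(\cdot,\kappa)\times\boldsymbol\nu\|^2_{\Gamma_R}\,{\rm d}\kappa\lesssim s^{-(2m-5)}\|\boldsymbol J\|^2_{H^m(B_R)^3},
\]
the analogue of Lemma \ref{nhfe} with $d=3$.

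\emph{Step 4 (analytic continuation and optimization).} By Step 2, $|(I_1+I_2)(s)|\,e^{-(4R+1)|s|}\lesssim M^2$ on $\mathcal V$, while $|(I_1+I_2)(s)|\,e^{-(4R+1)s}\le\epsilon_4^2$ on $[0,K]$ because $(I_1+I_2)(s)=\int_0^s\kappa^2\|\boldsymbol E(\cdot,\kappa)\times\boldsymbol\nu\|^2_{\Gamma_R}{\rm d}\kappa$ and $\epsilon_4^2$ is this quantity at $s=K$. Lemma \ref{caf} then furnishes $\beta(s)$ with $\beta(s)\ge\tfrac12$ on $(K,2^{\frac14}K)$ and $\beta(s)\ge\tfrac1\pi\big((s/K)^4-1\big)^{-1/2}$ beyond, such that $|(I_1+I_2)(s)|\lesssim M^2e^{(4R+1)s}\epsilon_4^{2\beta(s)}$ for $s>K$ (the analogue of Lemma \ref{ni12}). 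Assuming $\epsilon_4<e^{-1}$, one takes $s\sim K^{2/3}|\ln\epsilon_4|^{1/4}$ when $|\ln\epsilon_4|^{1/4}$ dominates $K^{1/3}$ and $s=K$ otherwise; balancing $e^{(4R+1)s}$ against $e^{-c|\ln\epsilon_4|(K/s)^2}$ makes the exponent $\sim-(c_0K)^{2/3}|\ln\epsilon_4|^{1/2}$, and the elementary inequality $e^{-x}\le(6m-15)!\,x^{-3(2m-5)}$ converts this to the algebraic term in \eqref{je}. Adding the tail bound of Step 3 at this $s$ (which is $\gtrsim K$) and invoking Step 1 yields \eqref{je} after the same routine clean-up as in the proof of Theorem \ref{mrn} (using $K>1$, $|\ln\epsilon_4|>1$, $m\ge3$ to discard the smaller error term).

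The delicate point is Step 2: despite the explicit $1/\kappa$ in \eqref{gtm}, one must verify that $I_1(s)$ and $I_2(s)$ are genuinely analytic across $s=0$ with the stated exponential growth. This rests on the cancellation of the $1/r$ and constant terms of $e^{{\rm i}\kappa r}/r$ under $\tfrac1\kappa\nabla_{\boldsymbol x}\nabla_{\boldsymbol x}^\top$ (equivalently, on using $\nabla\cdot\boldsymbol J=0$ to eliminate the gradient part of $\mathbf G_{\rm M}$ against $\boldsymbol J$ altogether, leaving $\boldsymbol E={\rm i}\kappa\int_\Omega g_3\boldsymbol J$ with no pole at $\kappa=0$), and on keeping careful track of the powers of $\kappa$ so that the weight $\kappa^2$ from Step 1, the spherical Jacobian, and the Green's-tensor powers all match; everything else is a faithful transcription of the Navier case.
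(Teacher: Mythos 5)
Your proposal is correct and follows essentially the same route as the paper: the same four ingredients (the Fourier/plane-wave lower bound exploiting transversality of $\hat{\boldsymbol J}$, the exponential bounds on $I_1,I_2$ in the sector $\mathcal V$, the $\kappa^{-m}$ integration-by-parts tail estimate, and Lemma \ref{caf} with the same choice of $s$ and the inequality \eqref{ei}). The only (harmless) variations are that you obtain $\boldsymbol\xi\cdot\hat{\boldsymbol J}(\boldsymbol\xi)=0$ from $\nabla\cdot\boldsymbol J=0$ rather than by testing the curl--curl equation with the longitudinal plane wave $\boldsymbol d\,e^{{\rm i}\kappa\boldsymbol x\cdot\boldsymbol d}$ as in Lemma \ref{mje}, and that you define $I_2$ through $\nabla_{\boldsymbol x}\times\mathbf G_{\rm M}$ rather than through the magnetic-field representation \eqref{hs} — these coincide, and your observation that divergence-freeness kills the $\nabla\nabla^\top g_3$ part of the Green tensor is a legitimate simplification of the paper's estimate of $I_{1,2}$.
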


\begin{rema}
The stability estimate \eqref{je} is consistent with that for elastic waves in
\eqref{fe}. It also has two parts: the data discrepancy and the high frequency
tail. The ill-posedness of the inverse problem decreases as $K$ increases.
\end{rema}

We begin with several useful lemmas.

\begin{lemm}\label{mje}
Let $\boldsymbol E$ be the solution of \eqref{smrc}--\eqref{ef} corresponding
to the source $\boldsymbol J\in \mathbb X(B_R)$. Then
\[
\|\boldsymbol{J} \|^2_{L^2(B_R)^3}\lesssim\int_0^{\infty}\kappa^{2}
\|\boldsymbol E(\cdot, \kappa)\times\boldsymbol\nu\|^2_{\Gamma_R} {\rm
d}\kappa.
\]
\end{lemm}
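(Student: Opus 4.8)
The plan is to adapt the argument of Lemma~\ref{nfe} (for the Navier equation) to the Maxwell system, using electromagnetic plane waves as test functions. The essential new feature is that a Maxwell plane wave is transverse, so it only detects the component of $\hat{\boldsymbol J}$ orthogonal to the frequency vector; the hypothesis $\boldsymbol J\in\mathbb X(B_R)$ is precisely what compensates for this.

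First I would fix a unit direction $\boldsymbol d\in\mathbb S^2$ and a polarization $\boldsymbol p\in\mathbb S^2$ with $\boldsymbol p\cdot\boldsymbol d=0$, and introduce the plane wave $\boldsymbol E^{\rm inc}(\boldsymbol x)=\boldsymbol p\,e^{-{\rm i}\kappa\boldsymbol x\cdot\boldsymbol d}$. A direct computation gives $\nabla\times\boldsymbol E^{\rm inc}=-{\rm i}\kappa(\boldsymbol d\times\boldsymbol p)e^{-{\rm i}\kappa\boldsymbol x\cdot\boldsymbol d}$, hence $\nabla\times(\nabla\times\boldsymbol E^{\rm inc})-\kappa^2\boldsymbol E^{\rm inc}=0$ in $\mathbb R^3$, while on $\Gamma_R$ one has $|\boldsymbol E^{\rm inc}|=1$ and $|\nabla\times\boldsymbol E^{\rm inc}|\le\kappa$. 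Since $\overline{\boldsymbol E^{\rm inc}}$ is again such a plane wave, it satisfies \eqref{VE3}, so it is admissible in \eqref{VE4}; alternatively one may simply multiply \eqref{ef} by $\boldsymbol E^{\rm inc}$ and integrate by parts twice over $B_R$, exactly as in the elastic case. Using \eqref{VE4} with $\bar{\boldsymbol\xi}=\boldsymbol E^{\rm inc}$ and ${\rm supp}\,\boldsymbol J=\Omega\subset B_R$, the left-hand side becomes ${\rm i}\kappa\,\boldsymbol p\cdot\hat{\boldsymbol J}(\kappa\boldsymbol d)$; substituting the transparent boundary condition \eqref{mtbc2} into the right-hand side and using the above bounds on $\boldsymbol E^{\rm inc}$, the Cauchy--Schwarz inequality on $\Gamma_R$, and the definition of $\|\boldsymbol E(\cdot,\kappa)\times\boldsymbol\nu\|_{\Gamma_R}$, the right-hand side is bounded by $C\kappa\|\boldsymbol E(\cdot,\kappa)\times\boldsymbol\nu\|_{\Gamma_R}$. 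Cancelling $\kappa$ yields
\[
|\boldsymbol p\cdot\hat{\boldsymbol J}(\kappa\boldsymbol d)|^2\lesssim\|\boldsymbol E(\cdot,\kappa)\times\boldsymbol\nu\|^2_{\Gamma_R},\qquad\forall\,\kappa>0,\ \boldsymbol d\in\mathbb S^2,\ \boldsymbol p\perp\boldsymbol d.
\]

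Next, the crucial point is to recover all of $\hat{\boldsymbol J}(\boldsymbol\xi)$ and not merely its transverse part, and this is where $\boldsymbol J\in\mathbb X(B_R)$ enters. For $\phi\in C_0^\infty(B_R)$ one has $\nabla\times(\nabla\times\nabla\phi)-\kappa^2\nabla\phi=-\kappa^2\nabla\phi$, so by Lemma~\ref{phi} the field $\nabla\phi$ lies in $\mathbb Y(B_R)$; since $\mathbb X(B_R)=\mathbb Y(B_R)^\perp$ in $L^2(B_R)^3$, it follows that $\int_{B_R}\boldsymbol J\cdot\nabla\phi\,{\rm d}\boldsymbol x=0$, i.e.\ $\nabla\cdot\boldsymbol J=0$ in $B_R$. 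Combined with ${\rm supp}\,\boldsymbol J=\Omega\subset B_{\hat R}\subset B_R$, this upgrades to $\nabla\cdot\boldsymbol J=0$ in $\mathbb R^3$, hence $\boldsymbol\xi\cdot\hat{\boldsymbol J}(\boldsymbol\xi)=0$ for every $\boldsymbol\xi$. Fixing $\boldsymbol d$ and choosing orthonormal $\boldsymbol p_1,\boldsymbol p_2\perp\boldsymbol d$, the Pythagorean theorem gives $|\hat{\boldsymbol J}(\kappa\boldsymbol d)|^2=|\boldsymbol p_1\cdot\hat{\boldsymbol J}(\kappa\boldsymbol d)|^2+|\boldsymbol p_2\cdot\hat{\boldsymbol J}(\kappa\boldsymbol d)|^2\lesssim\|\boldsymbol E(\cdot,\kappa)\times\boldsymbol\nu\|^2_{\Gamma_R}$. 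Finally, passing to spherical coordinates $\boldsymbol\xi=\kappa\boldsymbol d$ with ${\rm d}\boldsymbol\xi=\kappa^2\,{\rm d}\kappa\,{\rm d}S(\boldsymbol d)$ and using Parseval's theorem together with ${\rm supp}\,\boldsymbol J\subset B_R$,
\[
\|\boldsymbol J\|^2_{L^2(B_R)^3}=\|\boldsymbol J\|^2_{L^2(\mathbb R^3)^3}\lesssim\int_{\mathbb R^3}|\hat{\boldsymbol J}(\boldsymbol\xi)|^2\,{\rm d}\boldsymbol\xi=\int_{\mathbb S^2}\!\int_0^\infty|\hat{\boldsymbol J}(\kappa\boldsymbol d)|^2\kappa^2\,{\rm d}\kappa\,{\rm d}S(\boldsymbol d)\lesssim\int_0^\infty\kappa^2\|\boldsymbol E(\cdot,\kappa)\times\boldsymbol\nu\|^2_{\Gamma_R}\,{\rm d}\kappa,
\]
which is the claimed estimate.

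The step I expect to be the main obstacle is the use of $\boldsymbol J\in\mathbb X(B_R)$. Unlike the Navier equation, where the compressional and shear plane waves together span $\mathbb R^d$ at every frequency, Maxwell plane waves are inherently transverse and blind to the gradient (longitudinal) part of the source — this is exactly the origin of the non-radiating sources forming $\mathbb Y(B_R)$. One therefore has to argue carefully, via Lemma~\ref{phi} and the orthogonal decomposition $L^2(B_R)^3=\mathbb X(B_R)\oplus\mathbb Y(B_R)$, that membership in $\mathbb X(B_R)$ forces $\boldsymbol J$ to be divergence-free, so that its Fourier transform is purely transverse and the transverse plane-wave data suffice. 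The remaining ingredients — the boundary-term estimates and the spherical-coordinate integration — are routine and mirror the proof of Lemma~\ref{nfe}.
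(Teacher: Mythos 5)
Your proposal is correct and follows essentially the same route as the paper: test \eqref{ef} against transverse electric and magnetic plane waves to control $|\boldsymbol p\cdot\hat{\boldsymbol J}(\kappa\boldsymbol d)|$ and $|\boldsymbol q\cdot\hat{\boldsymbol J}(\kappa\boldsymbol d)|$ by $\|\boldsymbol E(\cdot,\kappa)\times\boldsymbol\nu\|_{\Gamma_R}$, kill the longitudinal component using $\boldsymbol J\in\mathbb X(B_R)$, and conclude by the Pythagorean theorem, spherical coordinates, and Parseval. The only (harmless) deviation is in how you obtain $\boldsymbol d\cdot\hat{\boldsymbol J}(\kappa\boldsymbol d)=0$: the paper tests the weak Maxwell system satisfied by $\boldsymbol J$ against the longitudinal wave $\boldsymbol d\,e^{{\rm i}\kappa\boldsymbol x\cdot\boldsymbol d}$, whose curl vanishes, whereas you deduce $\nabla\cdot\boldsymbol J=0$ from the orthogonality of $\mathbb X(B_R)$ to gradients via Lemma \ref{phi} — both arguments are valid and rest on the same hypothesis.
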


\begin{proof}
Let $\boldsymbol{E}^{\rm inc}$ and $\boldsymbol{H}^{\rm inc}$ be the electric
and magnetic plane waves:
\begin{equation}\label{mpw}
\boldsymbol{E}^{\rm inc}(\boldsymbol x) = \boldsymbol{p}e^{-{\rm
i}\kappa\boldsymbol{x} \cdot \boldsymbol{d}}\quad\text{and}\quad
\boldsymbol{H}^{\rm inc}(\boldsymbol x) = \boldsymbol{q}e^{-{\rm
i}\kappa\boldsymbol{x} \cdot \boldsymbol{d}},
\end{equation}
where $\boldsymbol{d}(\theta, \varphi) = (\sin\theta\cos\varphi,
\sin\theta\sin\varphi, \cos\theta)^\top$ is the unit propagation vector, and
$\boldsymbol p, \boldsymbol q$ are two unit polarization vectors and satisfy $
\boldsymbol p(\theta, \varphi)\cdot\boldsymbol d(\theta,
\varphi)=0, \boldsymbol q (\theta, \varphi)= \boldsymbol p (\theta,
\varphi)\times \boldsymbol d (\theta, \varphi)$ for all $\theta\in[0, \pi],
\varphi\in[0, 2\pi]$. The electric and magnetic plane waves satisfy
\begin{equation}\label{epw}
\nabla \times (\nabla \times \boldsymbol{E}^{\rm inc}) -
\kappa^2 \boldsymbol{E}^{\rm inc} = 0
\end{equation}
and
\begin{equation}\label{hpw}
\nabla \times (\nabla \times \boldsymbol{H}^{\rm inc}) -
\kappa^2 \boldsymbol{H}^{\rm inc} = 0.
\end{equation}

Let $\boldsymbol\xi = \kappa \boldsymbol d$ with $|\boldsymbol\xi|=\kappa\in
(0, \infty)$. We have from \eqref{mpw} that
$\boldsymbol{E}^{\rm inc} =\boldsymbol{p} e^{-{\rm i}\boldsymbol\xi\cdot
\boldsymbol x}$ and $\boldsymbol{H}^{\rm inc} = \boldsymbol{q} e^{-{\rm
i}\boldsymbol\xi\cdot \boldsymbol x}$. Multiplying the both sides of \eqref{ef}
by $\boldsymbol{E}^{\rm inc}$, using the integration by parts
over $B_R$ and \eqref{epw}, we obtain
\begin{align*}
{\rm i}\kappa\int_{B_R} (\boldsymbol{p}e^{- {\rm i}\boldsymbol{\xi} \cdot
\boldsymbol{x}} )\cdot \boldsymbol{J}(\boldsymbol x){\rm d}\boldsymbol x
= -\int_{\Gamma_R} \left({\rm i}\kappa
T_{\rm M}(\boldsymbol{E}(\boldsymbol{x},\kappa) \times \boldsymbol\nu) \cdot
\boldsymbol{E}^{\rm inc}+ (\boldsymbol{E}(\boldsymbol{x},\kappa) \times
\boldsymbol\nu)\cdot(\nabla \times \boldsymbol{E}^{\rm
inc})\right){\rm d}\gamma.
\end{align*}
A simple calculation yields that
\begin{align*}
\nabla \times \boldsymbol{E}^{\rm inc} = -{\rm i}\kappa
\boldsymbol{d} \times \boldsymbol{p}e^{-{\rm i}\kappa\boldsymbol
x\cdot\boldsymbol d},
\end{align*}
which gives
\begin{align*}
|\nabla \times \boldsymbol{E}^{\rm inc}| = \kappa.
\end{align*}
Since ${\rm supp}\boldsymbol{f}=\Omega\subset B_R$, we have
\[
 \int_{B_R}(\boldsymbol p e^{-{\rm i}\boldsymbol \xi\cdot\boldsymbol
x})\cdot\boldsymbol{J}(\boldsymbol x){\rm d}\boldsymbol
x=\boldsymbol p\cdot\int_{\mathbb{R}^3}\boldsymbol{J}(\boldsymbol x)e^{-{\rm
i}\boldsymbol\xi\cdot\boldsymbol
x}{\rm d}\boldsymbol x=\boldsymbol p\cdot\hat{\boldsymbol J}(\boldsymbol\xi).
\]
Combining the above estimates yields
\[
 |\boldsymbol p\cdot\hat{\boldsymbol J}(\boldsymbol\xi)|^2
\lesssim \int_{\Gamma_R}(|T_{\rm M}(\boldsymbol{E}(\boldsymbol
x,\kappa)\times\boldsymbol \nu)|^2+|\boldsymbol{E}(\boldsymbol x,\kappa)\times
\boldsymbol\nu|^2){\rm d}\gamma(\boldsymbol x)=\|\boldsymbol E(\cdot,
\kappa)\|^2_{\Gamma_R}.
\]
Hence
\[
 \int_{\mathbb R^3}|\boldsymbol p\cdot\hat{\boldsymbol
J}(\boldsymbol\xi)|^2{\rm d}\boldsymbol\xi
\lesssim\int_{\mathbb R^3}\|\boldsymbol E(\cdot,
\kappa)\|^2_{\Gamma_R}{\rm d}\boldsymbol\xi.
\]
Using the spherical coordinates, we get
\[
  \int_{\mathbb R^3}|\boldsymbol p\cdot\hat{\boldsymbol
J}(\boldsymbol\xi)|^2{\rm d}\boldsymbol\xi
\lesssim\int_0^{2\pi}{\rm d}\theta\int_0^{\pi}{\rm sin}\varphi
{\rm d}\varphi\int_0^{\infty}\kappa^2\|\boldsymbol E(\cdot,
\kappa)\|^2_{\Gamma_R}{\rm d}\kappa
\lesssim \int_0^{\infty}\kappa^2\|\boldsymbol E(\cdot,
\kappa)\|^2_{\Gamma_R}
{\rm d}\kappa.
\]
Similarly we may show from \eqref{hpw} and the integration by parts that
\begin{align*}
\int_{\mathbb{R}^3}\left|\int_{\mathbb{R}^3}(\boldsymbol q e^{-{\rm
i}\boldsymbol\xi\cdot\boldsymbol
x})\cdot\boldsymbol{J}(\boldsymbol x){\rm d}\boldsymbol x\right|^2{\rm
d}\boldsymbol\xi= \int_{\mathbb R^3}|\boldsymbol q\cdot\hat{\boldsymbol
J}(\boldsymbol\xi)|^2{\rm d}\boldsymbol\xi
\lesssim \int_0^{\infty}\kappa^2\|\boldsymbol E(\cdot,
\kappa)\|^2_{\Gamma_R}{\rm d}\kappa.
\end{align*}
Since $\boldsymbol p, \boldsymbol q, \boldsymbol d$ are orthonormal vectors,
they form an orthonormal basis in $\mathbb R^3$. We have from
the Pythagorean theorem that
\[
|\hat{\boldsymbol{J}}(\boldsymbol\xi)|^2 = |\boldsymbol{p} \cdot
\hat{\boldsymbol{J}}(\boldsymbol\xi)|^2 + |\boldsymbol{q}\cdot
\hat{\boldsymbol{J}}(\boldsymbol\xi)|^2 + |\boldsymbol{d} \cdot
\hat{\boldsymbol{J}}(\boldsymbol\xi)|^2.
\]

On the other hand, since $\boldsymbol J$ has a compact support $\Omega$
contained in $B_R$ and $\boldsymbol J \in \mathbb X(B_R)$, we obtain that
$\boldsymbol{J}$ is a weak solution of the Maxwell system:
\begin{align*}
\nabla \times( \nabla \times \boldsymbol{J}) - \kappa^2 \boldsymbol{J} = 0 \quad
{\rm in} ~B_R.
\end{align*}
Multiplying the above equation by ${\boldsymbol d} e^{{\rm i}\kappa \boldsymbol
d \cdot \boldsymbol x} $ and using integration by parts, we get
\begin{align*}
\int_{B_R}(\nabla \times \boldsymbol{J}) \cdot (\nabla \times ({\boldsymbol d}
e^{{\rm i}\kappa  \boldsymbol x\cdot\boldsymbol d})){\rm d}\boldsymbol x
= \kappa^2  {\boldsymbol d}\cdot\int_{B_R} \boldsymbol{J}(\boldsymbol x)e^{{\rm
i}\kappa \boldsymbol x\cdot\boldsymbol d}{\rm d}\boldsymbol x
= \kappa^2\boldsymbol d \cdot \hat{\boldsymbol J}(\boldsymbol\xi).
\end{align*}
Noting $\nabla \times ({\boldsymbol d} e^{{\rm i}\kappa\boldsymbol x\cdot
\boldsymbol d})={\rm i}\kappa\boldsymbol d\times\boldsymbol d e^{{\rm
i}\kappa\boldsymbol x\cdot\boldsymbol d}=0$, we get $\boldsymbol d \cdot
\hat{\boldsymbol J}(\boldsymbol\xi) = 0,$ which yields that
\[
|\hat{\boldsymbol{J}}(\boldsymbol\xi)|^2 = |\boldsymbol{p} \cdot
\hat{\boldsymbol{J}}(\boldsymbol\xi)|^2 + |\boldsymbol{q}\cdot
\hat{\boldsymbol{J}}(\boldsymbol\xi)|^2.
\]
Hence, we obtain from the Parseval theorem that
\begin{align*}
\|\boldsymbol J\|^2_{L^2(B_R)^3}&=\|\boldsymbol J\|^2_{L^2(\mathbb
R^3)^3}=\|\hat{\boldsymbol J}\|^2_{L^2(\mathbb R^3)^3}=
\int_{\mathbb{R}^3}|\hat{\boldsymbol{J}}
(\boldsymbol\xi)|^2 {\rm d} \boldsymbol\xi\\
&=\int_{\mathbb{R}^3}|\boldsymbol{p} \cdot
\hat{\boldsymbol{J}}(\boldsymbol\xi)|^2 {\rm d}\boldsymbol \xi +
\int_{\mathbb{R}^3}|\boldsymbol{q}\cdot \hat{\boldsymbol{J}}(\boldsymbol\xi)|^2
{\rm d}\boldsymbol \xi\lesssim  \int_0^{\infty}\kappa^2\|\boldsymbol E(\cdot,
\kappa)\|^2_{\Gamma_R} {\rm d}\kappa,
\end{align*}
which completes the proof.
\end{proof}

Eliminating $\boldsymbol{E}$ from \eqref{me}, we obtain
\begin{equation}\label{he}
\nabla \times (\nabla \times \boldsymbol{H}) - \kappa^2 \boldsymbol{H} = \nabla
\times \boldsymbol{J}\quad {\rm in}~\mathbb R^3.
\end{equation}
It is easy to verify from \eqref{me} that $\nabla\cdot\boldsymbol H=0$. Using
the identity
\[
\nabla \times( \nabla \times \boldsymbol{H} )= -\Delta \boldsymbol{H} +
\nabla \nabla \cdot \boldsymbol{H}= -\Delta \boldsymbol{H},
\]
we get from \eqref{he} that $\boldsymbol H$ satisfies the inhomogeneous
Helmholtz equation:
\begin{equation}\label{ihe}
\Delta\boldsymbol{H} + \kappa^2 \boldsymbol{H} = -\nabla \times
\boldsymbol{J}\quad {\rm in}~ \mathbb R^3.
\end{equation}
It is known that \eqref{ihe} has a unique solution:
\begin{equation}\label{hs}
\boldsymbol H(\boldsymbol x, \kappa) = -\int_{\Omega} g_3(\boldsymbol x,
\boldsymbol y)\mathbf I_3 \cdot \nabla \times \boldsymbol J(\boldsymbol y) {\rm
d} \boldsymbol y.
\end{equation}

Let
\begin{align}
\label{mi1}I_1(s)=&\int_0^s
\kappa^{2}\int_{\Gamma_R}\left|\int_\Omega\mathbf{G}_{\rm M}(\boldsymbol
x, \boldsymbol y; \kappa)\cdot\boldsymbol{J}(\boldsymbol x){\rm d}\boldsymbol y
\times \boldsymbol\nu(\boldsymbol x)\right|^2{\rm
d}\gamma(\boldsymbol x){\rm d}\kappa,\\
\label{mi2}I_2(s)=&\int_0^s \kappa^{2}\int_{\Gamma_R}\left|\int_\Omega
g_3(\boldsymbol x, \boldsymbol y; \kappa)\mathbf I_3\cdot \nabla \times
\boldsymbol{J}(\boldsymbol y){\rm
d}\boldsymbol y \times \boldsymbol\nu(\boldsymbol x)\right|^2{\rm
d}\gamma(\boldsymbol x){\rm d}\kappa.
\end{align}
Again, the integrands in \eqref{mi1}--\eqref{mi2} are analytic functions of
$\kappa$. The integrals with respect to $\kappa$ can be taken over any
path joining points $0$ and $s$ in $\mathcal{V}$. Thus $I_1(s)$ and $I_2(s)$ are
analytic functions of $s=s_1+{\rm i}s_2\in \mathcal{V}, s_1, s_2\in\mathbb{R}$.

\begin{lemm}\label{mi}
Let $\boldsymbol{J}\in H^2(B_R)^3$. For any $s=s_1+{\rm
i}s_2\in\mathcal{V}$, the following estimates hold:
\begin{align}
\label{m1}|I_1(s)|&\lesssim
(|s|^{5}+|s|)e^{4R|s|}\|\boldsymbol{J}\|_{H^2(B_R)^3}^2,\\
\label{m2}|I_2(s)|&\lesssim |s|^{3}e^{4R|s|}\|\boldsymbol{J}\|_{H^1(B_R)^3}^2.
\end{align}
\end{lemm}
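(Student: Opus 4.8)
The plan is to follow the three-dimensional part of the proof of Lemma~\ref{ni}, which is in fact lighter here: Green's tensor \eqref{gtm} for the Maxwell system carries only a single negative power of the frequency and, unlike the Navier tensor \eqref{gtn}, it contains no difference $g_3(\cdot;\kappa_{\rm s})-g_3(\cdot;\kappa_{\rm p})$ whose leading singularity would have to be cancelled through a Taylor expansion. Recalling \eqref{gtm}, I would split
\[
\mathbf G_{\rm M}(\boldsymbol x,\boldsymbol y;\kappa)=\mathbf G_1(\boldsymbol x,\boldsymbol y;\kappa)+\mathbf G_2(\boldsymbol x,\boldsymbol y;\kappa),\quad \mathbf G_1={\rm i}\kappa\,g_3(\boldsymbol x,\boldsymbol y;\kappa)\,\mathbf I_3,\quad \mathbf G_2=\frac{\rm i}{\kappa}\nabla_{\boldsymbol x}\nabla_{\boldsymbol x}^\top g_3(\boldsymbol x,\boldsymbol y;\kappa),
\]
parametrise the contour integral in \eqref{mi1}--\eqref{mi2} by $\kappa=st$ with $t\in(0,1)$ (so that $|\kappa^2\,{\rm d}\kappa|=|s|^3t^2\,{\rm d}t$), discard the cross product with $\boldsymbol\nu$ using $|\boldsymbol a\times\boldsymbol\nu|\le|\boldsymbol a|$, and bound $|I_1(s)|\lesssim I_{1,1}(s)+I_{1,2}(s)$, where $I_{1,1}$ and $I_{1,2}$ are the contributions of $\mathbf G_1$ and $\mathbf G_2$.

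For $I_{1,1}$ the geometry does the work: for $\boldsymbol x\in\Gamma_R$ and $\boldsymbol y\in\Omega\subset B_{\hat R}$ one has $R-\hat R\le|\boldsymbol x-\boldsymbol y|\le R+\hat R<2R$, hence $|e^{{\rm i}st|\boldsymbol x-\boldsymbol y|}|\le e^{2R|s|}$ and $g_3(\boldsymbol x,\cdot;st)$ is smooth and bounded on $\Omega$. The factor $|\kappa|^2=|s|^2t^2$ from $\mathbf G_1$ combines with the $|s|^3t^2$ weight, and one Cauchy--Schwarz inequality in $\boldsymbol y$ gives $I_{1,1}(s)\lesssim|s|^5e^{4R|s|}\|\boldsymbol J\|_{L^2(B_R)^3}^2$. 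For $I_{1,2}$ I would first replace $\nabla_{\boldsymbol x}\nabla_{\boldsymbol x}^\top g_3$ by $\nabla_{\boldsymbol y}\nabla_{\boldsymbol y}^\top g_3$ (legitimate since $g_3$ depends only on $|\boldsymbol x-\boldsymbol y|$) and then integrate by parts twice in $\boldsymbol y$; since $\boldsymbol J$ is supported in $\Omega$ with $\bar\Omega\subset B_{\hat R}$ and $g_3(\boldsymbol x,\cdot;\kappa)$ is smooth near $\Omega$, no boundary terms survive and
\[
\int_\Omega \mathbf G_2(\boldsymbol x,\boldsymbol y;\kappa)\cdot\boldsymbol J(\boldsymbol y)\,{\rm d}\boldsymbol y=\frac{\rm i}{\kappa}\int_\Omega g_3(\boldsymbol x,\boldsymbol y;\kappa)\,\nabla_{\boldsymbol y}(\nabla_{\boldsymbol y}\cdot\boldsymbol J(\boldsymbol y))\,{\rm d}\boldsymbol y.
\]
The remaining $1/\kappa=1/(st)$ is absorbed by the $|s|^3t^2$ weight, Cauchy--Schwarz produces $\|\nabla_{\boldsymbol y}(\nabla_{\boldsymbol y}\cdot\boldsymbol J)\|_{L^2(B_R)}^2\le\|\boldsymbol J\|_{H^2(B_R)^3}^2$, and one obtains $I_{1,2}(s)\lesssim|s|\,e^{4R|s|}\|\boldsymbol J\|_{H^2(B_R)^3}^2$. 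Summing the two pieces yields \eqref{m1}.

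The bound \eqref{m2} is obtained directly, since in \eqref{mi2} only the scalar kernel $g_3(\boldsymbol x,\boldsymbol y;\kappa)\mathbf I_3$ occurs: bounding $|g_3|\lesssim e^{2R|s|}$ on $\Gamma_R\times\Omega$, applying Cauchy--Schwarz in $\boldsymbol y$ together with $\|\nabla\times\boldsymbol J\|_{L^2(B_R)}\le\|\boldsymbol J\|_{H^1(B_R)}$, and integrating the weight $|s|^3t^2$ over $t\in(0,1)$ gives $|I_2(s)|\lesssim|s|^3e^{4R|s|}\|\boldsymbol J\|_{H^1(B_R)^3}^2$.

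The steps requiring genuine care are the bookkeeping of the powers of $|s|$ and the convergence of the $t$-integrals at $t=0$---which holds precisely because the $1/\kappa$ singularity of $\mathbf G_2$ is neutralised by the $\kappa^2$ prefactor in $I_1$---together with the double integration by parts defining $I_{1,2}$. I expect this last point, namely using the identity $\nabla_{\boldsymbol x}\nabla_{\boldsymbol x}^\top g_3=\nabla_{\boldsymbol y}\nabla_{\boldsymbol y}^\top g_3$ correctly and verifying that no boundary contributions remain, to be the main (though mild) obstacle; the rest is a streamlined version of arguments already carried out for Lemma~\ref{ni}.
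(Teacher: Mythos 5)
Your proposal is correct and follows essentially the same route as the paper: the same splitting of $\mathbf G_{\rm M}$ into $\mathbf G_1+\mathbf G_2$, the substitution $\kappa=st$, the bound $|e^{{\rm i}st|\boldsymbol x-\boldsymbol y|}|\le e^{2R|s|}$ with Cauchy--Schwarz for $I_{1,1}$ and $I_2$, and the double integration by parts (using $\nabla_{\boldsymbol x}\nabla_{\boldsymbol x}^\top g_3=\nabla_{\boldsymbol y}\nabla_{\boldsymbol y}^\top g_3$ and the compact support of $\boldsymbol J$) to absorb the $1/\kappa$ factor in $I_{1,2}$. The resulting powers $|s|^5$, $|s|$, and $|s|^3$ match the paper's estimates exactly.
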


\begin{proof}
Let $\kappa = st, t\in (0,1)$. Noting \eqref{gtm}, we have from direct calculation that
\begin{align*}
|I_1(s)|\leq I_{1,1}(s)+I_{1,2}(s),
\end{align*}
where
\begin{align*}
&I_{1,1}(s) = \int_0^1 |s|^3 t^2 \int_{\Gamma_R}\left|\int_\Omega s t
g_3(\boldsymbol x, \boldsymbol y; \kappa)\mathbf I_3 \cdot  \boldsymbol
J(\boldsymbol y){\rm d}\boldsymbol y\right|^2{\rm d}\gamma(\boldsymbol x) {\rm
d}t,\\
&I_{1,2}(s) = \int_0^1 |s|^3 t^2 \int_{\Gamma_R}\left|\int_\Omega
\frac{1}{st}\nabla_{\boldsymbol y}\nabla_{\boldsymbol y}^{\top}g(\boldsymbol x,
\boldsymbol y) \cdot  \boldsymbol J(\boldsymbol y){\rm d}\boldsymbol
y\right|^2{\rm d}\gamma(\boldsymbol x) {\rm d}t.
\end{align*}
Since ${\rm supp}\boldsymbol J=\Omega\subset B_R$ and
\[
|e^{{\rm i}st |\boldsymbol x-\boldsymbol y|}|\leq e^{2R|s|},
\quad\forall \boldsymbol x\in \Gamma_R, \boldsymbol y\in\Omega,
\]
we get from the Cauchy--Schwarz inequality that
 \begin{align}\label{lema4_s1}
I_{1,1}(s)&\lesssim\int_0^1|s|^{5}t^{4}\int_{\Gamma_R}\bigg|\int_\Omega\frac{
e^{2R|s|}}{|\boldsymbol x-\boldsymbol y|
}|\boldsymbol{J}(\boldsymbol y)|{\rm d}\boldsymbol y\bigg|^2{\rm
d}\gamma(\boldsymbol x){\rm d}t\notag\\
&\lesssim\int_0^1|s|^{5}t^{4}\int_{\Gamma_R}\left(\int_{B_R}
|\boldsymbol{J}(\boldsymbol y)|^2 {\rm
d}\boldsymbol y\right)\left(\int_\Omega\frac{e^{4R|s|}}{|\boldsymbol
x-\boldsymbol y|^2}{\rm d}\boldsymbol y\right) {\rm d}\gamma(\boldsymbol x){\rm
d}t\notag\\
&\lesssim |s|^{5}e^{4R|s|}\|\boldsymbol{J}\|_{L^2(B_R)^3}^2.
\end{align}

On the other hand, we obtain from the integration
by parts that
\begin{align}\label{lema4_s2}
I_{1,2}(s)&\lesssim \int_0^1 |s|^3 t^2 \int_{\Gamma_R}\left|\int_\Omega
\frac{1}{st}\nabla_{\boldsymbol y}\nabla_{\boldsymbol y}^{\top}g_3(\boldsymbol
x, \boldsymbol y) \cdot  \boldsymbol J(\boldsymbol y){\rm d}\boldsymbol
y\right|^2{\rm d}\gamma(\boldsymbol x) {\rm d}t\cr
&\lesssim \int_0^1 |s| \int_{\Gamma_R} \left| \int_\Omega g_3(\boldsymbol
x, \boldsymbol y)\nabla_{\boldsymbol y} \nabla_{\boldsymbol y} \cdot \boldsymbol
J{\rm d}\boldsymbol y\right|^2{\rm d}\gamma(\boldsymbol x) {\rm d}t\cr
&\lesssim  |s|e^{4R|s|}\|\boldsymbol{J}\|_{H^2(B_R)^3}^2.
\end{align}
Combing \eqref{lema4_s1} and \eqref{lema4_s2} yields \eqref{m1}.

Using the Cauchy--Schwarz inequality and \eqref{lema4_s1}, we have
\begin{align*}
I_2(s) &\lesssim \int_0^1 |s|^3 t^2\int_{\Gamma_R}\left|\int_\Omega
g_3(\boldsymbol x, \boldsymbol y; \kappa)\mathbf I_3\cdot \nabla \times
\boldsymbol{J}(\boldsymbol y){\rm
d}\boldsymbol y \right|^2{\rm
d}\gamma(\boldsymbol x){\rm d}t\\
&\lesssim \int_0^1|s|^{3}\left(\int_{B_R}|\nabla \times \boldsymbol{J}
(\boldsymbol y)|^2 { \rm d}\boldsymbol y\right)\int_{\Gamma_R}
\left(\int_\Omega\frac{e^{4R|s|}}{|\boldsymbol x- \boldsymbol y|^2}{\rm
d}\boldsymbol y \right){\rm d}\gamma(\boldsymbol x){\rm d}t\\
&\lesssim |s|^{3}e^{4R|s|}\|\boldsymbol{J}\|_{H^1(B_R)^3}^2,
\end{align*}
which complete the proof of \eqref{m2}.
\end{proof}

\begin{lemm}\label{mhje}
Let $\boldsymbol{J}\in\mathbb J_M(B_R) $. For any $s\ge 1$, the following
estimate holds:
\[
\int_s^{\infty}\kappa^{2}
\|\boldsymbol E\times\boldsymbol\nu\|^2_{\Gamma_R}{\rm
d}\kappa\lesssim s^{-(2m-5)}\| \boldsymbol{J}\|^2_{H^m(B_R)^3}.
\]
\end{lemm}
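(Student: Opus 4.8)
The plan is to mirror the proof of Lemma~\ref{nhfe}. Using the transparent boundary condition \eqref{mtbc1}, which gives $T_{\rm M}(\boldsymbol E\times\boldsymbol\nu)=\boldsymbol H\times\boldsymbol\nu$ on $\Gamma_R$, I first rewrite the boundary measurement in terms of the interior fields:
\[
\int_s^{\infty}\kappa^2\|\boldsymbol E(\cdot,\kappa)\times\boldsymbol\nu\|^2_{\Gamma_R}{\rm d}\kappa=\int_s^{\infty}\kappa^2\int_{\Gamma_R}\bigl(|\boldsymbol H(\boldsymbol x,\kappa)\times\boldsymbol\nu|^2+|\boldsymbol E(\boldsymbol x,\kappa)\times\boldsymbol\nu|^2\bigr){\rm d}\gamma(\boldsymbol x){\rm d}\kappa=:P_1+P_2.
\]
For $P_1$ I use the representation \eqref{es} with \eqref{gtm}; since $\nabla_{\boldsymbol x}\nabla_{\boldsymbol x}^\top g_3=\nabla_{\boldsymbol y}\nabla_{\boldsymbol y}^\top g_3$, integrating by parts moves the Hessian onto $\boldsymbol J$:
\[
\boldsymbol E(\boldsymbol x,\kappa)={\rm i}\kappa\int_\Omega g_3(\boldsymbol x,\boldsymbol y;\kappa)\boldsymbol J(\boldsymbol y){\rm d}\boldsymbol y+\frac{\rm i}{\kappa}\int_\Omega g_3(\boldsymbol x,\boldsymbol y;\kappa)\nabla_{\boldsymbol y}\bigl(\nabla_{\boldsymbol y}\cdot\boldsymbol J(\boldsymbol y)\bigr){\rm d}\boldsymbol y,
\]
so, dropping the unit factor $\boldsymbol\nu$ in the cross products, $P_1\lesssim P_{1,1}+P_{1,2}$ with $P_{1,1}$ weighted by $\kappa^2\cdot\kappa^2$ against $\int_\Omega g_3\boldsymbol J$ and $P_{1,2}$ weighted by $\kappa^2\cdot\kappa^{-2}$ against $\int_\Omega g_3\,\nabla_{\boldsymbol y}(\nabla_{\boldsymbol y}\cdot\boldsymbol J)$; for $P_2$ I use the representation \eqref{hs}, whose amplitude is $\nabla\times\boldsymbol J$.

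For each of the three pieces I would pass to polar coordinates $\rho=|\boldsymbol y-\boldsymbol x|$ originated at $\boldsymbol x$ with respect to $\boldsymbol y$, so that $g_3(\boldsymbol x,\boldsymbol y;\kappa)=e^{{\rm i}\kappa\rho}/(4\pi\rho)$, the Jacobian $\rho^2$ against the factor $1/\rho$ leaves a factor $\rho$, and $\rho$ ranges over $(R-\hat R,R+\hat R)$ on ${\rm supp}\,\boldsymbol J$. Integrating by parts $\ell$ times in $\rho$—the boundary terms vanishing because $\boldsymbol J$ and all its derivatives are zero outside $\Omega$, exactly as in Lemma~\ref{nhfe}—replaces $e^{{\rm i}\kappa\rho}$ by $e^{{\rm i}\kappa\rho}/({\rm i}\kappa)^\ell$ and the amplitude by its $\ell$-th $\rho$-derivative, which by the Leibniz rule together with $\rho\ge R-\hat R>0$ is controlled, after returning to Cartesian variables and using the Cauchy--Schwarz inequality, by $\kappa^{-2\ell}\|\boldsymbol J\|^2_{H^{k+\ell}(B_R)^3}$ when the original amplitude carries $k$ derivatives of $\boldsymbol J$. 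Taking $\ell=m$ for $P_{1,1}$ ($k=0$), $\ell=m-2$ for $P_{1,2}$ ($k=2$) and $\ell=m-1$ for $P_2$ ($k=1$), all admissible since $m\ge d=3$, the outer $\kappa$-weights combine to $\kappa^{4-2m}$ in every case, and $\int_s^{\infty}\kappa^{4-2m}{\rm d}\kappa\lesssim s^{-(2m-5)}$ for $s\ge1$, which gives $P_1+P_2\lesssim s^{-(2m-5)}\|\boldsymbol J\|^2_{H^m(B_R)^3}$, as claimed.

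The polar-coordinate integration by parts and the Leibniz expansion are routine and can be taken essentially verbatim from Lemma~\ref{nhfe}, so I would only sketch them. The point that requires genuine care—and the main obstacle—is the power bookkeeping: transferring the double $\boldsymbol x$-derivative in $\mathbf G_{\rm M}$ onto $\boldsymbol J$ simultaneously removes a factor $\kappa^{-1}$ from the amplitude and consumes two of the $m$ available Sobolev derivatives, and one must verify that the number of admissible integrations by parts and the surviving powers of $\kappa$ still conspire to give precisely the decay rate $s^{-(2m-5)}$ from all of $P_{1,1}$, $P_{1,2}$ and $P_2$. Keeping the term $\frac{\rm i}{\kappa}\nabla_{\boldsymbol x}\nabla_{\boldsymbol x}^\top g_3$ without this integration by parts would leave an uncontrolled $1/\kappa$ and destroy the convergence of the $\kappa$-integral, so this reduction is essential.
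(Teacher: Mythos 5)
Your proposal is correct and follows essentially the same route as the paper's proof: split off the $\boldsymbol H\times\boldsymbol\nu$ term via the capacity operator, decompose $\mathbf G_{\rm M}$ into its $ {\rm i}\kappa g_3\mathbf I_3$ and $\frac{\rm i}{\kappa}\nabla\nabla^\top g_3$ parts, move the Hessian onto $\boldsymbol J$, and perform $m$, $m-2$, and $m-1$ radial integrations by parts respectively so that every piece carries the weight $\kappa^{4-2m}$ and integrates to $s^{-(2m-5)}$. The only blemish is notational: you define $P_1$ as the $\boldsymbol H$-term and $P_2$ as the $\boldsymbol E$-term but then treat $P_1$ with the representation \eqref{es} of $\boldsymbol E$ and $P_2$ with \eqref{hs} of $\boldsymbol H$, so the labels should be swapped.
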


\begin{proof}
Let
\[
\int_s^{\infty} \int_{\Gamma_R}\kappa^{2}
\|\boldsymbol E\times\boldsymbol\nu\|^2_{\Gamma_R}{\rm
d}\kappa=L_1+L_2,
\]
where
\begin{align*}
L_1&=  \int_s^{\infty} \int_{\Gamma_R}\kappa^{2} |\boldsymbol{E}(\boldsymbol x,
\kappa)\times\boldsymbol \nu(\boldsymbol x)|^2{\rm d}\gamma(\boldsymbol x){\rm
d}\kappa,\\
L_2&=\int_s^{\infty} \int_{\Gamma_R}\kappa^{2}
|\boldsymbol H(\boldsymbol x, \kappa)\times \boldsymbol\nu(\boldsymbol
x))|^2{\rm d}\gamma(\boldsymbol x){\rm d}\kappa.
\end{align*}

First we estimate $L_1$. Using \eqref{es} and noting $s\geq 1$, we obtain
\begin{align*}
 L_1=\int_s^{\infty} \int_{\Gamma_R} \kappa^{2}|\boldsymbol{E}(\boldsymbol x,
\kappa)\times\boldsymbol\nu(\boldsymbol x)|^2 {\rm d}\gamma(\boldsymbol x){\rm
d}\kappa\lesssim L_{1, 1}+L_{1, 2},
\end{align*}
where
\begin{align*}
 L_{1, 1}&=\int_s^{\infty} \int_{\Gamma_R}
\kappa^{4}\bigg|\int_\Omega\frac{e^{{\rm i}\kappa|\boldsymbol
x-\boldsymbol y|}}{|\boldsymbol x-\boldsymbol
y|}\mathbf{I}_3\cdot\boldsymbol{J}(\boldsymbol y){\rm d}\boldsymbol y\bigg|^2
{\rm d}\gamma(\boldsymbol x){\rm d}\kappa,\\
L_{1, 2}&=\int_s^{\infty} \int_{\Gamma_R}
\bigg|\int_\Omega \nabla_{\boldsymbol y}\nabla_{\boldsymbol
y}^{\top}\frac{e^{{\rm i}\kappa|\boldsymbol x-\boldsymbol y|}}{|\boldsymbol
x-\boldsymbol y|}\cdot\boldsymbol{J}(\boldsymbol y){\rm d}\boldsymbol y\bigg|^2
{\rm d}\gamma(\boldsymbol x){\rm d}\kappa.
\end{align*}
Noting ${\rm supp}\boldsymbol{J}=\Omega\subset B_{\hat R}\subset B_R$, and using
integration by parts and the polar coordinates $\rho=|\boldsymbol y-\boldsymbol
x|$ originated at $\boldsymbol x$ with respect to $\boldsymbol y$, we have
\begin{align*}
L_{1,1}=\int_s^{\infty} \int_{\Gamma_R} \kappa^{4}\bigg|\int_0^{2\pi}{\rm
d}\theta\int_0^{\pi}\sin\varphi{\rm d}\varphi\int_{R-\hat
R}^{R+\hat R}\frac{e^{{\rm i}\kappa\rho}}{({\rm
i}\kappa)^m}\mathbf{I}_3\cdot\frac{\partial^m(\boldsymbol{J}\rho)}{
\partial\rho^m} {\rm d}\rho\bigg|^2 {\rm d}\gamma(\boldsymbol x){\rm d}\kappa.
\end{align*}
Consequently,
\begin{align*}
L_{1,1}\le &   \int_s^{\infty} \int_{\Gamma_R}
\kappa^{4}\bigg|\int_0^{2\pi}{\rm
d}\theta\int_0^{\pi}\sin\varphi{\rm d}\varphi\int_{R-\hat R}^{R+\hat R}
\kappa^{-m}\\
&\qquad\bigg(\bigg|\sum_{|\boldsymbol \alpha|=m}\partial_{\boldsymbol
y}^{\boldsymbol \alpha}
\boldsymbol{J}\bigg|\rho+m\bigg|\sum_{ |\boldsymbol
\alpha|=m-1}\partial_{\boldsymbol y}^{\boldsymbol
\alpha}\boldsymbol{J}\bigg|\bigg){\rm d}\rho\bigg|^2 {\rm
d}\gamma(\boldsymbol x){\rm d}\kappa\\
=& \int_s^{\infty} \int_{\Gamma_R} \kappa^{4}\bigg|\int_0^{2\pi}{\rm
d}\theta\int_0^{\pi}\sin\varphi{\rm d}\varphi\int_{R-\hat R}^{R+\hat R}
\kappa^{-m}\\
&\qquad\bigg(\bigg|\sum_{|\boldsymbol \alpha|=m}\partial_{\boldsymbol
y}^{\boldsymbol \alpha}\boldsymbol{J}\bigg|\frac{1}{\rho}
+\bigg|\sum_{|\boldsymbol \alpha|=m-1}\partial_{\boldsymbol
y}^{\boldsymbol \alpha} \boldsymbol{J}\bigg|\frac{m}{\rho^2}\bigg)\rho^2{\rm
d}\rho\bigg|^2 {\rm d}\gamma(\boldsymbol x){\rm d}\kappa\\
\leq& \int_s^{\infty} \int_{\Gamma_R} \kappa^{4}\bigg|\int_0^{2\pi}{\rm
d}\theta\int_0^{\pi}\sin\varphi{\rm d}\varphi\int_{R-\hat R}^{R+\hat R}
\kappa^{-m}\\
&\qquad\bigg(\bigg|\sum_{|\boldsymbol \alpha|=m}\partial_{\boldsymbol
y}^{\boldsymbol \alpha}\boldsymbol{J}\bigg|\frac{1}{(R-\hat R)}
+\bigg|\sum_{|\boldsymbol \alpha|=m-1}\partial_{\boldsymbol y}^{\boldsymbol \alpha}
\boldsymbol{J}\bigg|\frac{m}{(R-\hat R)^2}\bigg)\rho^2{\rm d}\rho\bigg|^2
{\rm d}\gamma(\boldsymbol x){\rm d}\kappa\\
= &\int_s^{\infty} \int_{\Gamma_R}
\omega^{4}\bigg|\int_0^{2\pi}{\rm
d}\theta\int_0^{\pi}\sin\varphi{\rm d}\varphi\int_{0}^{\infty}
\kappa^{-m}\\
&\qquad\bigg(\bigg|\sum_{|\boldsymbol \alpha|=m}\partial_{\boldsymbol
y}^{\boldsymbol \alpha}\boldsymbol{J}\bigg|\frac{1}{(R-\hat R)}
+\bigg|\sum_{|\boldsymbol \alpha|=m-1}\partial_{\boldsymbol
y}^{\boldsymbol \alpha} \boldsymbol{J}\bigg|\frac{m}{(R-\hat
R)^2}\bigg)\rho^2{\rm d}\rho\bigg|^2 {\rm d}\gamma(\boldsymbol x){\rm
d}\kappa.
\end{align*}
Changing back to the Cartesian coordinates with respect to $\boldsymbol y$, we
have
\begin{align}\label{lema5_s1}
L_{1,1} \leq& \int_s^{\infty} \int_{\Gamma_R}
\kappa^{4}\bigg|\int_{\Omega}\kappa^{-m}\\
&\qquad\bigg(\bigg|\sum_{|\boldsymbol \alpha|=m}\partial_{\boldsymbol
y}^{\boldsymbol \alpha}\boldsymbol{J}\bigg|\frac{1}{(R-\hat R)}
+\bigg|\sum_{|\boldsymbol \alpha|=m-1}\partial_{\boldsymbol
y}^{\boldsymbol \alpha} \boldsymbol{J}\bigg|\frac{m}{(R-\hat R)^2}\bigg){\rm
d} \boldsymbol y\bigg|^2 {\rm d}\gamma(\boldsymbol x){\rm d}\kappa\\
\lesssim& m\|\boldsymbol{J}\|^2_{H^m(B_R)^3}\int_s^{\infty}\kappa^{4-2m}{\rm
d}\kappa\\
\lesssim&\left(\frac{m}{2m-5}\right)s^{-(2m-5)}\|\boldsymbol{J}\|^2_{
H^m(B_R)^3}\lesssim s^{-(2m-5)}\|\boldsymbol{J}\|^2_{H^m(B_R)^3}.
\end{align}

For $L_{1,2}$, it follows from the integration by parts and similar steps for
\eqref{lema5_s1} that we obtain
\begin{align}\label{lema5_s2}
L_{1, 2}&\lesssim \int_s^{\infty} \int_{\Gamma_R}
\bigg|\int_{\Omega}\frac{e^{{\rm i}\kappa|\boldsymbol
x-\boldsymbol y|}}{|\boldsymbol x-\boldsymbol y|}\nabla_{\boldsymbol
y}\nabla_{\boldsymbol y}\cdot\boldsymbol{J}(\boldsymbol y){\rm d}\boldsymbol
y\bigg|^2 {\rm d}\gamma(\boldsymbol x){\rm d}\kappa\notag\\
&\lesssim
\int_s^{\infty}\int_{\Gamma_R}\bigg|\int_{\Omega}\frac{1}{\kappa^{m-2}}
\bigg(\bigg|\sum_{ |\boldsymbol \alpha|=m-2}\partial_{\boldsymbol
y}^{\boldsymbol \alpha}(\nabla_{\boldsymbol
y}\nabla_{\boldsymbol y}\cdot\boldsymbol{J})\bigg|\frac{1}{(R-\hat R)}\notag\\
&\qquad+\bigg|\sum_{|\boldsymbol
\alpha|=m-3}\partial_{\boldsymbol
y}^{\boldsymbol \alpha}(\nabla_{\boldsymbol
y}\nabla_{\boldsymbol y}\cdot\boldsymbol{J})\bigg|\frac{(m-2)}{(R-\hat
R)^2}\bigg){\rm d} \boldsymbol y\bigg|^2 {\rm d}\gamma(\boldsymbol x){\rm
d}\kappa\notag\\
&\lesssim (m-2)\|\boldsymbol{J}\|^2_{H^m(B_R)^3}\int_s^{\infty}\kappa^{4-2m}{\rm
d}\kappa\notag\\
&\lesssim\left(\frac{m-2}{2m-5}\right)s^{-(2m-5)} \|\boldsymbol
{J}\|^2_{H^m(B_R)^3}\lesssim
s^{-(2m-5)}\|\boldsymbol{J}\|^2_{H^m(B_R)^3}.
\end{align}
Following from the similar steps as those for \eqref{lema5_s1} and
\eqref{lema5_s2}, we may obtain from \eqref{hs} that
\begin{equation}\label{lema5_s3}
L_{2}\lesssim \int_s^{\infty} \int_{\Gamma_R}
\kappa^{2}\bigg|\int_{\Omega}\frac{e^{{\rm i}\kappa|\boldsymbol
x-\boldsymbol y|}}{|\boldsymbol x-\boldsymbol y|}\mathbf
I_3\cdot\nabla_{\boldsymbol y}\times\boldsymbol{J}(\boldsymbol y){\rm
d}\boldsymbol y\bigg|^2 {\rm d}\gamma(\boldsymbol x){\rm d}\kappa\lesssim
s^{-(2m-5)}\|\boldsymbol{J}\|^2_{H^m(B_R)^3}.
\end{equation}
Combining \eqref{lema5_s1}--\eqref{lema5_s3} completes the proof.
\end{proof}

\begin{lemm}\label{mi12}
 Let $\boldsymbol{f}\in\mathbb J_M(B_R)$. Then there exists a function
$\beta(s)$ satisfying \eqref{beta} such that
\[
 |I_1(s)+I_2(s)|\lesssim M^2 e^{(4R+1)s}\epsilon_4^{2\beta(s)},\quad\forall s\in
(K, ~\infty).
\]
\end{lemm}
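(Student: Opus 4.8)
The plan is to follow the three–step scheme already used for the elastic case in the proof of Lemma~\ref{ni12}: establish an a priori bound on the analytic functions $I_1(s)+I_2(s)$ in the whole sector $\mathcal V$, establish a smallness bound on the real segment $[0,K]$ in terms of the data $\epsilon_4$, and then invoke the analytic continuation estimate Lemma~\ref{caf} to propagate the smallness into $(K,\infty)$ with the exponent $\beta(s)$ prescribed by \eqref{beta}.

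First I would use Lemma~\ref{mi}. Since $\boldsymbol J\in\mathbb J_M(B_R)$ has $\|\boldsymbol J\|_{H^m(B_R)^3}\le M$ with $m\ge d=3$, in particular $\|\boldsymbol J\|_{H^2(B_R)^3}\le M$ and $\|\boldsymbol J\|_{H^1(B_R)^3}\le M$, so \eqref{m1}--\eqref{m2} give
\[
|I_1(s)+I_2(s)|\lesssim(|s|^{5}+|s|^{3}+|s|)e^{4R|s|}M^2,\qquad s\in\mathcal V .
\]
The polynomial prefactor is absorbed into $e^{|s|}$ up to a constant (for each fixed $k$ the function $|s|^k e^{-|s|}$ is bounded on $\mathcal V$), which yields
\[
\big|(I_1(s)+I_2(s))e^{-(4R+1)|s|}\big|\lesssim M^2,\qquad s\in\mathcal V .
\]
Here the role played by the shear speed $c_{\rm s}$ in the elastic argument is taken by $1$, because the Green tensor bounds in Lemma~\ref{mi} involve $e^{4R|s|}$ rather than $e^{4c_{\rm s}R|s|}$; this is why the exponent in the claimed estimate is $(4R+1)$ rather than $(4R+1)c_{\rm s}$.

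Second, for real $s\in[0,K]$ I would identify $I_1(s)+I_2(s)$ with the data functional. By \eqref{es}, \eqref{gtm} and \eqref{hs}, the integrand of $I_1$ in \eqref{mi1} equals $|\boldsymbol E(\boldsymbol x,\kappa)\times\boldsymbol\nu|^2$ and that of $I_2$ in \eqref{mi2} equals $|\boldsymbol H(\boldsymbol x,\kappa)\times\boldsymbol\nu|^2$; combined with the transparent boundary condition $\boldsymbol H\times\boldsymbol\nu=T_{\rm M}(\boldsymbol E\times\boldsymbol\nu)$ on $\Gamma_R$ from \eqref{mtbc1}, this gives
\[
I_1(s)+I_2(s)=\int_0^s\kappa^2\|\boldsymbol E(\cdot,\kappa)\times\boldsymbol\nu\|^2_{\Gamma_R}\,{\rm d}\kappa\le\epsilon_4^2,\qquad s\in[0,K],
\]
and therefore, since $e^{-(4R+1)s}\le 1$ there,
\[
\big|(I_1(s)+I_2(s))e^{-(4R+1)s}\big|\le\epsilon_4^2,\qquad s\in[0,K].
\]

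Finally, $(I_1(s)+I_2(s))e^{-(4R+1)s}$ is analytic in $\mathcal V$ — the integrals in \eqref{mi1}--\eqref{mi2} may be taken over any path in $\mathcal V$ joining $0$ to $s$, as already observed — bounded by $CM^2$ on $\mathcal V$ and by $\epsilon_4^2$ on $[0,K]$. An application of Lemma~\ref{caf} then produces a function $\beta(s)$ satisfying \eqref{beta} with
\[
\big|(I_1(s)+I_2(s))e^{-(4R+1)s}\big|\lesssim M^2\epsilon_4^{2\beta(s)},\qquad s\in(K,\infty),
\]
which is the asserted bound after multiplying through by $e^{(4R+1)s}$. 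The only point requiring care is to verify that the hypotheses of Lemma~\ref{caf} hold verbatim (uniform bound on the full sector, smallness on $[0,K]$, analyticity), all of which are supplied by the two steps above; the remaining bookkeeping is identical to the elastic case. The mildest obstacle is the clean absorption of the polynomial prefactor and the $e^{4R|s|}$ factor into the single exponential $e^{(4R+1)|s|}$ uniformly over $\mathcal V$, but this is immediate from boundedness of $|s|^k e^{-|s|}$.
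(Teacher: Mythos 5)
Your proposal is correct and follows essentially the same route as the paper: the uniform sector bound from Lemma \ref{mi} (with the polynomial prefactor absorbed into $e^{|s|}$), the identification $I_1(s)+I_2(s)=\int_0^s\kappa^2\|\boldsymbol E(\cdot,\kappa)\times\boldsymbol\nu\|^2_{\Gamma_R}\,{\rm d}\kappa\le\epsilon_4^2$ on $[0,K]$ via \eqref{es}, \eqref{hs} and \eqref{mtbc1}, and then Lemma \ref{caf}. The paper's own proof is terser but identical in substance; your added remarks on absorbing $(|s|^5+|s|^3+|s|)$ and on why the exponent is $(4R+1)$ rather than $(4R+1)c_{\rm s}$ are accurate fillings-in of details the paper leaves implicit.
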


\begin{proof}
 It follows from Lemma \ref{mi} that
\[
 |\left(I_1(s)+I_2(s)\right)e^{-(4R+1)|s|}|\lesssim M^2,\quad\forall
s\in\mathcal{V}.
\]
Recalling \eqref{mi1}--\eqref{mi2}, we have
\[
 |\left(I_1(s)+I_2(s)\right)e^{-(4R+1)s}|\leq\epsilon_4^2,\quad s\in [0, ~K].
\]
Using Lemma \ref{caf} shows that there exists a function
$\beta(s)$ satisfying \eqref{beta} such that
\[
 |\left(I_1(s)+I_2(s)\right)e^{-(4R+1)s}|\lesssim
M^2\epsilon_4^{2\beta},\quad\forall s\in
(K, ~\infty),
\]
which completes the proof.
\end{proof}

Now we show the proof of Theorem \ref{mrm}.

\begin{proof}
Let
\[
s=\begin{cases}
\frac{1}{((4R+3)\pi)^{\frac{1}{3}}}K^{\frac{2}{3}}|\ln\epsilon_4|^{\frac{1}{4}},
&
2^{\frac{1}{4}}
((4R+3)\pi)^{\frac{1}{3}}K^{\frac{1}{3}}<|\ln\epsilon_4|^{\frac{1}{4}},\\
K, &|\ln\epsilon_4|\leq 2^{\frac{1}{4}}((4R+3)\pi)^{\frac{1}{3}}K^{\frac{1}{3}}.
 \end{cases}
\]
If
$2^{\frac{1}{4}}((4R+3)\pi)^{\frac{1}{3}}K^{\frac{1}{3}}<|\ln\epsilon_4|^{\frac{
1}{4}}$, then we have from Lemma \ref{mi12} that
\begin{align*}
 |I_1(s)+I_2(s)|&\lesssim M^2 e^{(4R+3)s}
e^{-\frac{2|\ln\epsilon_4|}{\pi}((\frac{s}{K})^4-1)^{-\frac{1}{2}}}\cr
&\lesssim M^2
e^{\frac{(4R+3)}{((4R+3)\pi)^{\frac{1}{3}}}K^{\frac{2}{3}}|\ln\epsilon_4|^{\frac
{1}{4}}-\frac{2|\ln\epsilon_4|}{\pi}(\frac{K}{s})^2}\\
&\lesssim M^2
e^{-2\left(\frac{(4R+3)^2}{\pi}\right)^{\frac{1}{3}}K^{\frac{2}{3}}
|\ln\epsilon_4|^{\frac{1}{2}}\left(1-\frac{1}{2}
|\ln\epsilon_4|^{-\frac{1}{4}}\right)}.
\end{align*}
Noting that $\frac{1}{2} |\ln\epsilon_4|^{-\frac{1}{4}}<\frac{1}{2}$,
$\left(\frac{(4R+3)^2}{\pi}\right)^{\frac{1}{3}}>1$,  we have
\begin{align*}
 |I_1(s)+I_2(s)|&
\lesssim M^2 e^{-K^{\frac{2}{3}}|\ln\epsilon_4|^{\frac{1}{2}}}.
\end{align*}
It follows from the elementary inequality \eqref{ei} that we get
\begin{equation}\label{theo3_s1}
|I_1(s)+I_2(s)|\lesssim\frac{M^2}{\left(\frac{
K^2|\ln\epsilon_4|^{\frac{3}{2}}}{(6m-15)^3}\right)^{2m-5}}.
\end{equation}
If $|\ln\epsilon_4|\leq
2^{\frac{1}{4}}(((4R+3)\pi)^{\frac{1}{3}}K^{\frac{1}{3}}$, then $s=K$. We have
from Lemma \ref{me} and \eqref{mi1}--\eqref{mi2} that
\[
 |I_1(s)+I_2(s)|\leq \epsilon_4^2,
\]
Note that for $s>0$,
\[
I_1(s)+I_2(s)=\int_0^s \kappa^2|\boldsymbol E(\cdot,
\kappa)\times \boldsymbol\nu|^2_{\Gamma_R} {\rm
d}\kappa.
\]
Hence we obtain from Lemma \ref{mhje} and \eqref{theo3_s1} that
\begin{align*}
 &\int_0^\infty \kappa^2|\boldsymbol E(\cdot,
\kappa)\times \boldsymbol\nu|^2_{\Gamma_R}{\rm d}\gamma{\rm
d}\kappa\\
&\leq I_1(s)+I_2(s)+\int_s^\infty \kappa^2|\boldsymbol E(\cdot,
\kappa)\times \boldsymbol\nu|^2_{\Gamma_R}{\rm d}\kappa\\
&\lesssim \epsilon_4^2+\frac{M^2}{\left(\frac{
K^2|\ln\epsilon_4|^{\frac{3}{2}}}{(6m-15)^3}
\right)^{2m-5}}+\frac{M^2}{\left(2^{-\frac{1}{4}}((4R+3)\pi)^{-\frac
{1}{3}}K^{\frac{2}{3}} |\ln\epsilon_4|^{\frac{1}{4}}\right)^{2m-5}}.
\end{align*}
By Lemma \ref{mje}, we have
\[
 \|\boldsymbol{J}\|^2_{L^2(B_R)^3}\lesssim \epsilon_4^2
+\frac{M^2}{\left(\frac{ K^2|\ln\epsilon_4|^{\frac{3}{2}}}{(6m-15)^3}\right)^{
2m-5}}+\frac{M^2}{\left(\frac{K^{\frac{2}
{3}}|\ln\epsilon_4|^{\frac{1}{4}}}{(R+1)(6m-15)^3}\right)^{2m-5}}.
\]
Since $K^{\frac{2}{3}}|\ln\epsilon_4|^{\frac{1}{4}}\leq K^2
|\ln\epsilon_4|^{\frac{3}{2}}$ when $K>1$ and $|\ln\epsilon_4|>1$, we finish
the proof and obtain the stability estimate \eqref{je}.
\end{proof}

\subsection{Stability with discrete frequency data}

First we specify the discrete frequency data. For $\boldsymbol n\in\mathbb
R^d\setminus\{0\}$, let $n=|\boldsymbol n|$, denote the wavenumber
\[
\kappa_n=\frac{n\pi}{R}.
\]
We define the discrete frequency boundary data:
\[
\|\boldsymbol E(\cdot,
\kappa_n)\times\boldsymbol\nu\|^2_{\Gamma_R}=\int_{\Gamma_R}\left(|T_{\rm
M}(\boldsymbol E(\boldsymbol x, \kappa_n)\times\boldsymbol\nu)|^2+|\boldsymbol
E(\boldsymbol x, \kappa_n)\times\boldsymbol\nu|^2\right){\rm d}\gamma(\boldsymbol
x).
\]
Similarly, the Fourier coefficient $\hat{\boldsymbol J}_0$ cannot be recovered
by the discrete frequency data. It is necessary to revise the functional space.
Denote
\[
\tilde{\mathbb J}_M(B_R)=\{\boldsymbol J\in\mathbb J_M(B_R): \int_\Omega
\boldsymbol J(\boldsymbol x){\rm d}\boldsymbol x=0\}.
\]

\begin{prob}[Discrete frequency data for electromagnetic waves]\label{p5} Let
$\boldsymbol J\in \tilde{\mathbb J}_M(B_R) $. The inverse source problem of
electromagnetic waves is to determine $\boldsymbol J$ from the tangential trace
of the electric field $\boldsymbol E(\boldsymbol x, \kappa)\times\boldsymbol\nu$
for $\boldsymbol x\in\Gamma_R, \kappa\in (0, \frac{\pi}{R}]\cup\cup_{n=1}^N
\{\kappa_n\}$, where $1<N\in\mathbb N$.
\end{prob}

The following stability estimate is the main result of Problem \ref{p5}.

\begin{theo}\label{DM}
Let $\boldsymbol{E}$ be the solution of the scattering problem
\eqref{smrc}--\eqref{ef} corresponding to the source $\boldsymbol{J}\in\tilde{\mathbb
J}_M(B_R)$. Then
\begin{align}\label{dje}
\| \boldsymbol{J}\|^2_{L^2(B_R)^d}\lesssim
\epsilon_5^2+\frac{M^2}{\left(\frac{N^{\frac{5}{8}}|\ln\epsilon_6|^{\frac{1}{9}}
}{(6m-12)^3}\right)^{2m-4}},
\end{align}
where
\begin{align*}
\epsilon_5&=\left(\sum_{n\leq N}\|\boldsymbol
E(\cdot, \kappa_n)\|^2_{\Gamma_R}\right)^{\frac{1}{2}},\\
\epsilon_6 &=\sup_{\kappa \in
(0,\frac{\pi}{R}]}\|\boldsymbol E(\cdot, \kappa)\|_{\Gamma_R}.
\end{align*}
\end{theo}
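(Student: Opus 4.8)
The plan is to mirror the proof of Theorem~\ref{mrnd}, replacing the elastic plane waves and the DtN operator by the electromagnetic plane waves and the transparent boundary condition \eqref{mtbc2}, and using that $\boldsymbol{J}\in\tilde{\mathbb{J}}_M(B_R)\subset\mathbb{X}(B_R)$ annihilates the longitudinal Fourier mode. The fact that $\boldsymbol{J}$ lies in $H^m$ rather than $H^{m+1}$ is exactly what turns the exponents in \eqref{dfe} (which for $d=3$ read $(6m-6)^3$ and $2m-2$) into the $(6m-12)^3$ and $2m-4$ of \eqref{dje}. Since $\boldsymbol{J}$ has compact support in $B_R\subset U_R$ and mean zero, Lemma~\ref{fst} yields $\|\boldsymbol{J}\|^2_{L^2(B_R)^d}=\sum_{\boldsymbol{n}\in\mathbb{Z}^d}|\hat{\boldsymbol{J}}_{\boldsymbol{n}}|^2$ with $\hat{\boldsymbol{J}}_0=0$, and I would split this sum at a cut-off index $N_0$ to be optimised at the end.

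I would first establish three preparatory estimates. (a) Testing \eqref{ef} at $\kappa_n=n\pi/R$ against $\boldsymbol{p}_{\boldsymbol{n}}e^{-{\rm i}\kappa_n\boldsymbol{x}\cdot\hat{\boldsymbol{n}}}$ with $\boldsymbol{p}_{\boldsymbol{n}}\perp\hat{\boldsymbol{n}}$, integrating by parts over $B_R$ and using \eqref{mtbc2} as in the proof of Lemma~\ref{mje}, bounds each transverse component $|\boldsymbol{p}_{\boldsymbol{n}}\cdot\hat{\boldsymbol{J}}_{\boldsymbol{n}}|^2\lesssim\|\boldsymbol{E}(\cdot,\kappa_n)\times\boldsymbol{\nu}\|^2_{\Gamma_R}$; combined with $\hat{\boldsymbol{n}}\cdot\hat{\boldsymbol{J}}_{\boldsymbol{n}}=0$ (which follows, as in Lemma~\ref{mje}, from the weak Maxwell equation satisfied by $\boldsymbol{J}\in\mathbb{X}(B_R)$) and the Pythagorean theorem, this gives $|\hat{\boldsymbol{J}}_{\boldsymbol{n}}|^2\lesssim\|\boldsymbol{E}(\cdot,\kappa_n)\times\boldsymbol{\nu}\|^2_{\Gamma_R}$, hence $\sum_{n\le N}|\hat{\boldsymbol{J}}_{\boldsymbol{n}}|^2\lesssim\epsilon_5^2$. (b) Integrating by parts $m$ times in the coordinate direction carrying the largest component of $\boldsymbol{n}$ gives $|\hat{\boldsymbol{J}}_{\boldsymbol{n}}|^2\lesssim n^{-2m}\|\boldsymbol{J}\|^2_{H^m(B_R)^d}$, and summing over the lattice spheres as in Lemma~\ref{TFE} produces the tail bound $\sum_{n>N_0}|\hat{\boldsymbol{J}}_{\boldsymbol{n}}|^2\lesssim N_0^{-(2m-d-1)}\|\boldsymbol{J}\|^2_{H^m(B_R)^d}$, i.e. $N_0^{-(2m-4)}$ for $d=3$. (c) Repeating the plane-wave test at a continuous $\kappa\in(0,\pi/R]$ and an arbitrary $\boldsymbol{d}\in\mathbb{S}^{d-1}$, again using $\boldsymbol{d}\cdot\hat{\boldsymbol{J}}=0$, gives the low-frequency bound $\bigl|\int_{B_R}\boldsymbol{J}(\boldsymbol{x})e^{-{\rm i}\kappa\boldsymbol{x}\cdot\boldsymbol{d}}{\rm d}\boldsymbol{x}\bigr|^2\lesssim\epsilon_6^2$, the analogue of Lemma~\ref{FF}.

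Next I would run the analytic-continuation argument of Lemma~\ref{ni12E}: with $\boldsymbol{d}$ fixed and $I(s)=\bigl|\int_{B_R}\boldsymbol{J}(\boldsymbol{x})e^{-{\rm i}s\boldsymbol{d}\cdot\boldsymbol{x}}{\rm d}\boldsymbol{x}\bigr|^2$, Cauchy--Schwarz gives $e^{-2|s|R}I(s)\lesssim M^2$ on $\mathcal{V}$, estimate (c) gives $e^{-2|s|R}I(s)\lesssim\epsilon_6^2$ on $[0,\pi/R]$, and Lemma~\ref{caf} then supplies $\beta(s)$ satisfying \eqref{betad} with $|I(s)|\lesssim M^2e^{2sR}\epsilon_6^{2\beta(s)}$ for $s>\pi/R$; evaluating at $s=n\pi/R$ yields $|\hat{\boldsymbol{J}}_{\boldsymbol{n}}|^2\lesssim M^2e^{2nR}\epsilon_6^{2n\beta(n\pi/R)}$ for $n>1$. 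I would then take
\[
N_0=\begin{cases}\bigl[N^{3/4}|\ln\epsilon_6|^{1/9}\bigr], & N^{3/8}<2^{-5/6}\pi^{-2/3}|\ln\epsilon_6|^{1/9},\\ N, & \text{otherwise.}\end{cases}
\]
In the first case, the lower bound for $\beta$ from \eqref{betad} on $n\in(2^{1/4},N_0\pi]$, the inequality $1-2\pi^4N_0^3|\ln\epsilon_6|^{-1}>1/2$, and the elementary inequality \eqref{ei} (with the index shifted so that $(6m-12)!$ and $x^{3(2m-4)}$ appear) give $|\hat{\boldsymbol{J}}_{\boldsymbol{n}}|^2\lesssim M^2\bigl(|\ln\epsilon_6|^{1/3}N^{9/4}/(6m-12)^3\bigr)^{-(2m-4)}$ for $n=1,\dots,N_0$; summing over these $N_0\lesssim N^{3/4}|\ln\epsilon_6|^{1/9}$ indices yields a contribution with $N^{3/2}|\ln\epsilon_6|^{1/9}$, while the tail bound (b) with this $N_0$ contributes one with $N^{3/4}|\ln\epsilon_6|^{1/9}$. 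In the second case $N_0=N$, estimate (a) gives $\sum_{n\le N}|\hat{\boldsymbol{J}}_{\boldsymbol{n}}|^2\lesssim\epsilon_5^2$ and the tail $N^{-(2m-4)}\|\boldsymbol{J}\|^2_{H^m}$, rewritten via the case hypothesis $|\ln\epsilon_6|^{1/9}\le 2^{5/6}\pi^{2/3}N^{3/8}$, contributes one with $N^{5/8}|\ln\epsilon_6|^{1/9}$ and constant $(2^{5/6}\pi^{2/3})^{2m-4}$. Since $N^{5/8}\le N^{3/4}\le N^{3/2}$ and $2^{5/6}\pi^{2/3}\le(6m-12)^3$ for $m\ge d$, all four contributions collapse into \eqref{dje}. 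The main obstacle is this final bookkeeping: propagating the powers of $N$, of $|\ln\epsilon_6|$, and the $m$-dependent constants through the cut-off optimisation so that the bound lands on exactly $N^{5/8}|\ln\epsilon_6|^{1/9}$ and $(6m-12)^3$. A secondary point requiring care is confirming that the longitudinal Fourier component $\hat{\boldsymbol{n}}\cdot\hat{\boldsymbol{J}}_{\boldsymbol{n}}$ vanishes at every discrete wavenumber $\kappa_n$, since this is what reduces the problem to the two transverse modes and makes the Pythagorean bound available.
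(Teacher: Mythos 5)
Your proposal follows essentially the same route as the paper: the same Fourier-coefficient splitting at the cut-off $N_0$, the same three preparatory bounds (the paper's Lemmas \ref{mjed}, \ref{jhfd}, and \ref{FFM}), the same analytic-continuation step via Lemma \ref{caf} (the paper's Lemma \ref{ni12M}), and the identical final optimisation, including the observation that $H^m$ regularity in place of $H^{m+1}$ is what produces the exponents $2m-4$ and $(6m-12)^3$. The argument is correct as outlined, including the point that $\hat{\boldsymbol n}\cdot\hat{\boldsymbol J}_{\boldsymbol n}=0$ for $\boldsymbol J\in\mathbb X(B_R)$ is what makes the two-mode Pythagorean bound available.
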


\begin{rema}
The estimate for the discrete frequency data \eqref{dje} is also consistent with
the estimate for the continuous frequency data \eqref{je}. They are analogous to
the relationship between \eqref{dfe} and \eqref{fe} for elastic waves.
\end{rema}

We begin with several useful lemmas.

\begin{lemm}\label{mjed}
Let $\boldsymbol E$ be the solution of \eqref{smrc}--\eqref{ef} corresponding to
the source $\boldsymbol J\in \mathbb X(B_R)$. Then for all
$\boldsymbol n\in\mathbb Z^d\setminus\{0\}$, the Fourier coefficients
of $\boldsymbol J$ satisfy
\[
|\hat{\boldsymbol J}_{\boldsymbol n}|^2\lesssim \|\boldsymbol
E(\cdot, \kappa_n)\|^2_{\Gamma_R}.
\]
\end{lemm}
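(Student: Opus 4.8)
The plan is to transcribe the continuous-frequency argument of Lemma~\ref{mje} to the discrete setting, replacing the Fourier transform identity with the Fourier series over the cube $U_R$ and exploiting that $\kappa_n\hat{\boldsymbol n}=(\pi/R)\boldsymbol n$ is exactly a lattice point, so that $e^{-{\rm i}\kappa_n\boldsymbol x\cdot\hat{\boldsymbol n}}$ coincides with a Fourier basis function on $U_R$. Fix $\boldsymbol n\in\mathbb Z^d\setminus\{0\}$, put $\hat{\boldsymbol n}=\boldsymbol n/n$, and choose unit vectors $\boldsymbol p_{\boldsymbol n},\boldsymbol q_{\boldsymbol n}$ with $\boldsymbol p_{\boldsymbol n}\cdot\hat{\boldsymbol n}=0$ and $\boldsymbol q_{\boldsymbol n}=\boldsymbol p_{\boldsymbol n}\times\hat{\boldsymbol n}$, so that $\{\boldsymbol p_{\boldsymbol n},\boldsymbol q_{\boldsymbol n},\hat{\boldsymbol n}\}$ is an orthonormal basis of $\mathbb R^3$. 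Introduce the discrete electric and magnetic plane waves $\boldsymbol E^{\rm inc}_n=\boldsymbol p_{\boldsymbol n}e^{-{\rm i}\kappa_n\boldsymbol x\cdot\hat{\boldsymbol n}}$ and $\boldsymbol H^{\rm inc}_n=\boldsymbol q_{\boldsymbol n}e^{-{\rm i}\kappa_n\boldsymbol x\cdot\hat{\boldsymbol n}}$, which solve $\nabla\times(\nabla\times\boldsymbol E^{\rm inc}_n)-\kappa_n^2\boldsymbol E^{\rm inc}_n=0$ and $\nabla\times(\nabla\times\boldsymbol H^{\rm inc}_n)-\kappa_n^2\boldsymbol H^{\rm inc}_n=0$ in $\mathbb R^3$.

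The first step is to multiply \eqref{ef} at $\kappa=\kappa_n$ by $\boldsymbol E^{\rm inc}_n$, integrate over $B_R$, integrate by parts twice, and substitute the transparent boundary condition \eqref{mtbc2}; exactly as in Lemma~\ref{mje} this produces
\[
{\rm i}\kappa_n\int_{B_R}(\boldsymbol p_{\boldsymbol n}e^{-{\rm i}\kappa_n\boldsymbol x\cdot\hat{\boldsymbol n}})\cdot\boldsymbol J(\boldsymbol x){\rm d}\boldsymbol x
=-\int_{\Gamma_R}\bigl({\rm i}\kappa_n T_{\rm M}(\boldsymbol E(\boldsymbol x,\kappa_n)\times\boldsymbol\nu)\cdot\boldsymbol E^{\rm inc}_n+(\boldsymbol E(\boldsymbol x,\kappa_n)\times\boldsymbol\nu)\cdot(\nabla\times\boldsymbol E^{\rm inc}_n)\bigr){\rm d}\gamma.
\]
Since $|\nabla\times\boldsymbol E^{\rm inc}_n|=\kappa_n=n\pi/R\lesssim n$ and $|e^{-{\rm i}\kappa_n\boldsymbol x\cdot\hat{\boldsymbol n}}|=1$, the Cauchy--Schwarz inequality together with Lemma~\ref{fst} ---which, because $\kappa_n\hat{\boldsymbol n}=(\pi/R)\boldsymbol n$ and ${\rm supp}\,\boldsymbol J\subset B_R\subset U_R$, identifies $\frac{1}{(2R)^d}\int_{B_R}(\boldsymbol p_{\boldsymbol n}e^{-{\rm i}(\pi/R)\boldsymbol n\cdot\boldsymbol x})\cdot\boldsymbol J\,{\rm d}\boldsymbol x$ with $\boldsymbol p_{\boldsymbol n}\cdot\hat{\boldsymbol J}_{\boldsymbol n}$--- yields $|\boldsymbol p_{\boldsymbol n}\cdot\hat{\boldsymbol J}_{\boldsymbol n}|^2\lesssim\|\boldsymbol E(\cdot,\kappa_n)\|^2_{\Gamma_R}$. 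Running the identical argument with $\boldsymbol H^{\rm inc}_n$ in place of $\boldsymbol E^{\rm inc}_n$ ---that is, pairing \eqref{ef} with $\boldsymbol H^{\rm inc}_n$ and using that $\boldsymbol H^{\rm inc}_n$ solves the homogeneous Maxwell system, again as in Lemma~\ref{mje}--- gives $|\boldsymbol q_{\boldsymbol n}\cdot\hat{\boldsymbol J}_{\boldsymbol n}|^2\lesssim\|\boldsymbol E(\cdot,\kappa_n)\|^2_{\Gamma_R}$.

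It remains to control $\hat{\boldsymbol n}\cdot\hat{\boldsymbol J}_{\boldsymbol n}$. Since $\boldsymbol J\in\mathbb X(B_R)$ has compact support in $B_R$, it is a weak solution of $\nabla\times(\nabla\times\boldsymbol J)-\kappa_n^2\boldsymbol J=0$ in $B_R$; testing this against $\hat{\boldsymbol n}e^{{\rm i}\kappa_n\boldsymbol x\cdot\hat{\boldsymbol n}}$, integrating by parts, and using $\nabla\times(\hat{\boldsymbol n}e^{{\rm i}\kappa_n\boldsymbol x\cdot\hat{\boldsymbol n}})={\rm i}\kappa_n(\hat{\boldsymbol n}\times\hat{\boldsymbol n})e^{{\rm i}\kappa_n\boldsymbol x\cdot\hat{\boldsymbol n}}=0$ forces $\hat{\boldsymbol n}\cdot\hat{\boldsymbol J}_{\boldsymbol n}=0$, precisely as in the proof of Lemma~\ref{mje}. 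The Pythagorean theorem in the orthonormal basis $\{\boldsymbol p_{\boldsymbol n},\boldsymbol q_{\boldsymbol n},\hat{\boldsymbol n}\}$ then gives
\[
|\hat{\boldsymbol J}_{\boldsymbol n}|^2=|\boldsymbol p_{\boldsymbol n}\cdot\hat{\boldsymbol J}_{\boldsymbol n}|^2+|\boldsymbol q_{\boldsymbol n}\cdot\hat{\boldsymbol J}_{\boldsymbol n}|^2\lesssim\|\boldsymbol E(\cdot,\kappa_n)\|^2_{\Gamma_R},
\]
which is the asserted bound. Nearly everything here is a verbatim discrete restatement of Lemma~\ref{mje}; the only delicate point, and the main (though mild) obstacle, is the vanishing $\hat{\boldsymbol n}\cdot\hat{\boldsymbol J}_{\boldsymbol n}=0$: since $\mathbb X(B_R)$ is only the $L^2$-closure of the set of solutions of \eqref{VE3}, one must pass to the limit using this density, exactly as was done in the continuous-frequency proof.
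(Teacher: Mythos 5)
Your proposal is correct and follows essentially the same route as the paper's own proof: testing \eqref{ef} against the discrete electric and magnetic plane waves $\boldsymbol p_{\boldsymbol n}e^{-{\rm i}(\pi/R)\boldsymbol n\cdot\boldsymbol x}$ and $\boldsymbol q_{\boldsymbol n}e^{-{\rm i}(\pi/R)\boldsymbol n\cdot\boldsymbol x}$ via the transparent boundary condition, killing the longitudinal component $\hat{\boldsymbol n}\cdot\hat{\boldsymbol J}_{\boldsymbol n}$ by testing the homogeneous Maxwell system satisfied by $\boldsymbol J\in\mathbb X(B_R)$ against $\hat{\boldsymbol n}e^{{\rm i}\kappa_n\boldsymbol x\cdot\hat{\boldsymbol n}}$, and concluding with the Pythagorean theorem. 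Your closing remark about passing to the limit in the $L^2$-closure defining $\mathbb X(B_R)$ is a welcome extra precision that the paper leaves implicit.
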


\begin{proof}
Give any $\boldsymbol n \in \mathbb Z^3$, let $\hat{\boldsymbol n}=\boldsymbol
n/n$. Consider the following electric and magnetic plane waves:
\[
\boldsymbol{E}^{\rm inc}(\boldsymbol x) = \boldsymbol{p} e^{-{\rm
i}\kappa_n \boldsymbol{x} \cdot\hat{\boldsymbol n}}=\boldsymbol{p} e^{-{\rm
i}(\frac{\pi}{R}) \boldsymbol{x} \cdot\boldsymbol{n}}\quad\text{and}\quad
\boldsymbol{H}^{\rm inc}(\boldsymbol x) = \boldsymbol{q} e^{-{\rm
i}\kappa_n\boldsymbol{x} \cdot \hat{\boldsymbol n}}=\boldsymbol{q} e^{-{\rm
i}(\frac{\pi}{R})\boldsymbol{x} \cdot{\boldsymbol n}},
\]
where $\boldsymbol  p$ and $\boldsymbol q$ are chosen such that
$\{\hat{\boldsymbol n}, \boldsymbol p, \boldsymbol q\}$ form an orthonormal
basis in $\mathbb R^3$. It is easy to verify that $\boldsymbol{E}^{\rm inc}$ and
$\boldsymbol{H}^{\rm inc}$ satisfy the Maxwell equations:
\begin{equation}\label{epwd}
\nabla \times (\nabla \times \boldsymbol{E}^{\rm inc}) -
\kappa_n^2 \boldsymbol{E}^{\rm inc} = 0
\end{equation}
and
\[
\nabla \times (\nabla \times \boldsymbol{H}^{\rm inc}) -
\kappa_n^2 \boldsymbol{H}^{\rm inc} = 0.
\]

Multiplying the both sides of \eqref{ef} by $\boldsymbol{E}^{\rm inc}$, using
the integration by parts over $B_R$ and \eqref{epwd}, we obtain
\begin{align*}
{\rm i}n\int_{B_R} (\boldsymbol{p}e^{- {\rm i}(\frac{\pi}{R})\boldsymbol{n}
\cdot \boldsymbol{x}}) \cdot \boldsymbol{J}(\boldsymbol x){\rm d}\boldsymbol x
= -\int_{\Gamma_R} \big({\rm i}\kappa
T_{\rm M}(\boldsymbol{E}(\boldsymbol{x},\kappa_n) \times
\boldsymbol\nu) \cdot \boldsymbol{E}^{\rm inc}\\
+(\boldsymbol{E}(\boldsymbol{x}, \kappa_n) \times
\boldsymbol\nu)\cdot(\nabla \times \boldsymbol{E}^{\rm
inc})\big){\rm d}\gamma.
\end{align*}
A simple calculation yields that
\begin{align*}
\nabla \times \boldsymbol{E}^{\rm inc} = -{\rm i}
\boldsymbol{n}\times \boldsymbol{p}e^{-{\rm i}\kappa_n\boldsymbol x\cdot\hat{\boldsymbol n}},
\end{align*}
which gives
\begin{align*}
|\nabla \times \boldsymbol{E}_{\boldsymbol n}^{\rm inc}| = n.
\end{align*}
Combining the above estimates leads to
\[
 |\boldsymbol p\cdot\hat{\boldsymbol J}_{\boldsymbol n}|^2
\lesssim \int_{\Gamma_R}(|T_{\rm M}(\boldsymbol{E}(\boldsymbol
x, \kappa_n)\times\boldsymbol \nu)|^2+|\boldsymbol{E}(\boldsymbol
x, \kappa_n)\times  \boldsymbol\nu|^2){\rm d}\gamma(\boldsymbol x)\lesssim\|\boldsymbol E(\cdot,
\kappa_n)\|^2_{\Gamma_R}.
\]
Similarly, we have
\[
 |\boldsymbol q_{\boldsymbol n}\cdot\hat{\boldsymbol J}_{\boldsymbol n}|^2
\lesssim \int_{\Gamma_R}(|T_{\rm M}(\boldsymbol{E}(\boldsymbol
x, \kappa_n)\times\boldsymbol \nu)|^2+|\boldsymbol{E}(\boldsymbol x,
\kappa_n)\times \boldsymbol\nu|^2){\rm d}\gamma(\boldsymbol
x)\lesssim\|\boldsymbol E(\cdot,\kappa_n)\|^2_{\Gamma_R}.
\]

On the other hand, since $\boldsymbol J$ has a compact support $\Omega$
contained in $B_R$ and $\boldsymbol J \in \mathbb X(B_R)$, we obtain that
$\boldsymbol{J}$ is a weak solution of the Maxwell system:
\begin{align*}
\nabla \times( \nabla \times \boldsymbol{J}) - \kappa_n^2 \boldsymbol{J} = 0 \quad
{\rm in} ~B_R.
\end{align*}
Multiplying the above equation by $\hat{\boldsymbol n} e^{-{\rm i}\kappa_n
\boldsymbol x\cdot\hat{\boldsymbol n}}$ and using integration by parts, we get
\begin{align*}
\int_{B_R}(\nabla \times \boldsymbol{J}) \cdot (\nabla \times (\hat{\boldsymbol
n} e^{{\rm i}\kappa_n  \boldsymbol x\cdot\hat{\boldsymbol n}}) {\rm
d}\boldsymbol x= \kappa_n^2\hat{\boldsymbol n}\cdot \int_{B_R}
\boldsymbol{J}(\boldsymbol x) e^{-{\rm i}(\frac{\pi}{R})
\boldsymbol x\cdot\boldsymbol n}{\rm d}\boldsymbol x =
\kappa_n^2 \hat{\boldsymbol n}\cdot \hat{\boldsymbol
J}_{\boldsymbol n}.
\end{align*}
Noting $\nabla \times (\hat{\boldsymbol n} e^{-{\rm i}\kappa_n  \boldsymbol
x\cdot\hat{\boldsymbol n}})=-{\rm
i}\kappa_n\hat{\boldsymbol n}\times\hat{\boldsymbol n} e^{-{\rm
i}\kappa_n\boldsymbol x\cdot\hat{\boldsymbol n}}=0$, we
get $\hat{\boldsymbol n} \cdot \hat{\boldsymbol J}_{\boldsymbol n}= 0,$
which yields from the Pythagorean theorem that
\[
|\hat{\boldsymbol J}_{\boldsymbol n}|^2 = |\boldsymbol{p} \cdot \hat{\boldsymbol
J}_{\boldsymbol n}|^2 + |\boldsymbol{q}\cdot \hat{\boldsymbol J}_{\boldsymbol
n}|^2\lesssim \|\boldsymbol E(\cdot,\kappa_n)\|^2_{\Gamma_R},
\]
which completes the proof.
\end{proof}

\begin{lemm}\label{jhfd}
Let $\boldsymbol J \in H^m(B_R)^{3}.$ For any $N_0\in\mathbb N$, the following estimate holds:
\[
\sum_{n= N_0}^{\infty} |\hat{\boldsymbol J}_{\boldsymbol n}|^2
\lesssim N_0^{-(2m-4)}\|\boldsymbol J\|^2_{H^m(B_R)^3}.
\]
\end{lemm}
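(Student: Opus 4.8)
The plan is to adapt the proof of Lemma~\ref{TFE} with only cosmetic changes: here the source is assumed merely in $H^m(B_R)^3$ rather than in $H^{m+1}(B_R)^d$, which is precisely what turns the exponent $2m-d+1$ into $2m-4$ when $d=3$. The argument rests on two ingredients: first, a decay estimate for each Fourier coefficient $\hat{\boldsymbol J}_{\boldsymbol n}$ obtained by integrating by parts in a suitably chosen coordinate direction; and second, the elementary count of how many $\boldsymbol n\in\mathbb Z^3$ lie on a given sphere $\{|\boldsymbol n|=n\}$.

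For the first ingredient, I would fix $\boldsymbol n=(n_1,n_2,n_3)^\top\in\mathbb Z^3\setminus\{0\}$, choose the index $j$ with $n_j=\max\{n_1,n_2,n_3\}$, and note $n^2\le 3n_j^2$, so that $n_j^{-m}\le 3^{m/2}n^{-m}$. Writing $\boldsymbol J=(J_1,J_2,J_3)^\top$ and using ${\rm supp}\,\boldsymbol J\subset B_R\subset U_R$, I would integrate by parts $m$ times in the variable $x_j$ (the boundary terms vanishing because of the compact support), which produces a factor $\bigl((\pi/R)n_j\bigr)^{-m}$, and then apply the Cauchy--Schwarz inequality over $U_R$. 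This gives, for $\ell=1,2,3$,
\[
\left|\int_{B_R}J_\ell(\boldsymbol x)\,e^{-{\rm i}(\frac{\pi}{R})\boldsymbol n\cdot\boldsymbol x}\,{\rm d}\boldsymbol x\right|^2\lesssim n_j^{-2m}\,\|\partial_{x_j}^m J_\ell\|_{L^2(B_R)}^2\lesssim n^{-2m}\,\|\boldsymbol J\|_{H^m(B_R)^3}^2,
\]
and hence, recalling the definition of $\hat{\boldsymbol J}_{\boldsymbol n}$ from Lemma~\ref{fst}, $|\hat{\boldsymbol J}_{\boldsymbol n}|^2\lesssim n^{-2m}\|\boldsymbol J\|_{H^m(B_R)^3}^2$.

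For the second ingredient, exactly as in Lemma~\ref{TFE} I would use that there are at most $O(n^d)=O(n^3)$ elements in $\{\boldsymbol n\in\mathbb Z^3:|\boldsymbol n|=n\}$, so that
\begin{align*}
\sum_{n=N_0}^{\infty}|\hat{\boldsymbol J}_{\boldsymbol n}|^2 &\lesssim\Bigl(\sum_{n=N_0}^{\infty}n^{3-2m}\Bigr)\|\boldsymbol J\|_{H^m(B_R)^3}^2\\
&\lesssim\Bigl(\int_0^{\infty}(N_0+t)^{3-2m}\,{\rm d}t\Bigr)\|\boldsymbol J\|_{H^m(B_R)^3}^2=\frac{N_0^{-(2m-4)}}{2m-4}\,\|\boldsymbol J\|_{H^m(B_R)^3}^2,
\end{align*}
where the integral converges because $m\ge d=3$ forces $2m-4\ge 2>0$; absorbing the harmless constant $1/(2m-4)$ into $\lesssim$ yields the claimed bound.

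There is no genuine obstacle here: the estimate is routine once Lemma~\ref{TFE} is available as a template. The only points requiring attention are bookkeeping ones — carrying $H^m$ regularity (not $H^{m+1}$) throughout, specializing the dimension to $d=3$, and verifying that the resulting series $\sum_{n\ge N_0}n^{3-2m}$ still converges, which it does for every admissible $m\ge 3$.
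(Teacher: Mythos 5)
Your proposal matches the paper's own proof essentially verbatim: the same integration by parts $m$ times in the coordinate direction of the largest component of $\boldsymbol n$ to get $|\hat{\boldsymbol J}_{\boldsymbol n}|^2\lesssim n^{-2m}\|\boldsymbol J\|^2_{H^m(B_R)^3}$, the same $O(n^3)$ lattice-point count, and the same comparison of $\sum_{n\ge N_0}n^{3-2m}$ with $\int_0^\infty(N_0+t)^{3-2m}\,{\rm d}t$. It is correct as written.
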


\begin{proof}
Let $\boldsymbol n=(n_1, n_2, n_3)$ and choose $n_j =\max\{n_1, n_2,
n_3\}$. Then we have $n^2 \leq 3 n_j^2$, which means that $n_j^{-2m}
\leq 3^m n^{-2m}$. Let $\boldsymbol J=(J_1, J_2, J_3)$. Noting ${\rm
supp}\boldsymbol J \subset B_R\subset U_R$ and using integration by parts, we
obtain
\begin{align*}
\left|\int_{B_R} J_1({\boldsymbol x})e^{-{\rm i} (\frac{\pi}{R})\boldsymbol
n\cdot\boldsymbol x} {\rm d} \boldsymbol x\right|^2
\lesssim \left|\int_{B_R} n_j^{-m} e^{-{\rm i}(\frac{\pi}{R}) \boldsymbol
n\cdot\boldsymbol x} \partial^m_{x_j} J_1({\boldsymbol x}) {\rm d} \boldsymbol
x\right|^2\lesssim n^{-2m}\|\boldsymbol J\|^2_{H^m(B_R)^d}.
\end{align*}
Hence
\[
|\hat{\boldsymbol J}_{\boldsymbol n}|^2 \lesssim \left|\int_{B_R} \boldsymbol
J({\boldsymbol x})  e^{-{\rm i}(\frac{\pi}{R}) \boldsymbol n\cdot\boldsymbol
x}{\rm d} \boldsymbol x\right|^2\lesssim n^{-2m}\|\boldsymbol
J\|^2_{H^m(B_R)^d}.
\]
Noting that there are at most $O(n^3)$ elements in \{$\boldsymbol n
\in \mathbb Z^3$, $|\boldsymbol n| = n$\}, we get
\begin{align*}
\sum_{n= N_0}^{\infty} |\hat{\boldsymbol J}_{\boldsymbol
n}|^2 &\lesssim \left(\sum_{n=N_0}^\infty
n^{(3-2m)}\right)\|\boldsymbol J\|^2_{H^m(B_R)^d}\\
&\lesssim \left(\int_0^\infty (N_0+t)^{(3-2m)}{\rm d}t
\right)\|\boldsymbol J\|^2_{H^m(B_R)^d}\\
&\lesssim\frac{N_0^{-(2m-4)}}{(2m-4)}\|\boldsymbol
J\|^2_{H^m(B_R)^d} \lesssim N_0^{-(2m-4)}\|\boldsymbol
J\|^2_{H^m(B_R)^{d}}.
\end{align*}
which completes the proof.
\end{proof}

\begin{lemm}\label{FFM}
Let $\boldsymbol E$ be the solution of \eqref{smrc}--\eqref{ef} corresponding
to the source $\boldsymbol J\in \mathbb X(B_R)$. For any $\kappa
\in (0,\frac{\pi}{R}]$ and  $\boldsymbol d\in\mathbb S^{d-1}$,
the following estimate holds:
\begin{align*}
\left|\int_{B_R} \boldsymbol J(\boldsymbol x) e^{\rm i \kappa \boldsymbol x
\cdot \boldsymbol d}{\rm d} \boldsymbol x\right|^2\lesssim \epsilon_6^2.
\end{align*}
\end{lemm}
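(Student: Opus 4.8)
The plan is to mirror the argument behind Lemma \ref{mje} and its discrete analogue Lemma \ref{mjed}: test the electric-field equation \eqref{ef} against transverse electromagnetic plane waves, and then exploit $\boldsymbol J\in\mathbb X(B_R)$ to dispose of the remaining, radial, Fourier component. Fix $\boldsymbol d\in\mathbb S^{d-1}$ (with $d=3$ in the Maxwell setting) and pick unit vectors $\boldsymbol p_1,\boldsymbol p_2$ so that $\{\boldsymbol d,\boldsymbol p_1,\boldsymbol p_2\}$ is an orthonormal basis of $\mathbb R^3$. For $j=1,2$ put $\boldsymbol E^{\rm inc}_j(\boldsymbol x)=\boldsymbol p_j e^{{\rm i}\kappa\boldsymbol x\cdot\boldsymbol d}$; since $\boldsymbol p_j\perp\boldsymbol d$, one checks directly that $\nabla\times\boldsymbol E^{\rm inc}_j={\rm i}\kappa(\boldsymbol d\times\boldsymbol p_j)e^{{\rm i}\kappa\boldsymbol x\cdot\boldsymbol d}$ and $\nabla\times(\nabla\times\boldsymbol E^{\rm inc}_j)-\kappa^2\boldsymbol E^{\rm inc}_j=0$, so each $\boldsymbol E^{\rm inc}_j$ solves the homogeneous Maxwell system as in \eqref{epw}, with $|\boldsymbol E^{\rm inc}_j|=1$ and $|\nabla\times\boldsymbol E^{\rm inc}_j|=\kappa\le\pi/R$.

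First I would multiply \eqref{ef} by $\boldsymbol E^{\rm inc}_j$, integrate by parts over $B_R$, and insert the transparent boundary condition \eqref{mtbc2}, exactly as in the derivation preceding Lemma \ref{mje}. Because $\boldsymbol E^{\rm inc}_j$ solves the homogeneous system, the interior term on the test side drops and one is left with
\[
{\rm i}\kappa\,\boldsymbol p_j\cdot\int_{B_R}\boldsymbol J(\boldsymbol x)e^{{\rm i}\kappa\boldsymbol x\cdot\boldsymbol d}{\rm d}\boldsymbol x
=-\int_{\Gamma_R}\Bigl({\rm i}\kappa\,T_{\rm M}(\boldsymbol E\times\boldsymbol\nu)\cdot\boldsymbol E^{\rm inc}_j+(\boldsymbol E\times\boldsymbol\nu)\cdot(\nabla\times\boldsymbol E^{\rm inc}_j)\Bigr){\rm d}\gamma .
\]
Dividing by ${\rm i}\kappa$ and applying the Cauchy--Schwarz inequality (the factor $\kappa^{-1}|\nabla\times\boldsymbol E^{\rm inc}_j|=1$ being harmless), the right-hand side is controlled by $\int_{\Gamma_R}\bigl(|T_{\rm M}(\boldsymbol E\times\boldsymbol\nu)|^2+|\boldsymbol E\times\boldsymbol\nu|^2\bigr){\rm d}\gamma=\|\boldsymbol E(\cdot,\kappa)\|^2_{\Gamma_R}$. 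Hence $\bigl|\boldsymbol p_j\cdot\int_{B_R}\boldsymbol J\,e^{{\rm i}\kappa\boldsymbol x\cdot\boldsymbol d}{\rm d}\boldsymbol x\bigr|^2\lesssim\|\boldsymbol E(\cdot,\kappa)\|^2_{\Gamma_R}\le\epsilon_6^2$, the last step because $\kappa\in(0,\pi/R]$ and $\epsilon_6=\sup_{\kappa\in(0,\pi/R]}\|\boldsymbol E(\cdot,\kappa)\|_{\Gamma_R}$.

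For the component along $\boldsymbol d$ I would invoke, as in the proofs of Lemmas \ref{mje} and \ref{mjed}, the fact that $\boldsymbol J\in\mathbb X(B_R)$ together with ${\rm supp}\,\boldsymbol J=\Omega\subset B_R$ makes $\boldsymbol J$ a weak solution of $\nabla\times(\nabla\times\boldsymbol J)-\kappa^2\boldsymbol J=0$ in $B_R$. Testing this equation against $\boldsymbol d\,e^{{\rm i}\kappa\boldsymbol x\cdot\boldsymbol d}$ and integrating by parts, the boundary term vanishes by compact support and the curl term vanishes because $\nabla\times(\boldsymbol d\,e^{{\rm i}\kappa\boldsymbol x\cdot\boldsymbol d})={\rm i}\kappa(\boldsymbol d\times\boldsymbol d)e^{{\rm i}\kappa\boldsymbol x\cdot\boldsymbol d}=0$, so $\kappa^2\,\boldsymbol d\cdot\int_{B_R}\boldsymbol J(\boldsymbol x)e^{{\rm i}\kappa\boldsymbol x\cdot\boldsymbol d}{\rm d}\boldsymbol x=0$; since $\kappa>0$ the radial component $\boldsymbol d\cdot\int_{B_R}\boldsymbol J\,e^{{\rm i}\kappa\boldsymbol x\cdot\boldsymbol d}{\rm d}\boldsymbol x$ is zero. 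Decomposing $\int_{B_R}\boldsymbol J\,e^{{\rm i}\kappa\boldsymbol x\cdot\boldsymbol d}{\rm d}\boldsymbol x$ in the orthonormal basis $\{\boldsymbol d,\boldsymbol p_1,\boldsymbol p_2\}$ and combining the three component estimates by the Pythagorean theorem then gives $\bigl|\int_{B_R}\boldsymbol J\,e^{{\rm i}\kappa\boldsymbol x\cdot\boldsymbol d}{\rm d}\boldsymbol x\bigr|^2=\sum_{j=1}^{2}\bigl|\boldsymbol p_j\cdot\int_{B_R}\boldsymbol J\,e^{{\rm i}\kappa\boldsymbol x\cdot\boldsymbol d}{\rm d}\boldsymbol x\bigr|^2\lesssim\epsilon_6^2$, which is the claim.

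The only delicate ingredient is this last structural fact: a compactly supported member of $\mathbb X(B_R)$ solves the homogeneous Maxwell system at the very wavenumber $\kappa\le\pi/R$ under consideration, which is what forces its radial Fourier component at $\kappa\boldsymbol d$ to vanish. Without it the $\boldsymbol d$-component could not be estimated by $\epsilon_6$, since there is no plane-wave test function polarized parallel to $\boldsymbol d$ that satisfies the homogeneous Maxwell system; this is exactly the point already exploited in Lemmas \ref{mje} and \ref{mjed}. Everything else is a routine reuse of the integration-by-parts identity derived before Lemma \ref{mje}, and all $\kappa$-dependent constants are harmless because $\kappa\le\pi/R$.
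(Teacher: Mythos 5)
Your proposal is correct and follows essentially the same route as the paper: test \eqref{ef} against transverse plane waves polarized perpendicular to $\boldsymbol d$ to bound the two transverse components by $\|\boldsymbol E(\cdot,\kappa)\|^2_{\Gamma_R}\le\epsilon_6^2$, use $\boldsymbol J\in\mathbb X(B_R)$ to make the $\boldsymbol d$-component of $\hat{\boldsymbol J}(\kappa\boldsymbol d)$ vanish, and combine via the Pythagorean theorem. The only cosmetic difference is that the paper labels its two transverse test waves as an ``electric'' and a ``magnetic'' plane wave ($\boldsymbol p$ and $\boldsymbol q=\boldsymbol p\times\boldsymbol d$) whereas you use two generic transverse polarizations, and you spell out the vanishing of the radial component that the paper leaves implicit by reference to Lemma \ref{mjed}.
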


\begin{proof}
Let $\boldsymbol p, \boldsymbol q\in\mathbb S^{d-1}$ such that $\boldsymbol p
\cdot \boldsymbol d = 0$ and $\boldsymbol q = \boldsymbol p \times \boldsymbol
d.$ Consider the electric plane wave
$\boldsymbol E^{\rm inc}=\boldsymbol{p} e^{-{\rm
i}\kappa\boldsymbol{x} \cdot \boldsymbol{d}}$ and magnetic plane wave
$\boldsymbol H^{\rm inc}=\boldsymbol{q} e^{-{\rm i}\kappa\boldsymbol{x}\cdot
\boldsymbol{d}}$. Noting ${\rm supp}\boldsymbol J\subset B_R$ and using
similar arguments as those in Lemma \ref{mjed}, we get
\begin{align*}
|\boldsymbol{p} \cdot \hat{\boldsymbol J}(\kappa\boldsymbol d)|^2
&=\left|\boldsymbol p\cdot \int_{B_R} \boldsymbol J(\boldsymbol x) e^{-{\rm
i}\kappa\boldsymbol{x} \cdot \boldsymbol{d}} {\rm d}\boldsymbol x\right|^2\\
&\lesssim \int_{\Gamma_R}(|T_{\rm M}(\boldsymbol{E}(\boldsymbol
x,\kappa)\times\boldsymbol \nu)|^2+|\boldsymbol{E}(\boldsymbol x,\kappa)\times
\boldsymbol\nu|^2){\rm d}\gamma(\boldsymbol x)\lesssim\|\boldsymbol E(\cdot,
\kappa)\|^2_{\Gamma_R},
\end{align*}
and
\begin{align*}
 |\boldsymbol{q}\cdot \hat{\boldsymbol J}(\kappa\boldsymbol d)|^2 &=
 \left|\boldsymbol q\cdot\int_{B_R} \boldsymbol J(\boldsymbol x) e^{-{\rm
i}\kappa\boldsymbol{x} \cdot \boldsymbol{d}}{\rm d}\boldsymbol x\right|^2\\
&\lesssim \int_{\Gamma_R}(|T_{\rm M}(\boldsymbol{E}(\boldsymbol
x,\kappa)\times\boldsymbol \nu)|^2+|\boldsymbol{E}(\boldsymbol x,\kappa)\times
\boldsymbol\nu|^2){\rm d}\gamma(\boldsymbol x)\lesssim\|\boldsymbol E(\cdot,
\kappa)\|^2_{\Gamma_R}.
\end{align*}
Hence we have from the Pythagorean theorem that
\[
|\hat{\boldsymbol J}(\kappa\boldsymbol d)|^2 = |\boldsymbol{p} \cdot
\hat{\boldsymbol J}(\kappa\boldsymbol d)|^2 + |\boldsymbol{q}\cdot
\hat{\boldsymbol J}(\kappa\boldsymbol d)|^2\lesssim \epsilon_6^2,
\]
which completes the proof.
\end{proof}

\begin{lemm}\label{ni12M}
 Let $\boldsymbol{J}\in\tilde{\mathbb J}_M(B_R) $. Then there exists a function
$\beta(s)$ satisfying \eqref{betad} such that
\[
 \left|\int_{B_R} \boldsymbol J(\boldsymbol x) e^{-{\rm i}(\frac{\pi}{R})
\boldsymbol n \cdot \boldsymbol x} {\rm d} \boldsymbol x\right|^2\lesssim M^2
e^{2nR}\epsilon_6^{2n\beta(\frac{n\pi}{R})}, \quad\forall n\in (1, ~\infty).
\]
\end{lemm}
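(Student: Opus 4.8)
The plan is to transcribe, essentially verbatim, the argument of Lemma~\ref{ni12E} from the elastic case, replacing the source $\boldsymbol f$ by the current density $\boldsymbol J$, the constant $\epsilon_3$ by $\epsilon_6$, and the input Lemma~\ref{FF} by its electromagnetic counterpart Lemma~\ref{FFM}. Fix a unit direction $\boldsymbol d\in\mathbb S^{d-1}$ and restrict attention to those $\boldsymbol n\in\mathbb Z^d$ parallel to $\boldsymbol d$. Introduce the single-variable function
\[
I(s)=\left|\int_{B_R}\boldsymbol J(\boldsymbol x)e^{-{\rm i}s\boldsymbol d\cdot\boldsymbol x}{\rm d}\boldsymbol x\right|^2,
\]
which extends to an analytic function of $s\in\mathcal V$ because the integrand is entire in $s$ and $\boldsymbol J$ has compact support in $B_R$.

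First I would establish the two bounds needed to feed into the analytic continuation lemma. Since ${\rm supp}\,\boldsymbol J\subset B_R$ and $\|\boldsymbol J\|_{H^m(B_R)^3}\le M$, on $B_R$ one has $|e^{-{\rm i}s\boldsymbol d\cdot\boldsymbol x}|\le e^{|s|R}$, so the Cauchy--Schwarz inequality gives $I(s)\le C(R,d)e^{2|s|R}M^2$, that is $e^{-2|s|R}I(s)\lesssim M^2$ for all $s\in\mathcal V$, with a constant independent of $\boldsymbol d$. For the low-frequency control I would use that $\boldsymbol J\in\tilde{\mathbb J}_M(B_R)\subset\mathbb J_M(B_R)\subset\mathbb X(B_R)$ has vanishing mean, which disposes of the endpoint $s=0$, together with Lemma~\ref{FFM} applied with $\kappa=s\in(0,\pi/R]$, to conclude $e^{-2|s|R}I(s)\lesssim\epsilon_6^2$ for all $s\in[0,\pi/R]$.

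Next I would apply the analytic continuation result Lemma~\ref{caf} to the analytic function $e^{-2sR}I(s)$, obtaining a function $\beta(s)$ obeying \eqref{betad} such that
\[
|I(s)e^{-2sR}|\lesssim M^2\epsilon_6^{2\beta(s)},\qquad s\in(\pi/R,\infty),
\]
and hence $|I(s)|\lesssim M^2e^{2sR}\epsilon_6^{2\beta(s)}$ on the same range. Finally, specializing $\boldsymbol d=\hat{\boldsymbol n}$ and $s=\kappa_n=n\pi/R$ for $\boldsymbol n\in\mathbb Z^d$ with $n>1$, and using that the constant in the last estimate does not depend on $\boldsymbol d$, gives
\[
\left|\int_{B_R}\boldsymbol J(\boldsymbol x)e^{-{\rm i}(\pi/R)\boldsymbol n\cdot\boldsymbol x}{\rm d}\boldsymbol x\right|^2=\left|\int_{B_R}\boldsymbol J(\boldsymbol x)e^{-{\rm i}(n\pi/R)\hat{\boldsymbol n}\cdot\boldsymbol x}{\rm d}\boldsymbol x\right|^2\lesssim M^2e^{2nR}\epsilon_6^{2n\beta(n\pi/R)},
\]
which is the asserted estimate. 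I do not expect a genuine obstacle here: the only points requiring attention are verifying that the hypotheses of Lemma~\ref{FFM} (membership in $\mathbb X(B_R)$) and of Lemma~\ref{caf} (the two a priori bounds, including the vanishing-mean normalization used to close the interval at $s=0$) are met, after which the proof is a word-for-word copy of that of Lemma~\ref{ni12E}.
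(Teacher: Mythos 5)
Your proposal reproduces the paper's own proof essentially verbatim: the same single-variable function $I(s)$, the same Cauchy--Schwarz bound on the sector $\mathcal V$, the same use of Lemma \ref{FFM} on $(0,\pi/R]$ (with the vanishing-mean condition handling $s=0$), followed by Lemma \ref{caf} and specialization to $s=n\pi/R$. The argument is correct and matches the paper's approach, so there is nothing further to add.
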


\begin{proof}
We fix $\boldsymbol d\in\mathbb S^{d-1}$ and consider $\boldsymbol
n \in \mathbb Z^3$ which  parallel to $\boldsymbol
d$. Define
\[
I(s) = \left|\int_{B_R} \boldsymbol J(\boldsymbol x) e^{-{\rm i} s \boldsymbol d
\cdot \boldsymbol x}{\rm d} \boldsymbol x\right|^2.
\]
It is easy to show from the Cauchy--Schwarz inequality that there exists a
constant $C$ depending on $R, d$ such that
\[
I(s)\leq C(R,d) e^{2|s|R} M^2, \quad\forall s\in\mathcal{V},
\]
which gives
\[
e^{-2|s|R}I(s) \lesssim M^2, \quad \forall s\in\mathcal{V}.
\]
Using Lemma \ref{FFM} yields
\[
e^{-2|s|R}\left|\int_{B_R}\boldsymbol J(\boldsymbol x) e^{-{\rm i} s \boldsymbol
d \cdot \boldsymbol x} {\rm d} \boldsymbol x\right|^2 \leq
\epsilon_6^2, \quad \forall s\in [0, ~\frac{\pi}{R}].
\]
An direct application of Lemma \ref{caf} shows that there exists a function
$\beta(s)$ satisfying \eqref{betad} such that
\[
 |I(s)e^{-2sR}|\lesssim M^2\epsilon_6^{2\beta},\quad\forall
s\in (\frac{\pi}{R}, ~\infty).
\]
Hence
\[
 |I(s)|\lesssim M^2e^{2sR}\epsilon_6^{2\beta},\quad\forall
s\in (\frac{\pi}{R}, ~\infty).
\]
Noting that the constant $C(R,d)$ does not depend on $\boldsymbol
d$, we have obtained that for all $\boldsymbol n\in\mathbb Z^3$ with
$n > 1$ such that
\[
\left|\int_{B_R} \boldsymbol J(\boldsymbol x) e^{-{\rm i}(\frac{\pi}{R})
\boldsymbol n \cdot \boldsymbol x}  {\rm d} \boldsymbol x\right|^2 =
\left|\int_{B_R}\boldsymbol J(\boldsymbol x)  e^{-{\rm i}(\frac{n\pi}{R})
\hat{\boldsymbol n} \cdot \boldsymbol x}
{\rm d} \boldsymbol x\right|^2
\lesssim M^2 e^{2nR}\epsilon_6^{2n\beta(\frac{n\pi}{R})},
\]
which completes the proof.
\end{proof}

The proof of Theorem \ref{DM} is similar to that for Theorem \ref{mrnd}. We
briefly present it for completeness.

\begin{proof}
Applying Lemma \ref{fst} and the Parseval identity, we have
\begin{align*}
\int_{B_R} |\boldsymbol J|^2{\rm d}\boldsymbol x
\lesssim\sum_{n=0}^{N_0} |\hat{\boldsymbol J}_{\boldsymbol n} |^2
+\sum_{n=N_0+1}^\infty |\hat{\boldsymbol J}_{\boldsymbol n} |^2.
\end{align*}
Let
\[
N_0=
 \begin{cases}
  [N^{\frac{3}{4}}|\ln\epsilon_6|^{\frac{1}{9}}], &
N^{\frac{3}{8}}<\frac{1}{2^{\frac{5}{6}}
\pi^{\frac{2}{3}}}|\ln\epsilon_6|^{\frac{1}{9}},\\
N,&N^{\frac{3}{8}}\geq \frac{1}{2^{\frac{5}{6}}
\pi^{\frac{2}{3}}}|\ln\epsilon_6|^{\frac{1}{9}}
 \end{cases}.
\]
Using Lemma \ref{ni12M} leads to
\begin{align*}
& \Bigl|\int_{B_R}\boldsymbol J(\boldsymbol x)e^{-{\rm i}(\frac{\pi}{R})\boldsymbol n
\cdot \boldsymbol x} {\rm d}\boldsymbol x\Bigr|^2\lesssim M^2
e^{2nR}\epsilon_2^{2n\beta}\lesssim M^2
e^{2nR}e^{2n\beta|\ln\epsilon_6|}\\
&\qquad\lesssim M^2 e^{2nR}e^{-\frac{2}{\pi}(n^4-1)^{-\frac{1}{2}}|\ln\epsilon_6|}\lesssim
M^2 e^{2nR-\frac{2}{\pi}n^{-2}|\ln\epsilon_6|}\\
&\qquad\lesssim M^2 e^{-\frac{2}{\pi}n^{-2}|\ln\epsilon_6|(1-2\pi n^3|\ln\epsilon_6|^{
-1})},\quad \forall\,n\in (2^{\frac{1}{4}},~\infty).
\end{align*}
Therefore
\begin{equation}\label{c2m}
 \Bigl|\int_{B_R}\boldsymbol J(\boldsymbol x) e^{-{\rm i}(\frac{\pi}{R})\boldsymbol n
\cdot \boldsymbol x} {\rm d}\boldsymbol x\Bigr|^2\lesssim M^2
e^{-\frac{2}{\pi^3}N_0^{-2}|\ln\epsilon_6|(1-2\pi^4 N_0^3|\ln\epsilon_6|^{
-1})},\quad\forall~n\in (2^{\frac{1}{4}},~N_0\pi].
\end{equation}

If $N^{\frac{3}{8}}<\frac{1}{2^{\frac{5}{6}}
\pi^{\frac{2}{3}}}|\ln\epsilon_6|^{\frac{1}{9}}$, then $2\pi^4
N_0^3|\ln\epsilon_6|^{-1}<\frac{1}{2}$ and
\begin{equation}\label{c3m}
 e^{-\frac{2}{\pi^3}\frac{|\ln\epsilon_6|}{N_0^2}}\leq
e^{-\frac{2}{\pi^3}\frac{|\ln\epsilon_6|}{N^{\frac{3}{2}}|\ln\epsilon_6|^{\frac{
2}{9}}}}\leq
e^{-\frac{2}{\pi^3}\frac{|\ln\epsilon_6|^{\frac{7}{9}}}{N^\frac{3}{2}}}\leq
e^{-\frac{2}{\pi^3}\frac{2^5\pi^4
|\ln\epsilon_6|^{\frac{1}{9}} N^{\frac{9}{4}}}{N^{\frac{3}{2}}}}= e^{-64\pi
|\ln\epsilon_6|^{\frac{1}{9}}N^{\frac{3}{4}}}.
\end{equation}
Combining \eqref{c2m} and \eqref{c3m}, we obtain
\begin{align*}
  \Bigl|\int_{B_R}\boldsymbol J(\boldsymbol x) e^{-{\rm i}(\frac{\pi}{R})\boldsymbol n
\cdot \boldsymbol x} {\rm d}\boldsymbol x\Bigr|^2&\lesssim M^2
e^{-\frac{2}{\pi^3}N_0^{-2}|\ln\epsilon_6|(1-2\pi^4 N_0^3|\ln\epsilon_6|^{
-1})}\\
&\lesssim M^2 e^{-\frac{1}{\pi^3}N_0^{-2}|\ln\epsilon_6|}\lesssim M^2 e^{-32\pi
|\ln\epsilon_6|^{\frac{1}{9}} N^{\frac{3}{4}}},\quad\forall n\in
(2^{\frac{1}{4}},~N_0\pi].
\end{align*}
Using \eqref{ei}, we have
\[
  \Bigl|\int_{B_R}\boldsymbol J(\boldsymbol x) e^{-{\rm i}(\frac{\pi}{R})\boldsymbol n
\cdot \boldsymbol x} {\rm d}\boldsymbol x\Bigr|^2\lesssim
M^2\frac{1}{\left(\frac{|\ln\epsilon_6|^{\frac{1}{3}}N^{\frac{9}{4}}}{(6m-12)^3
}\right)^{2m-4}},
\quad n=1, \dots, N_0.
\]
Consequently,
\begin{align*}
& \sum_{n=0}^{N_0}  \Bigl|\int_{B_R}\boldsymbol J(\boldsymbol x)
e^{-{\rm i}(\frac{\pi}{R})\boldsymbol n \cdot \boldsymbol x} {\rm d}\boldsymbol x\Bigr|^2\lesssim
\frac{M^2 N_0}{\left(\frac{|\ln\epsilon_6|^{\frac{1}{3}}N^{\frac{9}{4}}}{(6m-12)^3
}\right)^{2m-4}}\\
&\lesssim \frac{M^2 N^{\frac{3}{4}}|\ln\epsilon_6|^{\frac{1}{9}}}{\left(\frac{
|\ln\epsilon_6|^{\frac{1}{3}}N^{\frac{9}{4}}}{(6m-12)^3}\right)^{2m-4}}\lesssim
\frac{M^2}{\left(\frac{|\ln\epsilon_6|^{\frac{2}{9}}N^{\frac{3}{2}}}{(6m-12)^3}
\right)^{2m-4}}\lesssim \frac{M^2}{\left(\frac{|\ln\epsilon_6|^{\frac{1}{9}}N^{\frac{3}{2}}}{(6m-12)^3}
\right)^{2m-4}}.
\end{align*}
Here we have noted that $|\ln\epsilon_6|>1$ when
$N^{\frac{3}{8}}<\frac{1}{2^{\frac{5}{6}}
\pi^{\frac{2}{3}}}|\ln\epsilon_6|^{\frac{1}{9}}$.
If $N^{\frac{3}{8}}<\frac{1}{2^{\frac{5}{6}}
\pi^{\frac{2}{3}}}|\ln\epsilon_6|^{\frac{1}{9}}$, we have
\[
\left(\bigl[|\ln\epsilon_2|^{\frac{1}{9}}N^{\frac{3}{4}}\bigr]
+1\right)^{2m-4}\geq\left(|\ln\epsilon_2|^{\frac{1}{9}}N^{\frac{3}{4}}\right)^{2m-4}.
\]

If $N^{\frac{3}{8}}\geq \frac{1}{2^{\frac{5}{6}}
\pi^{\frac{2}{3}}}|\ln\epsilon_6|^{\frac{1}{9}}$, then $N_0=N$. It follows from
Lemma \ref{mjed} that
\[
 \sum_{n=0}^{N_0}  \Bigl|\int_{B_R}\boldsymbol J(\boldsymbol x)e^{-{\rm
i}(\frac{\pi}{R})\boldsymbol n \cdot \boldsymbol x} {\rm d}\boldsymbol
x\Bigr|^2\lesssim\epsilon_5^2.
\]
Combining the above estimates and Lemma \ref{jhfd}, we obtain
\begin{align*}
 \sum_{n=0}^{\infty}  \Bigl|\int_{B_R}\boldsymbol J(\boldsymbol
x)e^{-{\rm i}(\frac{\pi}{R})\boldsymbol n \cdot \boldsymbol x} {\rm d}\boldsymbol
x\Bigr|^2\lesssim \epsilon_5^2+
\frac{M^2}{\left(\frac{|\ln\epsilon_6|^{\frac{1}{9}}N^{\frac{3}{2}}}{(6m-12)^3}
\right)^{2m-4}}\\
+\frac{M^2}{\bigl(|\ln\epsilon_6|^{\frac{1}{9}}N^{\frac{3}{4}}
\bigr)^{2m-4}}+\frac{M^2(2^{\frac{5}{6}}\pi^{\frac{2}{3}})^{2m-4}}{\left(|\ln\epsilon_6|^{\frac{1}
{9}}N^{\frac{5}{8}}\right)^{2m-4}}.
\end{align*}
Noting that $N^{\frac{5}{8}}\le N^{\frac{3}{4}} \le N^{\frac{3}{2}}$ and
$2^{\frac{5}{6}}\pi^{\frac{2}{3}}\le (6m-12)^3$, $\forall m\geq 3$.
The proof is completed by combining the above estimates.
\end{proof}

\section{Conclusion}

We have presented a unified stability theory of the inverse source
problems for both elastic and electromagnetic waves. For elastic waves, the
increasing stability is achieved to reconstruct the external force. For
electromagnetic waves, the increasing stability is obtained to reconstruct the
radiating electric current density. The analysis requires the Dirichlet data
only at multiple frequencies. The stability estimates consist of the data
discrepancy and the high frequency tail. The result shows that the ill-posedness
of the inverse source problems decreases as the frequency increases for the
data. A possible continuation of this work is to investigate the stability with
a limited aperture data, i.e., the data is only available on a part of the
boundary. Since the Neumann data cannot be represented via the limited Dirichlet
data by using the DtN map, a new technique must be developed, and both the
Dirichlet and Neumann data are required in order to obtain the increasing
stability. Another interesting direction is to study the stability in the
inverse source problems for inhomogeneous media, where the analytical Green
tensors are not available any more and the present method may not be directly
applicable. We also point out even more challenging inverse medium
and obstacle scattering problems. These nonlinear problems are largly open.

\appendix

\section{differential operators}\label{do}

We list the notations for differential operators used in
this paper.

First we introduce the notation in two-dimensions. Let $\boldsymbol x=(x_1,
x_2)^\top$. Let $u$ and $\boldsymbol u=(u_1, u_2)^\top$ and be a scalar and
vector function, respectively. We introduce the gradient and the Jacobi matrix:
\[
\nabla u=(\partial_{x_1}u, \partial_{x_2}u)^\top,\quad
 \nabla\boldsymbol u=\begin{bmatrix}
                      \partial_{x_1} u_1 & \partial_{x_2} u_1\\
                      \partial_{x_1} u_2 & \partial_{x_2} u_2
                     \end{bmatrix}
\]
and the scalar curl and the vector curl:
\[
 {\rm curl}\boldsymbol u=\partial_{x_1} u_2 - \partial_{x_2}u_1,\quad
{\bf curl} u=(\partial_{x_2}u, -\partial_{x_1}u)^\top.
\]
It is easy to verify that
\[
\nabla \nabla^{\top}u=\begin{bmatrix}
\partial_{x_1x_1} u& \partial_{x_1x_2}u \\
\partial_{x_2x_1} u& \partial_{x_2x_2}u
\end{bmatrix}
\]
and
\[
 \nabla\nabla\cdot\boldsymbol u=\begin{bmatrix}
\partial_{x_1x_1} u_1 + \partial_{x_1x_2}u_2 \\
\partial_{x_2x_1} u_1 + \partial_{x_2x_2}u_2
\end{bmatrix}.
\]

Next we introduce the notation in three-dimensions. Let $\boldsymbol x=(x_1,
x_2, x_3)^\top$. Let $u$ and $\boldsymbol u=(u_1, u_2, u_3)^\top$ and be a
scalar and vector function, respectively. We introduce the gradient, the curl,
and the Jacobi matrix:
\[
\nabla u=(\partial_{x_1}u, \partial_{x_2}u, \partial_{x_3}u)^\top,\quad
\nabla\times\boldsymbol u=\begin{bmatrix}
\partial_{x_2}u_3-\partial_{x_3}u_2\\
\partial_{x_3}u_1-\partial_{x_1}u_3\\
\partial_{x_1}u_2-\partial_{x_2}u_1
\end{bmatrix},
\quad
 \nabla\boldsymbol u=\begin{bmatrix}
                      \partial_{x_1} u_1 & \partial_{x_2} u_1 & \partial_{x_3}
u_1\\
                      \partial_{x_1} u_2 & \partial_{x_2} u_2 &
\partial_{x_3}u_2\\
                      \partial_{x_1} u_3 & \partial_{x_2} u_3 &
\partial_{x_3}u_3
                     \end{bmatrix}.
\]
It can be also verified that
\[
\nabla\nabla^{\top}u=\begin{bmatrix}
\partial_{x_1x_1} u& \partial_{x_1x_2}u & \partial_{x_1x_3} u\\
\partial_{x_2x_1} u& \partial_{x_2x_2}u & \partial_{x_2x_3} u\\
\partial_{x_3x_1} u& \partial_{x_3x_2}u & \partial_{x_3x_3}u
\end{bmatrix}
\]
and
\[
\nabla\nabla\cdot\boldsymbol{u}=
\begin{bmatrix}
\partial_{x_1x_1}u_1 + \partial_{x_1x_2}u_2 + \partial_{x_1x_3}u_3 \\
\partial_{x_2x_1}u_1 + \partial_{x_2x_2}u_2 + \partial_{x_2x_3}u_3 \\
\partial_{x_3x_1}u_1 + \partial_{x_3x_2}u_2 + \partial_{x_3x_3}u_3
\end{bmatrix}.
\]

\section{Helmholtz decomposition}\label{hd}

We present the Helmholtz decomposition for the displacement
which is used to introduce the Kupradze--Sommerfeld radiation condition
in Section 2. Since the source $\boldsymbol f$ has a compact support $\Omega$,
the elastic wave equation \eqref{ne} reduces to
\begin{equation}\label{ext}
\mu\Delta\boldsymbol{u}+ (\lambda + \mu)\nabla\nabla\cdot\boldsymbol{u} +
\omega^2\boldsymbol{u} =0\quad\text{in}~\mathbb{R}^d\setminus\bar{\Omega}.
\end{equation}

First we introduce the Helmholtz decomposition in the two-dimensions. For any
solution $\boldsymbol u$ of \eqref{ext}, we let
\begin{equation}\label{hd2}
 \boldsymbol u=\nabla\phi + {\bf curl}\psi,
\end{equation}
where $\phi$ and $\psi$ are scalar potential functions. Substituting
\eqref{hd2} into \eqref{ext} gives
\[
 \nabla((\lambda+2\mu)\Delta \phi+\omega^2\phi)+{\bf
curl}(\mu\Delta\psi+\omega^2\psi)=0,
\]
which is fulfilled if $\phi$ and $\psi$ satisfy the Helmholtz equations:
\begin{equation}\label{he2}
 \Delta\phi+\kappa^2_{\rm p}\phi=0,\quad \Delta\psi+\kappa^2_{\rm s}\psi=0.
\end{equation}
It follows from \eqref{hd2} and \eqref{he2} that we get
\[
 \nabla\cdot\boldsymbol u=\Delta\phi=-\kappa^2_{\rm p}\phi,\quad {\rm
curl}\boldsymbol u=-\Delta\psi=\kappa^2_{\rm s}\psi.
\]
Using \eqref{hd2} again yields
\[
 \boldsymbol u=\boldsymbol u_{\rm p} +\boldsymbol u_{\rm s},
\]
where $\boldsymbol u_{\rm p}$ and $\boldsymbol u_{\rm s}$ are the compressional
part the shear part, respectively, given by
\[
 \boldsymbol u_{\rm p}=-\frac{1}{\kappa_{\rm p}^2}\nabla \nabla\cdot\boldsymbol
u,\quad \boldsymbol u_{\rm s}=\frac{1}{\kappa^2_{\rm s}}{\bf curl}{\rm
curl}\boldsymbol u.
\]

Next we introduce the Helmholtz decomposition in the three-dimensions. For any
solution $\boldsymbol u$ of \eqref{ext}, the Helmholtz decomposition reads
\begin{equation}\label{hd3}
 \boldsymbol u=\nabla\varphi + \nabla\times\boldsymbol\psi,\quad
\nabla\cdot\boldsymbol\psi=0,
\end{equation}
where $\varphi$ is a scalar potential function and $\boldsymbol\psi$ is a
vector potential function. Substituting \eqref{hd3} into \eqref{ext} gives
\[
 \nabla((\lambda+2\mu)\Delta\varphi+\omega^2\varphi)
+\nabla\times(\mu\Delta\boldsymbol\psi+\omega^2\boldsymbol\psi)=0,
\]
which implies that $\varphi$ and $\boldsymbol\psi$ satisfy the Helmholtz
equations:
\begin{equation}\label{he3}
 \Delta\varphi+\kappa^2_{\rm p}\varphi=0,\quad
\Delta\boldsymbol\psi+\kappa^2_{\rm s}\boldsymbol\psi=0.
\end{equation}
Similarly, we have from \eqref{hd3} and \eqref{he3} that
\[
 \boldsymbol u=\boldsymbol u_{\rm p} +\boldsymbol u_{\rm s},
\]
where
\[
 \boldsymbol u_{\rm p}=-\frac{1}{\kappa_{\rm p}^2}\nabla \nabla\cdot\boldsymbol
u,\quad \boldsymbol u_{\rm s}=\frac{1}{\kappa^2_{\rm
s}}\nabla\times(\nabla\times\boldsymbol u).
\]

\section{Sobolev spaces}\label{ss}

Denote by $L^2(B_R)$ the Hilbert space of square integrable functions. Denote
by $H^m(B_R), m\in\mathbb N$ the Sobolev space which consists of square
integrable weak derivatives up to $m$th order and has the norm characterized by
\[
 \|u\|^2_{H^m(B_R)}=\sum_{|\alpha|\leq m}\int_{B_R}|D^\alpha u(\boldsymbol
x)|{\rm d}\boldsymbol x.
\]
Introduce the Sobolev space
\[
 H({\rm curl}, B_R)=\{\boldsymbol u\in L^2(B_R)^3, ~ \nabla\times\boldsymbol
u\in L^2(B)^3\},
\]
which is equipped with the norm
\[
 \|\boldsymbol u\|_{H({\rm curl}, B_R)}=\left(\|\boldsymbol
u\|^2_{L^2(B_R)^3}+\|\nabla\times\boldsymbol u\|^2_{L^2(B_R)^3}\right)^{1/2}.
\]

Let $H^s(\Gamma_R), s\in\mathbb R$ be the standard trace functional
space. Given $u(\boldsymbol x)\in L^2(\Gamma_R), \boldsymbol x\in\mathbb R^2$,
it has the Fourier expansion
\[
 u(R, \theta)=\sum_{n\in\mathbb Z}\hat{u}_n e^{{\rm i}n\theta},\quad
\hat{u}_n=\frac{1}{2\pi}\int_0^{2\pi}u(R, \theta)e^{-{\rm i}n\theta}{\rm
d}\theta.
\]
The $H^s(\Gamma_R)$-norm is characterized by
\[
 \|u\|^2_{H^s(\Gamma_R)}=\sum_{n\in\mathbb Z}(1+n^2)^s |\hat{u}_n|^2.
\]
Given $u(\boldsymbol x)\in L^2(\Gamma_R), \boldsymbol x\in\mathbb R^3$,
it has the Fourier expansion
\[
 u(R, \theta, \varphi)=\sum_{n=0}^\infty\sum_{m=-n}^n \hat{u}_n^m Y_n^m(\theta,
\varphi), \quad \hat{u}_n^m =\int_{\Gamma_R}u(R, \theta,
\varphi)\bar{Y}_n^m(\theta, \varphi){\rm d}\gamma,
\]
where $Y_n^m$ is the spherical harmonics of order $n$. The
$H^s(\Gamma_R)$-norm is characterized by
\[
 \|u\|^2_{H^s(\Gamma_R)}=\sum_{n=0}^\infty\sum_{m=-n}^m
(1+n(n+1))^s|\hat{u}_n^m|^2.
\]
Define a tangential trace functional space
\[
 H^{-1/2}({\rm curl}, \Gamma_R)=\{\boldsymbol u\in H^{-1/2}(\Gamma_R)^3:
\boldsymbol u\cdot\boldsymbol\nu=0~\text{on}~\Gamma_R, ~ {\rm
curl}_{\Gamma_R}\boldsymbol u\in H^{-1/2}(\Gamma_R)\},
\]
where $\boldsymbol\nu$ is the unit outward normal vector on $\Gamma_R$ and ${\rm
curl}_{\Gamma_R}$ is the surface scalar curl on $\Gamma_R$.

Below is a classical result from the theory of Fourier analysis.

\begin{lemm}\label{fst}
 Let $U_R=(-R, R)^d\subset\mathbb R^d$ be a box. For $\boldsymbol f\in
L^2(U_R)^d$, define the Fourier coefficients
\[
 \hat{\boldsymbol f}_{\boldsymbol
n}=\frac{1}{(2R)^d}\int_{U_R}\boldsymbol f(\boldsymbol x)e^{-{\rm
i}(\frac{\pi}{R})\boldsymbol x\cdot\boldsymbol n}{\rm d}\boldsymbol
x, \quad \boldsymbol n\in \mathbb Z^d.
\]
Then $\boldsymbol f$ has the Fourier series expansion
\[
 \boldsymbol f(\boldsymbol x)=\sum_{\boldsymbol n\in \mathbb
Z^d}\hat{\boldsymbol f}_{\boldsymbol n}
e^{{\rm i}(\frac{\pi}{R})\boldsymbol x\cdot\boldsymbol n}
\]
in the $L^2$-sense, i.e.,
\[
\int_{U_R} \Big|\boldsymbol f(\boldsymbol x) - \sum_{|\boldsymbol n|
\leq N} \hat{\boldsymbol f}_{\boldsymbol n}e^{{\rm i}(\frac{\pi}{R})
\boldsymbol x\cdot\boldsymbol n}\Big|^2{\rm d}\boldsymbol x \to 0, \quad N \to
\infty.
\]
Moreover,
\[
 \|\boldsymbol f\|^2_{L^2(U_R)}=(2R)^d\sum_{\boldsymbol n\in\mathbb
Z^d}|\hat{\boldsymbol f}_{\boldsymbol n}|^2.
\]
\end{lemm}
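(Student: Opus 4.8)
The plan is to recognize Lemma \ref{fst} as the standard completeness statement for the multidimensional trigonometric system on the cube $U_R=(-R,R)^d$, and to reduce it to Hilbert-space generalities once completeness is in hand. Since every assertion is component-wise in $\boldsymbol f=(f_1,\dots,f_d)^\top$, it suffices to treat a scalar function $f\in L^2(U_R)$ and then sum over the $d$ components at the end.

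First I would record orthogonality by a direct computation: $\int_{U_R} e^{{\rm i}(\pi/R)\boldsymbol x\cdot(\boldsymbol n-\boldsymbol m)}{\rm d}\boldsymbol x=(2R)^d\delta_{\boldsymbol n\boldsymbol m}$, so the functions $\varphi_{\boldsymbol n}(\boldsymbol x)=(2R)^{-d/2}e^{{\rm i}(\pi/R)\boldsymbol x\cdot\boldsymbol n}$ form an orthonormal system in $L^2(U_R)$, and $\hat f_{\boldsymbol n}=(2R)^{-d/2}\langle f,\varphi_{\boldsymbol n}\rangle$. Bessel's inequality immediately gives $\sum_{\boldsymbol n}|\hat f_{\boldsymbol n}|^2\le (2R)^{-d}\|f\|_{L^2(U_R)}^2$, so the Fourier coefficient sequence is square-summable.

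The heart of the matter is completeness: the finite linear span of $\{\varphi_{\boldsymbol n}\}$ is dense in $L^2(U_R)$. This is where I would invest the work. I would first reduce to continuous functions using density of $C(\overline{U_R})$, indeed of $C^\infty_c(U_R)$, in $L^2(U_R)$; then, to cope with the boundary, approximate a given continuous $g$ in $L^2$ by a function that is $2R$-periodic in each variable (smoothly damping $g$ near the faces of $U_R$, incurring only an $\epsilon$-error). For periodic continuous functions one invokes the Stone--Weierstrass theorem on the torus $(\mathbb R/2R\mathbb Z)^d$ — the trigonometric polynomials form a self-adjoint, point-separating subalgebra containing the constants — to get uniform, hence $L^2$, approximation; alternatively Fej\'er's theorem (Ces\`aro means of the Fourier series converging uniformly) does the job in one dimension, tensored up by a routine multidimensional argument. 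Either route shows $\{\varphi_{\boldsymbol n}\}$ is a complete orthonormal system.

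With completeness established, the rest is automatic: the orthogonal projection of $f$ onto the span of $\{\varphi_{\boldsymbol n}:|\boldsymbol n|\le N\}$ is exactly the partial sum $\sum_{|\boldsymbol n|\le N}\hat f_{\boldsymbol n}e^{{\rm i}(\pi/R)\boldsymbol x\cdot\boldsymbol n}$, its $L^2$-distance to $f$ tends to $0$ as $N\to\infty$, and Parseval's identity yields $\|f\|_{L^2(U_R)}^2=(2R)^d\sum_{\boldsymbol n}|\hat f_{\boldsymbol n}|^2$; summing these over the $d$ coordinates of $\boldsymbol f$ completes the proof. The only genuinely nontrivial ingredient is the density/completeness step, and the sole technical nuisance there is the periodization near $\partial U_R$ — everything else is Hilbert-space bookkeeping. (In fact, in the present paper this lemma is used only as a black box, so citing any standard reference on Fourier series in place of the above argument is equally acceptable.)
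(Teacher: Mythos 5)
Your proof is correct: the orthonormality computation, the reduction of completeness to density of trigonometric polynomials via Stone--Weierstrass (or Fej\'er) after periodization, and the concluding projection/Parseval bookkeeping are all sound, and the normalization constants ($\hat{\boldsymbol f}_{\boldsymbol n}=(2R)^{-d/2}\langle \boldsymbol f,\varphi_{\boldsymbol n}\rangle$, hence the factor $(2R)^d$ in Parseval) come out right. Note, however, that the paper offers no proof at all --- Lemma \ref{fst} is stated in the appendix as ``a classical result from the theory of Fourier analysis'' and used as a black box --- so there is no in-paper argument to compare against; your write-up simply supplies the standard justification the authors chose to omit.
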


The following lemma (cf. \cite[Lemma 3.2]{CIL-JDE16}) gives a link between the
values of an analytical function for small and large arguments.

\begin{lemm}\label{caf}
Let $p(z)$ be analytic in the sector
\[
 \mathcal{V}=\{z\in\mathbb C: -\frac{\pi}{4}<{\rm arg}z<\frac{\pi}{4}\}
\]
and continuous in $\bar{\mathcal{V}}$ satisfying
\[
 \begin{cases}
  |p(z)|\leq\epsilon, & z\in (0, ~ K],\\
  |p(z)|\leq M, & z\in\mathcal{V},\\
  |p(0)|=0, & z=0,
 \end{cases}
\]
where $\epsilon, K, M$ are positive constants. Then there exits a function
$\beta(z)$ satisfying
\[
 \begin{cases}
  \beta(z)\geq\frac{1}{2},  & z\in(K, ~ 2^{\frac{1}{4}}K),\\
  \beta(z)\geq \frac{1}{\pi}((\frac{z}{K})^4-1)^{-\frac{1}{2}}, & z\in
(2^{\frac{1}{4}}K, ~ \infty),
 \end{cases}
\]
such that
\[
|p(z)|\leq M\epsilon^{\beta(z)}, \quad\forall z\in (K, ~ \infty).
\]
\end{lemm}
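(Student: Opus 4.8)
The plan is to establish this classical statement (it is \cite[Lemma 3.2]{CIL-JDE16}) by a two-constants, or Phragm\'en--Lindel\"of, argument together with an explicit harmonic-measure computation carried out by conformal mapping. We may assume $p\not\equiv 0$, and also $\epsilon<1$ without loss of generality: if $\epsilon\ge 1$ we simply take $\beta(z)$ equal to the prescribed lower bound (which is at most $1$) and note $M\epsilon^{\beta(z)}\ge M\ge|p(z)|$ trivially.

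The first step is to pass to the slit domain $\Omega_0=\mathcal V\setminus(0,K]$, obtained from $\mathcal V$ by removing the segment on which $|p|\le\epsilon$. Since $p\not\equiv 0$, the function $\log|p|$ is subharmonic in $\mathcal V$, hence in $\Omega_0$, and is bounded above there by $\log M$. Using continuity of $p$ on $\bar{\mathcal V}$ and the hypothesis $p(0)=0$, its boundary values satisfy $\limsup\log|p|\le\log\epsilon$ on both banks of the slit $(0,K]$, $\limsup\log|p|\le\log M$ on the rays ${\rm arg}\,z=\pm\frac{\pi}{4}$ and at $z=0$ (where in fact $\log|p|\to-\infty$), and $\log|p|\le\log M$ near $\infty$. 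Let
\[
\beta(z):=\omega\bigl(z,(0,K],\Omega_0\bigr)
\]
denote the harmonic measure of the slit relative to $\Omega_0$; then $\beta(z)\log\epsilon+(1-\beta(z))\log M$ is harmonic in $\Omega_0$ with the matching boundary values. Since $\mathbb C\setminus\Omega_0$ contains a continuum (either ray), the Phragm\'en--Lindel\"of form of the two-constants theorem for bounded-above subharmonic functions applies on the unbounded domain $\Omega_0$ and yields $\log|p(z)|\le\beta(z)\log\epsilon+(1-\beta(z))\log M$ for all $z\in\Omega_0$. Exponentiating and using $M\ge 1$ gives $|p(z)|\le\epsilon^{\beta(z)}M^{1-\beta(z)}\le M\epsilon^{\beta(z)}$, which is the desired bound on $(K,\infty)$.

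It remains to evaluate $\beta(z)$ for real $z>K$. The plan is to uniformize $\Omega_0$ by the chain of conformal maps $z\mapsto w=(\frac{z}{K})^2$ (sending $\mathcal V$ onto the right half-plane $\mathbb H=\{{\rm Re}\,w>0\}$ and $(0,K]$ onto $(0,1]$), then $w\mapsto\eta=w^2$ (sending $\mathbb H\setminus(0,1]$ onto $\mathbb C\setminus(-\infty,1]$), then $\eta\mapsto\sigma=\sqrt{\eta-1}$ (sending $\mathbb C\setminus(-\infty,1]$ onto $\{{\rm Re}\,\sigma>0\}$). Tracking the boundary correspondence, the composite $\Phi(z)=\sqrt{(\frac{z}{K})^4-1}$ maps $\Omega_0$ conformally onto $\{{\rm Re}\,\sigma>0\}$ so that the slit $(0,K]$ lands on the segment $\{{\rm i}t:|t|<1\}$ of the imaginary axis, the rays ${\rm arg}\,z=\pm\frac{\pi}{4}$ land on $\{{\rm i}t:|t|\ge 1\}$, and a real $z>K$ is sent to the real point $\sigma_0=\sqrt{(\frac{z}{K})^4-1}>0$. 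By conformal invariance of harmonic measure, together with the elementary fact that in a half-plane the harmonic measure of a boundary segment equals $\frac{1}{\pi}$ times the angle it subtends at the point, and this angle here is $2\arctan(1/\sigma_0)$, we obtain
\[
\beta(z)=\frac{2}{\pi}\arctan\frac{1}{\sqrt{(\frac{z}{K})^4-1}}.
\]
Finally, the two lower bounds follow from elementary properties of $\arctan$. For $z\in(K,2^{\frac{1}{4}}K)$ one has $(\frac{z}{K})^4-1\in(0,1)$, hence $\arctan\frac{1}{\sqrt{(\frac{z}{K})^4-1}}>\arctan 1=\frac{\pi}{4}$ and therefore $\beta(z)>\frac{1}{2}$. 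For $z\in(2^{\frac{1}{4}}K,\infty)$ the quantity $x:=\bigl((\frac{z}{K})^4-1\bigr)^{-1/2}$ lies in $(0,1)$, and since $\arctan$ is concave on $[0,1]$ with $\arctan 0=0$ it dominates the chord through $(0,0)$ and $(1,\frac{\pi}{4})$, so $\arctan x\ge\frac{\pi}{4}x$; hence $\beta(z)\ge\frac{2}{\pi}\cdot\frac{\pi}{4}x=\frac{1}{2}\bigl((\frac{z}{K})^4-1\bigr)^{-1/2}\ge\frac{1}{\pi}\bigl((\frac{z}{K})^4-1\bigr)^{-1/2}$. I expect the part requiring the most care to be the bookkeeping of the boundary correspondence through the three conformal maps---confirming in particular that the low-data slit lands precisely on the inner sub-segment $\{{\rm i}t:|t|<1\}$ of the imaginary axis---together with the precise justification of the Phragm\'en--Lindel\"of step on the unbounded slit domain $\Omega_0$; everything else is routine.
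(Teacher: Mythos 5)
Your argument is correct, but note that the paper itself does not prove this lemma: it is imported verbatim from \cite[Lemma 3.2]{CIL-JDE16}, so there is no in-paper proof to compare against. Your harmonic-measure/two-constants route is the standard (and, as far as the cited source goes, essentially the original) argument, and the details check out: the composite map $z\mapsto(\frac{z}{K})^4$ is injective on the sector of half-angle $\frac{\pi}{4}$ and sends $\mathcal V\setminus(0,K]$ onto $\mathbb C\setminus(-\infty,1]$ with the two banks of the slit landing on the two banks of $(0,1]$, so after $\eta\mapsto\sqrt{\eta-1}$ the low-data boundary is exactly $\{{\rm i}t:|t|<1\}$ and a real $z>K$ goes to $\sigma_0=((\frac{z}{K})^4-1)^{1/2}>0$; the angle-subtended formula then gives $\beta(z)=\frac{2}{\pi}\arctan(1/\sigma_0)$, and your $\arctan$ estimates (strict monotonicity on $(K,2^{1/4}K)$, the chord bound $\arctan x\ge\frac{\pi}{4}x$ on $[0,1]$) yield the stated lower bounds with room to spare. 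The Phragm\'en--Lindel\"of step is also properly justified: $\log|p|-h$ is subharmonic and bounded above on $\Omega_0$, its complement contains a continuum (hence is non-polar), and the exceptional boundary points (the slit endpoints and the origin, where $\log|p|\to-\infty$ anyway) form a polar set, so the generalized maximum principle applies with no separate hypothesis at infinity. Two cosmetic caveats: your final inequality $\epsilon^{\beta}M^{1-\beta}\le M\epsilon^{\beta}$ uses $M\ge 1$, which the lemma's statement (``positive constants'') does not literally guarantee, though it holds in every application in the paper (where $M>1$); and the lemma asks only for \emph{some} $\beta$ with the stated lower bounds, so exhibiting the exact harmonic measure is more than is required but certainly suffices.
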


\end{document}